\tikzset{>=latex}
\newcommand{\C}{\mathbb{C}}
\newcommand{\D}{\mathbb{D}}
\newcommand{\R}{\mathbb{R}}
\newcommand{\Z}{\mathbb{Z}}
\newcommand{\N}{\mathbb{N}}
\newcommand{\M}{\mathcal{M}}
\newcommand{\J}{\mathcal{J}}
\renewcommand{\P}{\mathcal{P}}
\newcommand{\F}{\mathcal{F}}
\newcommand{\V}{\mathcal{V}}
\renewcommand{\sl}{\text{sl}}
\newcommand{\link}{\mathrm{link}}
\newcommand{\wind}{\mathrm{wind}}
\newcommand{\U}{\mathcal U}
\newcommand{\lk}{\mathrm{link}}
\renewcommand{\S}{\mathcal S}
\newcommand{\ind}{\mathrm{ind}}
\newcounter{newcounter}[section]
\numberwithin{equation}{section}
\numberwithin{newcounter}{section}
\numberwithin{figure}{section}
\numberwithin{footnote}{section}
\newcommand{\authorfootnotes}{\renewcommand\thefootnote{\@fnsymbol\c@footnote}}%
\newtheorem{thm}[newcounter]{Theorem}
\newtheorem{defi}[newcounter]{Definition}
\newtheorem{prop}[newcounter]{Proposition}
\newtheorem{lem}[newcounter]{Lemma}
\newtheorem{cor}[newcounter]{Corollary}
\newtheorem{rem}[newcounter]{Remark}
\title[Finite energy foliations in the restricted three-body problem]{Finite energy foliations and global dynamics in the restricted three-body problem}
\begin{document}

\email{liulei30@email.sdu.edu.cn}

\email{psalomao@sustech.edu.cn}

\maketitle

\begin{center}

\normalsize
\authorfootnotes
Lei Liu\textsuperscript{1} and
Pedro A. S. Salom\~ao\textsuperscript{2}  \par \bigskip

\textsuperscript{1}School of Mathematics, Shandong University  \par

\textsuperscript{2} Shenzhen International Center for Mathematics, SUSTech

\end{center}

\begin{abstract}
We establish a general criterion for the existence of finite energy foliations on contact three-manifolds with boundary, imposing strong global constraints on the associated Reeb flows.
Our main abstract result shows that a configuration of periodic orbits, consisting of hyperbolic boundary orbits of Conley-Zehnder index $2$ and an interior orbit of index at least $3$, gives rise to a finite energy foliation, provided that no additional contractible periodic orbit satisfies a specific rotation, linking, and action condition. This identifies the precise dynamical obstruction to the existence of such foliations. These foliations organize the flow in a regime where the dynamics is typically chaotic, and imply the existence of infinitely many periodic orbits and infinitely many homoclinic/heteroclinic orbits to the Lyapunov orbits.

We apply this result to the circular planar restricted three-body problem. For mass ratios sufficiently close to $\frac{1}{2}$ and energies slightly above the first Lagrange value, the regularized energy surface $\mathbb{R}P^3 \# \mathbb{R}P^3$ admits a finite energy foliation whose binding consists of the retrograde orbits around the primaries together with the Lyapunov orbit near the first Lagrange point. Moreover, the convexity of the critical energy surface provides a proof of Birkhoff's retrograde orbit conjecture for mass ratios sufficiently close to $\frac{1}{2}$ and all energies below the first Lagrange value.

\end{abstract}

\tableofcontents
\section{Introduction}

Finite energy foliations, introduced by Hofer, Wysocki, and Zehnder, provide a geometric mechanism for organizing global dynamics of Reeb flows via pseudo-holomorphic curves. In this paper, we identify a configuration of periodic orbits consisting of an interior orbit together with distinguished separating orbits that, under a sharp nonexistence condition for additional contractible periodic orbits, gives rise to a finite energy foliation with minimal topology. The resulting foliations impose strong global constraints on the dynamics: they organize the flow by planes and cylinders and force the existence of infinitely many periodic orbits and homoclinic or heteroclinic connections to the separating orbits.

To describe the abstract setting, consider a contact three-manifold $\mathcal M$ with non-empty boundary, equipped with a Reeb vector field. Assume that each boundary component is a two-sphere $\mathcal S$ containing a hyperbolic periodic orbit $P_2$ with Conley-Zehnder index $2$, referred to as a {\em Lyapunov orbit}. The vector field is transverse to $\mathcal S$ and points in opposite directions along the two hemispheres of $\mathcal{S}\setminus P_2$. Assume further that the interior of $\mathcal M$ contains a periodic orbit, referred to as a {\em retrograde orbit}, such that its contractible cover has  Conley-Zehnder index at least $3$.

Our main abstract result characterizes when the Lyapunov orbits in $\partial \mathcal{M}$ together with the retrograde orbit in $\mathcal{M} \setminus \partial \mathcal{M}$ form the binding of a finite energy foliation with minimal topology. More precisely, the result shows that the only obstruction to the existence of such foliations is the presence of contractible periodic orbits satisfying specific rotation, linking, and action conditions.

For energies slightly above a critical value in typical Hamiltonian systems with saddle-center equilibria, a subset $\mathcal{M}$ of the energy surface, called a {\em chamber}, has boundary components in the neck regions and contains an interior orbit of the type described above.

In the particular case of the circular planar restricted three-body problem, the regularized component near the primaries is diffeomorphic to $\R P^3 \# \R P^3$. Each chamber $\mathcal{M}$ around a primary is diffeomorphic to $\R P^3 \equiv L(2,1)$ with an open ball removed, and contains a retrograde orbit around the corresponding primary. Its boundary $\partial \mathcal M$ consists of a single component containing the Lyapunov orbit near the first Lagrange point.

The applicability of the abstract result requires a detailed convexity analysis of the critical energy surface. We establish a general criterion for convexity and verify it for the mass ratio $\mu=\frac{1}{2}$ in elliptic-hyperbolic coordinates. This condition excludes the existence of the obstructing periodic orbits. Hence, each chamber admits a finite energy foliation, and the regularized component $\R P^3 \# \R P^3$ carries a $3-2-3$ foliation.

These foliations have a rigid structure: their leaves are planes asymptotic to the retrograde orbits, and cylinders connecting the retrograde orbits to the Lyapunov orbit. Their existence has strong dynamical consequences, including the existence of infinitely many periodic orbits and infinitely many homoclinic connections to the Lyapunov orbit.

As a further consequence of convexity, we also prove Birkhoff's retrograde orbit conjecture for mass ratios sufficiently close to $\frac{1}{2}$ and all energies below the first Lagrange value.

Our approach to the main abstract result is based on the study of pseudo-holomorphic curves in the symplectization of the energy surface. The construction starts from a Bishop family of pseudo-holomorphic disks with suitable boundary conditions, following \cite{Hofer1993}. The theory of pseudo-holomorphic curves in symplectizations was developed by Hofer, Wysocki, and Zehnder \cite{Hofer1993, Hofer1998, char1, char2, HWZ1996, HWZII, HWZIII, convex, fols}, and was first applied to the restricted three-body problem in \cite{AFKP2012}.

The mechanism developed in this paper is expected to extend beyond the restricted three-body problem to a broad class of Hamiltonian systems exhibiting saddle-center type dynamics. In many such systems, neck regions are organized by Lyapunov orbits, suggesting the existence of chambers bounded by distinguished periodic orbits as in the present work. Potential applications include Hill¡¯s lunar problem, H¨¦non-Heiles type Hamiltonians, magnetic-mechanical systems, and transition state models in molecular dynamics. In these settings, finite energy foliations may provide a geometric framework for understanding global transport, homoclinic and heteroclinic phenomena, entropy, and the organization of chaotic dynamics through pseudo-holomorphic curves.

\subsection{The setting}

Let $\M=\mathcal{M}^3$ be a smooth three-manifold diffeomorphic to a lens space $L(p,q)$ with finitely many disjoint regular open balls removed. Recall that $p\geq q\geq 1$ are relatively prime and that $L(p,q)$ is the quotient of
\[
S^3=\{(z_1,z_2)\in \C^2 : |z_1|^2+|z_2|^2=1\}
\]
by the free $\Z_p$-action generated by
\[
(z_1,z_2)\mapsto (e^{2\pi i/p}z_1,e^{2\pi i q/p}z_2).
\]

The boundary components $\mathcal{S}_i\subset \partial \M$, $i=1,\ldots,l$, are diffeomorphic to $S^2$. Let $\alpha$ be a contact form on $\M$ defining the standard contact structure of $L(p,q)$, and denote by $\mathcal{R}$ its Reeb vector field.

Assume that each $\mathcal{S}_i$ contains a hyperbolic periodic orbit $P_{2,i}\subset \mathcal{S}_i$ with Conley--Zehnder index $2$, and that $\mathcal{R}$ is transverse to $\mathcal{S}_i$ and points in opposite directions along the two hemispheres of $\mathcal{S}_i\setminus P_{2,i}$.

Assume further that there exists a periodic orbit $P_3\subset \M\setminus \partial \M$ which is $p$-unknotted with self-linking number $-1/p$. That is, there exists an immersed disk $\mathcal{D}\hookrightarrow \M\setminus \partial \M$ whose interior is embedded and whose boundary is a $p$-fold cover of $P_3$. The disk $\mathcal{D}$ is called a $p$-disk for $P_3$, and its characteristic foliation has a unique singular point which is positive and nicely elliptic.

Denote by $P_3^j$ the $j$-th iterate of $P_3$, and set the binding
\[
\P = \{P_3, P_{2,1},\ldots,P_{2,l}\}.
\]

\begin{defi}\label{defi_TF}
A weakly convex foliation $\mathcal{F}$ of $\M$ adapted to $\alpha$ and $\mathcal{P}$ is a singular foliation whose singular set is $\cup_{P\in \mathcal{P}} P$, and whose regular leaves are properly embedded punctured spheres transverse to the Reeb vector field.

Each leaf $\dot \Sigma\subset \M\setminus \cup_{P\in \mathcal{P}} P$ is diffeomorphic to $\C P^1\setminus \Gamma$, $0<\#\Gamma<\infty$, and is asymptotic at each puncture to some orbit $P_z\in\mathcal{P}$. The asymptotic limits are distinct, and exactly one puncture is positive, i.e., the orientation induced by the leaf coincides with the flow orientation.

If $\dot\Sigma$ is asymptotic to $P_3^p$, then the corresponding puncture is positive and all other punctures are negative and asymptotic to distinct $P_{2,j}$. The hemispheres of $\mathcal{S}_j\setminus P_{2,j}$ are also leaves of $\mathcal{F}$.
\end{defi}

\medskip

These foliations arise as projections of finite energy foliations in the symplectization $\R\times \M$. Their leaves are images of embedded finite energy $J$-holomorphic curves, where $J$ is an almost complex structure adapted to $\alpha$.

For generic $J$, the leaves have minimal topology: planes asymptotic to $P_3^p$, cylinders connecting $P_3^p$ to $P_{2,j}$, and planes projecting to the hemispheres of $\mathcal{S}_j\setminus P_{2,j}$. There are exactly $l$ one-parameter families of planes and, for each $j$, a unique rigid cylinder from $P_3^p$ to $P_{2,j}$ separating these families.

Let $J$ be an $\R$-invariant almost complex structure on $\R\times \M$ compatible with $d(e^a\alpha)$ and satisfying $J\partial_a=\mathcal{R}$. The leaves of the foliation are images of maps
\[
\tilde u=(a,u):\C P^1\setminus \Gamma \to \R\times \M,
\]
with finite Hofer energy. Trivial cylinders $\R\times P, P \in \mathcal P,$ over the binding orbits are also included.

\begin{defi}
A finite energy foliation $\tilde{\mathcal{F}}$ adapted to $\alpha$, $\mathcal{P}$, and $J$ is a foliation of $\R\times \M$ whose leaves are trivial cylinders over $\mathcal{P}$ or embedded finite energy $J$-holomorphic curves with uniformly bounded energy. Its projection to $\M$ is a weakly convex foliation.
\end{defi}

\subsection{The main abstract result}

Assume that $\alpha$ is the restriction to $\M$ of a contact form on $L(p,q)$ defining the standard tight contact structure $\xi_{p,q}$ induced from the Liouville form restricted on $S^3$ and the natural projection from $S^3$ to $L(p,q)$. We retain the notation introduced above. Let $\P(\alpha)$ denote the set of periodic orbits of $\alpha$, and for $P\in\P(\alpha)$ denote by $\mathcal{A}(P)= \int_P \alpha = T$ its action or period, and by $\rho(P)$ its rotation number if $P$ is contractible. We always assume that the contractible iterate $P_3^p$ satisfies $\rho(P_3^p)>1$, and that each Lyapunov orbit is hyperbolic and satisfies $\rho(P_{2,j})=1$. We also assume the existence of $J$ on $\R \times \mathcal{M}$ so that the hemispheres of $\S_j \setminus P_{2,j}$ are projections of nicely embedded $J$-holomorphic planes asymptotic to $P_{2,j}$. Such planes are unique and always exist in our concrete applications.

Let $\mathcal{D}\subset \mathcal M$ be a $p$-disk for $P_3$. Its $|d\alpha|$-area is defined as
$$
\mathcal{S}(\mathcal{D},\alpha):=\int_{\mathcal{D}}|d\alpha|.
$$
We consider the set $\P'\subset \P(\alpha)$ of contractible periodic orbits $P'\subset \M\setminus (\partial \M \cup P_3)$ satisfying
\begin{equation}\label{main assumptions}
\rho(P')=1, \qquad \lk(P',P_3^p)=0, \qquad \mathcal{A}(P') \leq \mathcal{S}(\mathcal{D},\alpha).
\end{equation}
The following theorem gives a sufficient condition for the existence of a finite energy foliation with minimal topology and binding $\mathcal P$.

\begin{thm}\label{main1}
Assume that $\mathcal{M}$, $\alpha$, $\P$, and $J$ satisfy the above conditions. If $\P'=\emptyset$, then there exist an almost complex structure $J'$, which is $C^\infty$-arbitrarily close to $J$ and coincides with $J$ on a neighborhood of $\partial \M$, and a finite energy foliation $\tilde \F$ adapted to $\alpha$, $\P$, and $J'$, whose leaves are planes and cylinders. In particular, its projection to $\M$ is a weakly convex foliation.
\end{thm}

The construction is based on a Bishop family of $J$-holomorphic disks with boundary on a $p$-disk $\{0\}\times\mathcal{D}$ for $P_3$ in $\R\times \mathcal M$, whose characteristic foliation has a unique singularity, which is nicely elliptic. This analysis was first introduced by Hofer in \cite{Hofer1993} in his proof of the Weinstein conjecture for overtwisted contact three-manifolds, and later developed in \cite{char1, char2}. The disk boundaries in the Bishop family is radially monotone from the singularity towards $\partial \mathcal{D} \equiv P_3^p$. If no bubbling occurs before the boundary reaches $P_3^p$, the family converges in the SFT sense to a holomorphic building whose top level yields planes or cylinders asymptotic to the binding orbits. These curves generate the desired foliation via standard Fredholm, gluing, and compactness arguments.
If bubbling occurs, the SFT-limit contains a half-cylinder with boundary on the $p$-disk and negative end at some $P_{2,j}$, together with a plane projecting to a hemisphere of $\mathcal{S}_j\setminus P_{2,j}$. Gluing the half-cylinder with the opposite plane to $P_{2,j}$ produces a new Bishop family closer to $P_3^p$, and an iteration of this procedure leads to the previous case.

For this strategy to work, two ingredients are crucial. First, $J$ may need to be perturbed to avoid non-regular curves in the compactness argument, allowing the application of Fredholm theory and the gluing theorem. Second, the uniqueness of disks in the Bishop family, as discussed in \cite{AH}, ensures that the Bishop family obtained by gluing the half-cylinder with the opposite rigid plane gets closer to $P_3^p$.

As an immediate consequence of the foliation, one obtains the existence of connecting orbits between the Lyapunov orbits.

\begin{cor}\label{cor_homoclinic}
Under the assumptions of Theorem \ref{main1}, let $P_{2,j}$ be the Lyapunov orbit with maximal action. Then $P_{2,j}$ admits a homoclinic orbit or a heteroclinic orbit to some $P_{2,k}$, $k\neq j$. In particular, if $l=1$, then $P_{2,1}$ admits a homoclinic orbit in $\M$.
\end{cor}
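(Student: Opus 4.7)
I let $P_{2,j^*}$ be a Lyapunov orbit realizing the maximum, $\mathcal{A}(P_{2,j^*}) = \max_{1 \leq k \leq l} \mathcal{A}(P_{2,k})$. Being hyperbolic, $P_{2,j^*}$ carries a two-dimensional unstable manifold with an interior branch $W^u_{\mathrm{int}}\subset \mathcal{M}\setminus \partial \mathcal{M}$ consisting of Reeb orbits flowing away from $P_{2,j^*}$ into the bulk. The plan is to use the finite energy foliation $\tilde{\mathcal{F}}$ supplied by Theorem \ref{main1} to show that some orbit $\gamma\subset W^u_{\mathrm{int}}$ is forward asymptotic to a Lyapunov orbit $P_{2,k}$. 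Then $\gamma\subset W^u(P_{2,j^*})\cap W^s(P_{2,k})$ is the sought homoclinic orbit (if $k = j^*$) or heteroclinic orbit (if $k \neq j^*$).

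\textbf{Analysis of forward orbits on $W^u_{\mathrm{int}}$.}
Fix such an orbit $\gamma$. By compactness of $\mathcal{M}$, one of two cases holds: either (i) $\gamma$ exits $\mathcal{M}$ through some exit hemisphere $H^-_k\subset \mathcal{S}_k$ in finite positive time, or (ii) $\gamma$ remains in $\mathcal{M}$ for all $t\geq 0$ with nonempty $\omega$-limit $\Lambda\subset \mathcal{M}$. In case (ii) the set $\Lambda$ is automatically disjoint from $\partial \mathcal{M}\setminus \mathcal{P}$, since orbits meeting those hemispheres exit $\mathcal{M}$ immediately. The hypothesis $\rho(P_3^p)>1$ forces $P_3^p$ to be elliptic, so it carries no local stable manifold into $\mathcal{M}$ and no orbit can asymptote to $P_3^p$ in forward time. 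Using the regular leaves of $\mathcal{F}$ as transverse sections and invoking a Birkhoff minimal-set argument, $\Lambda$ must contain a Lyapunov orbit $P_{2,k}$, yielding the desired asymptotic behavior for $\gamma$.

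\textbf{Main obstacle.}
The key remaining step, and the main technical hurdle, is to produce at least one orbit on $W^u_{\mathrm{int}}$ that avoids case (i). I would parametrize a small loop $\sigma\subset W^u_{\mathrm{int}}$ around $P_{2,j^*}$ and argue by contradiction that a continuous exit map $\Phi:\sigma\to \bigsqcup_k H^-_k$ cannot exist globally. The rigid cylinder $C_{j^*}$ and the two families of planes in $\tilde{\mathcal{F}}$ breaking into $C_{j^*}$ at their adjacent ends organize the foliation near $P_{2,j^*}$ and impose a definite winding of each exiting orbit around the binding $\mathcal{P}$. Combined with the maximality of $\mathcal{A}(P_{2,j^*})$ among Lyapunov actions, this winding should preclude the global existence of $\Phi$: a full continuous family of exits would sweep out a surface representing a pseudo-holomorphic connection of action exceeding $\mathcal{A}(P_{2,j^*})$, contradicting the action bounds and uniqueness of rigid cylinders intrinsic to Theorem \ref{main1}. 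A subsidiary subtlety is to rigorously rule out exotic $\omega$-limits in case (ii) that might fail to contain any binding orbit, which requires the transverse-section structure of $\mathcal{F}$ together with SFT compactness to pin down the recurrent behavior.
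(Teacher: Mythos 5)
Your overall plan (follow the interior branch of $W^u(P_{2,j^*})$ and use the foliation to force it to hit a stable manifold) points in the right direction, but both halves of the argument have genuine gaps. In your case (ii), knowing that the $\omega$-limit set of an orbit $\gamma$ \emph{contains} a Lyapunov orbit $P_{2,k}$ does not make $\gamma$ heteroclinic: for that you need $\gamma\subset W^s(P_{2,k})$, i.e.\ $\omega(\gamma)=P_{2,k}$, and an orbit can perfectly well accumulate on $P_{2,k}$ recurrently (e.g.\ by shadowing a homoclinic loop) without being asymptotic to it. A minimal-set argument only locates a minimal set inside $\omega(\gamma)$; it does not upgrade accumulation to convergence. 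Moreover the assertion that $\rho(P_3^p)>1$ forces $P_3^p$ to be elliptic is false (a hyperbolic orbit can have any integer rotation number in the global frame), so even the exclusion of $P_3^p$ from $\omega(\gamma)$ is not justified as written. Finally, your ``main obstacle'' is precisely the crux of the corollary, and the degree/winding obstruction you sketch for the exit map $\Phi$ is not carried out; the phrase ``a surface representing a pseudo-holomorphic connection of action exceeding $\mathcal{A}(P_{2,j^*})$'' does not correspond to an actual estimate (the surface swept by the flow is not pseudo-holomorphic, and no inequality is derived).

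The mechanism that actually closes both gaps, and which the paper uses, is a symplectic flux comparison on the leaves of $\mathcal{F}$. Since $d\alpha$ vanishes on $W^u(P_{2,j^*})$, the trace $C=W^u\cap D_\tau$ on a leaf $D_\tau$ satisfies $\int_C\alpha=T_{j^*}$, so $C$ bounds a disk $B\subset D_\tau$ of $d\alpha$-area exactly $T_{j^*}$; likewise the trace of the local stable branch $W^s(P_{2,k})$ on a leaf near the breaking bounds a disk of area $T_k\le T_{j^*}$. Hence the forward image of $B$ can never be contained in the interior of that disk, so either the two boundary circles intersect (producing the homoclinic/heteroclinic orbit directly, with no $\omega$-limit analysis needed) or the flow pushes $C$ across the rigid cylinder into the next family of planes. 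A second area argument on the rigid cylinders (each crossing circle carries flux $T_{j^*}$, resp.\ $T_{j^*}-T_k>0$ for non-contractible crossings, while the cylinder has finite total $d\alpha$-area and distinct crossings are disjoint) shows this can happen only finitely many times, so an intersection with some $W^s(P_{2,k})$ must eventually occur. I recommend replacing both your case analysis and your exit-map obstruction with this flux argument; it is where the maximality of $\mathcal{A}(P_{2,j^*})$ is genuinely used.
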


\subsection{The main application}

We explain how Theorem \ref{main1} applies to the circular planar restricted three-body problem for mass ratios near a fixed value and energies slightly above the first Lagrange value. In particular, we obtain $3-2-3$ foliations and deduce the existence of infinitely many periodic orbits and homoclinic orbits to the Lyapunov orbit.

Consider two primaries moving along circular orbits about their center of mass. A massless satellite moves in the same plane under Newtonian attraction. In rotating coordinates, the Hamiltonian is
\[
H_\mu(p,q)=\frac{1}{2}|p+iq|^2-\frac{\mu}{|q-(1-\mu)|}-\frac{1-\mu}{|q+\mu|}-\frac{1}{2}|q|^2,
\]
where $q=q_1+iq_2\in \C\setminus\{-\mu,1-\mu\}$ is its position, $p=p_1+ip_2\in \C$ is its momentum. The mass ratio $0<\mu<1$ is the mass of the primary at $1-\mu\in \C$, and $1-\mu$ is the mass of the primary at $-\mu\in \C$. The primaries are called the moon and the earth, respectively.

The Hamiltonian $H_\mu$ has five critical points $l_1(\mu),\ldots,l_5(\mu)$, called Lagrange points, with corresponding critical values $L_1(\mu)<L_2(\mu)\leq L_3(\mu)<L_4(\mu)=L_5(\mu)$ called Lagrange values. Notice that $L_2(1/2)=L_3(1/2)$ and $l_2,l_3$ are interchanged as $\mu$ crosses $1/2$. The Lagrange points are the rest points of the Hamiltonian flow of $H_\mu$, and $l_1(\mu)$ lies between the primaries. For $E<L_1(\mu)$, the energy surface has three components $\mathcal{M}^e_{\mu,E}, \mathcal{M}^m_{\mu,E}, \mathcal{M}^u_{\mu,E}$, the first two components project to punctured disk-like domains about the primaries, and the third one projects to a neighborhood of $\infty$. For $E=L_1(\mu)$, the components $\mathcal{M}^e_{\mu,E}$ and $\mathcal{M}^m_{\mu,E}$ meet at $l_1(\mu)$. For $L_1(\mu)<E<L_2(\mu)$, they merge into a regular component $\mathcal{M}^{e\# m}_{\mu,E}$, a connected sum of $\mathcal{M}_{\mu,E}^e$ and $\mathcal{M}_{\mu,E}^m$, which projects to a two-punctured disk-like domain about the earth and the moon.

To treat collisions, we regularize the flow using elliptic-hyperbolic coordinates. The regularized Hamiltonian $\hat H_{\mu,E}$ is defined on $\R^2\times (\R\times \R/2\pi\Z)$ and has the form
$$\hat H_{\mu,E}(y,x)= \frac{1}{2}|y+ F(x)|^2 + V_{\mu,E}(x),$$
where $F(x) = (f_1(x_2),f_2(x_1))$ and $V_{\mu,E}(x)$ are smooth functions on $\R\times \R / 2\pi \Z$, with $V_{\mu,E}$ smoothly depending on $(\mu,E)$. Up to time reparametrization, its zero level corresponds to $H^{-1}(E)$. After quotienting by the antipodal symmetry, the regularized components become compact manifolds:
\begin{equation}\label{energy_surfaces}
\begin{aligned}
S^3 &\stackrel{2:1}{\longrightarrow} \mathcal{M}^e_{\mu,E}, \mathcal{M}^m_{\mu,E} \equiv \R P^3, && E<L_1(\mu),\\
S^1\times S^2 &\stackrel{2:1}{\longrightarrow} \mathcal{M}^{e\# m}_{\mu,E} \equiv \R P^3 \# \R P^3, && L_1(\mu)<E<L_2(\mu).
\end{aligned}
\end{equation}

In \cite{AFKP2012}, Albers, Frauenfelder, van Koert, and Paternain observed that for energies up to slightly above $L_1(\mu)$, the regularized energy hypersurfaces are of contact type. Thus the regularized flow is equivalent to a Reeb flow. For $E<L_1(\mu)$, one obtains a Reeb flow on $(\R P^3,\xi_0)$. Birkhoff \cite{Birkhoff1915} used the shooting method to prove the existence of a retrograde orbit, that is, a periodic orbit projecting to a simple closed curve around the primary moving opposite to the rotating system. He raised the question of whether the retrograde orbit bounds a disk-like global surface of section. For $E$ slightly above $L_1(\mu)$, the component $\mathcal{M}^{e\# m}_{\mu,E}$ contains an index-$2$ hyperbolic periodic orbit $P_2=P_{2,E}$ in the neck region, the Lyapunov orbit. It bounds a two-sphere $\S$ which separates $\mathcal{M}^{e\# m}_{\mu,E}$ into two chambers whose closures, still denoted $\mathcal{M}^e_{\mu,E}$ and $\mathcal{M}^m_{\mu,E}$, are contactomorphic to $(\R P^3,\xi_0)$ with an open ball removed.

The following theorem shows that the interiors of $\mathcal{M}_{\mu, E}^e$ and $\mathcal{M}_{\mu, E}^m$ possess retrograde orbits $P_3^e$ and $P_e^m$, respectively.  These orbits are $2$-unknotted with self-linking number $-1/2$ and admit $2$-disks with a unique nicely elliptic singularity.

\begin{thm}\label{thm_retrograde}
Fix $(\mu_0,E_0)$ with $0<\mu_0<1$ and $E_0\leq L_1(\mu_0)$. Then there exist continuous families of retrograde orbits $P_3^e=P^e_{3,\mu,E}$ and $P_3^m=P^m_{3,\mu,E}$, defined for $(\mu,E)$ sufficiently close to $(\mu_0,E_0)$, with uniformly bounded action, and a continuous family of $2$-disks for $P_3^e$ with uniformly bounded $|d\alpha|$-area.
\end{thm}
The existence of a retrograde orbit as in Theorem \ref{thm_retrograde} follows from the usual Birkhoff's shooting method. This method considers trajectories issuing perpendicularly from certain open intervals in the $q_1$-axis containing the primary as an endpoint. Such intervals are shown to be uniformly away from $l_1(\mu)$ as $E \to L_1(\mu)$ and, moreover, they parametrize two non-self-intersecting real-analytic curves in the interior of the rectangle $Q:=[-\pi/2,\pi/2] \times [-M,0]$, for some $M>0$. At the endpoints, these curves tend to $\partial Q$. An intersection point between these curves correspond to a retrograde orbit. A topological crossing of such curves always exists and persists under small perturbations of $(\mu, E)$, thus giving at least one family of retrograde orbits near $(\mu_0, E_0)$ as in the statement.

Our goal is to construct weakly convex foliations on $\mathcal{M}^{e\# m}_{\mu,E}$ whose binding consists of the Lyapunov orbit $P_2$ and the retrograde orbits $P_3^e, P_3^m$. To fix the separating sphere $\S$, we choose an almost complex structure $J$ on the symplectization such that the hemispheres of $\S\setminus P_2$ are projections of $J$-holomorphic planes asymptotic to $P_2$, located in the neck region and converging to $l_1(\mu)$ as $E\to L_1(\mu)^+$. The following result provides such a choice.

\begin{thm}\label{thm_alphaJ}
Let $0<\mu_0<1$. For every $(\mu,E)$ sufficiently close to $(\mu_0,L_1(\mu_0))$ with $E>L_1(\mu)$:
\begin{itemize}
\item[(i)] There exists a contact form $\alpha=\alpha_{\mu,E}=i_{Y}\omega_0$ on $\mathcal{M}^{e\# m}_{\mu,E}\equiv \R P^3 \# \R P^3$ whose Reeb flow is equivalent to the regularized Hamiltonian flow. Here, $Y=Y_{\mu, E}$ is a Liouville vector field, defined on a neighborhood of $\mathcal{M}_{\mu, E}^{e\#m}$ in $\R^4$ and transverse to $\mathcal{M}^{e\# m}_{\mu, E}$, and $\omega_0$ is the canonical symplectic form $\sum_{i=1,2} dp_i \wedge dq_i$.
\item[(ii)] There exists a compatible almost complex structure $J=J_{\mu,E}$ on $\R \times \mathcal{M}^{e\#m}_{\mu,E}$  adapted to $\alpha$ that admits a pair of $J$-holomorphic planes asymptotic to $P_{2}$ through opposite directions. The closure of their projections to $\mathcal{M}^{e\#m}_{\mu,E}$ form a regular two-sphere $\S=\S_{\mu,E}$ containing $P_{2}$. Furthermore, $\text{dist}(\S,l_1(\mu)) \to 0$  as $E \to L_1(\mu)^+$ uniformly in $\mu$.

\item[(iii)] The contact form $\alpha$ in (i) $C^\infty_{\text{loc}}$-converge uniformly in $\mu$ to a limiting contact form $\alpha_{\mu,L_1(\mu)}$ on the critical level away from $l_1(\mu)$ as $E \to L_1(\mu)^+$.
\end{itemize}
\end{thm}

\medskip

Fix $\alpha$ and $J$ as above. The sphere $\S$ separates $\mathcal{M}^{e\# m}_{\mu,E}$ into chambers $\mathcal{M}^e_{\mu,E}$ and $\mathcal{M}^m_{\mu,E}$, each containing a retrograde orbit as mentioned before. We have $P_2=P_{2,E}\subset  \S=\partial \mathcal{M}_{\mu,E}^e = \partial \mathcal{M}_{\mu,E}^m,$ $P_3^e \subset \mathcal{M}_{\mu,E}^e \setminus \mathcal{S}$ and $P_3^m \subset \mathcal{M}_{\mu,E}^m \setminus \mathcal{S}.$ We aim to construct finite energy foliations projecting to special weakly convex foliations of $\mathcal{M}_{\mu, E}^{e \# m}$, called $3-2-3$ foliations.

\begin{defi}
A $2-3$ foliation of $\mathcal{M}^e_{\mu,E}$ adapted to $\alpha$, $J$, and $\{P_3^e,P_2\}$ is a weakly convex foliation whose leaves consist of two hemispheres in $\S\setminus P_{2}$, a one-parameter family of planes asymptotic to $(P_3^e)^2$, and a rigid cylinder connecting $(P_3^e)^2$ to $P_2$. They are transverse to the flow and consist of projections to $\mathcal{M}_{\mu,E}^e$ of a finite energy foliation in the symplectization. A $3-2-3$ foliation of $\mathcal{M}^{e\# m}_{\mu,E}$ is obtained by combining $2-3$ foliations on both chambers.
\end{defi}

\begin{figure}[hbpt]
\centering
  \includegraphics[width=.75\linewidth]{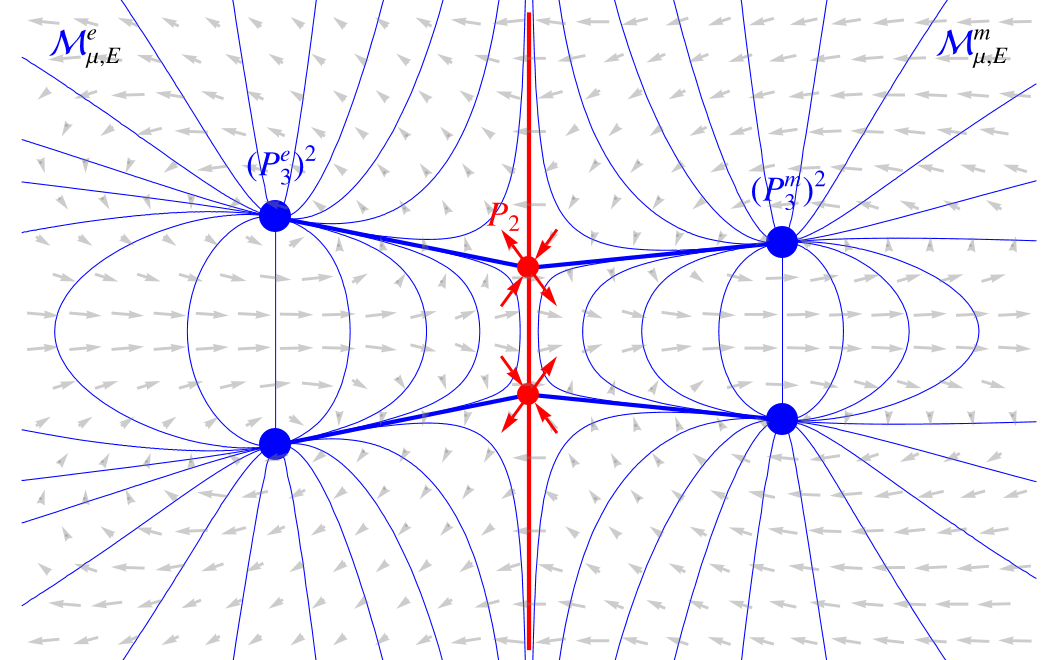}
\caption{The $3-2-3$ foliation on the regularized component $\mathcal{M}_{\mu, E}^{e\# m} \equiv \R P^3 \# \R P^3$ for mass ratios close to $1/2$ and energies slightly above the first Lagrange value. The rigid cylinders (bold blue) connect the double cover of the retrograde orbits $P_3^e$ and $P_3^m$ to the Lyapunov orbit $P_2$ near the first Lagrange point. The rigid planes asymptotic to $P_2$ (bold red) separate $\mathcal{M}_{\mu,E}^e$ and $\mathcal{M}_{\mu,E}^m$.}
\label{fig: 3-2-3 foliation}
\end{figure}

To apply our main abstract theorem, Theorem \ref{main1}, one needs a crucial index estimate of periodic orbits near the critical level, which states that periodic orbits passing sufficiently close to $l_1(\mu)$ must have a high Conley-Zehnder index.

\begin{thm}\label{thm_muRSl1}
Let $0<\mu_0 <1$. Given $N\in \N$, there exists an open neighborhood $\U_{N}\subset \R^2 \times (\R \times \R / 2\pi \Z)$ of the saddle-center singularities $S_\pm(\mu_0),$ corresponding to the first Lagrange point $l_1(\mu_0)$, such that any periodic orbit $P$ entering it, and which is not a cover of the Lyapunov orbit, has Conley-Zehnder index $\mu_{\text{CZ}}(P)>N$.
\end{thm}

We now state the main criterion to the existence of $3-2-3$ foliations on the regularized component $\mathcal{M}^{e \# m}_{\mu, E}$ for $E$ slightly above $L_1(\mu)$.

\begin{thm}\label{thm_main3}
Let $0<\mu_0<1$,  and let $\alpha,J$ and $\S$ be defined for $(\mu,E)$ sufficiently close to $(\mu_0, L_1(\mu_0))$, with $E> L_1(\mu)$, as in Theorem \ref{thm_alphaJ}. Let $P_3^e$ be the continuous family of retrograde orbits and $\mathcal{D}=\mathcal D_{\mu,E}$ be the $2$-disk for $P_3^e$ in Theorem \ref{thm_retrograde} for $(\mu, E)$ sufficiently close to $(\mu_0, L_1(\mu_0))$. Assume the absence of contractible periodic orbits $P'\subset \mathcal{M}^e_{\mu_0, L_1(\mu_0)}\setminus (P_{2} \cup P^e_{3})$ at the critical level  satisfying
$$\rho(P')=1, \quad \link(P', P^e_{3})=0 \quad \mbox{ and } \quad \mathcal{A}(P') \leq \S(\mathcal{D}),$$
and that $\mu_{\mathrm{CZ}}((P_3^e)^2)\geq 3$. Then for $(\mu,E)$ sufficiently close to $(\mu_0,L_1(\mu_0))$, with $E>L_1(\mu)$, the following statements hold:
\begin{itemize}
\item[(i)] $P_2$ is the unique contractible orbit with rotation number $1$, unlinked with $P_3^e$ in $\mathcal{M}_{\mu,E}^e$ and action $\leq \S(\mathcal{D}, \alpha)$;

\item[(ii)] $\mathcal{M}^e_{\mu,E}$ admits a $2-3$ foliation whose binding orbits consist of the retrograde orbit $P_3^e$ and the Lyapunov orbit $P_2$ around the first Lagrange point $l_1(\mu)$;

\item[(iii)] $\mathcal{M}^e_{\mu, E}$ contains infinitely many periodic and homoclinic orbits to the Lyapunov orbit near $l_1(\mu)$. Moreover, if the branches in $\mathcal{M}_{\mu, E}^e$ of the stable and unstable manifolds of the Lyapunov orbit do not coincide, then the topological entropy of the flow on $\mathcal{M}_{\mu, E}^e$ is positive.
\end{itemize}
An analogous statement holds for $\mathcal{M}^m_{\mu,E}$. Moreover, if the conditions above are satisfied for both  $\mathcal{M}_{\mu_0,L_1(\mu_0)}^e$ and $\mathcal{M}_{\mu_0, L_1(\mu_0)}^m$, then the regularized component $\mathcal{M}_{\mu,E}^{e\# m} \equiv \R P^3 \# \R P^3$ admits a $3-2-3$ foliation whose binding orbits are $P_3^e, P_3^m$ and $P_2$, for every $(\mu, E)$ sufficiently close to $(\mu_0, L_1(\mu_0))$, with $E> L_1(\mu)$.
\end{thm}

To establish the estimate $\mu_{\mathrm{CZ}}((P_3^e)^2)\geq 3$, we introduce the following definitions.
\begin{defi}
Let $0<\mu<1$, and let $\alpha_{\mu}=\alpha_{\mu,L_1(\mu)}$ be the contact form on the regularized critical subset $\mathcal{M}^e_{\mu}=\mathcal{M}^e_{\mu,L_1(\mu)}$ as in Theorem \ref{thm_alphaJ}.
\begin{itemize}
\item[(i)] We say that $\mathcal{M}_{\mu}^e$ is dynamically convex if the Conley-Zehnder index of every contractible periodic orbit of $\alpha_\mu$ is at least $3$.
\item[(ii)] Let $\widetilde{\mathcal{M}}_{\mu}^e$ be a component of the lift of the regularized critical subset $\mathcal{M}_{\mu}^e$ to $\R^4$, which double covers $\mathcal{M}_\mu^e$. In particular, $\widetilde{\mathcal{M}}_{\mu}^e$ is a topological three-sphere with antipodal symmetry and two opposite saddle-center singularities $S_\pm=S_\pm(\mu)$ corresponding to the first Lagrange point $l_1(\mu)$.  We say that $\mathcal{M}_{\mu}^e$ is strictly convex if  $\widetilde{\mathcal{M}}_{\mu}^e$ bounds a convex subset of $\R^4$ and all sectional curvatures of $\widetilde {\mathcal{M}}_{\mu}^e\setminus \{S_\pm\}$ are positive.
\end{itemize}
A similar definition holds for $\mathcal{M}_{\mu}^m=\mathcal{M}^m_{\mu,L_1(\mu)}$.
\end{defi}

A key point in the argument is that dynamical convexity of the critical energy surface $\mathcal M^e_\mu$ and $\mathcal M^m_\mu$  provides a direct mechanism to exclude the obstructions in Theorem \ref{main1} for slightly higher energies. In practice, one of the few effective ways to establish dynamical convexity is through geometric convexity properties of $\mathcal M^e_\mu$ and $\mathcal M^m_\mu$. In particular, strict convexity implies dynamical convexity, and thus forces all contractible periodic orbits to have Conley-Zehnder index at least $3$, ruling out the configurations responsible for the obstruction.

\begin{prop}\label{prop_convexity}
Let $0<\mu<1$. Strict convexity of the critical chambers $\mathcal{M}_{\mu}^e$ and $\mathcal{M}_{\mu}^m$ implies that all contractible periodic orbits have Conley--Zehnder index at least $3$.
\end{prop}

This proposition is essentially proved in \cite{convex} and \cite{Long02}. Indeed, consider any contractible periodic orbit $P$ in $\mathcal{M}_{\mu}^e$, and let $\tilde P$ denote a lift of $P$ in $\widetilde {\mathcal{M}}_{\mu}^e\subset\R^4$. The index of $\tilde P$ then depends only on the Hamiltonian near $\tilde P$. One may change the Hamiltonian away from $\tilde P$ so that $\tilde P$ lies in a strictly convex regular hypersurface. Hence, its index is at least $3$, including $P_3^e$.

Our main application in the restricted three-body problem relies on establishing the strict convexity of $\mathcal M^e_\mu$ and $\mathcal M^m_\mu$ for $\mu=1/2$.  This is a delicate geometric property, but it provides a direct route to dynamical convexity and hence to the exclusion of the obstructions in Theorem~\ref{main1}.

\begin{thm}\label{thm_3bp2}
For $\mu=1/2$, the regularized components $\mathcal{M}_{\mu, E}^e$ and $\mathcal{M}_{\mu, E}^m$ are strictly convex for all energies $E\leq L_1(1/2)=-2$.
\end{thm}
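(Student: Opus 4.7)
The plan is to exploit the $\Z_2$ symmetry of the problem at $\mu = 1/2$, which swaps the earth and moon components, to reduce the theorem to strict convexity of $\mathcal{M}^e_{1/2,E}$ alone for every $E \leq -2$. Write the regularized Hamiltonian as $\hat H(y,x) = \frac{1}{2}|y + F(x)|^2 + V_{1/2,E}(x)$ on $\R^2_y \times (\R \times \R/2\pi\Z)_x$ with $F(x) = (f_1(x_2), f_2(x_1))$. Its antipodal lift to $\R^4$, denoted $\tilde H$, cuts out $\widetilde{\mathcal{M}}^e_{1/2,E}$, and strict convexity amounts to two assertions: that $\widetilde{\mathcal{M}}^e_{1/2,E}$ bounds a convex body in $\R^4$, and that every sectional curvature of the induced Euclidean metric on $\widetilde{\mathcal{M}}^e_{1/2,E}\setminus\{S_\pm\}$ is strictly positive.

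First I would establish convexity of the bounded region by verifying that the Hessian $D^2 \tilde H$ is positive definite on $\ker d\tilde H$ at every smooth point of the level set. Direct differentiation gives the block form
\begin{equation*}
D^2 \hat H = \begin{pmatrix} I_2 & A(x) \\ A(x)^{\top} & B(x,y) \end{pmatrix}, \qquad A(x) = \begin{pmatrix} 0 & f_1'(x_2) \\ f_2'(x_1) & 0 \end{pmatrix},
\end{equation*}
with $B$ assembled from $(y_i + f_i)f_i''$, $(f_i')^2$ and the Hessian of $V_{1/2,E}$. Eliminating the momentum block via the Schur complement gives the effective Hessian
\begin{equation*}
\mathcal{H}_{\mathrm{eff}}(x,y) = \begin{pmatrix} (y_2 + f_2(x_1))\,f_2''(x_1) + V_{x_1 x_1} & V_{x_1 x_2} \\ V_{x_1 x_2} & (y_1 + f_1(x_2))\,f_1''(x_2) + V_{x_2 x_2} \end{pmatrix}.
\end{equation*}
On $\{\hat H = 0\}$ one has $|y+F|^2 = -2V_{1/2,E}$, so the momentum contributions in $\mathcal{H}_{\mathrm{eff}}$ are pointwise bounded by $\sqrt{-2V}\,|f_i''|$, and positivity reduces to a geometric inequality on the Hill region $\{V_{1/2,E} \leq 0\}$ involving only $V$, $f_i$ and their derivatives.

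I would then verify this inequality using the additional reflection symmetry that $\mu = 1/2$ imposes on $V$ in the elliptic angle, which forces $V_{x_1 x_2}$ to vanish along a distinguished axis and allows a closed-form estimate on the eigenvalues of $\mathcal{H}_{\mathrm{eff}}$. Split the Hill region into a neighborhood of the saddle-center singularities $S_\pm$ corresponding to $l_1(1/2)$ and its complement: on the complement, compactness and explicit monotonicity of $V$ and $f_i$ give a uniform positive lower bound on $\mathcal{H}_{\mathrm{eff}}$; near $S_\pm$, rescaling by the leading quadratic form of $V_{1/2,-2}$ at $l_1(1/2)$ reduces the estimate to the saddle-center normal form, where the hyperbolic direction is transverse to the energy level and thus does not obstruct positive-definiteness of $\mathcal{H}_{\mathrm{eff}}$ on $\ker d\tilde H$. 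The case $E = -2$ then follows from $E < -2$ by continuity of these estimates up to $S_\pm$. Positive sectional curvature on $\widetilde{\mathcal{M}}^e_{1/2,E}\setminus\{S_\pm\}$ is a direct consequence of the same Hessian bound, since the second fundamental form of a regular Hamiltonian level set with respect to the outward Euclidean normal equals the restriction of $D^2\tilde H$ to $\ker d\tilde H$ up to the positive factor $1/|\nabla \tilde H|$. The main technical obstacle I foresee is the rescaling analysis near $S_\pm$: the kinetic term $|y+F|^2$ degenerates there while $D^2 V$ acquires indefinite signature, so the trade-off between these competing contributions must be matched delicately to the saddle-center normal form in a chart adapted to the eigendirections of $D^2 V$ at $l_1(1/2)$.
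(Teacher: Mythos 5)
There is a genuine gap in your reduction step. Eliminating the momentum block by a Schur complement tests positive-definiteness of the \emph{full} $4\times 4$ Hessian $D^2\hat H$ (the Schur complement of the identity block is $B-A^{\top}A$, which in the paper's notation is exactly $U_1=r(\cos\theta\,\nabla^2 f_1+\sin\theta\,\nabla^2 f_2)+\nabla^2V$). This is strictly stronger than positive-definiteness of $D^2\hat H$ restricted to the three-dimensional tangent space $\ker d\hat H$, which is what strict convexity actually requires, and the stronger condition is \emph{false} on part of the energy surface: in the decoupled case $U_1=\mathrm{diag}(c_t,d_s)$, and the entry $d_s$ is negative on a nonempty region of the Hill region near its boundary (this is precisely the region $\{D<0\}$ analyzed in Lemma \ref{lem: I0>0}). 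The correct tangential criterion is $\det U_W>0$ with $U_W=rU_1+r^{-1}\nabla V\otimes\nabla V$; the extra term $r^{-1}\nabla V\otimes\nabla V$, which your effective Hessian discards, is exactly what rescues positivity where $d_s<0$, via the identity $\det U_W=d_sV_1^2+c_tV_2^2+r^2c_td_s$. So the "geometric inequality on the Hill region" you propose to verify for $\mathcal{H}_{\mathrm{eff}}$ cannot be verified because it fails, and no amount of care near $S_\pm$ repairs this, since the failure occurs away from the singularities.

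Two further issues. First, your energy monotonicity goes the wrong way: deducing $E=-2$ from $E<-2$ by continuity only yields non-strict positivity in the limit, whereas the paper proves the critical case $E=-2$ directly (Lemmas \ref{lem: I0>0} and \ref{lem: E2>0}, which are substantial explicit polynomial estimates your sketch does not account for) and then shows $\partial_hI_0^h\leq 0$ on the relevant region to propagate positivity \emph{down} to $E<-2$. Second, positive sectional curvature at all regular points does not by itself give global convexity of a sphere with two singular points; you still need the local-to-global argument of Proposition \ref{prop_section curvature implies convexity}.
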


The proof of Theorem \ref{thm_3bp2} involves a generalization of the results in  \cite{Sa1} for magnetic-mechanical Hamiltonians. The strategy is to analyze the geometry of the regularized Hamiltonian in elliptic-hyperbolic coordinates and reduce convexity to explicit curvature estimates on the energy hypersurface. This reduction allows us to verify strict convexity by controlling a concrete function on the Hill region and exploiting a monotonicity argument that propagates the convexity from the critical level to lower energies.

Combining the previous results yields the main application.

\begin{thm}\label{thm_main3bp}
For $(\mu,E)$ sufficiently close to $(1/2,L_1(1/2))$ with $E>L_1(\mu)$, the flow on $\mathcal{M}^{e\# m}_{\mu,E}$ admits a $3-2-3$ foliation  whose binding orbits are $P_3^e,P_3^m$, and $P_2$. In particular, each chamber $\mathcal{M}_{\mu, E}^e$ or $\mathcal{M}_{\mu, E}^m$ contains infinitely many periodic orbits and infinitely many homoclinic orbits to the Lyapunov orbit $P_2$ near $l_1(\mu)$. Moreover, if the stable and unstable manifolds of $P_2$ do not coincide, then the topological entropy of the flow on $\mathcal{M}_{\mu, E}^{e \# m}$ is positive.
\end{thm}

The $3-2-3$ foliation in Theorem \ref{thm_main3bp} follows from Theorems \ref{thm_main3} and \ref{thm_3bp2} and Proposition~\ref{prop_convexity}.

\begin{rem}\label{rem_McGehee}For small mass ratios, homoclinic orbits to the Lyapunov orbit were studied in the regularized component $\mathcal{M}_{\mu,E}^e$ by McGehee \cite{McGehee1969}, exploiting the integrability of the Rotating Kepler Problem. In fact, it is simple to check the linking and index conditions that imply $\mathcal{P}'=\emptyset$ in Theorem \ref{main1} and thus $\mathcal{M}_{\mu,E}^e$ admits a $2-3$ foliation for $\mu>0$ sufficiently small and $E$ slightly above $L_1(\mu)$.  McGehee's construction is relatively simpler than finding a $2-3$ foliation but somehow deals with the existence of a family of disks bounded by $P_3^e$ that are transverse to the flow.
\end{rem}
\begin{rem}
See \cite{JvK} for a computer-assisted proof of convexity in a certain range of mass ratio and energies below $L_1(\mu)$. Finally, the critical subsets $\mathcal{M}_{\mu, L_1(\mu)}^e$ and $\mathcal{M}_{\mu, L_1(\mu)}^m$ are expected to be dynamically convex for every mass ratio.
\end{rem}

\subsection{Birkhoff's retrograde orbit conjecture}

In \cite{Birkhoff1915}, Birkhoff asked whether the double cover of a retrograde orbit $P_3^e$ bounds a disk-like global surface of section for the flow on the regularized components $\mathcal{M}_{\mu,E}^e, \mathcal{M}_{\mu,E}^m \equiv \R P^3$ for energies $E<L_1(\mu)$. This question is closely related to the existence of direct orbits around the primaries.

The convexity estimates established in Theorem \ref{thm_3bp2} imply that, for mass ratios sufficiently close to $1/2$ and all energies below the first Lagrange value, the components $\mathcal{M}_{\mu,E}^e$ and $\mathcal{M}_{\mu,E}^m$ are dynamically convex, i.e., all contractible periodic orbits have Conley-Zehnder index at least $3$. As a consequence, Birkhoff's conjecture holds in this regime.

\begin{thm}\label{thm_Birkhoff_conj}
There exists $\epsilon_0>0$ such that for every $|\mu-1/2|<\epsilon_0$ and $E<L_1(\mu)$, the following hold:
\begin{itemize}
\item[(i)] The components $\mathcal{M}_{\mu,E}^e$ and $\mathcal{M}_{\mu,E}^m$ are dynamically convex.

\item[(ii)] Every retrograde orbit $P_3^e\subset \mathcal{M}_{\mu,E}^e$ binds a rational open book decomposition whose pages are disk-like global surfaces of section. More generally, the same holds for any periodic orbit transversely isotopic to a Hopf fiber. An analogous statement holds for $\mathcal{M}_{\mu,E}^m$.

\item[(iii)] If $P'$ is a simple periodic orbit corresponding to a fixed point of the return map associated to the global surface of section bounded by $P$ in (ii), then $P\cup P'$ bounds an annulus-like global surface of section. The same holds for $\mathcal{M}_{\mu,E}^m$.
\end{itemize}
\end{thm}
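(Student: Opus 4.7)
The strategy is to bootstrap the convexity estimates of Theorem \ref{thm_3bp2} from the critical energy surface down to all subcritical energies, thereby obtaining dynamical convexity of the $\R P^3$-components, and then to invoke established existence results for disk-like and annulus-like global surfaces of section on dynamically convex $\R P^3$.

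For (i), the monotonicity argument behind Theorem \ref{thm_3bp2} already transports the curvature estimates from the critical value down to all $E < L_1(\mu)$, and the same argument is robust under small perturbations of $\mu$ about $1/2$. Thus $\widetilde{\mathcal{M}}_{\mu,E}^e \subset \R^4$ is strictly convex for $|\mu - 1/2| < \epsilon_0$ and $E \leq L_1(\mu)$, and similarly for $\mathcal{M}_{\mu,E}^m$. For $E<L_1(\mu)$ the lift is smooth and free of saddle--center singularities, so the classical Hofer--Wysocki--Zehnder result from \cite{convex} applies verbatim on the $S^3$-double cover; the Conley--Zehnder index of a contractible orbit in $\R P^3$ agrees with that of its lift under compatible symplectic trivializations, so dynamical convexity descends to $\R P^3$.

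For (ii), I would invoke the existence theorem for rational open books with disk-like pages on dynamically convex $(\R P^3, \xi_0)$ from Hryniewicz--Licata--Salom\~ao \cite{HLS}: any $p$-unknotted periodic orbit with self-linking number $-1/p$ whose $p$-th iterate has Conley--Zehnder index at least $3$ binds a rational open book whose pages are disk-like global surfaces of section, provided the ambient flow is dynamically convex. By Theorem \ref{thm_retrograde} the retrograde orbit $P_3^e$ carries exactly these topological invariants with $p=2$, and (i) supplies the index hypothesis. The same argument applies to any periodic orbit transversely isotopic to a Hopf fiber, since transverse isotopy preserves the knot type, the self-linking number, and the monodromy of a suitable $2$-disk.

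For (iii), once the disk-like page is at hand, its first return map is an area-preserving homeomorphism of an open disk extending continuously to the boundary. A Brouwer--Franks argument for area-preserving disk maps produces an interior fixed point, giving a simple periodic orbit $P'$ such that $P \cup P'$ forms a transverse Hopf link in $\R P^3$. To promote this link to the binding of an annulus-like global surface of section I would apply the annular criterion of Hryniewicz--Salom\~ao for transverse Hopf links in dynamically convex $\R P^3$, whose hypotheses follow from (i) together with the dynamical information for $P'$ extracted from the return map. The main obstacle lies in step (ii): one must carefully track indices, self-linking numbers, and monodromies through the $\Z_2$-cover $S^3 \to \R P^3$ and verify that the retrograde orbit --- produced by a purely topological shooting argument --- fits the precise hypotheses of the rational open book theorem, in particular that no contractible periodic orbit of low action obstructs the closing-up of the pseudo-holomorphic family.
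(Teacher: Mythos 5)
Your treatment of (ii) and (iii) matches the paper, which simply cites \cite{hryn2, HLS, HS2, HSW} for these items, and your reduction of everything to dynamical convexity is the right strategy. The gap is in (i), specifically in the sentence asserting that the monotonicity argument behind Theorem \ref{thm_3bp2} ``is robust under small perturbations of $\mu$'' and hence that $\widetilde{\mathcal{M}}_{\mu,E}^e$ is strictly convex for all $|\mu-1/2|<\epsilon_0$ and all $E\le L_1(\mu)$ with a \emph{uniform} $\epsilon_0$. Two problems. First, the convexity computation of Theorem \ref{thm_3bp2} uses in an essential way that for $\mu=1/2$ the regularized potential decouples in elliptic coordinates ($\hat V$ enters with coefficient $1-2\mu$); for $\mu\neq 1/2$ you cannot rerun that argument, only appeal to openness of strict convexity. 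Second, and more seriously, openness does not give uniformity near the corner $(\mu,E)=(1/2,L_1(1/2))$: the quantity controlling positivity of the sectional curvature degenerates at the saddle-center singularities $S_\pm$ of the critical surface (the paper shows $I_0=O(\cos^4 x_2)$ there), so an $O(|\mu-1/2|)$ perturbation can destroy positivity in a shrinking neighborhood of $S_\pm$ as $E\to L_1(\mu)^-$. The paper therefore does \emph{not} claim strict convexity on a full product neighborhood; it only gets strict convexity on an open neighborhood of the open segment $\{1/2\}\times(-\infty,-2)$, which may pinch at the corner.

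The missing ingredient is Theorem \ref{thm_muRSl1}: for $(\mu,E)$ close to $(1/2,-2)$, any periodic orbit entering a fixed neighborhood $\mathcal{U}_3$ of the singularities corresponding to $l_1$ has Conley--Zehnder index $>3$, by a Robbin--Salamon index estimate near the saddle-center that is independent of convexity. Combining this with the fact that orbits avoiding $\mathcal{U}_3$ have index $\ge 3$ by the (uniform away from $S_\pm$) convexity estimate yields dynamical convexity near the corner; together with the openness argument on compact parameter regions away from the corner and the uniform convexity for $E\ll 0$, this gives the uniform $\epsilon_0$. Without this case split your proof of (i) does not close, and since (ii) and (iii) rest on (i), the whole argument inherits the gap.
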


\subsection{Related works}

The theory of pseudo-holomorphic curves in symplectizations was initiated by Hofer \cite{Hofer1993} and developed by Hofer, Wysocki, and Zehnder \cite{HWZ1996, HWZII, HWZIII}. A first major application to dynamics concerns strictly convex hypersurfaces in $\R^4$.

\begin{thm}[Hofer--Wysocki--Zehnder \cite{convex}]
Let $\alpha=f\alpha_0$ be a dynamically convex contact form on $(S^3,\xi_0)$. Then $\alpha$ admits an index-$3$ periodic orbit bounding an open book decomposition whose pages are disk-like global surfaces of section. This open book arises as the projection of a finite energy foliation.
\end{thm}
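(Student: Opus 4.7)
The plan is to mirror the Bishop family and SFT compactness strategy outlined for Theorem~\ref{main1}, now in the closed setting $\mathcal{M}=S^3$ without boundary. The three main tasks are: produce an unknotted periodic orbit $P$ with self-linking $-1$ and Conley-Zehnder index $3$; construct an initial embedded finite energy $J$-holomorphic plane asymptotic to $P$; and propagate this plane to an $S^1$-family whose projections foliate $S^3\setminus P$.

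For the first task, after a small nondegenerate perturbation of $\alpha$ preserving dynamical convexity, Hofer's solution of the Weinstein conjecture on $(S^3,\xi_0)$ furnishes at least one periodic orbit. Taking a simple contractible orbit $P$ of minimal action and combining dynamical convexity with intersection positivity of punctured pseudo-holomorphic curves and the standard relationship between the Conley-Zehnder index and the self-linking number, one shows that $P$ is unknotted with $\mu_{\text{CZ}}(P)=3$ and $\text{sl}(P)=-1$. For the second task, fix a spanning disk $\mathcal{D}$ for $P$ whose characteristic foliation has a unique nicely elliptic singularity, and start a Bishop family of $J$-holomorphic disks with boundary on $\{0\}\times \mathcal{D}$. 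Dynamical convexity forces every asymptotic orbit appearing in any SFT limit to satisfy $\mu_{\text{CZ}}\geq 3$, and the compactness analysis of Hofer, Wysocki, and Zehnder then rules out all bad bubbling, so the Bishop family converges to an embedded finite energy plane $\tilde u_0$ asymptotic to $P$.

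For the third task, $\tilde u_0$ has Fredholm index $\mu_{\text{CZ}}(P)-1=2$; automatic transversality for embedded planes in dimension four implies that the moduli space of finite energy planes asymptotic to $P$, modulo $\R$-translation, is a smooth $1$-manifold near $\tilde u_0$. SFT compactness then propagates this family: the only possible obstruction to extending it is breaking into a building, which would require an asymptotic orbit of Conley-Zehnder index at most $2$; this is excluded by dynamical convexity. The family therefore closes into a circle, and its projection fills $S^3\setminus P$ by pairwise disjoint embedded open disks transverse to the Reeb flow. Together with $P$ they form a disk-like open book decomposition whose pages are global surfaces of section.

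The main obstacle is precisely this compactness and transversality analysis: without dynamical convexity one would have to contend with bubbling of low-index punctured spheres and breaking of the $1$-parameter family along broken buildings, and the open book could fail to close up. The assumption $\mu_{\text{CZ}}\geq 3$ for every contractible orbit is exactly the condition that excludes every such degeneration, both in producing the initial plane $\tilde u_0$ and in propagating it to the full $S^1$-family.
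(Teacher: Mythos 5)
Your overall strategy (Bishop family, automatic transversality, SFT compactness, with dynamical convexity excluding all breaking) is the right one and is essentially the Hofer--Wysocki--Zehnder argument, which this paper only cites and then adapts to the bounded setting in the proof of Theorem \ref{main1}. However, your first step contains a genuine gap that inverts the logical order of the actual proof. You propose to first identify the binding orbit $P$ as a simple contractible orbit of minimal action and to deduce from ``dynamical convexity, intersection positivity, and the standard relationship between the Conley--Zehnder index and the self-linking number'' that $P$ is unknotted with $\mu_{\text{CZ}}(P)=3$ and $\text{sl}(P)=-1$. No such deduction is available at that stage: minimality of the action gives no control on the knot type, the self-linking number, or the exact value of the index, and intersection positivity can only be brought to bear once one already has a punctured holomorphic curve asymptotic to $P$. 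Worse, your second step presupposes the conclusion of the first: a spanning disk $\mathcal{D}$ for $P$ whose characteristic foliation has a single nicely elliptic singularity exists (after Giroux elimination) precisely because $\text{sl}(P)=-1$, so you cannot set up the Bishop family with boundary on $\mathcal{D}$ before knowing that. In the actual proof the order is reversed: one starts the Bishop family with boundary on a small disk transverse to the Reeb flow inside a Darboux ball (whose boundary is \emph{not} a periodic orbit), so the family must bubble off a finite energy plane; the asymptotic limit $P$ of that plane is then shown to be unknotted with $\text{sl}(P)=-1$ from the embedding and winding properties of the plane ($\wind_\pi\geq 0$), and $\mu_{\text{CZ}}(P)=3$ follows by combining the upper bound coming from the asymptotic winding with the lower bound from dynamical convexity. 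The orbit and its properties are outputs of the holomorphic-curve analysis, not inputs.

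Two smaller points. First, you perturb $\alpha$ to a nondegenerate form at the outset but never return to the original, possibly degenerate, dynamically convex $\alpha$; the theorem requires a limiting argument in the degenerate case, analogous to the paper's Section on ``passing to the degenerate case.'' Second, after the $S^1$-family of planes closes up, you still need to verify that each page is a \emph{global} surface of section, i.e.\ that every trajectory in $S^3\setminus P$ hits every page in forward and backward time; this uses the transversality of the leaves to the flow together with the asymptotic behavior near $P$ and the fact that every other orbit is linked with $P$, and should be stated rather than absorbed into ``the pages form an open book.''
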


The existence of finite energy foliations projecting to open book decompositions, all of whose pages are global surfaces of section, was further studied in \cite{char1, char2, hryn, hryn2, HLS, HS1, HS2}, see the surveys \cite{BH, Hofer1998, SH18}.

A fundamental result by Hofer, Wysocki, and Zehnder establishes more general finite energy foliations for generic star-shaped hypersurfaces in $\R^4$.

\begin{thm}[Hofer-Wysocki-Zehnder \cite{fols}] Let $\alpha=f\alpha_0$ be a nondegenerate contact form on the tight three-sphere $(S^3,\xi_0)$. Then, for $J$ in a generic subset $\J_{\text{reg}}(\alpha)\subset \J(\alpha)$ of $d\alpha$-compatible almost complex structures, there exists a finite energy foliation by $J$-holomorphic curves whose projection is a transverse foliation. The binding is formed by simple periodic orbits with self-linking number $-1$ and index $1$, $2$, or $3$. Each leaf is the projection of a $J$-holomorphic curve, has genus zero and satisfies one of the following conditions:
\begin{itemize}
\item[(i)] It is a trivial cylinder over a binding orbit.
\item[(ii)] It has one positive puncture asymptotic to an index-$3$ orbit and an arbitrary number of negative punctures asymptotic to index-$1$ orbits.
\item[(iii)] It has one positive puncture asymptotic to an index-$3$ orbit, one negative puncture asymptotic to an index-$2$ orbit, and an arbitrary number of negative punctures asymptotic to index-$1$ orbits.
\item[(iv)] It has one positive puncture asymptotic to an index-$2$ orbit and an arbitrary number of negative punctures asymptotic to index-$1$ orbits.
\end{itemize}
\end{thm}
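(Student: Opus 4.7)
\medskip

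\noindent\textbf{Proposal.} The plan is to carry out the Hofer--Wysocki--Zehnder program in its original form, following closely the Bishop--filling strategy used in \cite{Hofer1993, char1, char2, convex} but now without any convexity hypothesis. I would first fix a nondegenerate contact form $\alpha = f\alpha_0$ on $(S^3,\xi_0)$ and work in the space $\J(\alpha)$ of $d\alpha$-compatible $\R$-invariant almost complex structures on $\R\times S^3$. The first step is to set up the relevant moduli spaces $\M_{g,k}(J;\gamma^+;\gamma^-_1,\dots,\gamma^-_k)$ of finite energy $J$-holomorphic spheres with one positive puncture asymptotic to $\gamma^+$ and finitely many negative punctures, together with the Fredholm index formula in terms of Conley--Zehnder indices, and to prove that for $J$ in a $C^\infty$-residual subset $\J_{\text{reg}}(\alpha)\subset \J(\alpha)$, every somewhere-injective curve is Fredholm regular. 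This is the standard Dragnev/HWZ transversality argument via universal moduli spaces. Combined with the automatic transversality for genus-zero curves in symplectizations, this pins down the dimensions of the relevant moduli spaces.

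\medskip

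\noindent Next I would germinate the foliation from a Bishop family. Pick an elliptic simple orbit $P_e$ of low action or, following \cite{Hofer1993}, use the fact that any nondegenerate $(S^3,\xi_0)$ Reeb flow admits a disk filling: by Hofer's original argument, bubbling-off of a Bishop family of $J$-disks with totally real boundary on a suitable local model yields either a $J$-holomorphic plane asymptotic to a simple orbit with Conley--Zehnder index in $\{1,2,3\}$ and self-linking $-1$, or a more complicated curve whose bubbles one analyzes by iteration. The action bound along the family is preserved by SFT compactness, and the self-linking computation comes from the asymptotic formula of \cite{HWZ1996}. These planes form the first nontrivial leaves; their binding orbits are then designated as prospective binding orbits of $\F$.

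\medskip

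\noindent The core inductive step is then: given a rigid (Fredholm index $1$) curve in the symplectization, one deforms it in a one-parameter family of index-$2$ curves obtained by the gluing theorem, and tracks the family using SFT compactness \cite{BEH} until it either returns to a rigid curve or breaks along a binding orbit. Positivity of intersections in dimension four (Micallef--White) together with Siefring's asymptotic intersection numbers implies that distinct members of a generic one-parameter family never intersect, hence project to disjoint embedded surfaces in $S^3$, and that each point of $S^3\setminus \bigcup \mathcal{P}$ is reached by exactly one leaf. Simultaneously, the index inequality $\mu_{\cz} = 2g + \sum (\mu_{\cz}(\gamma^-_i)-1) + \ldots$ for nicely embedded curves, applied together with $\mu_{\cz}(\gamma^+)\leq 3$ and $\mu_{\cz}(\gamma^-_i)\geq 1$ (using that indices $\leq 0$ would create overtwistedness or contradict self-linking $-1$), forces the four configurations (i)--(iv) in the statement. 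The exclusion of genus $\geq 1$, and of multiple positive punctures, follows from the same index/genus formula.

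\medskip

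\noindent I expect the hard part to be the global coherence of the construction: proving that the one-parameter families produced by gluing, breaking, and deformation truly tile $\R\times S^3$ with no overlaps and no gaps. This requires, on the one hand, a careful combinatorial analysis of all possible SFT limits of one-parameter families, which under the index constraints can only be the four listed building types or finite cascades thereof, and on the other hand, the uniqueness of the germ of the foliation near each binding orbit (so that broken ends on opposite sides glue to a unique continuation). The technical backbone here is Wendl's automatic transversality theorem together with Siefring's intersection theory, which together reduce the global foliation property to local questions along binding orbits and at nodes. Once these local-to-global steps are established, the classification of leaves into types (i)--(iv) and the self-linking $-1$, index $\leq 3$ restriction on binding orbits follow as direct corollaries of the index and intersection computations, completing the proof of the theorem.
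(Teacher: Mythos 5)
You should first note that the paper does not prove this statement: it is quoted verbatim from Hofer--Wysocki--Zehnder \cite{fols} in the ``Related works'' section, so there is no in-paper proof to compare against. The closest analogue in the paper is the proof of Theorem \ref{main1}, and your outline does match its architecture -- Bishop family with totally real boundary on a spanning disk, generic regularity of somewhere-injective curves (the paper's conditions H1--H3), SFT compactness, gluing of rigid curves, and Siefring-type intersection theory to get disjointness of leaves. As a description of the known strategy your proposal is broadly faithful, modulo the anachronism that \cite{fols} predates Wendl and Siefring and instead uses the $\wind_\pi$ and self-intersection machinery of \cite{HWZII, HWZIII}.

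As a proof, however, two steps are asserted rather than established. First, the restriction of binding orbits to index $1$, $2$, or $3$ and self-linking $-1$ does not follow from ``indices $\leq 0$ would create overtwistedness''; the actual mechanism is the inequality $0\leq \wind_\pi(\tilde u)\leq c_N(\tilde u)$ for embedded leaves combined with the Fredholm index bound $\ind(\tilde u)\geq 1$ for somewhere-injective curves with generic $J$: the positive asymptotic winding is $\leq 1$ and the negative ones are $\geq 1$, which pinches $\mu(\gamma^+)\leq 3$ and $\mu(\gamma_i^-)\geq 1$, and self-linking $-1$ is then read off from the embeddedness of the projected leaves. Your garbled index formula would need to be made precise before the four leaf types (i)--(iv) can be extracted. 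Second, and more seriously, the ``global coherence'' step you defer is not a technicality but the theorem itself: one must classify \emph{all} SFT degenerations of the one-parameter families, including limits containing multiply covered components and several levels, show that each degeneration glues to a unique continuation on the other side, and prove that the resulting union of leaves is all of $\R\times S^3$ with no overlaps. In \cite{fols} this occupies the bulk of the paper (and in the present paper the analogous bookkeeping is exactly Propositions \ref{prop_buildings_boundary}--\ref{prop_planes_degenerate}); a proposal that reduces it to ``a careful combinatorial analysis'' has not yet engaged with the hard part.
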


Further developments include the construction of finite energy foliations in overtwisted manifolds and criteria for Fredholm regularity \cite{Wendl08, Wendl08b, Wendl10a, Wendl10}. The results in \cite{Wendl10} will be used in the proof of Theorem \ref{main1}. Connected sums of finite energy foliations were studied in \cite{FS}. Finite energy foliations near critical energy surfaces were considered in \cite{dPS1}. The combination of holomorphic curves and asymptotic cycle techniques yields the so-called broken book structures, which yield results on periodic orbits and global surfaces of section \cite{CDR, CDHR, CM, CM2}. 

In celestial mechanics, classical results of Poincar\'e \cite{Po} and Birkhoff \cite{Birkhoff1915} establish the existence of global surfaces of section and periodic orbits in the circular planar restricted three-body problem for energies below the first Lagrange value. Perturbative methods yield annulus-like global surfaces of section near integrable regimes, while shooting arguments produce retrograde and direct orbits. The dynamics at higher energies, including homoclinic connections to Lyapunov orbits, was further analyzed in \cite{BGG, BGG2, conley, con2, con3, McGehee1969}.

Holomorphic curve methods were introduced in this context in \cite{AFKP2012}, where the regularized energy hypersurfaces were shown to be of contact type. Global surfaces of section for small mass ratios were obtained in \cite{AFFHvK}. Related constructions in spatial or symmetric settings appear in \cite{MvK, HLOSY}. Finite energy foliations in other Hamiltonian systems were constructed in \cite{dPS1, dPS2, dPKSS, Kim1, Kim2, Sa1, Sch}. Methods for prescribing binding orbits were developed in \cite{dPHKS} and applied, for instance, to the H\'enon--Heiles system \cite{dPKSS}.

We refer to \cite{FvK} for background on symplectic methods in the restricted three-body problem.

\hfill\newline
\noindent{\bf Acknowledgment.}
LL is partially supported by the National Natural Science Foundation of China (Grant number: 12401238), the Natural Science Foundation of Shandong Province, China (Grant number: ZR2024QA188). LL thanks the School of Mathematics at Shandong University for its support. PS thanks Umberto Hryniewicz for several conversations on the analysis of $J$-holomorphic curves in symplectizations. PS was partially supported by the National Natural Science Foundation of China (grant number W2431007). LL and PS thank the support of the Shenzhen International Center for Mathematics - SUSTech.

\section{Preliminaries}

\subsection{Reeb flows and pseudo-holomorphic curves}\label{subsec: pseudo-holo curves}

The pair $(S^3,\xi_0=\ker \alpha_0)$ is a contact manifold called the tight three-sphere, where $\alpha_0:=\frac{1}{4i}\sum_{j=1}^2(\bar z_j dz_j - z_jd\bar z_j)$ is the Liouville form restricted to $S^3$. For every smooth function $f: S^3 \to \R^+$,  $\alpha:= f \alpha_0$ is a contact form on $(S^3,\xi_0)$, and its Reeb vector field $R$  is determined by $d\alpha(R,\cdot) =0$ and $\alpha(R)=1$. The flow of $R$, denoted $\psi_t,t\in \R,$ preserves $\alpha$ and thus preserves $\xi_0$.  We say that a periodic orbit $P=(x,T)$ is nondegenerate if the linear map $D\psi_T(x(0)):\xi_0|_{x(0)}\to \xi_0|_{x(0)}$ does not have $1$ as an eigenvalue. We say that $\alpha$ is nondegenerate if all of its periodic orbits are nondegenerate. The iterates of $P$ are denoted $P^k=(x,kT),$ for every $k\in \N.$

Given relatively prime integers $p\geq q\geq 1,$ we consider the lens space $L(p,q)=S^3 / \Z_p$ as before. Since the contact form $\alpha_0$ on $S^3$ is $\Z_p$-invariant, it descends to a contact form on $L(p,q)$, also denoted $\alpha_0$. The contact structure $\xi_0:=\ker \alpha_0$ is called the universally tight contact structure on $L(p,q)$.

A knot $K\subset L(p,q)$ is called $p$-unknotted if there exists an immersion $u:\D \to L(p,q)$ so that $u|_{\D \setminus \partial \D}:\D \setminus \partial \D \to L(p,q) \setminus K$ is an embedding and $u|_{\partial \D}:\partial \D \to K$ is a $p$-covering map. The disk $u$ is called a $p$-disk for $K$. Let $K\subset L(p,q)$ be  $p$-unknotted and transverse to $\xi_0$. Take a $p$-disk $u$ for $K$ and a small non-vanishing section $Y$ of $u^*\xi_0$. Use $Y$ and an exponential map to push $K$ to a knot $K'$ that is disjoint from $K$, close to a $p$-cover of $K$, and transverse to $u$. The (rational) self-linking number of $K$ is defined as the normalized algebraic intersection number between $K'$ and $u$, i.e.
$\sl(K):=\frac{1}{p^2} K' \cdot u.
$ Here, $K$ is oriented by $\alpha_0$, $K'$ inherits the orientation of $K$, $u$ is oriented by $K$ and $L(p,q)$ is oriented by $\alpha_0 \wedge d\alpha_0>0$. As an example, the knot $K:=\pi_{p,q}(S^1 \times 0)$ is $p$-unknotted and transverse to $\xi_0$. In this case, a $p$-disk for $K$ is given by $\pi_{p,q} \circ u$, where $u(z)=(z,\sqrt{1-|z|^2})\in S^3, \forall z\in \D$. One readily checks that a knot $K'$ as above satisfies $K' \cdot u=-p,$  and thus  $\sl(K)= -\frac{1}{p}$.

Let $B_i \subset L(p,q),i=1,\ldots,l,$ be mutually disjoint regular open three-balls, and let $\M:= L(p,q) \setminus \cup_{i=1}^l B_i.$ Then $\partial \M$ is the union of $l$ regular two-spheres  $\S_i := \partial B_i$. The restriction of $\alpha_0$ and $\xi_0$ to $\M$ is still denoted $\alpha_0$ and $\xi_0$, respectively. We only require $\partial \M$ to be  $C^1$.

Let $J$ be an almost complex structure on $\R \times \M$ so that $J \cdot \partial_a = R$ and $J(\xi_0)=\xi_0,$ where $d\alpha(\cdot, J, \cdot)$ is an inner product on $\xi_0$. Here, $a$ is the $\R$-coordinate, and $R$ and $\xi$ are regarded as $\R$-invariant objects on $\R \times \M$. The space of such $J$'s is denoted by $\J(\alpha)$. Let $(\Sigma,j)$ be a connected Riemann surface (possibly with boundary), and let $\Gamma \subset \Sigma \setminus \partial \Sigma$ be a finite set. Let $\dot \Sigma := \Sigma \setminus \Gamma,$ and let $J\in \J(\alpha)$. A map $\tilde u=(a,u):\dot \Sigma \to \R \times \M$ is called a finite energy $J$-holomorphic curve if it satisfies the non-linear Cauchy-Riemann equation
$
\bar \partial_J \tilde u = d\tilde u + J(\tilde u) \circ d\tilde u \circ j =0,
$ and has  finite Hofer energy
$
0< E(\tilde u):=\sup_{\phi\in \mathcal{T}}\int_{\dot \Sigma} \tilde u^*d(\phi(a)\alpha)<+\infty,
$
where $\mathcal{T}:=\{\phi:\R \to [0,1],\ \phi'\geq 0\}$.

We consider totally real boundary conditions. Let $\gamma_1,\ldots,\gamma_{m_0}$ be the components of $\partial \Sigma$. We assume that for each $j=1,\ldots,m_0$, there exists a totally real surface $L_j \subset \{0\} \times \M$, that is $TL_j\oplus JTL_j=T(\R\times \M)$ along $L_j$, so that $\tilde u(\gamma_j) \subset L_j$. We denote by $l_j \to \gamma_j$ the line bundle of $u^*\xi_0|_{\gamma_j}$ given by  $l_j(z) =\xi_0|_{u(z)} \cap T_{u(z)}L_j$ for every $z\in \gamma_j$. Later we shall consider the particular case where $L \subset \{0\} \times (\M\setminus \partial \M)$ is a totally real surface that transversely intersects the contact structure and   $\tilde u=(a,u):\D \to \R \times \M$ is a $J$-holomorphic disk satisfying $\tilde u(\partial \D) \subset L$ and $\partial_{\vec n} a>0$, where $\vec n$ is the outer normal vector along $\partial \D$.

Each non-removable puncture $z_0\in\Gamma$ of $\tilde u$ has a sign $\epsilon(z) \in \{-1,+1\}$ so that $a(z) \to \epsilon(z_0) \infty$ as $z\to z_0$. Furthermore, for suitable polar coordinates $s+it\in [0,+\infty) \times \R / \Z$ on a punctured neighborhood of $z_0$ the following holds: given a sequence $s_n \to +\infty$, there exists a subsequence also denoted $s_n$ and a periodic orbit $P=(x,T)$ so that $u(s_n,\cdot) \to x(\epsilon(z_0)T\cdot)$ in $C^\infty$ uniformly in $t$ as $s \to +\infty$. The periodic orbit $P$ is called an asymptotic limit of $\tilde u$ at $z_0$, and if $P$ is nondegenerate, then $P$ is the unique asymptotic limit of $\tilde u$ at $z_0$, and much can be said about the asymptotic behavior of $\tilde u$ as it approaches $P$. If $\epsilon(z_0)=+1$, we say that $z_0$ is a positive puncture. Otherwise, we say $z_0$ is a negative puncture. The signs of the punctures induce the splitting $\Gamma = \Gamma^+ \cup \Gamma^-.$

\subsection{The asymptotic operator and the Conley-Zehnder index}\label{subsec: asym operator}
Let $P=(x,T)$ be a periodic orbit of $\alpha$ and let $x_T:=x(T\cdot):\R / \Z \to \M$. Let $J\in \J(\alpha)$. The unbounded self-adjoint operator
$A_P:W^{1,2}(\R / \Z, x_T^*\xi_0) \to L^2(\R / \Z, x_T^*\xi_0)$, defined by
$
A_P \cdot \eta : = -J \cdot \mathcal{L}_{\dot x_T} \eta,
$
is called the asymptotic operator of $P$. Here, $\mathcal{L}_{\dot x_T} \eta$ is the Lie derivative of $\eta$ along $x_T$. The spectrum $\sigma(A_P)$ of $A_P$ consists of countably many real eigenvalues accumulating precisely at $\pm \infty$. An eigenvector $e: \R / \Z \to \R^2$ of $\lambda \in \sigma(A_P)$ is smooth and never vanishes. Hence, for a fixed trivialization $\Psi:x_T^*\xi_0 \to \R / \Z \times \R^2$,  $e$ has a well-defined winding number $\wind(\lambda)$, depending only on $\lambda$ and the homotopy class of $\Psi$. We omit the dependence on $\Psi$ in the notation. The function $\sigma(A_P) \ni \lambda \mapsto \wind(\lambda)\in \Z$ is monotone increasing, and given $k\in \Z$, there exist precisely $2$ eigenvalues (counting multiplicities) of $A_P$  with winding number $k$.  It can be directly checked that $P$ is nondegenerate if and only if $0\notin  \sigma(A_P).$  Fix $\delta\in \R$, and let
$$
\begin{aligned}
    \wind^{<\delta}(A_P)& := \max \{\wind(\lambda): \sigma(A_P)\ni\lambda<\delta \},\\ \wind^{\geq \delta}(A_P)& := \min \{\wind(\lambda): \sigma(A_P) \ni  \lambda \geq \delta\}.
\end{aligned}
$$
The weighted Conley-Zehnder index of $P$ is defined as
$$
\mu^\delta(P):=\wind^{<\delta}(A_P) + \wind^{\geq \delta}(A_P).
$$
The weighted index $\mu^\delta(P)$ depends  on $J$ and the homotopy class of $\Psi$. If $\delta=0$, then  $\mu(P):=\mu^0(P)$ depends only on the homotopy class of $\Psi.$ Since the parity of $\mu(P)$ does not depend on $\Psi$, there exists a natural splitting $\Gamma = \Gamma_{\text{even}} \cup \Gamma_{\text{odd}}.$

Having a geometric definition of $\mu(P)$ is convenient. In the frame induced by $\Psi$, the linearized flow along $x(t)$ determines a path of $2\times 2$ symplectic matrices $t\mapsto \Phi(t),t\in [0,T]$. Given $v_0\in \R^2\setminus \{0\}$, let $\theta(t),t\in [0,T],$ be a continuous argument of $\Phi(t) \cdot v_0$, and let $\Delta(v_0):=(\theta(T) - \theta(0))/2\pi$. Then $I_P:=\{\Delta(v_0),0\neq v_0\in \R^2\}$ is an interval of length $<1/2,$ and there exists $k\in \Z$ such that for every $\epsilon>0$ sufficiently small, either $k\in I_P - \epsilon$ or $I_P-\epsilon \subset (k,k+1)$. We then have, respectively, $\mu(P) = 2k$ or $\mu(P) = 2k +1$.  Finally, the rotation number of $P$ in the frame induced by $\Psi$ is defined as
$
\rho(P) := \lim_{k \to \infty} \frac{1}{2k}\mu(P^k),
$
where $P^k$ is the $k$-th iterate of $P$. It is immediate to check that if $\mu(P) =2$, then $\rho(P)=1$. Moreover,  $\mu(P) \geq 3$ if and only if $\rho(P)>1$.

\subsection{\texorpdfstring{Asymptotics of $J$-holomorphic curves}{Asymptotics of J-holomorphic curves}}
Let $z_0\in \Gamma$ be a puncture of a $J$-holomorphic curve $\tilde u=(a,u):  \Sigma\setminus \Gamma \to \R \times M$ and let $\epsilon(z_0)\in \{-1,+1\}$ be the sign of $z_0$. Assume that $P=(x, T)$ is an asymptotic limit of $\tilde u$ at $z_0$.  Denote by $P_0=(x_0,T_0)$ the simple periodic orbit so that $P= P_0^k$ for some integer $k\geq 1$. Let
$(\vartheta,x,y)\in \R / \Z \times B_\delta(0)$ be coordinates on a small tubular neighborhood $\U\subset M$ of $P_0$, so that $\alpha = f (d\vartheta + x dy),$ for a function $f=f(\vartheta,x,y)$ satisfying $f(\vartheta,0,0)=T_0$ and $df(\vartheta,0,0)=0$ for every $\vartheta$. Here, $B_\delta(0)\subset \R^2$ is an open disk of radius $\delta>0$ centered at $0$. Let $z:=(x,y)$. Consider polar coordinates $s+it \in [0,\infty) \times \R / \Z\to e^{-2\pi(s+i t)}\in \D \setminus \{0\}$ on a punctured neighborhood of $z_0\equiv 0$ and write $\tilde u(s,t)=(a(s,t),\vartheta(s,t),z(s,t)),$ whenever defined. Let $A_P$ be the asymptotic operator at $P$.

\begin{thm}[Hofer-Wysocki-Zehnder \cite{HWZ1996}, Siefring \cite{Si1}] \label{thm: half cylinder 1} Assume that $z_0\in \Gamma$ is a positive puncture and that $P=P_0^k$ is nondegenerate. Then $(\vartheta(s,t),z(s,t))\in \R / \Z \times B_\delta(0)$ for every $s$ sufficiently large, and there exist $a_0,\vartheta_0\in \R,$ a $\lambda$-eigenvector $e(t)$ of $A_P$, with $\lambda<0$, so that
\begin{equation}\label{eq:asymptotic_formula}
z(s,t)=e^{\lambda s}(e(t)+r(s,t)),\\
\end{equation}
where
$|r(s,t)| \to 0$ as $s\to +\infty$ uniformly in $t$.
Moreover,
    $|a(s,t) - (a_0+Ts)|\to 0$ and $|\vartheta(s,t) - (kt+\vartheta_0)| \to 0$ as $s\to +\infty$ uniformly in $t$, where $\vartheta$ is lifted to a real-valued function.
\end{thm}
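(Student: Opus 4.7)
The plan is to reduce the asymptotic problem to the analysis of a solution to a perturbed Cauchy--Riemann equation of the form $\partial_s w + A(s) w = f(s,t)$, where $A(s)$ is a loop of self-adjoint operators on $L^2(\R/\Z, \R^2)$ converging as $s\to+\infty$ to the asymptotic operator $A_P$, and $f$ is a quadratic error term. The desired formula \eqref{eq:asymptotic_formula} then follows from an exponential decay estimate for $w$ together with an identification of the leading term as a negative eigenvector of $A_P$. Throughout, one uses that $P$ is nondegenerate, so $0\notin \sigma(A_P)$ and the eigenvalues are bounded away from $0$.

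First I would establish the preliminary convergence statements: using the finite Hofer energy, a standard bubbling-off argument (from \cite{HWZ1996}) shows that after passing to the polar coordinates $(s,t)\in[s_0,\infty)\times\R/\Z$, one has $z(s,t)\to 0$ in $C^\infty$ uniformly in $t$, and that the centered maps $a(s,t)-Ts$ and $\vartheta(s,t)-kt$ converge in $C^\infty_{\mathrm{loc}}$ to constants $a_0$ and $\vartheta_0$. This is the content of the convergence to the trivial cylinder over $P$ at a positive puncture; the nondegeneracy of $P$ ensures that $P$ is the unique asymptotic limit (no subsequences are needed). At this stage one obtains the statements about $a$ and $\vartheta$ in the theorem, modulo the refinement coming from the asymptotic formula for $z$.

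Next I would write the Cauchy--Riemann equation $\bar\partial_J \tilde u=0$ in the chosen coordinates $(\vartheta,x,y)$ where $\alpha=f(d\vartheta+xdy)$ with $f(\vartheta,0,0)=T_0$ and $df(\vartheta,0,0)=0$. Setting $w(s,t):=z(s,t)\in\R^2$ and projecting the equation to the contact plane along the linearization, one obtains a system of the form
\begin{equation*}
\partial_s w + J_0(t)\,\partial_t w + S(s,t)\,w = N(s,t,w,\partial w),
\end{equation*}
where $J_0(t)$ and $S(s,t)$ combine to produce a loop of operators $A(s):=-J_0\partial_t - S(s,\cdot)$ with $A(s)\to A_P$ in the operator norm as $s\to\infty$, and $N$ is quadratic in $(w,\partial w)$ with coefficients decaying because of the vanishing of the first derivatives of $f$ at the orbit. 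The core of the proof is then the analysis of this perturbed spectral flow equation.

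The hard part, and the main obstacle, is extracting the leading exponential rate and eigenvector. The idea, due to Hofer--Wysocki--Zehnder and refined by Siefring, is to study $\gamma(s):=\tfrac12\log\|w(s,\cdot)\|_{L^2}^2$ and show that $\gamma'(s)\to\lambda$ for some $\lambda\in\sigma(A_P)$. One proves this by combining (i) an energy/Lyapunov-type identity for $\gamma$ that exploits the self-adjointness of $A_P$, (ii) an a priori $C^0$-decay (e.g.\ sub-exponential) obtained from the fact that $w\to 0$ and from elliptic regularity, and (iii) a dichotomy: either $w\equiv 0$ eventually (excluded, otherwise the curve would be a cover of the trivial cylinder), or $\gamma'$ converges to an eigenvalue. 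Because the puncture is positive, the asymptotics $a(s,t)\sim Ts$ forces this eigenvalue to be negative (this is where the sign enters). Writing $w(s,\cdot)=e^{\lambda s}(e(t)+r(s,t))$ with $e$ a $\lambda$-eigenvector of $A_P$, the error $r(s,t)$ is shown to tend to zero uniformly in $t$ by projecting the equation onto the spectral subspaces of $A_P$ and applying Gr\"onwall to the off-$\lambda$ modes, which decay strictly faster than $e^{\lambda s}$, while the quadratic nonlinearity $N$ is absorbed by the gap between consecutive eigenvalues. This last step, controlling the nonlinear coupling between spectral modes near the resonant eigenvalue, is the delicate part and uses nondegeneracy crucially to secure a spectral gap around $\lambda$.
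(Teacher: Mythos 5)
The paper does not prove this theorem: it is quoted as a known result from \cite{HWZ1996} and \cite{Si1}, and your sketch follows exactly the standard argument of those references (reduction to a perturbed Cauchy--Riemann equation $\partial_s w + A(s)w = $ error with $A(s)\to A_P$, convergence of $\frac{d}{ds}\log\|w(s,\cdot)\|_{L^2}$ to an eigenvalue of $A_P$, then spectral projection and Gr\"onwall for the remainder), so there is nothing in the text to compare against and the outline is sound. One small correction: the negativity of $\lambda$ is not forced by $a(s,t)\sim Ts$; it follows from $z(s,\cdot)\to 0$ as $s\to+\infty$, which forces the exponential rate to be $\leq 0$, together with nondegeneracy ($0\notin\sigma(A_P)$) to exclude $\lambda=0$.
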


In Theorem \ref{thm: half cylinder 1}, $\lambda$ and $e$ are called the leading eigenvalue and the leading eigenvector of $\tilde u$ at $z_0\in \Gamma$, respectively. The leading eigenvector is determined up to a positive multiple. If $z_0$ is a negative puncture, then we consider polar coordinates $(s,t)\in (-\infty,0]\times \R / \Z\to e^{2\pi(s+it)}\in \D \setminus \{0\}$ on a punctured neighborhood of $z_0\equiv 0$ and the asymptotic formula \eqref{eq:asymptotic_formula} still holds for some positive leading eigenvalue $\lambda$ and a leading eigenvector $e$. The asymptotic properties of $a(s,t)\to -\infty$ and $\vartheta(s,t)$ as $s\to -\infty$ are similar to the case of a positive puncture.

We call a puncture $z_0\in \Gamma$  nondegenerate if $\tilde u$ has an asymptotic formula at $z_0$ as in Theorem \ref{thm: half cylinder 1} for a non-vanishing leading eigenvalue $\lambda$  and a leading eigenvector $e$. With this definition, the puncture $z_0$ may be nondegenerate even if the asymptotic limit $P$ is degenerate. 

\begin{defi}Let $\tilde u_i:\Sigma \setminus \Gamma_i \to \R \times M$, $i=1,2,$ be a pair of finite energy $J$-holomorphic curves asymptotic to the same nondegenerate periodic orbit $P=(x,T)$ at $z_i\in \Gamma_i$. Let $\lambda_i$ and $e_i$ be the leading eigenvalue and leading eigenvector of $\tilde u_i$ at $z_i$, respectively. We say that $\tilde u_1$ and $\tilde u_2$ approach $P$ through the same direction if $\lambda_1 = \lambda_2$ and $e_1 = c e_2$ for some constant $c>0$. If $e_1 = -ce_2$ for some $c>0$ then we say that $\tilde u_1$ and $\tilde u_2$ approach $P$ through opposite directions.
\end{defi}

\subsection{\texorpdfstring{Uniqueness of $J$-holomorphic planes and cylinders}{Uniqueness of J-holomorphic planes and cylinders}} The  following uniqueness results on $J$-holomorphic curves follows from Siefring's intersection theory \cite{Si1, Si2}.

\begin{thm}\label{thm_uniqueness} The following uniqueness statements hold:
\begin{itemize}
   \item[(i)] If $\tilde u=(a,u):\C \to \R \times \mathcal{M}$ is an embedded $J$-holomorphic plane asymptotic to $P_{2,i}$, then up to parametrization and $\R$-translation, $\tilde u$ coincides with one of the $J$-holomorphic planes projecting to the hemispheres of $\S_i \setminus P_{2,i}$.
\item[(ii)] If $\tilde v_1=(b,v), \tilde v_2 = (b_2,v_2):\R \times \R / \Z \to \R \times (\mathcal{M}\setminus \partial \M)$ are embedded $J$-holomorphic cylinders, not intersecting $\R \times P_3$, with a positive end at $P_3^p$ whose leading eigenvalue has winding number $1$, and a negative end at $P_{2,i}$, then up to parametrization and $\R$-translation, $\tilde v_1$ coincides with $\tilde v_2$.
\end{itemize}
\end{thm}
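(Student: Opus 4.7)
The plan is to apply Siefring's intersection theory \cite{Si1, Si2} for finite-energy punctured $J$-holomorphic curves in symplectizations. Recall that this theory produces a homotopy-invariant intersection pairing $\tilde u * \tilde v \in \Z$ that decomposes as $\tilde u * \tilde v = \mathrm{int}(\tilde u, \tilde v) + \iota_\infty(\tilde u, \tilde v)$, where $\mathrm{int}(\tilde u, \tilde v) \geq 0$ counts geometric intersections (by positivity of intersections) and $\iota_\infty(\tilde u, \tilde v) \geq 0$ is an asymptotic defect computable from the leading eigenvalues, windings and eigenvectors at matched ends. The common strategy in both parts is to show that $\tilde u * \tilde v = 0$ can be read off the prescribed asymptotic data, so that both $\mathrm{int}$ and $\iota_\infty$ must vanish. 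A standard similarity-principle argument near the shared asymptotic ends then upgrades the absence of geometric intersections and the matching of leading eigenvectors to the conclusion that the two curves parametrize the same image in $\R \times \M$ up to an $\R$-translation.

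For part (i), since $P_{2,i}$ is positive hyperbolic with Conley-Zehnder index $2$, the largest negative eigenvalue of $A_{P_{2,i}}$ is simple with a one-dimensional eigenspace. Hence, by Theorem \ref{thm: half cylinder 1}, the leading eigenvector of any asymptotic plane lies along this line, and $\tilde u$ approaches $P_{2,i}$ through the same direction as exactly one of the hemispheric planes, say $\tilde p_1$. The Siefring self-intersection $\tilde p_1 * \tilde p_1$ vanishes because $\tilde p_1$ is nicely embedded and has the extremal leading eigenvector allowed by $A_{P_{2,i}}$; by homotopy invariance in the asymptotic class, $\tilde u * \tilde p_1 = \tilde p_1 * \tilde p_1 = 0$, and the argument concludes as above. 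Part (ii) proceeds identically: both cylinders $\tilde v_k$ are somewhere injective, since the winding-$1$ hypothesis at $P_3^p$ together with their disjointness from $\R \times P_3$ preclude any multiple covering; at the positive end the eigenspace of $A_{P_3^p}$ with winding $1$ is one-dimensional, and at the negative end the hyperbolic structure of $P_{2,i}$ again pins the leading eigenvector up to sign. A direct Siefring computation from the asymptotic data then yields $\tilde v_1 * \tilde v_2 = 0$, and coincidence up to $\R$-translation and parametrization follows from positivity of intersections and the similarity principle.

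The main obstacle is the Siefring bookkeeping at the positive end $P_3^p$ in part (ii). Because this is a $p$-fold iterate of a simple $p$-unknotted orbit, the asymptotic winding invariants must be computed carefully in the trivialization of $\xi$ induced by the $p$-disk $\mathcal D$, and the winding-$1$ constraint is precisely what forces the asymptotic defect $\iota_\infty$ to vanish rather than remain strictly positive. Similarly, verifying $\tilde p_1 * \tilde p_1 = 0$ in part (i) requires confirming that the leading eigenvector of $\tilde p_1$ realizes the extremal winding allowed by $A_{P_{2,i}}$, together with a computation of the relative adjunction invariant for a nicely embedded plane. Once these two ingredients are in place, the remainder of the argument is the standard Siefring-theoretic uniqueness template.
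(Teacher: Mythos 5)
Your outline for part (i) is consistent with the standard argument (the paper simply defers to \cite[Proposition C-3]{dPS1}), but part (ii) contains a genuine gap: the claim that ``a direct Siefring computation from the asymptotic data yields $\tilde v_1 * \tilde v_2 = 0$'' is false in the setting of this theorem. The whole point of the hypothesis is that the leading eigenvalue of the cylinders at the positive end has winding number $1$, while $\mu(P_3^p)$ may be strictly greater than $3$, in which case the winding number $\alpha_+ = d+1$ of the largest negative eigenvalue of $A_{P_3^p}$ satisfies $d \geq 1$. Siefring's formula (Theorem 5.8 of \cite{Si2}) then gives $[\tilde v_1]*[\tilde v_2] = p\,d_0^+ + d_0^- = pd$, which is strictly positive whenever $d>0$. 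So you cannot conclude that both the geometric intersection count and the asymptotic defect vanish; the correct argument must instead show that if $\tilde v_1 \neq \tilde v_2$, the total asymptotic intersection index is at least $pd+1$, exceeding the generalized intersection number $pd$ and producing a contradiction.

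Two of your supporting claims also need repair. First, the eigenspace of $A_{P_3^p}$ corresponding to winding number $1$ is two-dimensional, not one-dimensional (for any nondegenerate orbit there are exactly two eigenvalues, with multiplicity, realizing each winding number), so the positive end only yields the weak estimate $\delta_\infty^+(\tilde v_1,\tilde v_2) \geq pd$ and contributes no strict excess. The decisive extra unit comes from the negative end at the hyperbolic orbit $P_{2,i}$: there the leading eigenvalue has winding number $1$, its eigenspace is one-dimensional, and both cylinders approach through it, so the relative asymptotic winding of their difference is at least $2$ and $\delta_\infty^-(\tilde v_1,\tilde v_2) \geq 1$. It is this negative-end contribution, not the winding-$1$ constraint at $P_3^p$, that closes the argument. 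Second, in part (i) the identity $\tilde u * \tilde p_1 = \tilde p_1 * \tilde p_1$ follows from homotopy invariance only once you know the generalized intersection number depends solely on the asymptotic orbit (which it does), but your justification via ``the extremal leading eigenvector'' conflates the homotopy-invariant pairing with the asymptotic defect; this is harmless for (i) but is the same confusion that derails (ii).
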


\begin{proof} The proof of (i) is essentially contained in \cite[Proposition C-3]{dPS1}. However, the proof of (ii) needs to be adapted to the current situation since the index of $P_3^p$ might be greater than $3$. In that case, the leading eigenvalue of $\tilde v_i$ at the positive puncture has winding number $1$ and thus does not coincide with the winding of the largest negative eigenvalue of $A_{P_3^p}$.

Let $\tilde v_1, \tilde v_2$ be as in Theorem (ii). Siefring \cite{Si2} introduced the generalized intersection number $[\tilde v_1] * [\tilde v_2],$ which is invariant under homotopies that keep the same asymptotic limits. This number counts actual intersections between the curves as well as intersections at infinity, i.e., intersections related to the respective punctures of $\tilde v_1$ and $\tilde v_2$ whose asymptotic limits are covers of the same simple periodic orbit. In particular, this number includes hidden intersections at the punctures, which correspond to tangencies at infinity or appear only when the curves are suitably perturbed. Since $\tilde v_1$ and $\tilde v_2$ have the same asymptotic limits at their respective positive and negative punctures, and since these curves do not intersect the trivial cylinders $\R \times P_3$ and $\R \times P_{2, i}$, Theorem 5.8 from \cite{Si2} gives
$$
[\tilde v_1]*[\tilde v_2] = pd_0^+ + d_0^-,
$$
where $d_0^+\geq 0$ is the difference between the winding number $\alpha_+:=d+1\geq 1$ of the largest negative eigenvalue of $A_{P_3^p}$ and the winding number of the leading eigenvalue at the positive puncture of $\tilde v_1$, which is equal to $1$. In the same way, $d_0^-\geq 0$ is the difference between the winding number of the leading eigenvalue at the negative puncture, which is equal to $1$, and the winding number $\alpha_-$ of the smallest positive eigenvalue of $A_{P_3^p}$, also equal to $1$. Indeed, at the negative puncture, if the leading eigenvalue does not have winding number $1$, then its projection to $\mathcal{M}$ must wind around $P_{2,i}$ with winding number $\geq 2$, forcing intersections with $\partial \M$, a contradiction. Hence we obtain $d_0^+ = d$ and $d_0^-=0,$ which implies that
\begin{equation}\label{gen_int_number}
[\tilde v_1]*[\tilde v_2] = pd.
\end{equation}
This value accounts for the hidden intersections at $P_3^p$ that arise from the positive puncture after suitably perturbing $\tilde v_1$ and $\tilde v_2$. Since the asymptotic limit is the $p$-cover of a simple periodic orbit, the number of these potential intersections is a multiple of $p$.

The generalized intersection number $[\tilde v_1] *[\tilde v_2]$ can be better expressed once we have more information on the relative asymptotic behavior between $\tilde v_1$ and $\tilde v_2$ at both positive and negative punctures. Assume by contradiction that $\tilde v_1$ does not coincide with $\tilde v_2$, i.e., $\tilde v_1$ is not obtained from $\tilde v_2$ by reparametrization and $\R$-translation. In particular, $v_1(\R \times \R / \Z) \neq v_2(\R \times \R / \Z)$ and one may consider the non-trivial difference between these curves near the ends. It is then proved in \cite[Theorem 4.4]{Si2} that
\begin{equation}\label{generalized}
[\tilde v_1 ]*[\tilde v_2] = \text{int}(\tilde v_1, \tilde v_2) + \delta_\infty(\tilde v_1,\tilde v_2),
\end{equation}
where $\text{int}(\tilde v_1,\tilde v_2)\geq 0$ is the algebraic intersection number of $\tilde v_1$ and $\tilde v_2$ and $\delta_\infty(\tilde v_1, \tilde v_2)\geq 0$ is the total asymptotic intersection index of $\tilde v_1$ and $\tilde v_2$, i.e.,
$$
\delta_\infty(\tilde v_1, \tilde v_2) = \delta_\infty^+(\tilde v_1,\tilde v_2) + \delta_\infty^-(\tilde v_1,\tilde v_2).$$
Here, $\delta_\infty^+(\tilde v_1, \tilde v_2)$ and $\delta_\infty^-(\tilde v_1, \tilde v_2)$ are the asymptotic intersection indices of $\tilde v_1$ and $\tilde v_2$ at the positive
 and negative punctures, respectively, given by
 $$
 \begin{aligned}
 \delta_\infty^+(\tilde v_1, \tilde v_2) & = i_\infty^+(\tilde v_1, \tilde v_2) +p\alpha_+,\\
 \delta_\infty^-(\tilde v_1, \tilde v_2) & = i_\infty^-(\tilde v_1, \tilde v_2) -\alpha_-,
 \end{aligned}
 $$
 see Lemma 3.20 and equation (3-32) in \cite{Si2}. Notice that the sign convention in \cite{Si2} is slightly different from the one in this paper. Indeed, in \cite{Si2}, the winding number at a negative puncture has an opposite sign. Here, $$i_\infty^\pm(\tilde v_1, \tilde v_2) =\mp  \wind_{\text{rel}}^\pm(\tilde v_1, \tilde v_2)$$ is the adjusted winding number between the difference of $\tilde v_1$ and $\tilde v_2$ on suitable coordinates, see \cite{Si1}. In the case of positive punctures, where the curves $p$-cover $P_3$, since the leading eigenvectors of both curves have winding number $1$ and the space of such eigenvectors is two-dimensional, the winding number of the difference between $\tilde v_1$ and $\tilde v_2$ in suitable coordinates is $\leq 1$. The fact that the asymptotic limit is the $p$-cover of $P_3$, this number is multiplied by $p$. Therefore, $$i_\infty^+(\tilde v_1, \tilde v_2) = -\wind_{\text{rel}}^+(\tilde v_1, \tilde v_2) \geq -p,$$ see Corollary 3.21 in \cite{Si2}. This implies that
 $$
 \delta_\infty^+(\tilde v_1, \tilde v_2) \geq -p + p(d+1) =pd.
 $$
 Now, since both leading eigenvalues of $\tilde v_1$ and $\tilde v_2$ at their negative punctures coincide and have winding number $1$, and since the space of such eigenvectors is one-dimensional, we conclude from Siefring's formula \cite{Si1} for the difference between $\tilde v_1$ and $\tilde v_2$ near the negative punctures that
 $$i_\infty^-(\tilde v_1, \tilde v_2) = \wind_{\text{rel}}^-(\tilde v_1, \tilde v_2) \geq 2.$$ This implies that
 $$
 \delta_\infty^-(\tilde v_1, \tilde v_2) \geq 2 - \alpha_-=2-1=1.
 $$
 The above estimates for $\delta_\infty^+(\tilde v_1, \tilde v_2)$ and $\delta_\infty^-(\tilde v_1, \tilde v_2)$ give
 $$
 \delta_\infty(\tilde v_1, \tilde v_2) = \delta_\infty^+(\tilde v_1, \tilde v_2) + \delta_\infty^- (\tilde v_1, \tilde v_2) \geq pd + 1,
$$
and thus \eqref{generalized} implies $[\tilde v_1] *[\tilde v_2] \geq pd +1,$ contradicting \eqref{gen_int_number}. We conclude that $\tilde v_1$ coincides with $\tilde v_2$ up to reparametrization and $\R$-translation and this finishes the proof of Theorem \ref{thm_uniqueness}-(ii).
\end{proof}

\subsection{Automatic Transversality} Fix a compact connected Riemann surface $(\Sigma,j)$ possibly with non-empty boundary, and let $\Gamma \subset \Sigma \setminus \partial \Sigma$ be a finite set. Assume that all punctures of a $J$-holomorphic curve $\tilde u=(a,u):\Sigma\setminus \Gamma \to \R \times \M$ are nondegenerate and denote by $P_z$  the asymptotic limit of $\tilde u$ at $z\in \Gamma$. Assume that the $d\alpha$-area of $\tilde u$ is positive. Fix a symplectic trivialization $\Psi$ of $u^*\xi$, which induces a homotopy class of symplectic trivializations of $\xi$ along the asymptotic limits $P_z, z\in \Gamma$. Denote by $\mu(P_z),z\in \Gamma,$ the index of $P_z$ induced by $\Psi$. Let $\lambda_z$ be the leading eigenvalue of $\tilde u$ at $z$. Recall that $\lambda_z<0$ if $z$ is a positive puncture, and $\lambda_z>0$, otherwise. Let $\delta$ be a collection of real numbers $\delta_z,z\in \Gamma$, called weights, so that $\lambda_z < \delta_z\leq 0$ if $z$ is a positive puncture  and $0\leq \delta_z < \lambda_z$, otherwise. In the following, we shall assume that  $\delta_z$ is not an eigenvalue of $A_{P_z}$. Later, we also assume that no eigenvalue of $A_{P_z}$ exists between $\lambda_z$ and $\delta_z$. Assume that each boundary component $\gamma_j \subset \partial \Sigma$ is mapped under $\tilde u$ into a totally real surface $L_j\subset \{0\} \times M$ and let $l_j\to \gamma_j$ be the line bundle of $u^*\xi|_{\gamma_j}$ given by $l_j(z) = \xi|_{u(z)} \cap T_{u(z)} L_j, \forall z\in \gamma_j$ and $j=1,\cdots,m_0$. The $\delta$-weighted Conley-Zehnder index and the $\delta$-weighted Fredholm index of $\tilde u$ are defined as
\begin{equation}\label{eq_index}
\begin{aligned}
\mu^\delta(\tilde u) & : = \sum_{j=1}^{m_0} \mu(u^*\xi|_{\gamma_j}, l_j) + \sum_{z\in \Gamma^+} \mu^{\delta_z}(P_z) - \sum_{z\in \Gamma^-} \mu^{\delta_z}(P_z),\\
\ind^\delta (\tilde u) & := \mu^\delta(\tilde u) - \chi(\dot \Sigma),
\end{aligned}
\end{equation}
where $\chi(\dot \Sigma)=\chi(\Sigma) - \#\Gamma$ is the Euler characteristic of $\dot \Sigma.$ The first term in the definition of $\mu^\delta(\tilde u)$ consists of Maslov indices of $l_j\subset u^*\xi|_{\gamma_j}$  in the frame  $\Psi,$ and $m_0$ is the number of components of $\partial \Sigma$.
If $\delta_z =0, \forall z\in \Gamma,$ then $\ind^\delta(\tilde u)$ is denoted by $\ind(\tilde u)$ and called the index of $\tilde u$. Since $\mu^{\delta_z}(P_z) \leq \mu(P_z), \forall z\in \Gamma^+,$ and $\mu^{\delta_z}(P_z)\geq \mu(P_z), \forall z\in \Gamma^-$, the inequalities $\mu^\delta(\tilde u) \leq \mu(\tilde u)$ and $\ind^\delta(\tilde u) \leq \ind(\tilde u)$ always hold.

Recall that the $d\lambda$-area $\int_{\dot \Sigma} u^*d\lambda$ of $\tilde u$ is always non-negative and vanishes if and only if $\pi \circ du \equiv 0$, where $\pi: TM \to \xi$ is the projection along the Reeb vector field. We keep assuming that
$
\int_{\dot \Sigma}u^* d\lambda >0.
$
In this case, the leading eigenvalue and leading eigenvector are well-defined at each puncture. Since $\dot \Sigma$ is connected, Theorem \ref{thm: half cylinder 1} implies that $\pi \circ du$ does not vanish near the punctures. Since $\pi \circ du$ satisfies a Cauchy-Riemann equation, each zero is isolated and has a positive local degree. Hence, $\pi \circ du$ has finitely many zeros, and the sum of their local degree is denoted by $\wind_\pi(\tilde u).$ Each zero of $\pi \circ du$  lying in  $\partial \Sigma$  contributes with half of its local degree. Hence $\wind_\pi(\tilde u)$ is a half-integer. According to Theorem \ref{thm: half cylinder 1}, each puncture $z\in\Gamma$ admits a leading eigenvalue and a leading eigenvector. The winding number of the leading eigenvector in the frame $\Psi$ is denoted by $\mathrm{wind}_\infty(z)$. It is proved in \cite{HWZII} that $\mathrm{wind}_\pi(\tilde u)=\sum_{z\in\Gamma^+}\mathrm{wind}_\infty(z)-\sum_{z\in\Gamma^-}\mathrm{wind}_\infty(z)-\chi(\dot \Sigma)$ provided $\partial \Sigma=\emptyset$. The theorem below follows from Wendl's results in \cite{Wendl08b} and the definitions above.

\begin{thm}[Wendl \cite{Wendl08b}]\label{thm_Wendl_windpi}The following inequalities hold
\begin{equation}\label{wind_pi}
0\leq 2\wind_\pi(\tilde u) \leq \ind^\delta(\tilde u) - 2 + 2g + \#\Gamma_{\mathrm{even}}^\delta+m_0,
\end{equation}
where $g$ is the genus of $\Sigma$ and  $\#\Gamma_{\mathrm{even}}^\delta$ is the number of punctures whose asymptotic limit has an even $\delta$-weighted index.
\end{thm}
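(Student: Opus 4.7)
The plan is to prove both inequalities by analyzing the section $s:=\pi\circ du$ of the complex line bundle $\mathrm{Hom}_\C(T\dot\Sigma, u^*\xi)\to \dot\Sigma$, bounding its total zero count from below (positivity) and from above via a relative first Chern number expressed through asymptotic winding numbers and boundary Maslov indices.

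For the lower bound, $J$-holomorphicity of $\tilde u$ together with the invariance $J\xi=\xi$ implies that $s$ satisfies a linear Cauchy--Riemann equation in the line bundle above, so the similarity principle forces its interior zeros to be isolated with strictly positive local degree. Along each boundary arc $\gamma_j$, the condition $\tilde u(\gamma_j)\subset L_j$ together with the fact that $L_j$ is totally real forces $s|_{\gamma_j}$ to take values in the real line subbundle $l_j\subset u^*\xi|_{\gamma_j}$, and the boundary similarity principle ensures that each boundary zero contributes a positive half-integer. The hypothesis $\int_{\dot\Sigma}u^*d\alpha>0$ excludes $s\equiv 0$, and Theorem \ref{thm: half cylinder 1} guarantees that $s$ does not vanish near any puncture, so $\wind_\pi(\tilde u)$ is a finite non-negative half-integer.

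For the upper bound, I would first derive the counting identity
\begin{equation*}
2\wind_\pi(\tilde u)\;=\;2\!\!\sum_{z\in\Gamma^+}\!\!\wind_\infty(z)\;-\;2\!\!\sum_{z\in\Gamma^-}\!\!\wind_\infty(z)\;+\;\sum_{j=1}^{m_0}\mu(u^*\xi|_{\gamma_j},l_j)\;-\;2\chi(\dot\Sigma),
\end{equation*}
interpreting $2\wind_\pi(\tilde u)$ as the relative first Chern number of the line bundle containing $s$, with asymptotic boundary data at each puncture prescribed by the leading eigenvectors and totally real boundary data prescribed by $l_j$. This formula follows from the relative Chern class computations of \cite{HWZII, Wendl08b}. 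The remaining step is to bound the right-hand side using the weighted winding inequalities. At a positive puncture $z$ we have $\lambda_z<\delta_z\leq 0$, so $\wind_\infty(z)=\wind(\lambda_z)\leq \wind^{<\delta_z}(A_{P_z})$; at a negative puncture $\wind_\infty(z)\geq \wind^{\geq\delta_z}(A_{P_z})$. Introducing $\epsilon(z):=\wind^{\geq\delta_z}(A_{P_z})-\wind^{<\delta_z}(A_{P_z})\in\{0,1\}$, which vanishes iff $\mu^{\delta_z}(P_z)$ is even, one obtains $2\wind_\infty(z)\leq \mu^{\delta_z}(P_z)-\epsilon(z)$ at positive punctures and $-2\wind_\infty(z)\leq -\mu^{\delta_z}(P_z)-\epsilon(z)$ at negative punctures. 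Summing and using $\sum_{z\in\Gamma}\epsilon(z)=\#\Gamma-\#\Gamma^\delta_{\mathrm{even}}$ yields
\begin{equation*}
2\wind_\pi(\tilde u)\;\leq\;\mu^\delta(\tilde u)\;-\;\#\Gamma\;+\;\#\Gamma^\delta_{\mathrm{even}}\;-\;2\chi(\dot\Sigma).
\end{equation*}

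To conclude, substitute $\chi(\dot\Sigma)=2-2g-m_0-\#\Gamma$ together with the index definition $\ind^\delta(\tilde u)=\mu^\delta(\tilde u)-\chi(\dot\Sigma)$; after a routine algebraic simplification the right-hand side collapses to $\ind^\delta(\tilde u)-2+2g+\#\Gamma^\delta_{\mathrm{even}}+m_0$, proving \eqref{wind_pi}. The main obstacle is the derivation of the counting identity in the presence of totally real boundary components: one must carefully identify the correct complex structure on $\mathrm{Hom}_\C(T\dot\Sigma, u^*\xi)$, choose suitable trivializations near the punctures so that the asymptotic winding numbers enter with the right sign, and verify that the coefficient of $\sum_j\mu(u^*\xi|_{\gamma_j},l_j)$ is exactly $1$ with no spurious $m_0$-correction in the counting identity itself. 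This precise accounting is what produces the $+m_0$ on the right-hand side of \eqref{wind_pi} once everything is combined. Once the counting identity is in hand, the remaining argument is straightforward algebra together with the asymptotic description supplied by Theorem \ref{thm: half cylinder 1}.
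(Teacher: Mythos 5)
Your proposal is correct and reconstructs the standard argument that the paper itself delegates entirely to the citation of Wendl \cite{Wendl08b} (and the $\partial\Sigma=\emptyset$ identity from \cite{HWZII} quoted just before the statement): positivity of zeros of $\pi\circ du$ via the similarity principle, the relative Chern/winding counting identity, and the weighted winding bounds $\wind_\infty(z)\leq\wind^{<\delta_z}(A_{P_z})$ at positive and $\wind_\infty(z)\geq\wind^{\geq\delta_z}(A_{P_z})$ at negative punctures, with the parity defect $\epsilon(z)$ producing $\#\Gamma^{\delta}_{\mathrm{even}}$. The final substitution of $\chi(\dot\Sigma)=2-2g-m_0-\#\Gamma$ and $\ind^\delta(\tilde u)=\mu^\delta(\tilde u)-\chi(\dot\Sigma)$ indeed yields \eqref{wind_pi} exactly, so no gap remains beyond the counting identity you correctly flag as the point to check against \cite{Wendl08b}.
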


Assume that the $J$-holomorphic curve $\tilde u=(a,u):\dot \Sigma \to \R \times \M$ is  embedded. Then $\tilde u^* T(\R \times \M)$ splits as $T_{\tilde u} \oplus N_{\tilde u}$, where the fiber of $T_{\tilde u}$ at $z\in \dot \Sigma$ is $d\tilde u(T_z\dot \Sigma)$ and $N_{\tilde u}$ is a complex line bundle complementary to $T_{\tilde u}$.  From the asymptotic behavior of $\tilde u$, see Theorem \ref{thm: half cylinder 1}, we may assume that $N_{\tilde u}$ coincides with $\tilde u^*\xi$ near the punctures.  We also assume that  $N_{\tilde u}$ coincides with $u^*\xi|_{\partial \Sigma}$ along $\partial \Sigma$.

The $\delta$-weighted normal first Chern number $c_N^\delta(\tilde u)$ of $\tilde u$ is  defined as the half-integer
\begin{equation}\label{c_N}
2c_N^\delta(\tilde u) := \ind^\delta(\tilde u) -2 + 2g +\#\Gamma_{\text{even}}^\delta + m_0,
\end{equation}
where $g$ is the genus of $\Sigma$ and $\Gamma_{\text{even}}^\delta$ is the number of punctures whose asymptotic limit has an even $\delta$-weighted index. Notice that \eqref{wind_pi} and \eqref{c_N} imply
\begin{equation}\label{wind_pi_c_N}
0\leq \wind_\pi(\tilde u) \leq c_N^\delta(\tilde u).
\end{equation}
If $\delta_z=0, \forall z\in \Gamma,$ then $c_N^\delta(\tilde u)$ is simply denoted $c_N(\tilde u)$. If $\delta_z$ is not specified for some puncture $z$, then we tacitly assume that $\delta_z=0$.  If $m_0=0$, then $c_N^\delta(\tilde u)$ is an integer, since $\ind^\delta(\tilde u)$ and $\#\Gamma_{\text{even}}^\delta$ have the same parity, see \eqref{eq_index}. The half-integer $c_N^\delta(\tilde u)$ can be regarded as a bound on the algebraic number of zeros of a section $\sigma:\dot \Sigma \to N_{\tilde u}$  representing infinitesimal $J$-holomorphic variations of $\tilde u$ in the normal direction, keeping the same asymptotic limits at the punctures, the same boundary conditions,  and respecting the weight constraints at the punctures. The zeros on the boundary $\partial \Sigma$ contribute half of their local degree. The section $\sigma$ satisfies  $D^N\bar \partial_J (\tilde u) \cdot \sigma =0$, where $D^N\bar \partial_J \tilde u$ is the restriction and projection to the normal bundle $N_{\tilde u}$ of the linearized Fredholm operator  $D\bar \partial_J (\tilde u)$ between suitable weighted Sobolev spaces. The curve $\tilde u$ is said to be {\em regular} if $D\bar \partial_J (\tilde u)$ is surjective. Any such a section of $N_{\tilde u}$ admits an asymptotic formula similar to the one in Theorem \ref{thm: half cylinder 1} so that at each puncture $z\in \Gamma$, $\sigma$ has a leading eigenvalue and a leading eigenvector of the asymptotic operator $A_{P_z}$.

\begin{thm}[Wendl \cite{Wendl10a}]\label{thm_aut_transverse} Assume that $\tilde u$ is embedded. If $\ind^\delta(\tilde u) > c_N^\delta(\tilde u),$ then $\tilde u$ is regular. In particular, the space of $J$-holomorphic curves near $\tilde u$, with the same asymptotic limits, boundary conditions, and $\delta$-weight constraints, has the structure of a smooth manifold with dimension $\ind^\delta(\tilde u)$.
\end{thm}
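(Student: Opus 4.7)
The plan is to reduce the regularity of $\tilde u$ to surjectivity of the restricted linearized operator $D^N := D^N\bar\partial_J(\tilde u)$ acting on the complex line bundle $N_{\tilde u}$. Since $\tilde u$ is embedded, the splitting $\tilde u^*T(\R\times\M) = T_{\tilde u}\oplus N_{\tilde u}$ is globally defined, and variations in the tangential factor $T_{\tilde u}$ correspond to reparametrizations of the domain and $\R$-translations, which are always realized by the full linearization. Consequently, the cokernel of $D\bar\partial_J(\tilde u)$ on the weighted Sobolev spaces encoding the decay rates $\delta$ at the punctures and the totally real boundary conditions $l_j\subset u^*\xi|_{\gamma_j}$ coincides with the cokernel of $D^N$ on the corresponding Sobolev spaces of sections of $N_{\tilde u}$, and a weighted Riemann--Roch computation yields $\mathrm{ind}(D^N) = \ind^\delta(\tilde u)$.

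The central analytic input is an upper bound on the algebraic zero count of nontrivial elements $\eta\in\ker D^N$. Because $D^N$ is of Cauchy--Riemann type on a complex line bundle, the Carleman similarity principle implies that every nontrivial $\eta$ has only isolated zeros, each of strictly positive local degree, with boundary zeros contributing half their local degree. The asymptotic analysis analogous to Theorem \ref{thm: half cylinder 1} attaches to $\eta$ at each puncture $z\in\Gamma$ a leading eigenvalue $\lambda_z^\eta$ of $A_{P_z}$ that is $\delta$-admissible, i.e.\ $\lambda_z^\eta<\delta_z$ at positive punctures and $\lambda_z^\eta>\delta_z$ at negative punctures, so $\mathrm{wind}_\infty^\eta(z)$ is constrained by $\mathrm{wind}^{<\delta_z}$ or $\mathrm{wind}^{\geq\delta_z}$ as appropriate. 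Feeding these asymptotic winding bounds together with the Maslov contribution from $l_j\subset u^*\xi|_{\gamma_j}$ into the winding identity used to prove Theorem \ref{thm_Wendl_windpi}, but now applied to sections of $N_{\tilde u}$, produces the key inequality
$$
Z(\eta) \leq c_N^\delta(\tilde u),
$$
where $Z(\eta)$ is the half-integer counting interior zeros with multiplicity and boundary zeros with half their local degree.

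Now assume for contradiction that $\dim\coker D^N \geq 1$. Then
$$
\dim\ker D^N \;=\; \ind^\delta(\tilde u) + \dim\coker D^N \;\geq\; c_N^\delta(\tilde u) + 2.
$$
Since $D^N$ acts on a complex line bundle, prescribing vanishing at a generic interior point of $\dot\Sigma$ drops the kernel dimension by at most $2$ while forcing a zero of local degree at least $1$; a boundary point constraint contributes analogously with half weight. By choosing sufficiently many such point constraints one produces a nontrivial $\eta\in\ker D^N$ whose algebraic zero count strictly exceeds $c_N^\delta(\tilde u)$, contradicting the inequality above. Therefore $\coker D^N =0$, so $\tilde u$ is regular, and the implicit function theorem applied to the Fredholm setup gives the smooth manifold structure of dimension $\ind^\delta(\tilde u)$ on the nearby moduli space with the same asymptotic limits, boundary conditions and $\delta$-weight constraints.

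The principal technical obstacle is establishing the zero-count inequality $Z(\eta)\leq c_N^\delta(\tilde u)$ uniformly across the mixed data: the contribution at weighted punctures requires a precise comparison of the leading eigenvalue of $\eta$ with the prescribed weight $\delta_z$ (exactly the role of the condition that no eigenvalue of $A_{P_z}$ lie between $\lambda_z$ and $\delta_z$), and the contribution of $\partial\Sigma$ requires a Maslov-index version of positivity of intersections adapted to totally real boundary conditions and to the possibly half-integer nature of $c_N^\delta$. Both ingredients are developed by Wendl \cite{Wendl10a}; once in place, the surjectivity argument reduces to the algebraic dimension-counting contradiction above.
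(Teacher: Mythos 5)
Your argument reproduces the heuristic that the paper itself offers in the paragraph following the theorem (the paper does not prove the statement; it attributes it to Wendl and sketches exactly this kernel-dimension count), so in spirit you are on the paper's route. However, the step you call ``the algebraic dimension-counting contradiction'' does not close as stated, and this is a genuine gap rather than a presentational one. From $\dim\mathrm{coker}\, D^N\geq 1$ you get $\dim\ker D^N\geq \ind^\delta(\tilde u)+1\geq c_N^\delta(\tilde u)+2$. But each interior point constraint imposes \emph{two} real conditions on the kernel while contributing only \emph{one} to the algebraic zero count $Z(\eta)$, so to manufacture a nontrivial $\eta$ with $Z(\eta)>c_N^\delta(\tilde u)$ you would need on the order of $\dim\ker D^N\geq 2c_N^\delta(\tilde u)+3$, which the hypothesis does not supply. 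Already in the simplest relevant case $c_N^\delta=0$, $\ind^\delta=1$, $\dim\ker D^N=2$, the evaluation map $\ker D^N\to N_{\tilde u}|_p$ can be an isomorphism at every point, so no nontrivial kernel element need vanish anywhere and no contradiction arises from the kernel side.

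The mechanism that actually proves the theorem in Wendl's work runs through the cokernel, not the kernel: one identifies $\mathrm{coker}\, D^N$ with the kernel of the formal adjoint, which (after conjugation) is again a Cauchy--Riemann type operator on a line bundle over the same punctured surface with the dual weights and boundary conditions. Its nontrivial elements therefore also have isolated zeros of positive local degree, and their total algebraic count is non-negative yet bounded above by the normal Chern number of the adjoint problem, which works out to $c_N^\delta(\tilde u)-\ind^\delta(\tilde u)$ and is negative under the hypothesis. Hence the adjoint kernel is trivial, $D^N$ is surjective, and the manifold structure follows from the implicit function theorem as you say. Your preparatory reductions (passage to $D^N$, the similarity principle, the $\delta$-admissible asymptotic windings, the Maslov contribution of the totally real boundary) are correct and are exactly the inputs needed; the fix is to apply the zero-count bound to the adjoint rather than trying to overload the kernel with prescribed zeros.
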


Theorem \ref{thm_aut_transverse} can be regarded as follows. Suppose by contradiction that an embedded curve $\tilde u$ with $\ind^\delta(\tilde u) > c_N^\delta(\tilde u)$ is not regular. Then the linearized Cauchy-Riemann operator in the normal direction has a non-trivial cokernel, and thus the dimension of the kernel is $d \geq \ind^\delta(\tilde u)+1 > c_N^\delta(\tilde u)+1$. Then it is possible to construct a non-trivial section with more than $c_N^\delta(\tilde u)$ zeros, a contradiction.

The curves satisfying the conditions of Theorem \ref{thm_aut_transverse} are called automatically transverse. The following proposition is a direct consequence of Wendl's results in \cite{ Wendl08, Wendl08b, Wendl10a, Wendl10}.

\begin{prop}\label{prop_automatic_transversality}The following assertions hold:
\begin{itemize}
    \item[(i)] If $\tilde u:\D \to \R \times \M$ is an embedded $J$-holomorphic disk so that $\mu(u^*\xi|_{\partial \D},l_1)=2,$ then $\tilde u$ is regular, $\ind(\tilde u)=1$, $\wind_\pi(\tilde u)=0$ and $c_N(\tilde u)=0.$

    \item[(ii)] If $\tilde u:\D \setminus \{0\}\to \R \times \M$ is an embedded $J$-holomorphic punctured disk so that $\mu(u^*\xi|_{\partial \D},l_1)=2,$ and $0$ is a negative puncture whose asymptotic limit is an index-$2$ hyperbolic orbit, then $\ind(\tilde u)=0,$ $\wind_\pi(\tilde u)=0$ and $c_N(\tilde u) =0.$

    \item[(iii)] If $\tilde u:\C \to \R \times \M$ is an embedded $J$-holomorphic plane whose asymptotic limit $P_3$ at $\infty$ has $\delta$-weighted  index $\mu^\delta(P_3)=3$, then $\tilde u$ is regular, $\ind^\delta(\tilde u)=2$, $\wind_\pi(\tilde u)=0$ and $c_N^\delta(\tilde u)=0$.

    \item[(iv)] If $\tilde u:\C \setminus \{0\}\to \R \times \M$ is an embedded $J$-holomorphic cylinder whose asymptotic limit $P_3$ at the positive puncture $\infty$ has $\delta$-weighted index $\mu^\delta(P_3)=3$, and $0$ is a negative puncture whose asymptotic limit is an index-$2$ hyperbolic orbit, then $\tilde u$ is regular, $\ind^\delta(\tilde u) =1$, $\wind_\pi(\tilde u) =0$ and $c_N^\delta(\tilde u)=0$.
\end{itemize}
\end{prop}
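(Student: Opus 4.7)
The plan is to treat the four assertions as a direct bookkeeping exercise with the index formulas \eqref{eq_index} and \eqref{c_N}, the Hofer–Wysocki–Zehnder/Wendl winding inequality \eqref{wind_pi_c_N}, and Wendl's automatic transversality theorem (Theorem \ref{thm_aut_transverse}). In each case I compute $\ind^\delta(\tilde u)$ from the data, then $c_N^\delta(\tilde u)$ from $2c_N^\delta(\tilde u) = \ind^\delta(\tilde u) - 2 + 2g + \#\Gamma_{\text{even}}^\delta + m_0$, read off $\wind_\pi(\tilde u)$ from $0 \leq \wind_\pi(\tilde u) \leq c_N^\delta(\tilde u)$, and invoke Theorem \ref{thm_aut_transverse} to deduce regularity whenever $\ind^\delta > c_N^\delta$. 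All four curves are of genus zero, so $g=0$; the only data that vary are $\#\Gamma$, $m_0$, and the parities of the $\delta$-weighted indices at the punctures.

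For (i): here $\#\Gamma=0$, $m_0=1$, $\chi(\dot\Sigma)=1$, and $\mu^\delta(\tilde u)=\mu(u^*\xi|_{\partial\D}, l_1)=2$, giving $\ind(\tilde u) = 2-1 = 1$ and $2c_N(\tilde u) = 1-2+0+0+1 = 0$, hence $c_N(\tilde u)=0$. Then $0\leq \wind_\pi(\tilde u)\leq 0$, and $\ind(\tilde u)=1>0=c_N(\tilde u)$ triggers Theorem \ref{thm_aut_transverse}, so $\tilde u$ is regular. For (ii): the hyperbolic orbit has even index $\mu(P)=2$, so $\#\Gamma_{\text{even}}=1$; with $\#\Gamma=1$ (one negative puncture), $m_0=1$, $\chi(\dot\Sigma)=0$, we get $\mu^\delta(\tilde u)=2-2=0$, $\ind(\tilde u)=0$, and $2c_N(\tilde u)=0-2+0+1+1=0$; the sandwich bound again forces $\wind_\pi(\tilde u)=0$, and regularity is not asserted.

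For (iii): one positive puncture, $m_0=0$, $\chi(\dot\Sigma)=1$; the hypothesis $\mu^\delta(P_3)=3$ gives $\ind^\delta(\tilde u)=3-1=2$ and $\#\Gamma_{\text{even}}^\delta=0$, so $2c_N^\delta(\tilde u)=2-2+0+0+0=0$, producing $\wind_\pi(\tilde u)=0$ and, since $\ind^\delta=2>0=c_N^\delta$, regularity via Theorem \ref{thm_aut_transverse}. For (iv): one positive puncture asymptotic to an orbit of weighted index $3$ and one negative puncture asymptotic to a hyperbolic orbit of index $2$; hence $\#\Gamma=2$, $m_0=0$, $\chi(\dot\Sigma)=0$, $\#\Gamma_{\text{even}}^\delta=1$, $\mu^\delta(\tilde u)=3-2=1$, $\ind^\delta(\tilde u)=1$, and $2c_N^\delta(\tilde u)=1-2+0+1+0=0$; thus $\wind_\pi(\tilde u)=0$ and, as $\ind^\delta=1>0=c_N^\delta$, Theorem \ref{thm_aut_transverse} gives regularity.

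There is no serious obstacle beyond verifying that the parity conventions are correctly applied in computing $\#\Gamma_{\text{even}}^\delta$ and that the Maslov boundary contribution $\mu(u^*\xi|_{\partial\D},l_1)=2$ is genuinely the one appearing in \eqref{eq_index} (rather than a normalization involving $\tfrac12$); this is consistent with the weighted Fredholm formula used in \cite{Wendl08b, Wendl10a} for totally real boundary conditions, so no adjustment is needed. The only subtlety to point out in the written proof is that in parts (iii) and (iv) the choice of weight $\delta$ at the positive puncture is implicitly placed between the leading eigenvalue and $0$ so that $\wind^{<\delta}$ picks up the winding of the leading eigenvector, justifying the identification $\mu^\delta(P_3)=3$ as the Conley–Zehnder index used in the index formula.
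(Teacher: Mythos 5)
Your proposal is correct and follows exactly the paper's argument: compute $\ind^\delta$ and $\#\Gamma_{\mathrm{even}}^\delta$ in each case, deduce $c_N^\delta=0$ from \eqref{c_N}, get $\wind_\pi=0$ from \eqref{wind_pi_c_N}, and apply Theorem \ref{thm_aut_transverse} for regularity in (i), (iii), (iv). All four numerical computations match the paper's.
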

\begin{proof}
We compute in each case
\begin{itemize}
    \item[(i)]  $\ind(\tilde u) = 2-1=1,m_0=1$ and $\#\Gamma_{\text{even}}=0.$
     \item[(ii)] $\ind(\tilde u) = 2 - 2 -0 = 0,m_0=1,$ $\#\Gamma_{\text{even}}=1.$
     \item[(iii)] $\ind^\delta(\tilde u)=3-1=2, m_0=0$ and $\#\Gamma_{\text{even}}^\delta=0$.
     \item[(iv)] $\ind^\delta(\tilde u) =3-2=1,$ $m_0=0$ and $\#\Gamma_{\text{even}}^\delta=1.$
\end{itemize}
From \eqref{c_N}, we obtain $c_N^\delta(\tilde u)=c_N(\tilde u)=0$ in all cases. A direct application of inequality \eqref{wind_pi_c_N} gives $\wind_\pi(\tilde u)=0$. In cases (i), (iii), and (iv), we see that $c_N^\delta(\tilde u) < \ind^\delta(\tilde u)$. By Theorem~\ref{thm_aut_transverse}, $\tilde u$ is regular in those cases.
\end{proof}

\begin{rem}  The curve $\tilde u:\D \setminus \{0\} \to \R \times \M$ in Proposition \ref{prop_automatic_transversality}-(ii) is not automatically transverse. Theorem 1 in \cite{Wendl10a} implies that $\dim \ker D \bar \partial_J (\tilde u) \leq 1$ and it is not difficult to construct examples so that $\dim \ker D \bar \partial_J(\tilde u) =1.$ In that case, $\dim {\rm coker} D \bar \partial_J(\tilde u) = 1$ and thus $\tilde u$ is not regular. 
\end{rem}

\section{Proof of Theorem \ref{main1}}

Let $\mathcal{M}$, $\alpha$, $\P=\{P_3,P_{2,1},\ldots,P_{2,l}\}\subset \P(\alpha)$ and $J\in \J(\alpha)$ be as in Theorem \ref{main1}. Recall that each $P_{2,i}$ lies in the sphere-like boundary component $\S_i\subset \partial \mathcal{M}$ and the hemispheres of $\S_i \setminus P_{2,i}$ are projections of a pair of nicely embedded $J$-holomorphic planes $\tilde u_{i,1}=(a_{i,1},u_{i,1})$ and $\tilde u_{i,2}=(a_{i,2},u_{i,2})$ asymptotic to $P_{2,i}$ and satisfying $u_{i,1}(\C) \cup u_{i,2}(\C) = \S_i \setminus P_{2,i}.$

The planes $\tilde u_{i,j}, j=1,\ldots,l,$ prevent families of $J$-holomorphic curves in $\mathcal{M}$ from escaping through $\partial \mathcal{M}$. They form a barrier so that the bubbling-off analysis is similar to that of a boundaryless contact three-manifold. For this reason, we call $\S_j$ a {\bf spherical shield}.

\begin{prop}\label{prop_inside}
    Let $\tilde u =(a,u):\dot \Sigma \to \R \times \mathcal{M}$ be a connected $J$-holomorphic curve with boundary conditions in $\mathcal{M} \setminus \partial \mathcal{M}$, and whose image is not contained in $\R \times \partial \mathcal{M}$. Then
    \begin{itemize}
        \item[(i)] $\tilde u$ does not have a positive end at any orbit in $\partial \M$.

        \item[(ii)]  $u(\dot \Sigma) \subset \mathcal{M} \setminus \partial \mathcal{M}$.
    \end{itemize}
 \end{prop}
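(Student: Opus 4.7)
\textbf{Proof plan for Proposition \ref{prop_inside}.}
Both assertions follow from Siefring's intersection theory \cite{Si1, Si2} applied to $\tilde u$, the trivial cylinders $\R\times P_{2,i}$, and the rigid shield planes $\tilde u_{i,1}, \tilde u_{i,2}$, combined with positivity of intersection for $J$-holomorphic curves in four-dimensional symplectizations. The geometric picture is that the shield planes, together with $\R\times P_{2,i}$, form a $J$-holomorphic "cage" around $\R\times\mathcal{S}_i$, while the boundary of $\tilde u$, sitting in $\mathcal{M}\setminus\partial\mathcal{M}$, does not touch this cage. The point is to show that any interaction of $\tilde u$ with $\R\times\partial\mathcal{M}$ contradicts the generalized intersection count against a piece of the cage.

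For (i), suppose for contradiction that $\tilde u$ admits a positive puncture $z_0$ asymptotic to some iterate $P_{2,i}^k$. Since the image of $\tilde u$ is not contained in $\R\times\partial\mathcal{M}$, the curves $\tilde u$ and $\tilde u_{i,j}$ are distinct for $j=1,2$, and hence $[\tilde u]*[\tilde u_{i,j}]\geq 0$ by the general theory. The plan is to compute these generalized intersection numbers following the template in the proof of Theorem~\ref{thm_uniqueness}. The leading eigenvector of $\tilde u$ at $z_0$ is an element of an eigenspace of $A_{P_{2,i}^k}$ which descends from an eigenspace of $A_{P_{2,i}}$; since the two shield planes $\tilde u_{i,1},\tilde u_{i,2}$ approach $P_{2,i}$ through opposite directions, this eigenvector agrees up to positive scaling with the leading eigenvector of exactly one of the shields, say $\tilde u_{i,1}$, and is antiparallel to the one of $\tilde u_{i,2}$. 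A Siefring-type calculation of the asymptotic intersection index $\delta^+_\infty(\tilde u,\tilde u_{i,2})$ at $z_0$ then yields a strictly negative upper bound on $\mathrm{wind}^+_{\mathrm{rel}}$, while the remaining ends of $\tilde u$ contribute non-negatively. This forces $[\tilde u]*[\tilde u_{i,2}]<0$, the desired contradiction.

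For (ii), by (i) we may assume $\tilde u$ has no positive end at any cover of $P_{2,i}$. Suppose $u(z)\in\partial\mathcal{M}$ for some interior point $z\in\dot\Sigma\setminus\partial\Sigma$. Then $u(z)\in\mathcal{S}_i$ for some $i$. If $u(z)\in\mathcal{S}_i\setminus P_{2,i}$, then $u(z)$ lies on the projection of exactly one shield, say $\tilde u_{i,1}$, so after an appropriate $\R$-translation of $\tilde u_{i,1}$ the curves $\tilde u$ and $\tilde u_{i,1}$ share an interior intersection point. Positivity of intersection then gives $\mathrm{int}(\tilde u,\tilde u_{i,1})\geq 1$, and by $\R$-invariance $[\tilde u]*[\tilde u_{i,1}]\geq 1$. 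On the other hand, since $\tilde u_{i,1}$ has only a positive puncture at $P_{2,i}$, and since by (i) $\tilde u$ has no positive end at any cover of $P_{2,i}$ (and negative ends of $\tilde u$ at covers of $P_{2,i}$ do not pair with positive ends of $\tilde u_{i,1}$ to produce asymptotic contributions), a Siefring computation yields $[\tilde u]*[\tilde u_{i,1}]=0$, a contradiction. If instead $u(z)\in P_{2,i}$, then $\tilde u$ intersects the trivial cylinder $\R\times P_{2,i}$ at an interior point transverse to the Reeb flow (since $\tilde u$ is not a cover of this trivial cylinder), and the analogous computation of $[\tilde u]*[\R\times P_{2,i}]$, again using (i) to rule out asymptotic contributions, gives the same contradiction.

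The main obstacle is the careful bookkeeping of asymptotic contributions in the generalized intersection formula, particularly at positive ends covering the simple orbit $P_{2,i}$: one must identify the leading eigenspaces of $A_{P_{2,i}^k}$ with $k$-fold covers of those of $A_{P_{2,i}}$, compare winding numbers in a consistent trivialization along $\mathcal{S}_i$, and account for hidden intersections coming from multiply covered asymptotic limits, exactly as in the proof of Theorem~\ref{thm_uniqueness}. Once this bookkeeping is in place, both (i) and (ii) reduce to elementary sign comparisons.
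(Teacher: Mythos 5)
Your part (i) has a genuine gap. You aim to contradict $[\tilde u]*[\tilde u_{i,2}]\geq 0$ by computing this number to be negative from the asymptotic data, but that is impossible: for curves with distinct images one has $[\tilde u]*[\tilde u_{i,2}]=\mathrm{int}(\tilde u,\tilde u_{i,2})+\delta_\infty(\tilde u,\tilde u_{i,2})$ with \emph{both} summands non-negative, and the asymptotic intersection index $\delta_\infty^+$ remains non-negative whether the two curves approach $P_{2,i}$ through the same or through opposite directions (opposite directions simply give $\delta_\infty^+=0$, exactly as for the two shield planes themselves, which do not intersect). Moreover the homotopy-invariant side of Siefring's formula depends on the relative homology class of $\tilde u$ and on all of its other ends, which you do not control for an arbitrary curve. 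The mechanism that actually rules out a positive end at $P_{2,i}^{k}$ is not an intersection count but the constraint $u(\dot\Sigma)\subset\mathcal{M}$: either the leading eigenvector of $\tilde u$ at that end has non-extremal winding, or it has extremal winding and then Siefring's \emph{relative asymptotic formula}, applied to $\tilde u$ and the $k$-fold cover of the hemisphere approaching $P_{2,i}^k$ through the same direction, shows that their difference is governed by an eigenvector of strictly smaller winding; in either case $u$ winds around $P_{2,i}$ relative to the boundary sphere $\mathcal{S}_i$ near the puncture and must cross to the outside of $\mathcal{M}$, which is the contradiction. This is the route the paper takes.

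For (ii) your skeleton is close, but the step ``a Siefring computation yields $[\tilde u]*[\tilde u_{i,1}]=0$'' is precisely the point that needs proof, and it is not a formal computation: $\tilde u$ may have negative ends at covers of $P_{2,i}$ and positive ends at other orbits, both of which enter the generalized intersection formula, and your parenthetical dismissal of the opposite-sign pairings is unjustified. The paper establishes the needed vanishing, in effect, by translating $\tilde u_{i,1}$ in the $\R$-direction: intersections persist by positivity and stability, and they cannot escape through $\partial\Sigma$ (boundary condition in $\mathcal{M}\setminus\partial\mathcal{M}$), through negative punctures (since $r+a_{i,1}$ is bounded below), or through positive punctures (by part (i), using that the only periodic orbit contained in $\mathcal{S}_i$ is $P_{2,i}$), yet for $r$ large no intersections can exist. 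If you want to phrase (ii) through $[\tilde u]*[\tilde u_{i,1}]$, you still need this translation/compactness argument to evaluate that number; it does not come for free from the asymptotic formulas.
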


 \begin{proof}
By contradiction, assume that $\tilde u=(a,u)$ has a positive end at $P_{2, i}^p$ for some $i\in \{1,\ldots,l\}$ and $p>0$. Consider the asymptotic operator $A_{P_{2,i}^p}$ associated with the positive hyperbolic orbit $P_{2,i}^p$. The leading eigenvalue of $\tilde u$ at the corresponding puncture must be the maximum negative eigenvalue $p\lambda_2<0$, where $\lambda_2$ is the maximum negative eigenvalue of $A_{P_{2,i}}$. In fact, otherwise the curve $u$ winds around $P_{2,i}$ slower that the hemispheres of $\mathcal{S}_i \setminus P_{2,i}$, forcing the existence of points in $u(\dot \Sigma)$ outside $\mathcal{M}$, a contradiction. The eigenspace associated with $p\lambda_2$ is one-dimensional and corresponds to the $p$-cover of the $\lambda_2$-eigenspace $\R e_2$ of $A_{P_{2,i}}$. In particular,  $e_2^p$ is tangent to $\mathcal{S}_i$ along $P_{2,i}^p$. Consider the holomorphic plane $\tilde v$ that $p$-covers the hemisphere of $\mathcal{S}_i\setminus P_{2,i}$ and approaches $P_{2,i}^p$ through the same direction as $\tilde u$. After reparametrizing $\tilde u$ if necessary, both $\tilde u$ and $\tilde v$ have the same leading eigenvalues and eigenvectors. The maps $u$ and $v$ do not have the same image close to the corresponding punctures by assumption. Hence, Siefring's formula for the difference of $J$-holomorphic curves asymptotic to the same periodic orbit, see Theorem 2.2 in \cite{Si1}, implies that the difference between $u$ and $v$ in suitable coordinates is ruled by a leading eigenvalue $\lambda_d<0$ strictly smaller than $p\lambda_2$. The winding number of $\lambda_d$ is strictly less than the winding number of $p\lambda_2$. This implies that $\tilde u$ and $\tilde v$ wind around each other and, as before, $u(\dot \Sigma)$ contains points outside $\mathcal{M}$, again a contradiction. This proves (i).

 Now assume by contradiction that $u(\dot \Sigma) \cap \mathcal{S}_i \neq \emptyset$ for some $i$. Since every point in $\partial \mathcal{M}$ lies in the projection of a $J$-holomorphic curve, the assumptions on $\tilde u$ imply that the intersection of $\tilde u$ with the curves $\tilde u_{i,1}$, $\tilde u_{i,2}$ and the trivial cylinder $\R \times P_{2, i}$ are isolated. Moreover, $\int_{\dot \Sigma} u^* d\alpha >0$ and a point $z\in \dot \Sigma$ so that $u(z)\in \mathcal{S}_i$ satisfies $z\in \dot \Sigma \setminus \partial \Sigma$. If $\tilde u(z) \in \R \times P_{2, i}$, then since $\tilde u$ satisfies a Cauchy-Riemann equation on the contact structure, a circle around $z$ and close to $z$ is mapped under $u$ onto a closed curve that locally surrounds $P_{2, i}$, contradicting $u(\dot \Sigma) \subset \mathcal{M}$.  By the similarity principle, this holds true even in the case $\pi \circ d u(z)=0$. Hence $\tilde u$ does not intersect $\R \times P_{2,i}$ and we may assume that $\tilde u$ intersects $\tilde u_{i,1}$. Again, these intersection points are isolated and $\tilde u$ has positive $d\lambda$-area. By positivity and stability of intersections, we may shift $\tilde u_{i,1}$ in the $\R$-direction, and $\tilde u$ intersects $r+ \tilde u_{i,1}$ for all small values of $r>0$. We keep increasing $r$ and notice that the intersecting point in the domain of $\tilde u$ cannot escape through $\partial \Sigma$ by the standing assumption $u(\partial  \Sigma) \subset \mathcal{M} \setminus \partial \mathcal{M}$. Also, it cannot escape through a negative puncture of $\tilde u$ since $r+a_{i,1}(\C)$ is uniformly bounded from below for every $r>0$. By the first statement of this proposition, the intersection point in the domain of $\tilde u$ cannot escape through a positive puncture. So there exists a compact subset $K \subset \dot \Sigma \setminus \partial \dot \Sigma$ that contains all points in the domain of $\tilde u$ corresponding to intersections between $\tilde u$ and $r+\tilde u_{i,1},$ for every $r>0$. In the same way, the points in the domain of $r+\tilde u_{i,1}$ corresponding to intersections between $\tilde u$ and $r+\tilde u_{i,1}$ must be contained in a compact subset $K'\subset \C$ for every $r>0$ since $a_{i,1}\to +\infty$ at $\infty$ uniformly. However, for $r>0$ sufficiently large, we have $r+ \min a_{i,1}(\C)> \max a(K),$ meaning that $\tilde u$ does not intersect $r+ \tilde u_{i,1}$ for every $r>0$ sufficiently large. This contradicts the assumption that $\tilde u$ intersects $\tilde u_{i,1}$, and (ii) is proved.
 \end{proof}

\subsection{\texorpdfstring{The Bishop family of $J$-holomorphic disks}{The Bishop family of J-holomorphic disks}}  Let $P\subset \mathcal{M} \setminus \partial \mathcal{M}$ be a $p$-unknotted periodic orbit with self-linking number $-1/p$ and Conley-Zehnder index $\mu(P^p)\geq 3$. Assume that $P$ bounds a $p$-disk $\mathcal{D}'\hookrightarrow \mathcal{M} \setminus \partial \mathcal{M}$. Consider the characteristic foliation $(T\mathcal{D}'\cap \xi)^\perp$, where $\perp$ is the symplectic complement with respect to any area form on $\mathcal{D}'$. We call $e_0'\in\mathcal{D}'$ a singular point of the characteristic foliation if $T_{e_0'}\mathcal{D}'=\xi_{e_0'}$. We say that a singular point  $e_0'\in \mathcal{D}' \setminus \partial \mathcal{D}'$ of the characteristic foliation is nicely elliptic if there exists a vector field $V:\mathcal{D}'\rightarrow (T\mathcal{D}' \cap \xi)^\perp$  so that  $DV(e_0')$ has positive eigenvalues. In particular, $e_0'$ is a source of $V$. Using Giroux's elimination procedure, it is always possible to $C^0$-perturb $\mathcal{D}'$ away from the boundary so that its characteristic foliation contains a unique singularity $e_0$ that is nicely elliptic. We may assume that the interior of $\mathcal{D}'$ contains no periodic orbits. We may also assume that $\mathcal{D}'$ is transverse to the Reeb vector field near $P_3^p$. Indeed, we may $C^0$-perturb $\mathcal{D}'$ in a tubular neighborhood of $P_3$ so that it approaches $P_3^p$ as an eigenvector of the asymptotic operator $A_{P_3^p}$ with winding number $1$. Such an approach implies that $\mathcal{D}'$ is transverse to the Reeb vector field near $P_3^p$ and for each cross section at $x_0\in P_3$, the $p$-disk $\mathcal{D}'$ is formed by $p$ local branches approaching $x_0$, whose tangent at $x_0$ are not a positive multiple of each other.

\begin{thm}[Hofer \cite{Hofer1993}]\label{thm_Bishop}  We may further $C^0$-perturb $\mathcal{D}'$ away from the boundary $\partial \mathcal{D}'$, keeping the previous properties, and $C^\infty$-perturb $J$ in a small neighborhood of $e_0$ so that the new disk $\mathcal{D}$ and the new almost complex structure, still denoted by $J$, admits a family of $J$-holomorphic disks $\tilde u_\tau=(a_\tau,u_\tau): (\D,i) \to \R \times \M, \tau >0$ small, satisfying

\begin{equation}\label{bishop_family}
\left\{
\begin{aligned}
& \tilde u_\tau \mbox{ is an embedding}, a_\tau|_{\partial \D} \equiv 0, \\
& u_\tau(\partial \D)\subset \mathcal{D} \setminus (\partial \mathcal{D}\cup \{e_0\}), \\  & \wind(u_\tau|_{\partial \D}, e_0) = +1.\\
\end{aligned}
\right.
\end{equation}

Finally, $\displaystyle \lim_{\tau \to 0^+} u_\tau(z) = e_0$ uniformly in $z\in \D$.
\end{thm}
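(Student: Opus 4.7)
The plan is to carry out Hofer's original Bishop disk construction \cite{Hofer1993}, now adapted to the punctured lens space setting. The construction is entirely local near the elliptic singularity $e_0$, so the fact that $\mathcal{M}$ has boundary and that $P_3$ is only $p$-unknotted plays no role here; only the nicely elliptic nature of $e_0$ matters.

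First, I would normalize the geometry near $e_0$. By the contact Darboux theorem, choose local coordinates $(\vartheta, x, y)$ on an open neighborhood $\mathcal{U}$ of $e_0 \equiv 0 \in \R^3$ in which the contact form becomes $\alpha = d\vartheta + \tfrac{1}{2}(x\,dy - y\,dx)$, so that the Reeb vector field is $\partial_\vartheta$ and $\xi_{e_0} = \ker d\vartheta|_0 = \R\partial_x \oplus \R\partial_y$. Since $T_{e_0}\mathcal{D}' = \xi_{e_0}$, the disk $\mathcal{D}'$ is locally the graph $\vartheta = h(x,y)$ with $h(0) = 0$ and $dh(0) = 0$; a $C^0$-small perturbation supported in a small neighborhood of $e_0$ flattens $h$ to $h \equiv 0$ on a smaller neighborhood, so we may assume $\mathcal{D} \cap \mathcal{U}' = \{\vartheta = 0\}\cap\mathcal{U}'$. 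This preserves the previous normalizations: the interior remains embedded, the nicely elliptic singularity persists (the $V$-dynamics are preserved away from the boundary), and the transverse behavior near $P_3$ is untouched. Next, $C^\infty$-perturb $J$ inside $\mathcal{U}'$ so that, along $\mathcal{D}$, it coincides with the standard $\R$-invariant almost complex structure $J_0$ determined by $J_0(\partial_a) = \partial_\vartheta$ and $J_0(\partial_x) = \partial_y$; this can be done while preserving $J(\xi) = \xi$ and $d\alpha$-compatibility, since the space of compatible complex structures on $\xi$ is contractible.

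With this normalization, write down explicit local Bishop disks. For $\tau>0$ small and $z \in \D$, set
\begin{equation*}
\tilde u_\tau(z) = \bigl(a_\tau(z),\, \vartheta_\tau(z),\, \tau z\bigr), \qquad a_\tau \equiv \tfrac{1}{4}(|z|^2 - 1)\tau^2, \quad \vartheta_\tau \equiv 0.
\end{equation*}
A direct computation shows $\bar\partial_{J_0} \tilde u_\tau = 0$, $a_\tau|_{\partial\D} \equiv 0$, $u_\tau(\partial\D) \subset \mathcal{D}\setminus\{e_0\}$, and the boundary $\tau\, \partial\D$ winds once around $e_0$ in $\mathcal{D}$, so \eqref{bishop_family} is satisfied for the model $(J_0, \{\vartheta=0\})$. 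Moreover $u_\tau(z) \to e_0$ as $\tau \to 0^+$ uniformly. Because, by the last perturbation, $(J, \mathcal{D})$ already agrees with $(J_0, \{\vartheta = 0\})$ inside $\mathcal{U}'$, and the image of $\tilde u_\tau$ lies in $\R\times\mathcal{U}'$ once $\tau$ is sufficiently small, these model disks are genuine $J$-holomorphic disks with boundary on $\mathcal{D}$ for the perturbed data.

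The persistence of this family for larger $\tau$, not just near the singularity, is a routine application of the implicit function theorem. Each $\tilde u_\tau$ is an embedded $J$-holomorphic disk with totally real boundary condition in the line bundle $l \to \partial \D$ given by $l(z) = \xi_{u_\tau(z)} \cap T_{u_\tau(z)}\mathcal{D}$. Since the characteristic foliation of $\mathcal{D}$ emanates radially from the elliptic singularity and $u_\tau(\partial \D)$ has winding $+1$ around $e_0$, a standard Maslov computation gives $\mu(u_\tau^*\xi|_{\partial \D}, l) = 2$. By Proposition \ref{prop_automatic_transversality}-(i), $\tilde u_\tau$ is then regular with $\ind(\tilde u_\tau) = 1$, so the moduli space of $J$-holomorphic disks through $\tilde u_\tau$ with this boundary condition is locally a $1$-dimensional manifold. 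Parametrizing by the signed $d\alpha$-area (or equivalently by the symplectic area $-a_\tau(0)$), we propagate the family as $\tau$ increases until some obstruction appears, giving the open interval $\tau \in (0,\epsilon)$.

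The main obstacle is conceptual rather than technical: one must arrange the joint perturbation of $\mathcal{D}$ and $J$ simultaneously and in a sufficiently small neighborhood of $e_0$ that the earlier global properties of $\mathcal{D}$ (nicely elliptic singularity, transversality near $P_3$, absence of periodic orbits in the interior) are preserved, while still making $(J, \mathcal{D})$ match the model near $e_0$ so that the explicit local solutions exist. Once the local solutions are in place, the promotion to a smooth one-parameter family is automatic by the index-one automatic transversality, and the asymptotic behavior $u_\tau \to e_0$ as $\tau \to 0^+$ follows from the construction of the initial model disks.
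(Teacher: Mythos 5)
Your construction is correct and is essentially the argument of Hofer \cite{Hofer1993}, which the paper cites rather than reproves: flatten $\mathcal{D}'$ to $\{\vartheta=0\}$ in Darboux coordinates near the nicely elliptic point, adjust $J$ to the standard model there, and observe that the explicit disks $z\mapsto\bigl(\tfrac{\tau^2}{4}(|z|^2-1),0,\tau z\bigr)$ satisfy \eqref{bishop_family} and shrink to $e_0$ as $\tau\to 0^+$. Note only that these explicit solutions already constitute the family for all $\tau\in(0,\epsilon)$, so your final implicit-function-theorem step is not needed for the statement as given (it is what the paper uses later to extend the family to its maximal interval).
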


One of the key properties of the $J$-holomorphic disk $\tilde u= \tilde u_\tau$ given in Theorem \ref{thm_Bishop} is that $u|_{\partial \D}$ is transverse to the characteristic foliation. Indeed, take polar coordinates $(\theta,r)$  on $\D$. Since $\Delta a  = |\pi u_s|^2 \geq 0$,  the strong maximum principle implies that $(\partial_r a)|_{\partial \D}>0$. Hence $\alpha(\partial_\theta u|_{\partial \D}) >0$ and thus $u|_{\partial \D}$ is positively transverse to the leaves of $(T\mathcal{D} \cap \xi)^\perp$. Since $\tilde u$ is an embedding and $a|_{\partial \D} \equiv 0$, we conclude from $\wind(u_\tau|_{\partial \D}, e_0) = +1$ that $u|_{\partial \D}$ is an embedding that intersects each leaf of the characteristic foliation precisely once.

Another important property of the family $u_\tau$ is that if $\tau<\tau'$, then $u_{\tau}(\partial \D)$ is contained in the interior of the disk in $\mathcal{D}$ bounded by $u_{\tau'}(\partial \D)$. Indeed, the three-dimensional group of bi-holomorphisms of $(\D, i)$ acts freely on the space of $J$-holomorphic disks satisfying \eqref{bishop_family}. For each $\tau$, the linearization  $D\bar \partial_{\widetilde J}(\tilde u_\tau)$ is a surjective Fredholm operator with Fredholm index $4$. Since the group of automorphisms of the disk has dimension $3$, the unparametrized space $\mathcal{M}_0$ of $J$-holomorphic disks satisfying the boundary conditions in \eqref{bishop_family} is one-dimensional, see Proposition \ref{prop_automatic_transversality}-(i).

Let $V\subset T\mathcal{D}$ be a vector field generating $(T\mathcal{D} \cap \xi)^\perp$, and let $N_{\tilde u_\tau}\subset \tilde u_\tau^*T(\R \times \M)$ be a normal complex line bundle  satisfying $N_{\tilde u_\tau} \oplus T_{\tilde u_\tau} = \tilde u_\tau^*T(\R \times \M)$, so that $N_{\tilde u_\tau}$ coincides with the contact structure and thus contains $0 \oplus \R V$ along $\partial \D$.  Infinitesimally, the curves near $\tilde u_\tau \in \mathcal{M}_0$ are modeled by non-trivial sections $\sigma: \D\to N_{\tilde u_\tau}$ satisfying $\sigma|_{\partial \D} \subset \R V.$ Indeed, Proposition \ref{prop_automatic_transversality}-(i) gives $c_N^\delta(\tilde u_\tau)=0$ and thus $\sigma$ never vanishes.  Hence, the nearby disks in the Bishop family do not intersect each other, see \cite{Hofer1993}, and  $u_\tau(\partial \D)$ is strictly monotone towards $P_3^p = \partial \mathcal{D}$ as $\tau$ increases. Moreover, $\pi \circ du_\tau$ never vanishes since $0\leq \wind_\pi(\tilde u_\tau) \leq c_N(\tilde u_\tau)=0$ and thus $u_\tau$ is transverse to the Reeb vector field.

\subsection{Compactness properties}

Before studying the compactness properties of the Bishop family, we assume that the contact form $\alpha$ and the almost complex structure $J$ satisfy the following $C^\infty$-generic conditions:

\begin{itemize}
    \item[H1.] $\alpha$ is nondegenerate up to action $\S(\mathcal{D},\alpha)=\int_\mathcal{D} |d\alpha|$. Notice that the energy of any $\tilde u_\tau$ satisfying \eqref{bishop_family} is bounded by $\S(\mathcal{D},\alpha).$

    \item[H2.]  Let $\tilde u=(a,u): \D \setminus \Gamma \to \R \times (\M\setminus \partial \M)$, $\emptyset \neq \Gamma \subset \D \setminus \partial \D,$ be a somewhere injective $J$-holomorphic punctured disk satisfying the analogous conditions in    \eqref{bishop_family}, and so that at the negative punctures in $\Gamma$, $\tilde u$ is asymptotic to orbits with index $\geq 2$. Then $\# \Gamma=1$ and $\tilde u$ is asymptotic to an index-$2$ orbit at its negative puncture. Moreover, $\tilde u$ is Fredholm regular.

    \item[H3.] Let $\tilde u=(a,u):\C \setminus \Gamma \to \R \times (\M\setminus \partial \M)$, $\Gamma\neq \emptyset$, be a somewhere injective $J$-holomorphic curve with non-vanishing $d\alpha$-area, asymptotic to $P_3^p$ at $\infty$, whose leading eigenvalue has winding number $+1$, and to orbits in $\partial \M$ at the negative punctures in $\Gamma$. Then $\# \Gamma=1$ and $\tilde u$ is asymptotic to some index-$2$ orbit at its negative puncture.
\end{itemize}

Condition H1 is achieved under $C^\infty$-small perturbations of $\alpha$, see Proposition 6.1 in \cite{convex}. Fixing $\mathcal{D}$, condition H2 is achieved for $C^\infty$-generic $J\in \mathcal{J}(\alpha)$, see \cite[Theorem 1.15]{HWZIII}, \cite[Theorem 5.8]{HS1} and \cite{Drago2004}. Indeed, let $\tilde u:\D \setminus \Gamma \to \R \times (\M\setminus \partial \M)$ be as in H2.  If $\#\Gamma \geq 2$ or there exists a negative puncture where $\tilde u$ is asymptotic to an orbit with index $\geq 3$, then $\text{ind}(\tilde u) < 2 - 2\#\Gamma - \chi(\D) + \#\Gamma = 1-\#\Gamma \leq 0$. Since $\tilde u$ is somewhere injective,  such a curve does not exist for a generic $J$. We conclude that $\tilde u$ has precisely one puncture whose asymptotic limit has index $2$. Moreover,  $\tilde u$ is Fredholm regular for a $C^\infty$-generic $J$. Condition H3 also holds for a $C^\infty$-generic $J$. In fact, such a curve is necessarily somewhere injective and as in H2, for a generic $J$, its weighted index is at least $1$. Hence, as before, it has at most $1$ negative puncture, and the asymptotic limit at this negative puncture has index $2$.

From now on, we assume that $\alpha$ and $J$ satisfy conditions H1-H3. The set of such $J$'s for a fixed $\alpha$ is denoted by $\J_{\text{reg}}(\alpha) \subset \J(\alpha)$. Notice that the assumptions of Theorem \ref{main1} still hold if the perturbation of $\alpha$ is sufficiently small in the $C^\infty$-topology. Indeed, we restrict to perturbations that keep $P_3$ as a periodic orbit, and the condition $\mu(P_3^p) \geq 3$ still holds. Moreover, if $P'\subset \mathcal{M} \setminus (\partial \mathcal{M} \cup P_3)$ is contractible and satisfies $\rho(P')=1$ and $\link(P', P_3^p) =0$, then its action must be $> \mathcal{S}(\mathcal{D},\alpha)$. Otherwise, we find a sequence of such nondegenerate perturbations $\alpha_n \to \alpha$ in $C^\infty$ and periodic orbits $P_n$ of $\alpha_n$, satisfying $\rho(P_n)=1$, $\link(P_n,P_3^p)=0$ and $\mathcal{A}(P_n) \leq \mathcal{S}(\mathcal{D},\alpha_n)$,
and converging to a periodic orbit $P\subset \M \setminus (\partial \M \cup P_3)$ of $\alpha$ which is contractible and satisfies $\rho(P)=1$, $\link(P,P_3^p)=0$ and $\mathcal{A}(P) \leq \mathcal{S}(\mathcal{D},\alpha),$ a contradiction. Notice that $P$ cannot coincide with a cover of $P_3$ since $P$ is contractible and its rotation number is $1$.

Since the disks in the Bishop family are automatically transverse, the family $\tilde u_\tau$ persists under $C^\infty$-small perturbations of $J$, and thus we may assume they exist for some $J\in \J_{\text{reg}}(\alpha)$.

We choose three distinct leaves $l_{1}, l_i$ and $l_{-1}$ of the characteristic foliation $(T\mathcal{D}\cap \xi)^\perp$ issuing from $e_0$ to $\partial \mathcal{D}$, in the counterclockwise direction, and reparametrize the disks $\tilde u_\tau$ in the Bishop family so that
\begin{equation}\label{param_Bishop}
u_\tau(z)\in l_z,\quad \forall z\in \{1,i,-1\},\quad \forall \tau \in (0,1).
\end{equation}
This parametrization, introduced in \cite{Hofer1993}, controls the compactness properties of the Bishop family. Take the maximal family $\tilde u_\tau=(a_\tau,u_\tau), \tau \in (0,1)$ of such $J$-holomorphic disks. By Proposition \ref{prop_inside},  $u_\tau(\D) \subset \mathcal{M} \setminus \partial \mathcal{M}$ for every $\tau \in (0,1)$.

\begin{prop}\label{prop_buildings_boundary} Under the conditions H1-H3, the maximal family  $\tilde u_\tau, \tau \in (0,1),$ SFT-converges as $\tau \to 1$ to one of the following  buildings $\mathcal{B}$:
\begin{itemize}

     \item[(i)] $\mathcal{B}=(\tilde v_1,\tilde v_2)$ has two levels. The top level $\tilde v_1$ is a nicely embedded punctured disk $\tilde v_1=(a_1,v_1):\D \setminus \{0\} \to \R \times \M$ satisfying the boundary conditions in \eqref{bishop_family}, with a negative puncture at $0$ asymptotic to some $P_{2,i} \subset \partial \M$. The lowest level $v_2$ is an embedded plane asymptotic to $P_{2, i}$ that projects to one of the hemispheres of $\S_i \setminus P_{2, i}$.

     \item[(ii)] $\mathcal{B}=(\tilde v_1,\tilde v_2)$ has two levels. The top level is a punctured disk $\tilde v_1:\D \setminus \{0\} \to \R \times \M$ whose image is  $(-\infty,0] \times P_3.$ The lowest level is a nicely embedded plane $\tilde v_2$ asymptotic to $P_3^p$, and projecting to $\M\setminus \partial \M$.

     \item[(iii)] $\mathcal{B}=(\tilde v_1,\tilde v_2,\tilde v_3)$ has three levels. The top level $\tilde v_1$ is a punctured disk $\tilde v_1:\D \setminus \{0\}$ whose image is  $(-\infty,0] \times P_3.$ The second level is a nicely embedded cylinder $\tilde v_2:\R \times \R / \Z \to \R \times \M$ with a positive end at $P_3^p$ and a negative end at some $P_{2,i}\subset \partial \M.$ The lowest level consists of an embedded plane $\tilde v_3:\C \to \R \times \M$ asymptotic to $P_{2,i}$ that projects to one of the hemispheres of $\S_i \setminus P_{2,i}$.
\end{itemize}
Moreover, the building in (i) occurs if and only if $\inf_\tau \text{dist} (u_\tau(\partial \D), \partial \mathcal{D}) >0.$
\end{prop}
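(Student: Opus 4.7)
The plan is to apply the SFT compactness theorem, in its version with totally real boundary conditions, to extract a limit building $\mathcal{B}$ from the maximal family $\tilde u_\tau$ as $\tau\to 1^-$. The energies $E(\tilde u_\tau)\leq \S(\mathcal{D},\alpha)$ are uniformly bounded, and the parametrization \eqref{param_Bishop} pins three boundary marked points on three distinct characteristic leaves, which prevents the top-level disk from losing connectivity or forming a boundary node. Hence the top level of $\mathcal{B}$ consists of a single connected punctured disk $\tilde v_1=(a_1,v_1):\D\setminus\Gamma\to\R\times\M$ with $v_1(\partial\D)\subset \mathcal{D}$, $a_1|_{\partial\D}\equiv 0$, whose boundary winds once around $e_0$ in the characteristic foliation. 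The maximum principle applied to the subharmonic function $a_1$ on $\D$ forces every puncture in $\Gamma$ to be negative.

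I would then split according to whether $\inf_\tau \mathrm{dist}(u_\tau(\partial\D),\partial\mathcal{D})>0$ (Case A) or not (Case B). In Case A, $v_1(\partial\D)$ stays inside $\mathcal{D}\setminus\partial\mathcal{D}$, so Proposition~\ref{prop_inside} confines $\tilde v_1$ to $\R\times(\M\setminus\partial\M)$, and its asymptotic limits are forced to lie in $\partial \M$ (otherwise, by automatic transversality and the gluing theorem, the Bishop family could be continued past $\tau=1$, contradicting maximality). Hypothesis H2 then yields $\#\Gamma=1$ with the unique asymptotic limit equal to some $P_{2,i}$ and gives Fredholm regularity of $\tilde v_1$. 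The component of $\mathcal{B}$ attached at this breaking orbit must be a finite-energy plane asymptotic to $P_{2,i}$, which by Theorem~\ref{thm_uniqueness}(i) projects to a hemisphere of $\S_i\setminus P_{2,i}$. This realizes building~(i).

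In Case B, a subsequence of $u_\tau(\partial\D)$ converges to $\partial\mathcal{D}=P_3^p$. By the neck-pinching behavior in SFT compactness near the boundary, the top-level component of $\mathcal{B}$ is then forced to be a $p$-fold cover of the trivial half-cylinder $(-\infty,0]\times P_3$, realized as a map $\D\setminus\{0\}\to\R\times\M$ whose unique negative puncture is asymptotic to $P_3^p$. The winding condition $\wind(u_\tau|_{\partial\D},e_0)=+1$ from \eqref{bishop_family}, together with the transversality of $\mathcal{D}$ to the Reeb flow near $P_3^p$, passes to the limit and fixes the leading eigenvector of the curve glued below $\tilde v_1$ to have winding $+1$ in the disk framing. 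Matching at the breaking orbit then forces the next level to contain a single non-trivial component $\tilde w$ with a positive puncture at $P_3^p$ and winding $+1$ at that end. By H3, the underlying simple curve of $\tilde w$ is either a plane asymptotic to $P_3^p$, giving building~(ii), or a cylinder with a negative end at some $P_{2,i}$, in which case Theorem~\ref{thm_uniqueness}(i) identifies the further bubble as a hemisphere of $\S_i\setminus P_{2,i}$, giving building~(iii).

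The main technical obstacle is to rule out spurious extra components in $\mathcal{B}$, such as closed bubble spheres, additional intermediate-level components, or curves asymptotic to orbits other than $P_3^p$ and the $P_{2,i}$. Here the standing hypothesis $\P'=\emptyset$ of Theorem~\ref{main1} plays the central role: any such extraneous component would have asymptotic limits bounded in action by $\S(\mathcal{D},\alpha)$ and, by the winding-number estimates of Theorem~\ref{thm_Wendl_windpi} together with linking computations against a $p$-disk for $P_3$, would produce a contractible periodic orbit with $\rho=1$, unlinked with $P_3^p$, and action at most $\S(\mathcal{D},\alpha)$, contradicting $\P'=\emptyset$. Finally, the ``if and only if'' assertion is immediate from the case split, since the top-level asymptotic at $P_3^p$ in (ii) and (iii) forces $u_\tau(\partial\D)\to P_3^p$, while in (i) the top-level boundary is contained in a definite strictly smaller subdisk of $\mathcal{D}$.
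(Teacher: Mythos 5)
Your overall skeleton matches the paper's: the same dichotomy on $\inf_\tau \mathrm{dist}(u_\tau(\partial \D),\partial\mathcal{D})$, the same use of H1--H3, and the same identification of the bottom level with a hemisphere of $\S_i\setminus P_{2,i}$. However, there are genuine gaps at the two places where the argument actually has content. First, in Case B you assert that ``neck-pinching behavior'' forces the top level to be the trivial half-cylinder over $P_3^p$. This is not automatic: a priori the $C^\infty_{\mathrm{loc}}$-limit $\tilde v_1:\D\setminus\Gamma_1\to\R\times\M$ could have positive $d\alpha$-area while its projection meets $P_3$. The paper rules this out by the degree argument of Hofer--Wysocki--Zehnder (\cite[Theorem~4.4]{char1}): if $\tilde v_1$ had positive $d\alpha$-area and $v_1$ intersected $P_3$, then $\tilde u_n$ would intersect $\R\times P_3$ for large $n$, which is impossible. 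Only after the $d\alpha$-area is shown to vanish can one conclude $\tilde v_1$ is the half-cylinder $(-\infty,0]\times P_3$ (using that $\mathcal{D}$ is a $p$-disk, so $u_n(\partial\D)\to P_3^p$). You need to supply this step.

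Second, in Case A your justification that the negative asymptotic limits of $\tilde v_1$ lie in $\partial\M$ --- ``otherwise, by automatic transversality and the gluing theorem, the Bishop family could be continued past $\tau=1$'' --- is not a valid argument: a puncture asymptotic to an interior orbit does not produce a continuation of the family (there is nothing to glue). The maximality contradiction is used in the paper only for the case $\Gamma=\emptyset$. The correct route, which you only gesture at in your closing paragraph, is: (a) a descent argument through the building, using $0\le\wind_\pi$ and the winding formula, showing that every asymptotic limit of $\tilde v_1$ has index $\ge 2$ (an index-$\le 1$ breaking orbit would eventually force a finite-energy plane asymptotic to an orbit of index $\le 1$, contradicting $\wind_\pi\ge 0$); (b) the Fredholm index count behind H2, which then forces $\#\Gamma=1$ with breaking orbit of index exactly $2$; and (c) the identification of that orbit as a Lyapunov orbit via $\rho=1$, unlinkedness with $P_3^p$ (inherited from the circles $u_n(S)$), the action bound $\le\S(\mathcal{D},\alpha)$, and the hypothesis $\P'=\emptyset$. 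The same three-step argument is needed for the level $\tilde v_2$ in Case B before H3 can be invoked, since H3 presupposes that the negative asymptotic limits are covers of orbits in $\partial\M$. Without the index lower bound (a), neither H2 nor H3 applies as stated.
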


\begin{proof}
The parametrization \eqref{param_Bishop} prevents bubbling-off points at the boundary $\partial \D$. In fact, it follows from \cite[Theorem 2.1]{char1}, see also \cite{Hofer1993}, that there exists $\epsilon>0$ small such that
$
\sup\{|\nabla \tilde u_\tau(z)|:1-\epsilon<|z|\leq 1, \tau\in(0,1)\}<+\infty.
$
Hence $\tilde u_\tau$ only admits bubbling-off points in the interior of $\D$. Consider a sequence $\tau_n\to 1^-$ as $n \to \infty$ and denote $\tilde u_n=(a_n,u_n):=\tilde u_{\tau_n}$. Since every bubbling-off point takes away at least $\gamma/2>0$ of the $d\alpha$-area ($\gamma>0$ is the shortest period of a periodic orbit) and $\int_\D \tilde u_n ^* d\alpha$ is uniformly bounded by $\S(\mathcal{D}, \alpha)$, we find a finite set  $\Gamma \subset \D \setminus \partial \D$ so that, up to a subsequence, $\tilde u_n \to \tilde v_1=(b_1,v_1):\D \setminus \Gamma \to \R \times \M$ in $C^\infty_{\text{loc}}$ as $n\to +\infty$. Every puncture of $\tilde v_1$ is necessarily negative.

Assume first the $\inf_\tau \text{dist}(u_\tau(\partial \D),\partial \mathcal{D})>0.$ Since $\mathcal{D}\setminus \partial \mathcal{D}$ contains no periodic orbits, we conclude that $\tilde v_1$ satisfies the boundary conditions in \eqref{bishop_family} and its $d\alpha$-area is positive. If $\Gamma = \emptyset,$ then $\tilde v_1$ is an embedded disk since it is the limit of embeddings and $\tilde v_1$ is embedded near the boundary. Indeed, recall that $\partial_r b_1 >0$ and $v_1|_{\partial \D}$ transversely intersects $(T\mathcal{D} \cap \xi)^\perp$. Since $\wind(v_1|_{\partial \D},e_0)=\wind(u_n|_{\partial \D},e_0)=+1$ for every $n$, it follows that  $v_1|_{\partial \D}$ and $\tilde v_1$ are embedded near the boundary.  By the automatic transversality of the disks in the Bishop family, see Proposition \ref{prop_automatic_transversality}-(i), we conclude that $\tilde v_1$ lies in the interior of a unique local family of such disks containing $\tilde u_n$, $n$ large, contradicting the maximality of the family $\tilde u_\tau, \tau \in (0,1)$. Hence $\Gamma \neq \emptyset.$  Since the energy of $\tilde u_n$ is uniformly bounded by $\S(\mathcal{D},\alpha)$ and $\alpha$ is nondegenerate up to action $\S(\mathcal{D}, \alpha)$, see condition H1, we can invoke the SFT compactness theorem to obtain a building $\mathcal{B}$ which is the SFT-limit of $\tilde u_n$, up to a subsequence, so that the top level of $\mathcal{B}$ is $\tilde v_1$. We follow \cite{HS1}, and $\mathcal B$ has the structure of a rooted tree with $v_1$ as the root. Moreover, $\mathcal B$ has no trivial cylinder. Notice that $\tilde v_1$ is somewhere injective since it is embedded near the boundary. The curves below $\tilde v_1$ have precisely one positive puncture and an arbitrary number of negative punctures. Also, every asymptotic limit of a curve in $\mathcal{B}$ is contractible and thus its index is well-defined. Moreover, for every curve in $\mathcal{B}$, the action of every asymptotic limit is uniformly bounded by $\mathcal{S}(\mathcal{D},\alpha)$. If necessary, we may see the disks in the Bishop family lifted to the symplectization $\R \times S^3$ whose almost complex structure is the one lifted from $\R \times L(p,q)$ under $\pi_{p,q}:S^3 \to L(p,q)$. The same holds for the limiting curves in $\mathcal{B}$ after taking a subsequence. In particular, the asymptotic limits have a well-defined Conley-Zehnder index computed in a symplectic global frame of $\xi \to S^3$.

First, we prove that all asymptotic limits of $\tilde {v} _1$ have index  $\geq 2$. This argument is found in \cite{HS1}. Suppose by contradiction that $\tilde v_1$ has an asymptotic limit $\gamma$ with index $\leq 1$ at a negative puncture in $\Gamma$. Then $\mathcal{B}$ contains a curve $\tilde v_2:\C \setminus \Gamma_2\to \R \times \M$ below $\tilde v_1$, and positively asymptotic to $\gamma$ at  $\infty$.  All punctures in $\Gamma_2$ are negative, and their asymptotic limits are contractible. We claim that $\tilde v_2$ has an asymptotic limit at a negative puncture whose index is $\leq 1$. If the $d\alpha$-area of $\tilde v_2$ vanishes, then $\Gamma_2 \neq \emptyset$ and $\tilde v_2$ is a $k$-branched cover of the trivial cylinder over a contractible Reeb orbit $\hat \gamma$. We may assume that $\#\Gamma_2 \geq 2$, otherwise $\tilde v_2$ is a trivial cylinder and the claim trivially holds. Since the index of $\gamma=\hat \gamma^k$ is $\leq 1$, the index of $\hat \gamma$ is necessarily $\leq 1$. Thus, the asymptotic limit at a negative puncture of $\tilde v_2$ has the form $\hat \gamma^m$ for some $m< k$. Hence, $\hat \gamma^m$ has index $\leq 1.$
Now, assume that the $d\alpha$-area of $\tilde v_2$ is positive. Let $\wind_\infty(z)$ be the winding number of the leading eigenvalue of $\tilde v_2$ at $z\in\Gamma\cup\{\infty\}$. Since the index of $\gamma$ is $\leq 1$, we have $\wind_\infty(\infty) \leq 0$. This implies that $0\leq \wind_\pi (\tilde v_2)= \wind_\infty(\infty) - \sum_{z\in \Gamma_2}\wind_\infty(z)-(1-\#\Gamma_2) \leq -1 - \sum_{z\in \Gamma_2} (\wind_\infty(z)-1)$. It follows that there exists $z\in \Gamma_2$ such that $\wind_\infty(z)\leq 0$. This implies that $\gamma_z$ has index $\leq 0$.   We conclude in both cases that $\tilde v_2$ has an asymptotic limit at a negative puncture with index $\leq 1$. Similarly, we find a curve $\tilde v_3 \in \mathcal{B}$ below $\tilde v_2$ with a negative puncture whose asymptotic limit has index $\leq 1$. Continuing this procedure, we eventually find a $J$-holomorphic plane in a leaf of $\mathcal{B}$, which is asymptotic to a Reeb orbit with index $\leq 1$. However, such a plane cannot exist, and we conclude that every asymptotic limit of $\tilde v_1$ has index $\geq 2$.

Next, we claim that $\#\Gamma = 1$ and $\tilde v_1$ is asymptotic to an index-$2$ orbit $P_2=P_{2,i}\subset \partial \mathcal M$ for some $i$.
Since $\tilde v_1$ is somewhere injective, it follows from condition H2, that $\tilde v_1$ has precisely one negative puncture whose asymptotic limit $P_2$ has index $2$. Hence $\tilde v_1$ is a punctured $J$-holomorphic disk, and we may assume, after parametrization, that $\Gamma = \{0\}$. Since $P_2$ is contractible and has rotation number $1$, it is geometrically distinct from $P_3$. This implies that the $d\alpha$-area of $\tilde v_1$ is positive. Also, $P_2$ is unlinked with $P_3^p$ since the image of any circle $S\subset \D$ under $u_n$ is unlinked with $P_3^p$. Hence, the same holds for $\tilde v _1$. Moreover, since the action of $P_2$ is $\leq \mathcal{S}(\mathcal{D},\alpha)$, we conclude from our linking hypothesis that $P_2$ coincides with one of the Lyapunov orbits, say $P_2 = P_{2,i_0}$ for some $i_0\in \{1,\ldots,l\}$. In particular, $\tilde v_1$ is an embedded punctured disk asymptotic to $P_{2,i_0}$ at its negative puncture.

The  curve $\tilde v_2=(v_2,b_2):\C \setminus \Gamma_2 \to \R \times \mathcal{M}$ below $\tilde v_1$ is asymptotic to $P_{2,i_0}$ at its positive puncture $\infty$. By Proposition \ref{prop_inside}-(i), $\tilde v_{2}$ must coincide with one of the curves projecting to a hemisphere of $\mathcal{S}_{i_0} \setminus P_{2,i_0}.$ We may assume that $\tilde v_2 = \tilde u_{i_0,1}$ and the building $\mathcal{B}$ has no other levels besides the ones formed by $\tilde v_1$ and $\tilde v_2$. Thus $\mathcal{B}$ is as in (i).

Now assume that  $\inf_\tau \text{dist}(u_\tau(\partial \D), \partial \mathcal{D}) =0$, and pick $\tilde u_n := \tilde u_{\tau_n}$ with $\tau_n \to 1$ as $n\to \infty$. As before, no bubbling-off point occurs at the boundary $\partial \D$. Moreover, the  first level of $\mathcal{B}$ is a punctured disk $\tilde v_1=(b_1,v_1):\D \setminus \Gamma_1 \to \R \times \mathcal M$, $\Gamma_1\subset \D \setminus \partial \D,$ so that $v_1$ intersects $P_3$. If $\tilde v_1$ has positive $d\alpha$-area, then an important fact proved in \cite[Theorem 4.4]{char1} using a degree argument implies that $\tilde u_n$ intersects $\R \times P_3$ for $n$ sufficiently large, a contradiction. Hence the $d\alpha$-area of $\tilde v_1$ vanishes and $\tilde v_1$ is a trivial half-cylinder over $P_3^p$ whose image is $(-\infty,0] \times P_3$. This follows from the fact that $\mathcal{D}$ is a $p$-disk for $P_3$ and thus $u_n(\partial \D)$ converges to $P_3^p$ as $n\to \infty$. The curve $\tilde v_2=(b_2,v_2):\C \setminus \Gamma_2 \to \R \times \mathcal M$ below $\tilde v_1$ is asymptotic to $P_3^p$ at its positive puncture $\infty$. We may assume that $\tilde v_2$ is not a trivial cylinder over $P_3^p$. Also, it cannot be a branched cover of a trivial cylinder over $P_3^{p_0},$ $p_0$ divides $p$, since $P_3^j$ is non-contractible for every $0<j=mp_0 <p, m\in \N,$ and all asymptotic limits of $\tilde v_2$ are contractible. Hence, the $d\alpha$-area of $\tilde v_2$ is positive, and the leading eigenvalue of $\tilde v_2$ at $\infty$ has winding number $1$ with respect to a global symplectic frame of $\xi \to S^3$. This follows from the fact that the lift of $P_3$ to $S^3$ is a $\Z_p$-symmetric trivial knot $\hat P_3\equiv P_3^p$ and the lift of the image of any circle $S\subset \D$ under $\tilde u_n$ is unlinked with $\hat P_3$. In particular, the leading eigenvector, when projected to $P_3^p$ is simple in the sense that it does not cover an eigenvector along a smaller iterate of $P_3$. This implies that $\tilde v_2$ is somewhere injective. Arguing as before, the index of every asymptotic limit of $\tilde v_2$ at a negative puncture is $\geq 2$. The inequality $0 \leq \wind_\pi(\tilde v_2) = \wind_\infty(\infty) - \sum_{z\in \Gamma_2} \wind_\infty(z) -(1-\#\Gamma_2)=-\sum_{z\in \Gamma_2}( \wind_\infty(z) - 1)$ implies that  $P_z$ has index $2$ for every $z\in \Gamma_2$. Indeed, notice that $\wind_\infty(z) \geq 2$ if $P_z$ has index $\geq 3$. Since $P_z$ is unlinked with $P_3$ and the only orbits with action $\leq \S(\mathcal{D},\alpha)$ that are unlinked with $P_3$ are the orbits in $\partial \mathcal M$, we conclude that $P_z \subset \partial \M, \forall z\in \Gamma_2$.  Our generic choice of $J$ implies that $\#\Gamma_2\in \{0,1\}$, see condition H3.

If $\Gamma_2=\emptyset$, then $\tilde v_2$ is an embedded plane asymptotic to $P_3^p$, and not intersecting $\R \times P_3$. Its projection $v_2$ is an embedding, see Theorem 5.20 in \cite{Si2}. By Proposition \ref{prop_inside}, $v_2(\C) \subset \mathcal{M} \setminus \partial \mathcal{M}$. The building $\mathcal{B}=(\tilde v_1,\tilde v_2)$ is as in (ii). If $\#\Gamma_2=1$,  let $P_{2,i_0}$ be the asymptotic limit at the negative puncture in $\Gamma_2$.  The level $\tilde v_3$ below $\tilde v_2$ must then be a plane asymptotic to $P_{2,i_0}$ projecting to a hemisphere of $\mathcal{S}_{i_0} \setminus P_{2,i_0}$, see Proposition \ref{prop_inside}  As limits of embedded curves, both $\tilde v_2$ and $\tilde v_3$ are embedded and do not intersect any of its asymptotic limits. Therefore, their projections are also embedded, see Theorem 5.20 in \cite{Si2}. This gives the building in (iii). \end{proof}

\begin{prop}\label{prop_no_boundary} There exists a sequence of $J$-holomorphic disks $\tilde u_n=(a_n,u_n):\D \to \R \times
\mathcal{M}$ satisfying the conditions in \eqref{bishop_family}, and so that $\inf_n \text{dist}(u_n(\partial \D),\partial \mathcal{D}) =0$. Moreover, $\tilde u_n$ admits an SFT-limit as in Proposition \ref{prop_buildings_boundary}-(ii) or (iii).
\end{prop}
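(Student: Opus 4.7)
My plan is to chain Proposition~\ref{prop_buildings_boundary} with a \emph{monotone continuation} argument that upgrades unwanted type-(i) SFT-limits into new Bishop families whose boundaries sweep progressively closer to $P_3^p$. Starting from the maximal Bishop family $\tilde u_\tau$, $\tau \in (0,1)$, normalized by \eqref{param_Bishop}, I examine its SFT-limit as $\tau \to 1^-$. In cases (ii) and (iii) the conclusion is immediate with $\tilde u_n := \tilde u_{\tau_n}$ for any $\tau_n \to 1^-$. The only nontrivial situation is a type-(i) limit: a two-level building consisting of a nicely embedded punctured disk $\tilde v_1$ on $\mathcal{D}$ with a negative asymptote at a Lyapunov orbit $P_{2,i_0}$, breaking onto one of the two hemisphere planes $\tilde u_{i_0,k}$ of $\S_{i_0}\setminus P_{2,i_0}$.

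In this case I would perform an \emph{opposite-side gluing} of $\tilde v_1$ with the other hemisphere plane $\tilde u_{i_0,k'}$, $k'\neq k$. All three curves are Fredholm regular ($\tilde v_1$ by condition~H2 together with Proposition~\ref{prop_automatic_transversality}-(ii); the hemisphere planes by Proposition~\ref{prop_automatic_transversality}-(i)), and $P_{2,i_0}$ is positive hyperbolic with two sign-distinguished leading eigenvector directions, realized respectively by the two hemisphere planes. Hence the standard gluing theorem at nondegenerate hyperbolic ends yields a new smooth one-parameter family $\tilde w_s=(b_s,w_s):\D\to\R\times\M$, $s\in(s_0,+\infty)$, of $J$-holomorphic disks satisfying \eqref{bishop_family} and SFT-converging to $(\tilde v_1,\tilde u_{i_0,k'})$ as $s\to+\infty$. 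Since gluing $\tilde v_1$ with the same-side plane $\tilde u_{i_0,k}$ would, by uniqueness of Bishop disks (\cite{Hofer1993} and Proposition~\ref{prop_automatic_transversality}-(i)), reproduce the original family $\tilde u_\tau$, the opposite-side gluing produces a \emph{genuinely new} family. The disjointness of distinct Bishop disks (again, uniqueness) then forces $w_s(\partial\D)$, for $s$ large, to lie on the opposite side of $\lim_{\tau\to 1^-}u_\tau(\partial\D)$ within $\mathcal{D}$ from every $u_\tau(\partial\D)$, i.e.\ strictly farther from $e_0$ and closer to $\partial\mathcal{D}=P_3^p$.

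Next I extend $\tilde w_s$ to its maximal domain and reapply Proposition~\ref{prop_buildings_boundary}. Either the new SFT-limit is of type (ii) or (iii) — in which case a sequence $\tilde u_n:=\tilde w_{s_n}$ completes the proof — or it is again of type (i) at a possibly different Lyapunov orbit, and I iterate the opposite-side gluing. The hardest step, and the genuine obstacle, is to run this iteration to a useful termination. For this I would track the $|d\alpha|$-area $\int_{\D}\tilde u^*d\alpha$ enclosed by the disks of each generation, observing that monotonicity within a generation and strict monotonicity across generations are forced by uniqueness of Bishop disks, while the uniform bound by $\mathcal{S}(\mathcal{D},\alpha)$ prevents unbounded growth. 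If the iteration halts in type (ii) or (iii) after finitely many steps we are done; otherwise a diagonal selection of disks $\tilde u_n$ across generations has enclosed areas converging to $\mathcal{S}(\mathcal{D},\alpha)$. Because successive generations of boundaries are nested strictly outward in $\mathcal{D}$, one has $\mathrm{dist}(u_n(\partial\D),\partial\mathcal{D})\to 0$ in this alternative, and Proposition~\ref{prop_buildings_boundary} then forces any SFT-limit of $\tilde u_n$ to be of the required type (ii) or (iii), which establishes both conclusions.
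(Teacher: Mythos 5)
Your overall strategy coincides with the paper's: at each type-(i) break, glue the punctured Bishop disk to the \emph{opposite} hemisphere plane, use uniqueness of Bishop disks (Abbas--Hofer) to see that the new family's boundaries lie strictly outside the old ones, and iterate. The gap is in your termination argument for the case of infinitely many generations. Monotonicity of the enclosed $|d\alpha|$-areas plus the bound $\mathcal{S}(\mathcal{D},\alpha)$ only yields convergence of the areas to some $A_\infty\leq \mathcal{S}(\mathcal{D},\alpha)$; nothing forces $A_\infty=\mathcal{S}(\mathcal{D},\alpha)$, and a strictly nested sequence of circles in $\mathcal{D}$ can perfectly well accumulate on a circle strictly inside $\mathcal{D}$. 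There is moreover no quantized area gain between generations: both hemisphere planes of $\S_i\setminus P_{2,i}$ have $d\alpha$-area equal to $\mathcal{A}(P_{2,i})$, so swapping one for the other in the gluing does not jump the area, and the cumulative gain over infinitely many generations can be arbitrarily small. Hence your dichotomy ``finitely many steps, or areas converge to $\mathcal{S}(\mathcal{D},\alpha)$'' is unjustified, and the inference from ``boundaries nested strictly outward'' to $\mathrm{dist}(u_n(\partial\D),\partial\mathcal{D})\to 0$ is false as stated.

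The missing ingredient is the rigidity of the broken pieces. Each generation ending in a type-(i) building produces a new embedded punctured disk $\tilde w_k:\D\setminus\{0\}\to\R\times\M$ satisfying \eqref{bishop_family} with a negative end at some $P_{2,i}$; these are pairwise distinct (their boundaries are strictly nested), and by condition H2 together with Proposition \ref{prop_automatic_transversality}-(ii) they are Fredholm regular of index $0$, hence isolated in their moduli space. If $\inf_k\mathrm{dist}(u_k(\partial\D),\partial\mathcal{D})>0$, the uniform energy bound and the compactness analysis of Proposition \ref{prop_buildings_boundary} would force a subsequence of the $\tilde w_k$ to accumulate, contradicting this isolation. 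That is how the paper closes the loop; your area bookkeeping cannot substitute for it. (Separately, you should also verify that the glued disks remain in $\mathcal{M}\setminus\partial\mathcal{M}$, which requires the shifting argument with the hemisphere planes rather than just uniqueness of Bishop disks.)
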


\begin{proof}
Let us assume that the maximal family $\tilde u_\tau, \tau\in (0,1),$ issuing from the nicely elliptic singularity of $(T\mathcal{D} \cap \xi)^\perp$ for $\tau=0$, satisfies $\inf_\tau \text{dist}(u_\tau(\partial \D),\partial \mathcal{D}) >0$. Then it breaks into a building $\mathcal{B}=(\tilde u_1,\tilde v_1)$ as in Proposition \ref{prop_buildings_boundary}-(i). Moreover, $\tilde u_1:\D \setminus \{0\} \to \R \times \mathcal{M}$ is an embedded $J$-holomorphic punctured disk satisfying the boundary conditions in \eqref{bishop_family}, it is asymptotic at $0\in \D$ to $P_{2,i}\subset \S_i\subset \partial \M$ for some $i$, and $\tilde v_1:\C \to \R \times \mathcal{M}$ is one of the planes $\tilde u_{i,1}$ or $\tilde u_{i,2}$. Let us assume without loss of generality that  $\tilde v_1=\tilde u_{i,1}$.  Our generic choice of $J$ implies that $\tilde u_1$ is a regular curve. Since $\tilde u_{i,2}$ is also an embedded regular curve, we can glue the embedded curves $\tilde u_1$ and $\tilde u_{i,2}$, see \cite{HT1, HT2}, to obtain a new family of embedded $J$-holomorphic disks, denoted $\tilde u_\tau, \tau\in (1,1+\epsilon), \epsilon>0$ small.

\begin{lem} Every disk $\tilde u_\tau=(a_\tau,u_\tau),\tau\in (1,1+\epsilon),$ satisfies  \eqref{bishop_family}.  Moreover, $u_\tau(\D) \subset \mathcal{M} \setminus \partial \mathcal{M}$, and $u_\tau(\partial \D)$ lies in the exterior of  $u_1(\partial \D)\subset \mathcal{D}$.
\end{lem}

\begin{proof}
 Since $\tilde u_1$ and $\tilde u_{i,2}$ are embedded and their generalized intersection number vanishes, the glued curves $\tilde u_\tau$ are also embedded. The conditions $a_\tau|_{\partial \D} \equiv 0$, $u_\tau(\partial \D) \subset \mathcal{D} \setminus (\partial \mathcal{D} \cup \{e_0\})$ and $\wind(u_\tau|_{\partial \D},e_0) =+1$ follow from similar properties of $\tilde u_1$. We also know that $\wind_\pi(\tilde u_\tau)=0$ and thus $u_\tau$ is transverse to the flow.

 To prove that $u_\tau(\D) \subset \mathcal{M} \setminus \partial \mathcal{M},\forall \tau -1>0$ sufficiently small, we assume by contradiction that $u_\tau(\D) \cap \partial \mathcal{M} \neq \emptyset$ for some $\tau-1>0$ arbitrarily small. Since $\partial \mathcal{M}$ is the union of projections of $J$-holomorphic curves in $\R \times \partial \mathcal{M}$,  $\tilde u_\tau$ admits only isolated intersections with a curve $\tilde v$ projecting to $\S_j \subset \partial \M$ for some $j$. Since $u_\tau$ is transverse to the flow, we may assume that $\tilde v=(b,v):\C \to \R \times \mathcal{M}$ projects to one of the hemispheres of $\S_j \setminus P_{2,j}$.  Notice that $u_\tau(\partial \D)$ and $P_{2,j}$  are unlinked and $v(\C) \cap u_\tau(\partial \D) = \emptyset.$ Consider the shifted curves $\tilde v_a:=a+ \tilde v, a\in \R.$ Notice that  $u_\tau(\partial \D)$ does not intersect $v_a(\C) = v(\C)\subset \partial \M$. Since $a_\tau|_{\D} \leq 0$ and $b(z) \to +\infty$ as $z\to \infty$,  the points in $\C$ and $\D$ corresponding to intersections between $\tilde v_a,a\geq 0,$ and $\tilde u_\tau$ stay uniformly bounded in $\C$  and inside some disk $\D_r=\{z\in \C:|z|\leq r\} \subset \D,$  $0<r<1,$ respectively. However, for $a>0$ sufficiently large, such intersections cannot exist since $a_\tau|_{\D}\leq 0$ and $a+\min_{z\in \C} b(z) \to +\infty$ as $a\to +\infty$. From the stability and positivity of intersections between pseudo-holomorphic curves,  intersections between $\tilde u_\tau$ and $\tilde v_a$ cannot exist for any $a\geq 0,$ a contradiction.

 To prove that $u_\tau(\partial \D)$ lies in the exterior of $u_1(\partial \D)\subset \mathcal{D}\setminus \partial \mathcal{D}$, we need the following result from \cite{AH}  concerning uniqueness of pseudo-holomorphic discs.

\begin{prop}[Abbas-Hofer {\cite[Theorem 7.6.3]{AH}}]\label{prop_abbas_hofer}
Let $\tilde v,\tilde u_\tau:\D \to \R \times M,\tau\in(-1,1),$ satisfy \eqref{bishop_family}. Assume that $\tilde u_\tau(\D) \cap \tilde v(\D)=\emptyset$ for every $\tau \in (0,\epsilon)$,  and that $\tilde u_0(\D) \cap \tilde v(\D) \neq \emptyset.$ Then $\tilde v = \tilde u_0$, up to a bi-holomorphism of $\D$.
\end{prop}

We know that  $\tilde u_\tau(\D) \cap \tilde u_1(\D\setminus \{0\})=\emptyset$ for every $\tau<1$ sufficiently close to $1$. This follows from the fact that the sections of the normal bundle representing infinitesimal variations of the disks in the Bishop family never vanish. The same holds for the family $\tilde u_\tau$ for $\tau>1$ sufficiently close to $1$. It follows from this fact and  $\tilde u_\tau(\D) \subset \mathcal{M} \setminus \partial \mathcal{M}$ that if we fix $\tau_0<1$ arbitrarily close to $1$, then for every $\tau>1$ sufficiently close to $1$, we have $\tilde u_{\tau_0}(\D) \cap \tilde u_\tau(\D)=\emptyset.$

Let us assume by contradiction that $u_\tau(\partial \D),\tau>1,$ lies in the same component of $\mathcal{D} \setminus u_1(\partial \D)$  as $u_\tau(\partial \D), \tau<1$, i.e., in the disk-like region  bounded by $u_1(\partial \D)\subset \mathcal{D}$. We may fix $\tau_0<1$ sufficiently close to $1$ and choose $\tau_0^*>1$ sufficiently close to $1$ so that $\tilde u_{\tau_0}(\D) \cap \tilde u_{\tau_0^*}(\D) \neq \emptyset.$ 
We may assume that $\tilde u_{\tau_0}(\D) \cap (\tilde u_1(\D \setminus \{0\})\cup (\R \times \partial \mathcal{M})) = \emptyset$ and $\tilde u_{\tau_0^*}(\D) \cap (\tilde u_1(\D \setminus \{0\})\cup (\R \times \partial \mathcal{M})) = \emptyset$ as explained before. From the strict monotonicity of the circles $u_\tau(\partial  \D)\subset \mathcal{D}$, we know that there exists $\tau_1^*$ satisfying $1<\tau_1^*< \tau_0^*$ such that for  every $1<\tau < \tau_1^*$ we have
\begin{itemize}
    \item $\tilde u_\tau(\D) \cap \tilde u_{\tau_0}(\D) = \emptyset$. In particular, $u_\tau(\partial \D) \cap u_{\tau_0}(\partial \D) =\emptyset$.
    \item $u_{\tau}(\partial \D)$ lies in the annulus  in $\mathcal{D}$ bounded by $u_{\tau_0}(\partial \D)$ and $u_{1}(\partial \D)$.
\end{itemize}
Hence we find $\tau_2^*>1$ satisfying $1<\tau_1^* < \tau_2^* < \tau_0^*$ such that $\tilde u_{\tau_2^*}(\D) \cap \tilde u_{\tau_0}(\D) \neq \emptyset$ and  $\tilde u_\tau(\D) \cap \tilde u_{\tau_0}(\D) = \emptyset$ for every $1< \tau <\tau_2^*$.  By Proposition \ref{prop_abbas_hofer}, this implies that $\tilde u_{\tau_2^*}(\D) = \tilde u_{\tau_0}(\D)$. This is a contradiction since for $\tau_0<1<\tau_0^*$ sufficiently close to $1$, $u_{\tau_0}(\D)$ is arbitrarily close to  $u_1(\D \setminus \{0\})\cup P_{2,j} \cup v_1(\C)$ and $u_{\tau_2^*}(\D)$ is arbitrarily close to  $u_1(\D \setminus \{0\}) \cup (\S_j\setminus v_1(\C))$. We conclude that $u_\tau(\partial \D)$ lies in the exterior of $u_1(\partial \D)\subset \mathcal{D}.$
\end{proof}

We continue the proof of Proposition \ref{prop_no_boundary}. Consider the maximal family $\tilde u_\tau=(a_\tau,u_\tau),$ $\tau \in (1,2),$ satisfying \eqref{bishop_family}. Notice that
$\tilde u_\tau$ is embedded for every $\tau$, $\tilde u_\tau(\D) \cap \tilde u_{\tau'}(\D) = \emptyset, \forall \tau \neq \tau',$ and if $\tau < \tau'$
then $u_\tau(\partial \D)$ is contained in the interior of $u_{\tau'}(\partial \D) \subset \mathcal{D}.$ If $\inf_{\tau\in(1,2)} \text{dist}(u_\tau(\partial \D),\partial \mathcal{D}) =0$, then the proof of Proposition \ref{prop_no_boundary} is finished. Otherwise, take a sequence $\tilde w_n =(c_n,w_n):= \tilde u_{\tau_n}, \tau_n \to 2$, and apply Proposition \ref{prop_buildings_boundary}-(i) to obtain a new embedded half-cylinder $\tilde u_2=(a_2,u_2):\D \setminus \{0\} \to \R \times \mathcal{M}$ and a plane $\tilde v_2=(b_2,v_2):\C \to \R \times \mathcal{M}$ so that $u_2(\D) \setminus \{0\} \subset \mathcal{M} \setminus \partial \mathcal{M}$ and $v_2(\C) \subset \S_j\subset \partial \mathcal{M}$ for some $j$. Gluing $\tilde u_2$ with the plane projecting to the hemisphere $\S_j \setminus (P_{2,j} \cup v_2(\C))$ as before, we obtain a new family of embedded disks $\tilde u_\tau=(a_\tau,u_\tau),\tau \in (2,2+\epsilon),$ satisfying \eqref{bishop_family}. Using Proposition \ref{prop_abbas_hofer}, we conclude that $u_\tau(\partial \D)$ lies in the exterior of $u_2(\partial \D)\subset \mathcal{D}$ and the monotonicity of the boundary $u_\tau|_{\partial \D}$ still holds. Repeating this procedure either after finitely many steps we find a maximal family $\tilde u_\tau, \tau\in (k,k+1)$ satisfying \eqref{bishop_family} and so that $\inf_{\tau\in(k,k+1)} \text{dist}(u_\tau(\partial \D), \partial \mathcal{D}) =0$, or, after a gluing procedure as above, we obtain countably many families $\tilde u_\tau, \tau \in (k+1,k+2), k\in \N.$ In the first case, the proof of Proposition \ref{prop_no_boundary} is finished. In the second case, since every half-cylinders $\tilde w:\D \setminus \{0\} \to \R \times \M$ satisfying \eqref{bishop_family} and asymptotic to some $P_{2,i}$ is regular, $\tilde w$ is isolated in the space of such half-cylinders with the same asymptotic limit at the negative puncture. Hence, such half-cylinders cannot accumulate away from $\partial \mathcal{D}$ and thus Proposition \ref{prop_buildings_boundary} tells us that we necessarily have $\inf_{k\in \N} \text{dist}(u_k(\partial \D), \partial \mathcal{D}) =0$. Hence, the desired sequence of $J$-holomorphic disks is also obtained in this case. \end{proof}

By Proposition \ref{prop_no_boundary}, either we find a nicely embedded $J$-holomorphic plane $\tilde u=(a,u):\C \to \R \times \mathcal{M}$ asymptotic to $P_3^p$, or a nicely embedded $J$-holomorphic cylinder $\tilde v=(b,v):\C \setminus \{0\} \to \R \times \mathcal{M}$ with a positive puncture at $\infty$ asymptotic to $P_3^p$, whose leading eigenvalue has winding number $1$, and a negative puncture at $0$ asymptotic to some $P_{2,j} \subset \S_j$. We must have $u(\C) \subset \mathcal{M} \setminus (\partial \mathcal{M}\cup P_3)$ and $v(\C \setminus \{0\}) \subset \mathcal{M} \setminus (\partial \mathcal{M}\cup P_3)$ in each case, see Proposition \ref{prop_inside}. In the second case, we can glue $\tilde v$ with a $J$-holomorphic curve projecting to one of the hemispheres in $\S_j \setminus P_{2,j}$ to obtain a holomorphic plane $\tilde u=(a,u):\C \to \R \times \mathcal{M}$ asymptotic to $P_3^p$ such that $u(\C) \subset \mathcal{M}\setminus (\partial \mathcal{M} \cup P_3)$ as in the first case. Indeed, in the gluing procedure, we may consider only sections of the normal bundle that converge to $P_3^p$ with sufficiently large exponential decay, so that the glued curve $\tilde u$ approaches $P_3^p$ at $\infty$ with a leading eigenvalue that has winding number $1$ with respect to the global trivialization of the contact structure. In particular, intersections with $P_3^p$ are not created from infinity and $\wind_\pi(\tilde u) = \wind_\infty(\infty) -1 = 0$, implying that $u$ is transverse to the flow. The fact that $u$ is an embedding follows from standard intersection theory of pseudo-holomorphic curves, see \cite{HS2, Si2}. We have proved the following proposition.

\begin{prop}
    There exists an embedded $J$-holomorphic plane $\tilde u=(a,u):\C \to \R \times \mathcal{M}$ asymptotic to $P_3^p,$ so that its projection $u:\C \to \mathcal{M} \setminus (\partial \mathcal{M} \cup P_3^p)$ is embedded and transverse to the flow. The winding of the leading eigenvalue of $\tilde u$ at $\infty$ is $+1.$
\end{prop}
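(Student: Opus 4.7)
The plan is to feed the proposition directly from Proposition \ref{prop_no_boundary}, which produces a sequence of $J$-holomorphic disks $\tilde u_n=(a_n,u_n)$ satisfying \eqref{bishop_family} whose boundary circles approach $\partial \mathcal{D}\equiv P_3^p$, and whose SFT-limit is either a building of type (ii) or of type (iii) in Proposition \ref{prop_buildings_boundary}. In the (ii) case, the lowest level is already a nicely embedded plane $\tilde u=(a,u):\C\to \R\times \M$ asymptotic to $P_3^p$, projecting to $\M\setminus \partial \M$, and one only has to verify the winding assertion; in the (iii) case, one gets an embedded cylinder $\tilde v=(b,v)$ with positive end at $P_3^p$ of leading winding $+1$ and negative end at some $P_{2,j}$, together with one of the two hemisphere planes $\tilde u_{j,k}$ for $\mathcal{S}_j\setminus P_{2,j}$. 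I would then glue $\tilde v$ to $\tilde u_{j,k}$ along $P_{2,j}$ to produce the desired plane $\tilde u$.

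For the gluing step, both $\tilde v$ and $\tilde u_{j,k}$ are Fredholm regular by Proposition \ref{prop_automatic_transversality}-(iv) and -(i), and their leading eigenvalues at $P_{2,j}$ match (they are the unique negative/positive eigenvalue of $A_{P_{2,j}}$ whose eigenspace is tangent to $\mathcal{S}_j$, i.e.\ the one compatible with the hemisphere normal configuration). A standard pregluing plus Newton iteration then yields a one-parameter family of embedded $J$-holomorphic planes $\tilde u$ asymptotic to $P_3^p$; because the gluing is done from embedded pieces whose mutual generalized intersection number vanishes, the glued plane is embedded as well.

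The main obstacle I expect is ensuring that the leading eigenvalue of the glued plane $\tilde u$ at $+\infty$ still has winding $+1$, rather than some more negative eigenvalue with larger winding. To control this, I would perform the gluing in an exponentially weighted Sobolev space at the positive puncture with weight $\delta>0$ chosen so that $-\delta$ lies strictly between the negative eigenvalue of $A_{P_3^p}$ of winding $1$ and the next smaller negative eigenvalue (which has winding $\geq 2$). In this space, admissible perturbations must decay faster than $e^{-\delta s}$, so the asymptotic formula of Theorem \ref{thm: half cylinder 1} forces the leading eigenvalue of $\tilde u$ to be the unique one with winding $1$, matching the winding already carried by $\tilde v$. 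In this way no intersections with $P_3$ are created from the asymptotic end during the gluing, and $\wind_\infty(\infty)=1$ is preserved.

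With $\wind_\infty(\infty)=1$, Proposition \ref{prop_automatic_transversality}-(iii) yields $\ind^\delta(\tilde u)=2$ and $c_N^\delta(\tilde u)=0$, and inequality \eqref{wind_pi_c_N} then gives $\wind_\pi(\tilde u)=0$, so $u$ is everywhere transverse to the Reeb vector field. Proposition \ref{prop_inside} prevents $u(\C)$ from touching $\partial \M$, and the winding $+1$ asymptotic behaviour prevents it from meeting $P_3$. Finally, embeddedness of the projection $u:\C\to \M\setminus (\partial \M\cup P_3)$ follows by Siefring's intersection theory \cite{Si1, Si2, HS2}: the generalized self-intersection $[\tilde u]\ast[\tilde u]$ splits into the algebraic self-intersection in $\R\times \M$ and an asymptotic contribution at $P_3^p$; the winding-$1$ condition together with $\mathrm{sl}(P_3)=-1/p$ makes the asymptotic contribution vanish, and since $\tilde u$ arises as a limit/gluing of embedded curves, its algebraic self-intersection vanishes too. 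Combined with $\wind_\pi(\tilde u)=0$, this promotes embeddedness of $\tilde u$ to embeddedness of $u$, completing the proof.
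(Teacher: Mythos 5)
Your proposal is correct and follows essentially the same route as the paper: invoke Proposition \ref{prop_no_boundary}, and in the cylinder-plus-hemisphere case glue the cylinder to a hemisphere plane using exponential weights at $P_3^p$ to force the leading eigenvalue to have winding $+1$, then deduce $\wind_\pi(\tilde u)=0$ and embeddedness of the projection from intersection theory. The only cosmetic difference is that you spell out the weighted Fredholm setup and the regularity of the glued pieces slightly more explicitly than the paper does.
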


In the case $p=1$, these are the fast planes considered in \cite{hryn, hryn2}. The case $p>1$ was treated in \cite{HLS, HS2}. Let $\mathcal{M}_{P_3^p}$ denote the set of $J$-holomorphic planes $\tilde u=(a,u):\C \to \R \times \mathcal{M}$ asymptotic to $P_3^p$ and satisfying
$$
\left\{\begin{aligned}
& \tilde u \mbox{ and }  u \mbox{ are embeddings},\\
& u(\C) \subset \mathcal{M} \setminus (\partial \mathcal{M} \cup P_3^p),\\
& \wind_\pi(\tilde u) =0.
\end{aligned}
\right.
$$

The $\delta$-weighted index of any $\tilde u\in \mathcal{M}_{P_3^p}$ is $\text{ind}^\delta(\tilde u) = \mu^\delta (\tilde u) - (\chi(\C P^1) - 1) = 3-1=2.$ With this weight,  $\tilde u$ is automatically transverse, and $\mathcal{M}_{P_3^p}$ is a two-dimensional manifold containing all the $\R$-translations of $\tilde u$. One can cut out a transverse one-parameter family of $\tilde u_\tau=(a_\tau,u_\tau)\in \mathcal{M}_{P_3^p},\tau \in (-\epsilon,\epsilon)$, with $\tilde u_0 =\tilde u$, so that
\begin{equation}\label{cap_empty}
u_\tau(\C) \cap u_{\tau'}(\C) = \emptyset, \quad \forall \tau\neq \tau'.
\end{equation}

Consider the maximal family $\tilde u_\tau\in \mathcal{M}_{P_3^p},\tau\in(-1,1),$ satisfying \eqref{cap_empty}, where maximality means that $\cup_\tau u_\tau(\C)\subset \mathcal{M} \setminus \partial \mathcal{M}$  has the largest volume.

\begin{prop}\label{prop_buildings_no_boundary} Under the conditions above, the maximal family  $\tilde u_\tau, \tau \in (0,1),$ converges in the SFT sense  as $\tau \to 1$ (or as $\tau \to -1$) to a two-level building $\mathcal{B}=(\tilde u_1,\tilde v_1)$, so that  $\tilde u_1$ is a nicely embedded cylinder $\tilde u_1=(a_1,u_1):\C \setminus \{0\} \to \R \times \mathcal{M}$  with a positive puncture at $\infty$ asymptotic to $P_3^p=(x_3,pT_3)$, whose leading eigenvalue has winding number $+1$, and a negative puncture at $0$ asymptotic to some $P_{2,i} \subset \partial \M$. The lowest level $\tilde v_1$ is a plane asymptotic to $P_{2, i}$ and projecting to one of the hemispheres of $\S_i \setminus P_{2, i}$.
\end{prop}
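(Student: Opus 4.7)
The plan is to adapt the compactness argument of Proposition \ref{prop_buildings_boundary} to the present setting, where the totally real boundary condition on $\{0\}\times\mathcal{D}$ is replaced by the fixed asymptotic condition $P_3^p$ at $\infty$. Fix a sequence $\tau_n\nearrow 1$ and set $\tilde u_n:=\tilde u_{\tau_n}$; the argument for $\tau\to -1^+$ is identical. Since the Hofer energy $E(\tilde u_n)=\mathcal{A}(P_3^p)\leq\mathcal{S}(\mathcal{D},\alpha)$ is uniformly bounded and $\alpha$ is nondegenerate up to this action by condition H1, the SFT compactness theorem produces, after passing to a subsequence, a holomorphic building $\mathcal{B}$ whose top level
\[
\tilde u_1=(a_1,u_1):\C P^1\setminus(\{\infty\}\cup\Gamma)\to\R\times\mathcal{M}
\]
has a single positive puncture at $\infty$ asymptotic to $P_3^p$. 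Because the winding-$(+1)$ leading eigenvector at $\infty$ is preserved under SFT convergence, $\tilde u_1$ is somewhere injective.

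Next I would rule out $\Gamma=\emptyset$. If $\tilde u_1$ were a plane, Proposition \ref{prop_inside}-(ii) would force $u_1(\C)\subset\mathcal{M}\setminus\partial\mathcal{M}$, and combined with the winding control at $\infty$ this would give $\tilde u_1\in\mathcal{M}_{P_3^p}$. Proposition \ref{prop_automatic_transversality}-(iii) then makes $\tilde u_1$ Fredholm regular of $\delta$-weighted index $2$; together with the disjointness property \eqref{cap_empty}---which implies that infinitesimal variations of $\tilde u_1$ in $\mathcal{M}_{P_3^p}$ are represented by non-vanishing normal sections---and the implicit function theorem, this would continue the family $\{\tilde u_\tau\}$ past $\tau=1$ by disjoint planes, contradicting the maximality of $\cup_\tau u_\tau(\C)$. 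Hence $\Gamma\neq\emptyset$.

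Then I would analyze the negative asymptotic limits. The descending-index argument from Proposition \ref{prop_buildings_boundary} (an asymptotic limit of index $\leq 1$ propagates downward in $\mathcal{B}$ to a plane asymptotic to an orbit of index $\leq 1$, which does not exist) shows $\mu(P_z)\geq 2$ for every $z\in\Gamma$. The $\delta$-weighted index formula gives
\begin{equation*}
\ind^\delta(\tilde u_1)=3-\sum_{z\in\Gamma}\mu(P_z)-(1-\#\Gamma)=2-\sum_{z\in\Gamma}\bigl(\mu(P_z)-1\bigr),
\end{equation*}
and the SFT index additivity applied to $\mathcal{B}$, whose total $\delta$-index must equal $\dim\mathcal{M}_{P_3^p}=2$ and whose bottom level(s) are regular (being somewhere injective with non-negative index for $J\in\J_{\mathrm{reg}}(\alpha)$), pins $\ind^\delta(\tilde u_1)=1$, forcing $\#\Gamma=1$ and $\mu(P_{z_1})=2$. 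Writing $P':=P_{z_1}$, we obtain $\rho(P')=1$ and $\mathcal{A}(P')\leq\mathcal{A}(P_3^p)\leq\mathcal{S}(\mathcal{D},\alpha)$; the cap argument from Proposition \ref{prop_buildings_boundary}---interior circles in $u_n(\C)$ are unlinked with $P_3^p$ and converge to $P'$ in the limit---gives $\lk(P',P_3^p)=0$. The hypothesis $\mathcal{P}'=\emptyset$ then forces $P'=P_{2,i}$ for some $i\in\{1,\ldots,l\}$.

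Finally, the bottom level $\tilde v_1$ has positive puncture at $P_{2,i}$, and Proposition \ref{prop_inside}-(i) forbids its image from leaving $\R\times\partial\mathcal{M}$, so $\tilde v_1$ projects into $\mathcal{S}_i$; Theorem \ref{thm_uniqueness}-(i) then identifies $\tilde v_1$, up to $\R$-translation and reparametrization, with one of the hemisphere planes $\tilde u_{i,1}$ or $\tilde u_{i,2}$. No further level can appear below, so $\mathcal{B}=(\tilde u_1,\tilde v_1)$ has exactly two levels; embeddedness of $\tilde u_1$ and of its projection $u_1$ follows from being an SFT limit of embeddings disjoint from $\R\times P_3$, combined with Theorem 5.20 of \cite{Si2}. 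The principal technical delicacy I anticipate is pinning $\ind^\delta(\tilde u_1)$ to exactly $1$: the genericity of $J$ alone gives only $\ind^\delta(\tilde u_1)\geq 0$, leaving the residual alternative $\#\Gamma=1$ with $\mu(P_{z_1})=3$ in which $P_{z_1}$ could a priori be a contractible orbit of rotation $>1$ (for example a cover of $P_3$ or another high-index orbit) not excluded by $\mathcal{P}'=\emptyset$; ruling this case out requires the index additivity against the regular hemisphere plane of index $1$ at the bottom, together with intersection-theoretic control ensuring $\tilde u_1$ remains disjoint from $\R\times P_3$.
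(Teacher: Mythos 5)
Your argument follows essentially the same route as the paper's proof: SFT compactness from the H1 energy bound, somewhere injectivity of the top level from the winding-$(+1)$ approach to $P_3^p$, exclusion of $\Gamma=\emptyset$ by maximality and automatic transversality, the descending-index argument giving $\mu(P_z)\geq 2$, the weighted index formula, the linking/action/rotation argument identifying the negative limit with some $P_{2,i}$ via $\mathcal{P}'=\emptyset$, and uniqueness of the hemisphere planes for the bottom level. The one place you deviate is in pinning $\ind^\delta(\tilde u_1)=1$: the "residual alternative" you worry about at the end does not actually arise, because for generic $J$ a somewhere injective curve with positive $d\alpha$-area is not $\R$-invariant and its moduli space contains the free $\R$-translation orbit, so $\ind^\delta(\tilde u_1)\geq 1$ (not merely $\geq 0$); combined with $\ind^\delta(\tilde u_1)=2-\sum_{z\in\Gamma}(\mu(P_z)-1)$ and $\mu(P_z)\geq 2$ this immediately forces $\#\Gamma\leq 1$ and $\mu(P_{z_1})=2$, which is exactly how the paper argues via condition H3. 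Your alternative via index additivity of the building against the index-$1$ bottom plane reaches the same conclusion but is slightly circular as stated, since identifying the bottom level as a hemisphere plane of index $1$ already presupposes knowing the negative asymptotic limit of the top level.
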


\begin{proof} Take a sequence $\tilde u_n=(a_n,u_n):=\tilde u_{\tau_n},$  with $\tau_n \to 1$. Since the energy of $\tilde u_n$ is constant equal to $pT_3$ and $\alpha$ is nondegenerate up to action $\S(\mathcal{D},\alpha) \geq pT_3$, see condition H1, there exists a building $\mathcal{B}$ which is the SFT-limit of $\tilde u_n$, up to a subsequence. The first level of $\mathcal{B}$ is a curve $\tilde v_1:\C \setminus \Gamma \to \R \times \mathcal{M}$, with a positive puncture at $\infty$, where it is asymptotic to $P_3^p$. The asymptotic limits of $\tilde v_1$ at the negative punctures in $\Gamma$ are contractible. The $d\alpha$-area of $\tilde v_1$ is positive since smaller iterates $P_3^j$, $j<p$ are non-contractible and $\tilde v_1$ is not a trivial cylinder. Hence, the leading eigenvalue of $A_{P_3^p}$ has winding number $+1$. In particular, $\tilde v_1$ is somewhere injective. Every puncture  $z\in \Gamma$ is negative and the index of its asymptotic limit $P_z$ is $2$ and thus unlinked with $P_3$, see the proof of Proposition \ref{prop_buildings_boundary}.

We claim that $\#\Gamma = 1$ and $\tilde v_1$ is asymptotic to some $P_{2,i}\subset \partial \M$. Indeed, if  $\Gamma=\emptyset$, then $\tilde v_1\in \mathcal{M}_{P_3^p}$, contradicting the maximality of the family $\tilde u_\tau$. Hence $\#\Gamma \geq 1$. From the generic choice of $J$, see condition H3, we have $\#\Gamma =1$, and we can assume that $\Gamma = \{0\}$. Since the action of $P_2$ is $<pT_3$, the assumptions on $\alpha$ imply that $\tilde v_1$ is asymptotic to some $P_{2,i}\subset \partial \M$.  By Proposition \ref{prop_inside}, the curve $\tilde v_2$ below $\tilde v_1$ is an embedded $J$-holomorphic plane asymptotic to $P_{2,i}$ and projecting to a hemisphere of $\S_i\setminus P_{2,i}$. The embedding properties of $\tilde u_n$, see \eqref{cap_empty}, imply that $\mathcal{B}$ is the unique SFT-limit of the family $\tilde u_\tau$.
\end{proof}

We can assume that the family $\tilde u_\tau, \tau \in (0,1),$ is parametrized so that $u_\tau(\C)$ moves in the direction of the flow as $\tau$ increases. By Proposition \ref{prop_buildings_no_boundary}, the family $\tilde u_\tau$ breaks onto a nicely embedded cylinder $\tilde u_1$ asymptotic to $P_3^p$ at the positive puncture and asymptotic to $P_{2,j} \subset \S_j$ at the negative puncture, plus a plane $\tilde v_2$ that projects to a hemisphere of $\S_j\setminus P_{2,j}$. Let $\tilde v_1'$ be the plane asymptotic to $P_{2,j}$ projecting to the other hemisphere of $\S_j$. Using the appropriate weights at $P_3^p$, we can glue $\tilde u_1$ with $\tilde v_1'$, see \cite{HT1,HT2}, to obtain a family of planes $\tilde u_\tau\in \mathcal{M}_{P_3^p}, \tau \in (1,1+\epsilon),$ asymptotic to $P_3^p$ at $\infty$, whose leading eigenvalue has winding number $+1$.
Their projections $u_\tau$ to $\mathcal{M} \setminus \partial \mathcal{M}$ do not intersect each other and also do not intersect the corresponding maps of the family $\tilde u_\tau$, for $\tau \in (0,1).$ Considering the maximal family $\tilde u_\tau,\tau \in (1,2)$, we have two possibilities. If $l=1$, i.e. $\partial \mathcal{M}$ has only one boundary component, then for $\tau$ close to $2$, $\tilde u_\tau$ coincides with some $\tilde u_{\tau'}$ for some $\tau'<1$ close to $1$. Indeed, this follows from the uniqueness of cylinders $\tilde u_1$ from $P_3^p$ to $P_{2,1}$ and the uniqueness of planes obtained from gluing $\tilde u_1$ with the hemispheres of $\S_1$, see Theorem \ref{thm_uniqueness}. If $l>1$, then, again by the uniqueness of cylinders and planes, the family $\tilde u_\tau$ as $\tau \to 2$ cannot break onto $\tilde u_1$ and a plane projecting to $\S_j$. Indeed, otherwise, the projection of $u_\tau,$ $\tau<1$ sufficiently close to $1$ is homotopic to $u_{\tau'},$ $\tau'>1$ sufficiently close to $1$. This is not possible for topological reasons.  Hence the family $\tilde u_\tau, \tau\in(1,2)$ breaks into a new cylinder $\tilde u_2 \neq \tilde u_1$ connecting $P_3^p$ to $P_{2,i}\subset \S_i,$ for some $i\neq j$, and a plane $\tilde v_2$ asymptotic to $P_{2,i}$ projecting to $\S_i$. We can continue this procedure of gluing $\tilde u_2$ with the opposite plane $\tilde v_2'$ projecting to the other hemisphere of $\S_i\setminus P_{2, i}$, to extend the family of planes asymptotic to $P_3^p$. The number of breaks is necessarily equal to $l$, and we end up finding $l$ families of embedded planes $\{\tilde u_\tau\}_{\tau\in (i,i+1)}, $ $i=1,\ldots,l$, all asymptotic to $P_3^p$, their projections are also embedded and mutually disjoint. We also obtain $l$ embedded cylinders $\tilde v_i,i=1,\ldots,l,$ connecting $P_3^p$ at the positive puncture to a distinct $P_{2,j}\subset \S_j$ at the negative puncture. Their projections are embedded, and the union of such curves, with the ones projecting to $\partial \mathcal{M}$, determines a finite energy foliation of $\R \times \mathcal{M}$ whose projection to $\mathcal{M}$ is a weakly convex foliation. Notice that the families of planes asymptotic to $P_3^p$ and the cylinders connecting $P_3^p$ to each $P_{2,i}$ for an open and close subset of $\M \setminus \partial \M$ thus foliate the whole $\M \setminus \partial \M$.

\subsection{Passing to the degenerate case}

Let $\mathcal{M}, \alpha, \P$ and $J$ be as in Theorem \ref{main1}. In particular, the hemispheres of $\S_i\setminus P_{2,i}$ are the projections of two holomorphic planes $\tilde u_{i,1},\tilde u_{i,2}$, for every $i=1,\ldots, l$. After a $C^\infty$-small perturbation of $J$ supported in $\mathcal{M} \setminus \partial \mathcal{M},$ we may assume that
condition H3 is satisfied. We denote by $\J_{\text{reg}}(\alpha)$ the space of $J$'s satisfying condition H3.

Now let $\alpha_n$ be a sequence of nondegenerate contact forms on $(\mathcal M,\xi_0)$, coinciding with $\alpha$ on a small neighborhood of $\partial \mathcal{M}$ and converging in $C^\infty$ to $\alpha$ so that $P_3$ is a Reeb orbit of $\alpha_n$ for every $n$. Let $J\in \J_{\text{reg}}(\alpha)$ be such that the hemispheres in $\S_i\setminus P_{2,i}$ are the projections of $J$-holomorphic planes $\tilde u_{i,1},\tilde u_{i,2}$. Let $J_n\in \J(\alpha_n)$ be a sequence converging to $J$ in $C^\infty$ so that conditions H1-H3 are satisfied for $(\alpha_n, J_n)$, where in condition H2 the $p$-disk for $P_3$ is independent of $n$. We may assume that $J_n$ coincides with $J$ on a neighborhood of $\partial \mathcal{M}$. In particular, the hemispheres of $\S_i \setminus P_{2,i}$ are projections of $J_n$-holomorphic planes for every $n$ and $i$.

\begin{prop}\label{prop_alpha_n}Under the conditions above, the following holds: for every $n$ sufficiently large, $(\alpha_n, J_n)$ satisfies the condition $\mathcal{P}'=\emptyset$ in Theorem \ref{main1}, i.e., $\alpha_n$ does not admit a contractible periodic orbit $P'\subset \mathcal{M} \setminus (\partial \mathcal{M} \cup P_3)$ unlinked with $P_3$, with rotation number $1$ and action $\leq \S(\mathcal{D}, \alpha_n)$. In particular, $(\alpha_n, J_n)$ admits a finite energy foliation projecting to a weakly convex foliation whose binding orbits are $P_3\in \mathcal{M}\setminus\partial \mathcal{M}$ and $P_{2,1},\ldots, P_{2,l}\subset \partial \mathcal{M}.$
\end{prop}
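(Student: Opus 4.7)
The plan is two-fold: first, verify that the obstruction-set condition $\mathcal{P}'=\emptyset$ descends from $\alpha$ to the perturbed forms $\alpha_n$ for $n$ large; second, apply the $J_n$-holomorphic construction already carried out in this section (Propositions \ref{prop_buildings_boundary}, \ref{prop_no_boundary}, and \ref{prop_buildings_no_boundary}, together with the monotone gluing step) to $(\alpha_n, J_n)$, which by construction satisfies all the generic assumptions H1--H3.

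For the first step, I would argue by contradiction. Suppose that, after passing to a subsequence, each $\alpha_n$ admits a contractible periodic orbit $P'_n\subset \mathcal{M}\setminus(\partial\mathcal{M}\cup P_3)$ with $\rho(P'_n)=1$, $\link(P'_n,P_3^p)=0$, and $\mathcal{A}(P'_n)\leq \S(\mathcal{D},\alpha_n)$. Since $\S(\mathcal{D},\alpha_n)\to \S(\mathcal{D},\alpha)$ and the Reeb vector fields converge in $C^\infty$, the periods are uniformly bounded and a standard Arzel\`a--Ascoli argument yields, after a further subsequence, a $C^\infty$-limit periodic orbit $P'$ of $\alpha$ with $\mathcal{A}(P')\leq \S(\mathcal{D},\alpha)$. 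Continuity of the linearized flow under $C^\infty$-convergence gives $\rho(P')=1$, and $P'$ is contractible as the $C^\infty$-limit of contractible loops.

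I next need to rule out that $P'$ is an iterate of $P_3$ or lies in $\partial\mathcal{M}$, so that $P'$ witnesses $\mathcal{P}'\neq \emptyset$ for $\alpha$ and yields the desired contradiction. Any contractible iterate of $P_3$ has the form $P_3^{kp}$ with $k\in\N$, and $\rho(P_3^{kp})=k\rho(P_3^p)>k\geq 1$ by the standing hypothesis $\rho(P_3^p)>1$, so $P'$ cannot be such an iterate. If instead $P'\subset \partial\mathcal{M}$, then $P'$ is an iterate $P_{2,i}^k$ of some Lyapunov orbit, and $\rho(P_{2,i}^k)=k$ forces $k=1$; but the hypothesis $\alpha_n\equiv \alpha$ on a fixed open neighborhood of $\partial\mathcal{M}$, combined with the hyperbolicity (and hence isolation) of $P_{2,i}$ in that neighborhood, prevents the orbits $P'_n\subset \mathcal{M}\setminus\partial\mathcal{M}$ from accumulating on $P_{2,i}$. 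Therefore $P'\subset \mathcal{M}\setminus(\partial\mathcal{M}\cup P_3)$; since $P'$ is disjoint from $P_3$, the integer-valued linking number is continuous along the sequence and equals $0$; thus $P'\in \mathcal{P}'$, contradicting $\mathcal{P}'=\emptyset$.

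With $\mathcal{P}'=\emptyset$ established for $(\alpha_n,J_n)$ and conditions H1--H3 satisfied, the full chain of arguments from the nondegenerate case now applies verbatim: the Bishop family of $J_n$-disks issuing from the nicely elliptic singularity of $(T\mathcal{D}\cap \xi)^\perp$ is continued by monotone gluing across rigid planes over the hemispheres of each $\mathcal{S}_j\setminus P_{2,j}$ until a fast plane asymptotic to $P_3^p$ is produced, and then the two-dimensional moduli space $\mathcal{M}_{P_3^p}$ is propagated around $P_3^p$, breaking exactly $l$ times into rigid cylinders from $P_3^p$ to the $l$ Lyapunov orbits. The hardest step---and the only one genuinely new relative to the nondegenerate argument---is the boundary-accumulation issue addressed above: without the hypothesis that $\alpha_n$ agrees with $\alpha$ near $\partial\mathcal{M}$, the limit orbit $P'$ could in principle escape into the collar region and the contradiction would fail.
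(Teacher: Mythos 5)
Your argument is correct and follows essentially the same route as the paper: argue by contradiction, extract a $C^\infty$-convergent subsequence of offending orbits using the uniform action bound, rule out the limit lying in $\partial\mathcal{M}$ (via hyperbolicity of the $P_{2,i}$ and the fact that $\alpha_n\equiv\alpha$ near the boundary) or being a contractible cover of $P_3$ (via the rotation-number condition $\rho(P_3^p)>1$), and contradict $\mathcal{P}'=\emptyset$ for $\alpha$; the foliation then follows from the nondegenerate construction. No gaps.
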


\begin{proof}
    Fix a $p$-disk $\mathcal{D}$ for $P_3$ so that the characteristic foliation $(\xi_0 \cap T\mathcal{D})^\perp$ has a unique singularity, which is nicely elliptic. Since $\alpha_n \to \alpha$,  we have $\S(\mathcal{D},\alpha_n) \to \S(\mathcal{D}, \alpha)$ as $n\to \infty$. We claim that for every $n$ sufficiently large, $\alpha_n$ does not admit a contractible index-$2$ Reeb orbit $P_n \subset \mathcal{M} \setminus \partial \mathcal{M}$ with action $\leq \S(\alpha_n, \mathcal{D})$ and unlinked with $P_3$.  By contradiction, assume that such an orbit exists for $n$ arbitrarily large. Up to a subsequence $P_n \to Q$, where $Q\subset \mathcal{M}$ is a contractible periodic orbit of $\alpha$ with rotation number $1$ and action $\leq \S(\mathcal{D},\alpha)$. Since each $P_{2,i}\subset \partial \mathcal{M}$ is a hyperbolic orbit of $\alpha$, $Q$ cannot coincide with some $P_{2,i}$, and thus $Q\subset \mathcal{M} \setminus \partial \mathcal{M}$. Also, $Q$ is not a cover of $P_3$ since the rotation number of any contractible cover of $P_3$ is $>1$. Hence $Q\subset \mathcal{M} \setminus (\partial \mathcal{M} \cup P_3)$ has rotation number $1$ and is unlinked with $P_3$, contradicting the hypotheses on $\alpha$.
\end{proof}

Consider a sequence $\alpha_n \to \alpha$ and $J_n \to J$ as in Proposition \ref{prop_alpha_n}, and let $\F_n$ be the weakly convex foliation adapted to $(\alpha_n,J_n)$ given in Theorem \ref{main1} so that its binding orbits are $P_3,P_{2,1},\ldots,P_{2,l}$. Then the $J_n$-holomorphic planes projecting to $\partial \M$ do not depend on $n$ and hence we only need to consider the compactness properties of the family of planes asymptotic to $P_3^p$ and the cylinders connecting $P_3^p$ at the positive end to $P_{2, i}$ at the negative end. Before doing that, we need to show that $\alpha$ does not admit any periodic orbit which is unlinked with $P_3$, regardless of its index and knot-type.

\begin{prop}\label{prop_no_link}
    The contact form $\alpha$ admits no contractible periodic orbit $P'\subset \mathcal{M} \setminus (\partial \mathcal{M} \cup P_3)$ which is unlinked with $P_3$.
\end{prop}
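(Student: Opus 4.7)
\textbf{Proof plan for Proposition \ref{prop_no_link}.} I argue by contradiction: suppose $P' \subset \mathcal{M} \setminus (\partial \mathcal{M} \cup P_3)$ is a periodic orbit of $\alpha$ with $\link(P',P_3^p)=0$. The idea is to use the finite energy foliation $\mathcal{F}_n$ just produced for the perturbations $(\alpha_n, J_n)$ in Proposition \ref{prop_alpha_n} as an obstruction: its leaves --- the hemispheres of each $\mathcal{S}_j \setminus P_{2,j}$, the $l$ families of embedded planes asymptotic to $P_3^p$, and the $l$ rigid cylinders connecting $P_3^p$ to each $P_{2,j}$ --- together with the binding $P_3 \cup P_{2,1} \cup \cdots \cup P_{2,l}$, partition $\mathcal{M}$; each leaf is transverse to and co-oriented by the Reeb vector field $R_n$ of $\alpha_n$.

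First, since $\alpha_n \to \alpha$ in $C^\infty$, the Reeb vector field $R_\alpha$ is $C^0$-close to $R_n$ for $n$ large. Hence $R_\alpha$ remains transverse to every leaf and crosses it in the direction of the co-orientation. In particular $P'$, being a flow line of $R_\alpha$, is transverse to every leaf, and each intersection point contributes $+1$ to the algebraic count.

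Next, I will show that $P'$ has algebraic intersection zero with every leaf. It is disjoint from the hemisphere leaves because $P' \cap \partial \mathcal{M} = \emptyset$. For a plane leaf $D$ asymptotic to $P_3^p$, the closure $\bar D \subset \mathcal{M}$ is a $p$-disk for $P_3$, so by the very definition of the linking number used in this paper, $P' \cdot \bar D = \link(P',P_3^p)=0$. For a rigid cylinder $C$ from $P_3^p$ to $P_{2,j}$, the closure $\bar C$ is a $2$-chain with boundary $[P_3^p]-[P_{2,j}]$, giving
\[
P'\cdot \bar C \;=\; \link(P',P_3^p)\;-\;\link(P',P_{2,j}) \;=\; 0,
\]
because any hemisphere of $\mathcal{S}_j$ is a Seifert disk for $P_{2,j}$ lying in $\partial \mathcal{M}$, which $P'$ avoids. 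Combining positivity with the vanishing algebraic intersection forces $P'$ to be geometrically disjoint from every leaf of $\mathcal{F}_n$.

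Finally, since the same hemisphere argument shows $P'$ avoids each $P_{2,j}$, $P'$ is contained in the complement of the binding, which is precisely the union of the leaves of $\mathcal{F}_n$. Being non-empty and contained in the union of the leaves, while having empty intersection with every one of them, is impossible. The main technical point I anticipate is justifying the algebraic intersection formulas and the positivity of intersection at the noncompact ends of the plane and cylinder leaves near $P_3^p$ and $P_{2,j}$; both are handled by a standard capping-off inside small tubular neighborhoods of $P_3$ and of $\partial \mathcal{M}$, exploiting that $P'$ stays at positive distance from both.
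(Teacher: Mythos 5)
Your overall strategy --- using the foliation $\mathcal{F}_n$ from Proposition \ref{prop_alpha_n} as an intersection-theoretic obstruction, with the homological computations $P'\cdot\bar D=\lk(P',P_3^p)=0$ for the plane leaves and $P'\cdot\bar C=\lk(P',P_3^p)-\lk(P',P_{2,j})=0$ for the rigid cylinders --- is the right one, and those computations are correct. The gap is in the very first step: ``since $\alpha_n\to\alpha$ in $C^\infty$, $R_\alpha$ is $C^0$-close to $R_n$, hence $R_\alpha$ remains transverse to every leaf.'' The leaves of $\mathcal{F}_n$ depend on $n$, and the lower bound on the angle between $T\mathcal{F}_n$ and $R_n$ along the compact set where $P'$ lives is a constant $\epsilon_n>0$ that is not known to be uniform in $n$; it may well shrink faster than $\|R_\alpha-R_n\|_{C^0}$ does, in which case ``for $n$ large'' buys you nothing. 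Worse, the dangerous degeneration --- leaves of $\mathcal{F}_n$ becoming tangent to the flow direction near $P'$ as $n\to\infty$ --- is exactly what happens when $P'$ is about to appear as an asymptotic limit of the limiting curves, i.e., exactly the scenario this proposition is meant to exclude (and which Propositions \ref{prop_cylinders_degenerate} and \ref{prop_planes_degenerate} later exclude \emph{using} this proposition). So the transversality claim cannot be obtained by soft continuity; justifying it would require the hard compactness analysis, and your argument risks circularity.

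The fix is a small but essential twist in how the perturbation is chosen: arrange the nondegenerate $\alpha_n\to\alpha$ so that $P'$ itself (in addition to $P_3$) remains a periodic orbit of $\alpha_n$ for every $n$. This is a standard perturbation keeping finitely many prescribed orbits, and Proposition \ref{prop_alpha_n} still applies. Then $P'$ is a flow line of $R_n$, hence automatically positively transverse to every leaf of $\mathcal{F}_n$ that it meets --- no uniformity in $n$ is needed --- and your intersection computation (or simply the observation that $P'$ must cross some plane leaf positively, forcing $\lk(P',P_3^p)>0$) yields the contradiction verbatim. I recommend restructuring your proof around that perturbation.
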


\begin{proof} Suppose by contradiction the existence of a contractible periodic orbit $P'$ of $\alpha$ as in the statement. Then we can construct a sequence of nondegenerate contact forms $\alpha_n \to \alpha$ satisfying all conditions above with the additional property that $P'$ is a periodic orbit of $\alpha_n$ for every $n$. Choose $J_n \in \J_{\text{reg}}(\alpha_n)$ converging to $J$ as $n\to \infty$. Proposition \ref{prop_alpha_n} tells us that $(\alpha_n,J_n)$ admits a weakly convex foliation whose binding orbits are $P_3,P_{2,1},\ldots,P_{2,l}$ for $n$ sufficiently large. This implies, in particular, that every periodic orbit of $\alpha_n$ in $\mathcal{M}\setminus (\partial \mathcal{M} \cup P_3)$ is linked with $P_3$, a contradiction.
\end{proof}

Now, let us prove that rigid cylinders of $\F_n$ converge to a rigid cylinder for $(\alpha,J)$ with the same asymptotic limits. Denote by $\tilde v_n=(b_n,v_n):\R \times \R / \Z \to \R \times \mathcal M$ the nicely embedded $J_n$-holomorphic cylinder asymptotic to $P_3^p$ at its positive puncture $+\infty$ and to $P_{2,i}$ at its negative puncture $-\infty$. We know that $v_n(\R \times \R / \Z) \subset \mathcal{M} \setminus \partial \mathcal{M}$ for every $n$ and the leading eigenvalues at $+\infty$ and $-\infty$ have winding number $+1$. Fix a small compact tubular neighborhood $U \subset \mathcal M$ of $P_3$ satisfying the following conditions for $\alpha$
\begin{itemize}
    \item[(i)] $U$ has no periodic orbit which is contractible in $U$;
    \item[(ii)] There exists no periodic orbit $P'\subset U$ which is geometrically distinct from $P_3$, homotopic to $P_3^p$ in $U$ and unlinked with $P_3$.
    \end{itemize}
Parametrize $\tilde v_n$ so that
\begin{equation}\label{param_vn}
  \left\{   \begin{aligned}
     & v_n(\{0\} \times \R / \Z) \cap \partial U \neq \emptyset,\\
     & v_n(s,t) \in  U \setminus \partial U, \forall s>0,\\
     & b_n(1,0)=0.
     \end{aligned}\right.
 \end{equation}

 \begin{prop}\label{prop_cylinders_degenerate} Up to a subsequence, $\tilde v_n$ converges in $C^\infty_{\text{loc}}$ to a $J$-holomorphic cylinder $\tilde w_i=(b_i,w_i):\R \times \R / \Z \to \R \times \mathcal M$ asymptotic to $P_3^p$ at its positive puncture $+\infty$, whose leading eigenvalue has winding number $+1$, and to $P_{2,i}$ at its negative puncture $-\infty$.
 \end{prop}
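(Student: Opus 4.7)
The plan is to apply the SFT compactness theorem to the sequence $\tilde v_n$ and then rule out bubbling and intermediate breakings using the structural constraints on $\alpha$ established earlier, most notably Proposition \ref{prop_no_link}. The parametrization \eqref{param_vn} is central: it normalizes the $\R$-translation and pins the curve down so that the positive end stays localized near $P_3$, preventing degeneration into a trivial cylinder over $P_3$.

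First I would establish a uniform energy and $C^0$-bound. The Hofer energy of $\tilde v_n$ is controlled by $\mathcal{A}_{\alpha_n}(P_3^p) \to p\,\mathcal{A}_\alpha(P_3)$, giving a uniform upper bound. The requirement that $v_n(s,\cdot)\subset U$ for $s>0$, together with the normalization $b_n(1,0)=0$ and the standard exponential decay estimates near the positive puncture, prevents the $\R$-coordinate from escaping to $\pm\infty$ on compact subsets of $\R \times \R/\Z$. Away from finitely many bubbling-off points, elliptic bootstrapping yields $C^\infty_{\mathrm{loc}}$-convergence of a subsequence to a $J$-holomorphic curve. The full bubbling-off analysis combined with the SFT compactness theorem (using nondegeneracy of $\alpha_n$ up to the relevant action from H1) then produces a limit stable holomorphic building $\mathcal{B}$ for $(\alpha, J)$ whose top positive puncture is asymptotic to $P_3^p$ and bottom negative puncture to $P_{2,i}$.

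The second and main step is to show that $\mathcal{B}$ consists of a single nontrivial cylinder. Because each $\tilde v_n$ is a genus-zero cylinder, $\mathcal{B}$ has genus zero, and by the normalization $v_n(\{0\}\times \R/\Z)\cap \partial U\neq\emptyset$, at least one component is nontrivial and crosses $\partial U$. Suppose $\mathcal{B}$ contains an intermediate breaking orbit $\gamma$ of $\alpha$. By Proposition \ref{prop_no_link}, $\gamma$ is either a cover of $P_3$, lies on $\partial \mathcal{M}$, or is linked with $P_3$. A breaking orbit linked with $P_3$ would force a positive algebraic intersection between some component of $\mathcal{B}$ and the trivial cylinder $\R\times P_3$; but the condition $v_n(\R\times \R/\Z)\cap P_3=\emptyset$ passes to the limit by positivity of intersections, giving a contradiction. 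A breaking orbit on $\partial \mathcal{M}$ distinct from $P_{2,i}$ is excluded by Proposition \ref{prop_inside}-(i), since a curve below with a positive puncture at such an orbit is forbidden, while a breaking at $P_{2,j}$ with $j\neq i$ would force pants-type components incompatible with a cylindrical limit. A breaking orbit that is a cover of $P_3$ is excluded by a Fredholm index calculation under H3: the would-be top cylinder from $P_3^p$ to some $P_3^k$ with leading winding $+1$ at the positive puncture is somewhere injective and has nonpositive weighted index, so it cannot exist generically unless it is a branched cover of the trivial cylinder, contradicting the nontriviality forced by the normalization.

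Having excluded all intermediate breakings, $\mathcal{B}$ is a single cylinder $\tilde w_i=(b_i,w_i)$ asymptotic to $P_3^p$ at $+\infty$ and to $P_{2,i}$ at $-\infty$. The winding number of the leading eigenvalue at $+\infty$ equals $+1$ by upper semicontinuity of asymptotic winding numbers under SFT convergence, together with the fact that this winding equals $+1$ for each $\tilde v_n$. The main obstacle is the exclusion of intermediate breakings at covers of $P_3$, where one must combine the generic index constraints from H3 with positivity of the $d\alpha$-area forced by the parametrization; this is the most delicate part of the analysis.
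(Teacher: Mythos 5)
Your proposal is correct and follows essentially the same route as the paper: $C^\infty_{\mathrm{loc}}$/SFT compactness under the normalization \eqref{param_vn}, exclusion of intermediate breaking orbits via Proposition \ref{prop_no_link}, Proposition \ref{prop_inside} and the weighted index bound from H3, and persistence of the winding-one leading eigenvector at $+\infty$ from $\mu(P_3^p)\geq 3$. The only caveat is your exclusion of breakings at covers of $P_3$: condition H3 as stated does not cover curves with negative ends at covers of $P_3$, so the index step is not literally licensed; the paper instead notes that any contractible cover of $P_3$ has action $\geq pT_3$, which is incompatible with the strictly positive $d\alpha$-area of the top level forced by your normalization argument --- a cleaner version of the fallback you already sketch.
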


 \begin{proof}  From the normalization \eqref{param_vn}, we can extract a subsequence of $\tilde v_n$ in $C^\infty_{\text{loc}}$ converging to a $J$-holomorphic punctured cylinder $\tilde w_i=(b_i,w_i):(\R \times \R / \Z)\setminus \Gamma\to \R \times \mathcal{M}$ which is necessarily asymptotic to $P_3^p$ at its positive puncture $+\infty$, see \cite[Lemma 3.12]{dPHKS}. The hypotheses on $U$ imply that no bubbling occurs on $\{s>0\}$. We must have $w_i((0,+\infty)\times \R / \Z) \subset U$ and thus $\Gamma \subset (-\infty, 0] \times \R / \Z$.  Also, $-\infty$ is necessarily a negative puncture of $\tilde w_i$. We shall prove that $\Gamma = \emptyset$ and $\tilde w_i$ is asymptotic to $P_{2,i}$ at $-\infty$.

 If the $d\alpha$-area of $\tilde w_i$ vanishes, then $\Gamma=\emptyset$ and $\tilde w_i$ is asymptotic to $P_3^p$ at $-\infty$. This follows from the fact that all asymptotic limits of $\tilde w_i$ are contractible and every iterate $P_3^j$, $1\leq j < p$, is non-contractible. However, in this case, we have $w_i(0,0) \in \partial U$, a contradiction. Hence, the $d\alpha$-area of $\tilde w_i$ is positive. 

 Let us prove that $\Gamma = \emptyset$ and $\tilde w_i$ is asymptotic to $P_{2,i}$ at $-\infty$. We know that $+\infty$ is a nondegenerate puncture. The leading eigenvalue of $A_{P_3^p}$ has winding number $+1$. Indeed, since $\mu(P_3^p) \geq 3$, the leading eigenvalue of $\tilde v_n$ at $+\infty$ has winding number $1$ and stays away from $0$ as $\alpha_n \to \alpha$. Hence, as proved in \cite{convex}, see also \cite[Chapter 8]{dPS1}, the limiting curve $\tilde w_i$ admits a negative leading eigenvalue whose eigenvector has winding number $+1$, which describes the asymptotic behavior of $\tilde w_i$ at $+\infty$ as in Theorem \ref{thm: half cylinder 1}.

 Any asymptotic limit at a negative puncture in $\Gamma\cup \{-\infty\}$ must be contractible in $\mathcal{M}$, its action is $<pT_3$, and it is unlinked with $P_3$. By Proposition \ref{prop_no_link}, the only orbits with these properties are the covers of orbits in $\partial \mathcal{M}$. Hence, every puncture of $\tilde w_i$ is nondegenerate. If $\# \Gamma \geq 1$, then the weighted index of $\tilde w_i$ is $\ind^\delta(\tilde w_i) \leq 3-2(\#\Gamma +1) -(2-\#\Gamma -2)=1-\#\Gamma\leq 0,$  contradicting the choice of $J\in \J_{\text{reg}}(\alpha)$, see condition H3. Hence $\Gamma = \emptyset$. To prove that $\tilde w_i$ is asymptotic to $P_{2,i}$ at $-\infty$, we invoke the SFT-compactness theorem. In fact, this is possible since any asymptotic limit of any $J$-holomorphic curve arising from rescaling $\tilde v_n$ near $-\infty$, which is not $P_3^p$, has action $<pT_3$, is contractible in $\mathcal{M}$, and is unlinked with $P_3$. Hence, it necessarily lies in $\partial \mathcal{M}$, and consists of a hyperbolic orbit. The asymptotic limit of $\tilde w_i$ at $-\infty$ necessarily has index $2$. Otherwise, the index of $\tilde w_i$ is $<0$, again a contradiction with the choice of $J$. If the asymptotic limit of $\tilde w_i$ is $P_{2,j},$  $j\neq i$, then there exists a nontrivial building $\mathcal{B}$ below $\tilde w_i$ formed by $J$-holomorphic curves asymptotic to covers of orbits in $\partial \mathcal{M}$, so that $\mathcal{B}$ has a positive puncture at $P_{2,j}, j\neq i,$ and a negative puncture at $P_{2, i}$. In particular, the first level of $\mathcal{B}$ consists of a $J$-holomorphic punctured sphere $\tilde w:\C \setminus \Gamma' \to \R \times \mathcal{M}$, which is not a trivial cylinder over $P_{2,j}$, and thus has positive $d\alpha$-area. It is asymptotic to $P_{2,j}$ at its unique positive puncture and to covers of orbits in $\partial \mathcal{M}$ at the negative punctures. However, by Proposition \ref{prop_inside}, such a curve $\tilde w$ does not exists.
 Hence the asymptotic limit of $\tilde w_i$ at $-\infty$ is $P_{2,i}$. \end{proof}

 Let us consider a sequence of $J_n$-holomorphic planes $\tilde u_n=(a_n,u_n):\C \to \R \times \mathcal{M}.$ As in the cylinder case, we may choose a small neighborhood $U\subset \mathcal{M}$ of $P_3$ as before, and parametrize $\tilde u_n$ so that
 \begin{equation}\label{param_un}
 \left\{\begin{aligned}
 & u_n(z_n^*), u_n(1) \in \partial U, \mbox{ where } \text{Re}(z_n^*) \leq 0.\\
 & u_n(\C \setminus \D) \subset U \setminus \partial U,\\
 & a_n(2) = 0.
 \end{aligned} \right.
 \end{equation}
This parametrization is obtained by considering the smallest disk in $\C$ that contains $u_n^{-1}(\mathcal{M} \setminus U)$ and suitably re-scaling it to $\D$.

\begin{prop}\label{prop_planes_degenerate}
Up to a subsequence, $\tilde u_n$ converges to one of the following buildings $\mathcal{B}$:
\begin{itemize}
\item[(i)] $\mathcal{B}$ has only one level, which is a $J$-holomorphic plane $\tilde u=(a,u):\C \to \R \times \mathcal{M}$ asymptotic to $P_3^p.$ Moreover, $\tilde u$ is an embedding, $u$ is an embedding transverse to the flow, $u(\C) \cap P_3 = \emptyset,$ and the leading eigenvalue of $\tilde u$ at $\infty$ has winding number $+1$.

\item[(ii)] $\mathcal{B}=(\tilde v,\tilde u)$ has two levels, the first level $\tilde v=(b,v):\R \times \R / \Z \to \R \times \mathcal{M}$ is an embedded $J$-holomorphic cylinder asymptotic to $P_3^p$ at its positive puncture and to some $P_{2,i}\subset \S_i$ at its negative puncture. Also, $v$ is an embedding transverse to the flow and $v(\R \times \R / \Z) \subset \mathcal{M} \setminus (\partial \mathcal{M} \cup P_3).$ The curve $\tilde v_2$ is a $J$-holomorphic plane projecting to a hemisphere of $\S_i \setminus P_{2,i}$.
\end{itemize}
 \end{prop}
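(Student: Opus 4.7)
The plan is to follow the strategy of Proposition \ref{prop_cylinders_degenerate}, adapting it to the plane case. First I would apply the SFT compactness theorem to the sequence $\tilde u_n$, whose Hofer energies are uniformly bounded by $pT_3$, to extract a subsequence converging to a $J$-holomorphic building $\mathcal{B}$. The top level is a connected punctured plane $\tilde u=(a,u):\C \setminus \Gamma \to \R \times \mathcal{M}$ asymptotic to $P_3^p$ at the positive puncture $+\infty$, where $\Gamma \subset \C$ is a finite set of negative punctures. The normalization \eqref{param_un}, together with the assumptions on $U$ (no contractible periodic orbits in $U$, and no orbit homotopic to $P_3^p$ in $U$ which is unlinked with $P_3$), rules out bubbling in $\C \setminus \D$ and forces $\Gamma \subset \D$; the asymptotic behavior on $\C \setminus \D$ stays inside $U$ and limits to $P_3^p$. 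The requirements $u(z^*), u(1) \in \partial U$ ensure that $\tilde u$ has positive $d\alpha$-area.

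Next I would analyze the structure at $+\infty$. Since $\mu(P_3^p)\geq 3$ and $\wind_\pi(\tilde u_n)=0$, the leading eigenvalues of $\tilde u_n$ at $+\infty$ have winding number $+1$ and stay bounded away from $0$ as $\alpha_n \to \alpha$; by the spectral analysis of \cite{convex} and \cite[Chapter 8]{dPS1} the limit $\tilde u$ inherits a negative leading eigenvalue of $A_{P_3^p}$ with winding $+1$, so $\tilde u$ is somewhere injective. Asymptotic limits at the negative punctures are contractible, have action $<pT_3$, and are unlinked with $P_3$ (since large circles in $\C$ are sent by $u_n$ to loops unlinked with $P_3$). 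Proposition \ref{prop_no_link} then forces every such limit to lie in $\partial \mathcal{M}$ as a cover of some $P_{2,i}$; a cover of $P_3^p$ is excluded because it would force the $d\alpha$-area of $\tilde u$ to vanish. Applying condition H3 to the somewhere-injective curve $\tilde u$,
\[
\ind^\delta(\tilde u) \;=\; \mu^\delta(P_3^p) - \sum_{z\in \Gamma}\mu(P_z) - (1-\#\Gamma) \;=\; 2 - \#\Gamma \;\geq\; 1,
\]
so $\#\Gamma \leq 1$.

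I would then split into the two cases. If $\Gamma=\emptyset$, then $\tilde u$ is a $J$-holomorphic plane asymptotic to $P_3^p$. As a $C^\infty_{\text{loc}}$-limit of embeddings, $\tilde u$ is embedded, and Siefring's intersection theory \cite{Si2} gives that $u$ is embedded as well; the bounds $0\leq \wind_\pi(\tilde u) \leq c_N^\delta(\tilde u)=0$ yield transversality to the Reeb flow, and $u(\C)\cap P_3=\emptyset$ follows from positivity and stability of intersections with $\R\times P_3$ (otherwise $\tilde u_n$ would intersect $\R\times P_3$ for large $n$). This is case (i). If $\#\Gamma=1$, then $\tilde u$ is a cylinder from $P_3^p$ to some index-$2$ orbit $P_{2,i}\subset \S_i$; Proposition \ref{prop_inside} gives $v(\R\times \R/\Z)\subset \mathcal{M}\setminus(\partial \mathcal{M}\cup P_3)$, and the leading eigenvector at the negative puncture has winding $+1$ (else the projection would intersect $\partial\mathcal{M}$). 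The level immediately below must then be a $J$-holomorphic plane asymptotic to $P_{2,i}$, and Theorem \ref{thm_uniqueness}-(i) identifies it with one of $\tilde u_{i,1},\tilde u_{i,2}$; this is case (ii).

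The main obstacle will be the careful handling of the degenerate limit at $+\infty$: one must ensure that the leading eigenvalue does not drop its winding in the passage $\alpha_n\to \alpha$, and must exclude branched-cover or breaking phenomena at the negative punctures using the positive $d\alpha$-area together with the genericity condition H3 and Proposition \ref{prop_no_link}. Once these points are secured, the identification of the bottom level in case (ii) follows immediately from the uniqueness result in Theorem \ref{thm_uniqueness}, and the embedding and transversality properties are standard consequences of the index and $\wind_\pi$ computations.
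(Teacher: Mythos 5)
Your proposal is correct and follows essentially the same route as the paper's proof: $C^\infty_{\text{loc}}$-convergence with punctures confined to $\D$ by the normalization, positivity of the $d\alpha$-area, winding $+1$ of the leading eigenvalue at $+\infty$, Proposition \ref{prop_no_link} to place the negative asymptotic limits in $\partial\mathcal{M}$, condition H3 to bound $\#\Gamma\leq 1$, and the dichotomy giving cases (i) and (ii). The only cosmetic difference is that you identify the bottom level in case (ii) via Theorem \ref{thm_uniqueness}-(i), whereas the paper first shows $\Gamma'=\emptyset$ for the lower level using Proposition \ref{prop_inside}; these are interchangeable.
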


\begin{proof}
From the normalization \eqref{param_un}, we know that up to a subsequence $\tilde u_n$ converges in $C^\infty_{\text{loc}}$ to a $J$-holomorphic curve $\tilde v=(b,v):\C \setminus \Gamma \to \R \times \mathcal{M},$ where $\Gamma\subset \D$ is a finite set of negative punctures, and $\tilde v$ is asymptotic to $P_3^p$ at $\infty$. Moreover, $+\infty$ is a nondegenerate puncture and the leading eigenvalue describing the approach of $\tilde v$ to $P_3^p$ has winding number $+1$ as in the proof of Proposition \ref{prop_cylinders_degenerate}. Also, the $d\alpha$-area of $\tilde v$ is positive. Indeed, if it vanishes, then $\Gamma\neq \emptyset$ and $\tilde v$ is a trivial cylinder over $P_3^p$. This contradicts $v(1) \in \partial U$. Hence, $\int_{\C \setminus \Gamma} v^* d\alpha >0$. Any asymptotic limit of a negative puncture in $\Gamma$ is contractible, has action $<pT_3$, and is unlinked with $P_3$. By Proposition \ref{prop_no_link}, such a Reeb orbit coincides with a cover of an orbit in $\partial \mathcal{M}$. As in the proof of Proposition \ref{prop_cylinders_degenerate}, if $\#\Gamma >1$, the weighted Fredholm index of $\tilde v$ is $\leq 0$, a contradiction with the choice of $J$, see condition H3. If $\Gamma=\emptyset,$ then $\tilde v$ is a plane as in (i). It is immediate that $\tilde v$ is an embedding, $v$ is an embedding transverse to the flow, and $v(\C) \subset \mathcal{M} \setminus (\partial \mathcal{M} \cup P_3)$ since the same properties hold for the curves $\tilde u_n$ in the sequence. If $\#\Gamma =1,$ then by the SFT-compactness theorem, $\tilde u_n$ converges up to a subsequence to a building $\mathcal{B}$  with at least two levels. The first level contains only $\tilde v=(b,v),$ as above. Also, $\tilde v$ is asymptotic to $P_3^p$ at $\infty$ and to some $P_{2,i}$ at its unique puncture in $\Gamma$.  The level below $\tilde v,$ consists of a $J$-holomorphic curve $\tilde v_2=(b_2, v_2):\C \setminus \Gamma' \to \R \times \mathcal{M}$, asymptotic to $P_{2,i}$ at $\infty$ and to other orbits in $\partial \mathcal{M}$ at its negative punctures in $\Gamma'$. Notice that $\tilde v_2$ is not a trivial cylinder and this fact implies that $\int_{\C \setminus \Gamma'} v_2^* d\alpha>0$.  If $\Gamma'\neq \emptyset,$ then $\tilde v_2$ does not coincide with a hemisphere of $\S_i \setminus P_{2,i}$, contradiction Proposition \ref{prop_inside}. Hence, $\Gamma'=\emptyset$ and the building $\mathcal{B}$ is as in (ii).
 \end{proof}

Propositions \ref{prop_cylinders_degenerate} and \ref{prop_planes_degenerate} imply that given $q\in \mathcal{M} \setminus (\partial \mathcal{M} \cup P_3)$, there exists either a $J$-holomorphic cylinder $\tilde v=(b,v)$ connecting $P_3^p$ to some $P_{2,i}\subset \partial \mathcal M$ whose projection to $\M$ passes through $q$, or a $J$-holomorphic plane $\tilde u=(a,u)$ asymptotic to $P_3^p$ with the same property. Both $\tilde v$ and $\tilde u$ admit a leading negative eigenvalue at $\infty$ whose winding number is $+1$. The weighted Fredholm theory developed for embedded curves implies that such a plane asymptotic to $P_3^p$ lies in a smooth family of planes whose projections to $\mathcal{M}$ are either identical or disjoint, see \cite{hryn, hryn2}. The cylinders $\tilde v$ are rigid, in the sense that they are isolated in the space of such cylinders. The orbit $P_3, P_{2,1}, \ldots, P_{2,l}$ together with the families of planes asymptotic to $P_3^p$, the rigid cylinders, and the curves projecting to $\partial \mathcal{M}$ cover the whole manifold $\mathcal{M}$ forming a weakly convex foliation with binding orbits $P_3, P_{2,1},\ldots, P_{2,l}$. In fact, by uniqueness of cylinders connecting $P_3^p$ to $P_{2,i}$, see Theorem \ref{thm_uniqueness}-(ii), there exist precisely $l$ rigid cylinders $\tilde v_i=(b_i,v_i):\R \times \R / \Z \to \R \times \mathcal{M}, i=1,\ldots,l,$ asymptotic to $P_3^p$ at the positive puncture $+\infty$ and to $P_{2,i}$ at the negative puncture $-\infty$. The family of planes $\tilde u_{i,\tau},i\in\{1,\ldots,l\},\tau \in (0,1)$ must break as $\tau \to 1^-$ onto a building consisting of a cylinder connecting $P_3^p$ to $P_{2,i}$ and $\tilde u_{i,1}$, which projects to $\S_i$. Gluing $\tilde v_i$ with $\tilde u_{i,2}$, we obtain a family of nicely embedded planes $\tilde u_{i+1,\tau},\tau \in (0,1),$ which coincide with the planes in $\tilde u_{i,\tau}$ if and only if $l=1$. If $l>1$, then this new family will break onto a building with a cylinder $\tilde v_{i+1}$ connecting $P_3^p$ to $P_{2,i+1}$ and a $J$-holomorphic plane $\tilde u_{i+1,1}$, after suitably labeling the rigid cylinders and rigid planes. We obtain a new family of planes by gluing $\tilde v_{i+1}$ with $\tilde u_{i+1,2}$. This stops after obtaining the $l$ families of planes, and the weakly convex foliation adapted to $(\alpha, J)$ is finally constructed. This completes the proof of Theorem \ref{main1}.

\subsection{Proof of Corollary \ref{cor_homoclinic}}

The proof of Corollary \ref{cor_homoclinic} follows from the ideas in \cite{fols}. Let $i\in \{1,\ldots,l\}$ be such that $P_{2,i}$ has the largest action $T_i>0$ among the actions of $P_{2,1}, \ldots, P_{2,l}$. Consider the local branch $W_i^u$ of the unstable manifold of $P_{2,i}$ that lies in the interior of $\M$. Consider the family of planes $D_\tau, \tau \in (0,1),$ in $\F$ so that as $\tau \to 0^+$, $D_\tau$ breaks into a hemisphere $U_i$ of $\S_i$ and a cylinder connecting $P_3^p$ to $P_{2,i}$. Assume that the Reeb vector field along $U_i$ points inward $\M$ and thus $\tau$ increases in the direction of the flow. In particular, $W^u_i$ intersects $D_\tau$ for $\tau>0$ sufficiently small, and this intersection is a smooth circle $C$ bounding a compact disk $B \subset D_{\tau}$, whose symplectic area is $T_i>0$. The forward flow pushes $B$ to a disk $B_1 \subset D_{\tau'}$, where $\tau'<1$ is arbitrarily close to $1$. Observe that for $\tau'<1$ sufficiently close to $1$, $D_{\tau'}$ intersects the local branch $W^s_{i+1}$ of the stable manifold of $P_{2,i+1}$ inside $\M$, and this intersection is an embedded circle $C'$ bounding a compact disk $B'\subset D_{\tau'}$. Since the symplectic area of $B'$ is $T_{i+1} \leq T_i,$  $B_1$ is not contained in the interior of $B'$. Hence, either $\partial B_1$ intersects $\partial B'$, or $B_1$ lies in $D_{\tau'} \setminus B'$, or $\partial B_1$ surrounds $B'$. In the first case, there exists a heteroclinic orbit from $P_{2,i}$ to $P_{2,i+1}$, which is a homoclinic orbit to $P_{2,i}$ if $P_{2,i}=P_{2,i+1}$. In the second and third cases, the forward flow of $C$ intersects the cylinder connecting $P_3^p$ and $P_{2,i+1}$ and thus also intersects the next family of disks $D_\tau, \tau\in (1,2)$, bounding a new disk $B_2\subset D_\tau$ with symplectic area $T_i$, for $\tau-1>0$ sufficiently small. We may repeat the argument with the intersection between the forward flow of $C$ and the local branch $W^s_{i+2}$ of the stable manifold of $P_{2,i+2}$. Either it gives a heteroclinic orbit from to $P_{2,i}$ to $P_{2,i+2}$ or the forward flow of $C$ intersects the rigid cylinder connecting $P_3^p$ to $P_{2,i+2}$ on an embedded circle and then we can push it to the next family $D_\tau, \tau \in (2,3).$ By contradiction, if the forward flow of $C$ does not intersect any local branch of the stable manifold of any $P_{2,j}$ then it will intersect infinitely many times some rigid cylinder $V\in \F$ connecting $P_3^p$ to some $P_{2,j}$. Denote by $C_n, n\in \N,$ the embedded circles in $V$ given by these intersections. We show that this leads to a contradiction since the symplectic area of $V$ is finite. In fact, if $C_n$ bounds a disk in $V$, then the symplectic area of such a disk is constant, equal to $T_i>0$. By the uniqueness of solutions, such disks must be disjoint, and thus only finitely many such disks are allowed since the symplectic area of $V$ is finite. If $C_n$ is a non-trivial circle in $V$, then $C_n$ and $P_{2,j}$ determine a half-cylinder in $V$ with constant symplectic area $T_i-T_j>0$. Hence, any two such embedded circles $C_n, C_m, m\neq n,$ must intersect. This is not allowed by the uniqueness of solutions. We conclude that there exists at most a finite number of intersections between the forward flow of $W^u_i$ with $V$. This implies that the forward flow of $C$ eventually intersects the local branch of the stable manifold of some $P_{2,j}$, producing a heteroclinic orbit if $j\neq i$ or a homoclinic orbit if $j=i$. This finishes the proof of Corollary \ref{cor_homoclinic}.

\section{The circular planar restricted three-body problem}
The planar circular restricted three-body problem is the study of the dynamics of the following time-dependent Hamiltonian
\begin{equation}\label{equ: Hamiltonian}
E_\mu(q,p,t)=\frac{1}{2}|p|^2-\frac{\mu}{|q-m(t)|}-\frac{1-\mu}{|q-e(t)|},
\end{equation}
where $q=q_1+iq_2\in \C \equiv \R^2$ is the position of the massless satellite, $p=p_1+ip_2$ is its momentum, $m(t)$ is the position of the moon with mass $0<\mu<1$, and $e(t)$ is the position of the earth with mass $1-\mu$. Hamilton's equations of $E_\mu$ are equivalent to
$$
\begin{aligned}
\ddot q = -\mu \frac{q-m(t)}{|q-m(t)|^3} - (1-\mu)\frac{q-e(t)}{|q-e(t)|^3}.
\end{aligned}
$$
 The moon and the earth move as in the Kepler problem. We  fix their center of mass  at $0$,  that is  $(1-\mu)e(t)+\mu m(t)=0, \forall t,$ and assume that $e(t)$ and $m(t)$ move along ellipses with eccentricity $c\in[0,1]$. We restrict to the circular case  $c=0$, so that
\begin{equation}\label{equ_circular}
e(t)=-\mu e^{-it} \quad \mbox{ and } \quad m(t)=(1-\mu)e^{-it}.
\end{equation}

In the rotating system $\hat q =  q e^{it}$, the earth and the moon are fixed at $-\mu$ and $1-\mu$ respectively. We obtain
\begin{equation}\label{equ_second_order}
\ddot{ \hat q} =  -\mu \frac{\hat q-(1-\mu)}{|\hat q-(1-\mu)|^3} - (1-\mu)\frac{\hat q+\mu}{|\hat q+\mu|^3} +2\dot {\hat q} i +\hat q.
\end{equation}
Denoting $\hat q$ again by $q$, we see that the solutions of the autonomous Hamiltonian
\begin{equation}\label{equ: hamiltonian autonomous}
\begin{aligned}
H_\mu(p,q)&:=\frac{1}{2}|p|^2-\frac{\mu}{|q-(1-\mu)|}-\frac{1-\mu}{|q+\mu|}+q_1p_2-q_2p_1,
\end{aligned}
\end{equation}
recover the trajectories of \eqref{equ_second_order}. Here, $1-\mu \in \C$ and $-\mu\in \C$ are the fixed positions of the moon and the earth in the rotation system, respectively. This system is referred to as the \textbf{circular planar restricted three-body problem}.

We briefly abbreviate the effective potential of $H=H_\mu$ by
$$
U(q):=U_0(q)-\frac{1}{2}|q|^2=-\frac{\mu}{|q-(1-\mu)|}-\frac{1-\mu}{|q+\mu|}-\frac{1}{2}|q|^2,
$$ where the dependence on $\mu$ in the notation is omitted.  Hamilton's equations of $H$ become
\begin{equation}\label{equ: motion of H}
\left\{\begin{aligned}
\dot q_1=p_1-q_2,\quad \quad \dot p_1=-\partial_{q_1}U_0-p_2=-\partial_{q_1}U-p_2-q_1,\\
\dot q_2=p_2+q_1, \quad \quad \dot p_2=-\partial_{q_2}U_0+p_1=-\partial_{q_2}U+p_1-q_2,
\end{aligned}\right.
\end{equation}
or, equivalently,
\begin{equation}\label{equ: motion of H2}
\left\{\begin{aligned}
\ddot q_1&=-\partial_{q_1}U_0-2\dot q_2+q_1=-2\dot q_2-\partial_{q_1} U,\\
\ddot q_2&=-\partial_{q_2}U_0+2\dot q_1+q_2=2\dot q_1-\partial_{q_2}U.
\end{aligned}\right.
\end{equation}
Notice that $F(q,\dot q):=|\dot q|^2/2+U(q)$ is a conserved quantity of \eqref{equ: motion of H2}, and equals $H(q,p)$ under the correspondence $(q,\dot q=p+iq)\leftrightarrow(q,p)$. For any given $h\in \R$, the set $\Omega_h:=\{U(q)\leq h\} \subset \C \setminus \{m,e\},$ is called the Hill region at energy $h$. The boundary $\partial \Omega_h$ is called the zero velocity curve since $\dot q=0$ if $F(q,\dot q)=h$ and $q\in \partial \Omega_h$.

For any given $0<\mu<1$, the Hamiltonian $H$ has five critical points $l_1,\ldots,l_5$, corresponding to equilibrium points of
\eqref{equ: motion of H}. The respective projections $\hat l_1,\hat l_2,\hat l_3$ of $l_1$, $l_2$ and $l_3$ to the $q$-plane are co-linear with $e$ and $m$, and $\hat l_1$ lies in between the earth and the moon.  If $0<\mu<1/2,$ then $\hat l_2$ lies to the right of the moon and $\hat l_3$ to the left of the earth. The points $l_1,l_2,l_3$ are saddle-center equilibrium points of $H$, which means that the linearized vector field at those points admits a pair of real eigenvalues and a pair of purely imaginary eigenvalues. Writing
$\hat l_1 = 1-\mu - r_1(\mu)$, we have
\begin{equation}\label{equ: r1, r2 and r3}
\begin{aligned}
&\frac{1-\mu}{(1-r_1)^2}-\frac{\mu}{r_1^2}=1-\mu-r_1.
\end{aligned}
\end{equation}
We can explicitly find the inverse function
\begin{equation}\label{equ: mu of r1}
\mu(r_1)=\frac{r_1^3(3-3r_1+r_1^2)}{1-2r_1+r_1^2+2r_1^3-r_1^4},
\end{equation}
and $r_1\in (0,1)$ can be regarded as a parameter replacing $\mu$.
The projections $\hat l_4,\hat l_5$ of $l_4,l_5$ to the $q$-plane are symmetric with respect to the $q_1$-axis, and each one of them forms an equilateral triangle with the primaries.

\begin{figure}[hbpt]
    \centering
    \includegraphics[width=0.4\linewidth]{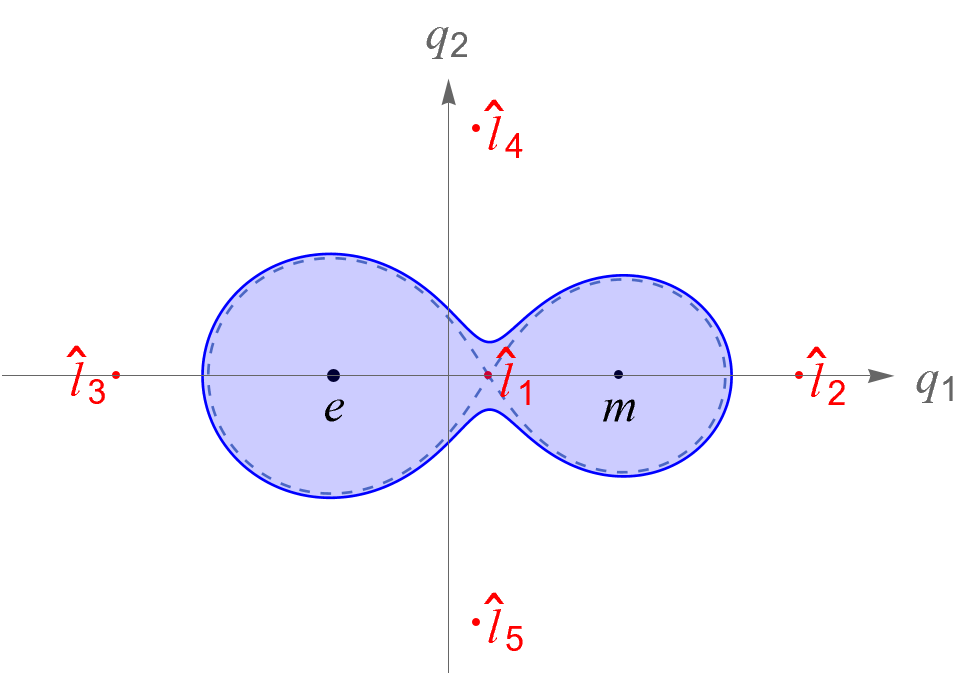}
    \caption{The projection of $\mathcal{M}_{\mu,E}^{e\# m}$ to the $q$-plane for $E$ slightly above $L_1(\mu)$.}
    \label{fig:Hill regions}
\end{figure}

Denote the critical values of $H$ by $L_i(\mu):=H(l_i),\ i=1,\ldots,5$. They satisfy $L_1 < L_2 \leq L_3 <L_4=L_5$ and are called Lagrange values. The second inequality is an equality if and only if $\mu=1/2$.  We study the dynamics on $H^{-1}(E)$, where $E$ is up to slightly above the first Lagrange value $L_1(\mu)$. We are interested in the dynamics on the component of $H^{-1}(E)$ that projects to the the disk-like region with two punctures at the primaries,  and has a small neck near $\hat l_1$, see Figure~\ref{fig:Hill regions}.


\section{Proof of Theorem \ref{thm_retrograde}}

For every mass ratio and energy below the first Lagrange value, Birkhoff used the shooting method to prove the existence of a retrograde in the bounded components of the energy surface around each primary.

\begin{thm}[Birkhoff \cite{Birkhoff1915}] \label{thm:Birkhoff1915}
Fix $0<\mu<1$. Then for every energy  $E< L_1(\mu)$, $\mathcal{M}_{\mu,E}^e$ admits  a $q_2$-symmetric retrograde orbit. A similar statement holds for $\mathcal{M}^m_{\mu,E}.$
\end{thm}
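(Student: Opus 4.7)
The plan is to carry out Birkhoff's classical shooting argument, exploiting the $\mathbb{Z}_2$-symmetry $\sigma:(q_1,q_2,p_1,p_2,t)\mapsto(q_1,-q_2,-p_1,p_2,-t)$ of $H_\mu$, which is a direct check using that the potential $U$ is even in $q_2$. The key observation is that any solution that meets the $q_1$-axis perpendicularly (meaning $q_2=0$ and $p_1=0$, so that $\dot q_1=p_1-q_2=0$) at two distinct instants can be reflected via $\sigma$ to yield a $\sigma$-symmetric closed orbit.

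To realize such crossings I would parametrize the perpendicular transversals on the earth side by a position $c\in(-\mu,\hat l_1(\mu))$, setting $q_2=0$, $p_1=0$ and solving the energy relation $\frac{1}{2}p_2^2+cp_2+U(c,0)=E$ for $p_2$; the Hill region condition $U(c,0)\le E$ cuts out, by Proposition~\ref{prop_transversality of $Y_e$}, a half-open interval $(-\mu,c^{\ast}(E)]\subset(-\mu,\hat l_1(\mu))$, with $c^{\ast}(E)<\hat l_1(\mu)$ strictly because $E<L_1(\mu)$. Shooting each such initial condition forward, the orbit remains trapped in the bounded Hill component around the earth and, after Levi-Civita regularization of possible collisions at $q=-\mu$, returns to $\{q_2=0\}$. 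I would record the first such return together with the angle $\theta(c)$ between the velocity and the $q_1$-axis at that moment; a perpendicular return corresponds to $\theta(c)=0$. Phrased as in the discussion following Theorem~\ref{thm_retrograde}, one obtains two real-analytic curves of outgoing and incoming configurations in the rectangle $Q=[-\pi/2,\pi/2]\times[-M,0]$ (where the first coordinate is an angle and the second a radial coordinate), each of which tends to $\partial Q$ at its endpoints.

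Existence of the symmetric retrograde orbit then reduces to an intermediate-value or topological intersection argument: for $c$ close to $-\mu$ the return angle approaches the value dictated by small near-Keplerian loops around the primary, whereas as $c\to c^{\ast}(E)^-$ the trajectory hugs the zero-velocity curve and the return angle approaches an opposite limiting value, so the two curves reach opposite components of $\partial Q$ and are forced to cross in the interior. Any such crossing supplies a $\sigma$-symmetric closed orbit. To identify it as a retrograde orbit in the geometric sense (projecting to a simple closed curve winding once counterclockwise around $q=-\mu$ in the rotating frame), I would argue by continuation from the Kepler limit, where the corresponding $\sigma$-symmetric orbits in the Rotating Kepler Problem are explicit and manifestly retrograde, and use that the rotation sense is preserved along the resulting one-parameter family as long as the projection stays uniformly away from $\hat l_1(\mu)$; the analogous statement on the moon side then follows from the symmetry $H_\mu(p,q)=H_{1-\mu}(-p,-q)$. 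The main obstacle is the asymptotic analysis of $\theta(c)$ at the two corners of $Q$: the singularity at $c=-\mu$ forces the Levi-Civita regularization together with a careful limiting computation of the leading Kepler loop, and as $c\to c^{\ast}(E)$ one must control the degeneration of the return map as the initial condition approaches the Hill boundary, in order to justify that the two analytic curves actually reach opposite components of $\partial Q$ and hence must intersect.
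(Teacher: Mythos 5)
Your overall strategy --- perpendicular shooting from the $q_1$-axis combined with the reversing symmetry $\sigma$ --- is exactly Birkhoff's and is the route the paper takes, but two steps of your plan as written would fail. The more serious one is the limiting behavior you assign to the outer end of the shooting interval. As $c\to c^{\ast}(E)^-$ the orbit does not ``hug the zero-velocity curve'' and return with an opposite limiting angle: for initial points sufficiently close to the Hill boundary the trajectory makes a small excursion and returns to the $q_1$-axis \emph{on the same side of the primary, without touching the $q_2$-axis}, so it does not wind around the earth at all and the return angle is not the relevant quantity there. Birkhoff's argument therefore truncates each family at a dynamically defined parameter ($\bar q_1^{\ast}$, resp.\ $\hat q_1^{\ast}$), characterized as the first value at which one of several degenerate alternatives occurs (collision, tangential return to the $q_1$-axis, or $\dot q_1=0$ before reaching the $q_2$-axis); ruling out the bad alternatives via the monotonicity of $\dot q_1+2q_2$ coming from $\partial_{q_1}U>0$ (Lemma \ref{lem: partial_q1 U positive}) and identifying which boundary point of $Q$ each curve limits to is the heart of the proof, and your proposal does not engage with it. Relatedly, you oscillate between a one-parameter intermediate-value argument for an angle $\theta(c)$ at the first return to $\{q_2=0\}$ and the paper's genuinely two-dimensional matching: there, \emph{two} families are shot, one perpendicular crossing on each side of the primary, each followed only as far as the $q_2$-axis, where the pair (velocity argument, crossing height) is recorded; existence follows because the two embedded analytic curves $\Gamma_1,\Gamma_2$ must cross in the rectangle $Q$. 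A single first-return angle to the $q_1$-axis is not obviously continuous on the whole interval (loss of transversality of the return is precisely what happens at the truncation point) and does not control on which side of the primary the return occurs.

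The second gap is the identification of the symmetric orbit as retrograde ``by continuation from the Kepler limit'': the theorem is for every fixed $\mu\in(0,1)$ and every $E<L_1(\mu)$, so there is no small parameter to continue from. In the actual argument the winding is built into the construction --- the matched arc runs from a perpendicular crossing on one side of the primary, through the region $q_2<0$ with $\dot q_1>0$, across the $q_2$-axis below the primary, to a perpendicular crossing on the other side, and its $\sigma$-reflection closes it into a simple loop encircling the primary once with the correct orientation. A smaller inaccuracy: the orbits with $c\to-\mu^+$ are near ejection--collision orbits (the perpendicular momentum solving the energy relation is near escape velocity), not small near-circular Kepler loops; the Levi--Civita computation showing that the crossing angle at the $q_2$-axis tends to $-\pi/4$ is what replaces that heuristic.
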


Theorem \ref{thm:Birkhoff1915} also holds for energies below the critical value $-9/2$ in the Hill's lunar problem, and the retrograde orbit is both symmetric in $q_1$ and $q_2$, see Section 8.3 in \cite{FvK}. 
We briefly explain the proof of Theorem \ref{thm:Birkhoff1915} and later show how it can be adapted to energies slightly above the first Lagrange value. We start replacing coordinates $(p,q)$ with $(p+i\mu,q-\mu)$ so that the primaries stay at $0, 1\in \C$. The Hamiltonian becomes
$$
H(p,q)=\frac{1}{2}((p_1-q_2)^2+(p_2+q_1)^2)+U(q),
$$
where
$$
U(q)=-\frac{\mu}{|q-1|}-\frac{1-\mu}{|q|}-\frac{1}{2}|q-\mu|^2.
$$

Let us consider two families of trajectories with initial conditions in $\{(0,p_1,q_1,0), p_1,q_1<0\}$ and $ \{(0,p_1,q_1,0), p_1,q_1>0\}$ at energy $E$. Using Levi-Civita coordinates to regularize collisions with the primary at $0$, one can show that for every $-q_1(0)>0$ sufficiently small, there exists $t_0>0$, depending on $q_1(0)$, such that $q_1(t_0)=0$,
$q_2(t)<0$ and $\dot q_1(t)>0$ for every $t\in(0,t_0]$. Moreover, $\mathrm{arg}(\dot q(t_0))\to -\pi/4$ as $q_1(0)\rightarrow 0^-$. If $q_1(0)>0$ is sufficiently small, then similar properties hold for the backward flow $t\in [-t_0,0)$, where $t_0>0$ depends on $q_1(0)$. In particular, $q_1(-t_0)=0$ and $\mathrm{arg}(\dot q(-t_0))\rightarrow \pi/4$ as $q_1(0)\to 0^+$. Let $\bar q_2>0$ be the maximal value of $q_2$ satisfying $E=U(0,\bar q_2)$. The families of solutions above induce real-analytic curves in the interior of the rectangle $Q:=[-\pi/2,\pi/2]\times [-\bar q_2,0]$, given by
$$
\Gamma_1:=\{(\mathrm{arg}(\dot q(t_0)),q_2(t_0)), q_{1}(0)\in(-\epsilon_0,0)\},\quad
\Gamma_2:=\{(\mathrm{arg}(\dot q(-t_0)),q_2(-t_0)),q_{1}(0)\in(0,\epsilon_0)\},
$$
where $\epsilon_0>0$ is sufficiently small.

For every energy $E$ up to slightly above $L_1(\mu)$, there exists the least value $\bar q_1<0$ of $q_1$ in the bounded component of the Hill region around $(0,0)$ such that $E=U(\bar q_1,0)$. Moreover, using Levi-Civita coordinates again, one shows that for every $q_{1}(0)-\bar q_1>0$ sufficiently small, the trajectory $q(t)$  returns to the $q_1$-axis without touching the $q_2$-axis. By continuity, there exists a minimal $\bar q_{1}^* \in (\bar q_1,0)$ such that every trajectory $q(t)$ with energy $E$ and initial conditions $q(0)\in (\bar q_{1}^*,0)\times \{0\}$, $\dot q_2(0) <0, \dot q_1(0)=0,$ satisfies
$$
q(t)|_{t\in(0,t_0)}\in\{q_1,q_2< 0\},\quad \dot q_1(t)|_{t\in(0,t_0]}> 0,\quad q_1(t_0)=0.
$$
If $q_1(0)=\bar q_1^*$, then at least one of the following conditions holds
\begin{itemize}
\item[(a)] $q(t_0)=(0,0)$.
\item[(b)]  $\exists\ t_1\in (0,t_0)$ so that $q_1(t_1)<0$ and $q_2(t_1)=\dot q_2(t_1)=0$.
\item[(c)]  $\exists\ t_1\in (0,t_0)$ so that $q_1(t_1),q_2(t_1)<0$, $\dot q_1(t_1)=0$.
\item[(d)] $\dot q_1(t_0)=0$,
\end{itemize}
where $t_0$ depends on $q_1(0)$. Cases (b) and (c) can be ruled out by a monotonicity argument proved by Birkhoff in \cite{Birkhoff1915}. In case (d), we necessarily have $\dot q_2(t_0)>0$.
Hence, we can extend $\Gamma_1$ to a real-analytic curve defined on the interval $(\bar q_1^*,0)$ denoted
$$
\Gamma_1=\{(\mathrm{arg}(\dot q(t_0)),q_2(t_0)), q_{1}(0)\in(\bar q_1^*,0)\}\subset Q\setminus\partial Q.
$$
If $q_{1}(0)\to 0^+$, $\Gamma_1$ converges to $(-\pi/4,0)\in \partial Q$. If $q_{1}(0)\to \bar q_1^*$, then either $\Gamma_1$ converges to $(\pi/4+e,0)$ for some $0<e\leq \pi/4$ (case (a)), or to $(\pi/2,q_2(t_0))$ for some $-\bar q_2 < q_2(t_0) < 0$ (case (d)). This is all proved in \cite{Birkhoff1915}.

A similar argument holds for the family $\Gamma_2$. More precisely, there exists $\hat q_1^*\in(0,l_1(\mu)+\mu)$ such that the backward trajectory $q(t)$ with energy $E$ up to $L_1(\mu)$ and initial conditions $q(0)\in (0,\hat q_1^*)$, $\dot q_1(0)=0, \dot q_1(0)>0,$ satisfies
$$
q(t)|_{t\in(-t_0,0)}\in\{q_2< 0< q_1\},\quad \dot q_1|_{t\in[-t_0,0)}>0,\quad q_1(-t_0)=0,
$$
where $t_0$ depends on $q_1(0).$
Moreover, for $q_1(0)=\hat q_1^*$ at least one of the following cases happen:
\begin{itemize}
\item[(e)] $q(-t_0)=(0,0)$.

\item[(f)] $\exists \ t_1\in (-t_0,0)$ so that $q_1(t_1)>0$ and $q_2(t_1)=\dot q_2(t_1)=0$.
\end{itemize}
As in (b), case (f) can be ruled out. Hence, we extend $\Gamma_2$ to a real-analytic curve defined on the interval $(0, \hat q_1^*)$ denoted
$$
\Gamma_2=\{(\mathrm{arg}(\dot q(-t_0)),q_2(-t_0)),q_{1}(0)\in(0,\hat q_1^*)\}\subset Q\setminus \partial Q.$$
If $q_1(0)\to 0^+$, then $\Gamma_2$ converges to $(\pi/4,0)\in \partial Q$. If $q_1(0)\to \hat q_1^*$, then $\Gamma_2$ converges to $(-d-\pi/4,0)$ for some $0<d\leq\pi/4$ (case (e)).

By uniqueness of solutions, both $\Gamma_1$ and $\Gamma_2$ do not admit self-intersections and are properly embedded in $\dot Q:= Q \setminus \partial Q\equiv \R^2$. Hence, they separate $\dot Q$ into twp disjoint open components. Moreover, the starting and ending points of $\Gamma_1$ lie in different components of $\dot Q \setminus \Gamma_2$ and thus $\Gamma_1$ must intersect $\Gamma_2$. Each intersection point corresponds to a retrograde orbit.  Since both curves are real-analytic, the intersection points are isolated.
A crossing point between these two curves is a point where one of the curves, say $\Gamma_1$, changes the component of $\dot Q \setminus \Gamma_2$. Such an intersection point always exists and is stable in the sense that if $\mu$ and $E$ are perturbed, there exists a nearby intersection point for the new parameters. We conclude that at least one crossing point exists between $\Gamma_1$ and $\Gamma_2$.

\begin{figure}[hbpt]
    \centering
    \includegraphics[width=0.4\linewidth]{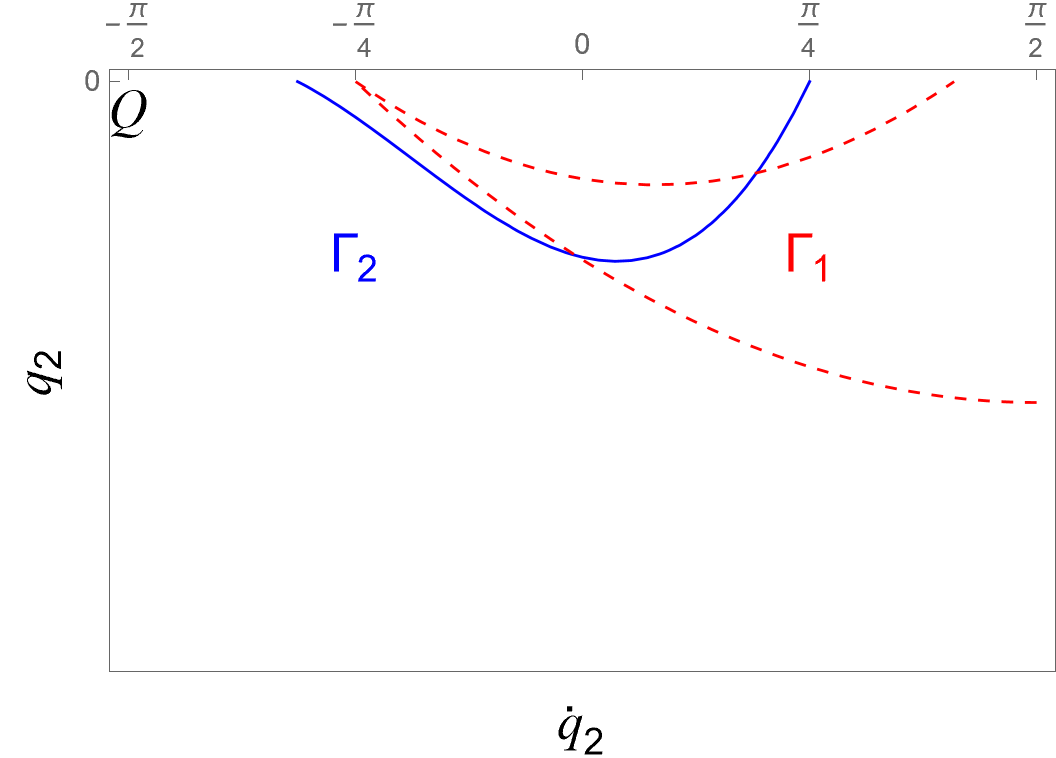}
    \caption{The curves $\Gamma_1$ and $\Gamma_2$ in $Q$}
    \label{fig:enter-label}
\end{figure}

In the following, we shall prove that Birkhoff's shooting method can be used to find retrograde orbits for energies slightly above the first Lagrange value. We start with the following lemma, which states that the endpoint $\hat q_1^*$ of the interval $(0,\hat q_1^*)$ defined above stays away from $l_1(\mu)$ as the energy increases to slightly above $L_1(\mu)$. In particular, the curves $\Gamma_1$ and $\Gamma_2$ are well-defined and share the same properties as in the case of lower energies.

\begin{lem}\label{lem: hat q2<1-r1-delta}
Fix $0<\mu_0<1$. There exists $\delta_0>0$ small so that for every $(\mu,E)$ sufficiently close to $(\mu_0, L_1(\mu_0))$ the solution in $H^{-1}(E)$ satisfying the initial conditions $q_1(0) = \hat l_1(\mu_0)+\mu_0 - \delta_0,$ $q_2(0)=0$, $\dot q_1(0)=0$ and $\dot q_2(0)>0$ satisfies the following conditions backward in time: there exists $t_0>0$ so that $q_2(-t_0)=0,$ $q_2(t) < 0 < \dot q_1(t)$ for every $t\in (-t_0,0)$, $0< q_1(-t_0)<q_1(0)$ and $\dot q_2(-t_0)<0$.
\end{lem}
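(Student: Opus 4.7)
The plan is to reduce the lemma to a compactness and continuity argument, exploiting that the initial condition is by design bounded away from the projection of the saddle-center $l_1(\mu)$ uniformly in $(\mu,E)$ close to $(\mu_0,L_1(\mu_0))$.

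First I would verify the conclusion at the limiting parameters $(\mu,E)=(\mu_0,L_1(\mu_0))$. For this I would replay Birkhoff's monotonicity analysis from \cite{Birkhoff1915} as recalled just above the lemma, on the open interval $q_1(0)\in(0,\hat l_1(\mu_0)+\mu_0)$. For the backward shoot starting at $(q_1(0),0)$ with $\dot q_1(0)=0$, $\dot q_2(0)>0$, the only obstructions to the desired behavior are the degenerate cases (e) (collision with the primary at the origin) and (f) (re-crossing of the $q_1$-axis with $\dot q_2=0$). Case (f) is ruled out by Birkhoff's convexity argument for the effective potential along the trajectory, and case (e) occurs only at a discrete set of values of $q_1(0)$. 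Hence for every $\delta_0>0$ sufficiently small the starting point $q_1(0)=\hat l_1(\mu_0)+\mu_0-\delta_0$ lies in the subinterval where the backward trajectory returns to the positive $q_1$-axis, satisfying $q_2<0<\dot q_1$ in between, $q_1(-t_0)\in(0,q_1(0))$, and $\dot q_2(-t_0)<0$. This establishes the conclusion at the limit.

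Second, I would extend to nearby $(\mu,E)$ by continuous dependence on parameters. The vector field of $H_\mu$ depends smoothly on $(\mu,E)$ on any compact subset of the regularized component that stays uniformly away from both primaries and from $\hat l_1(\mu_0)$; note $\dot q_2(0)$ is determined through the energy equation and depends continuously on $(\mu,E)$. Shrinking $\delta_0$ if needed, the limiting trajectory on $[-t_0,0]$ is contained in such a compact set. A standard Gronwall estimate then yields $C^1$-convergence of the perturbed trajectories on $[-t_0,0]$ as $(\mu,E)\to(\mu_0,L_1(\mu_0))$, and the open conditions $q_2<0$, $\dot q_1>0$ on $(-t_0,0)$ together with the transversality conditions $q_2(-t_0)=0$ and $\dot q_2(-t_0)<0$, $0<q_1(-t_0)<q_1(0)$ all persist under small perturbations.

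The main obstacle is to control that the return time $t_0$ stays uniformly bounded as $(\mu,E)\to(\mu_0,L_1(\mu_0))$, since for supercritical energies the neck region around $l_1(\mu)$ is open and trajectories could a priori leak into it and linger. I would rule this out by two observations: first, the backward trajectory starts with $\dot q_2(0)<0$, so it immediately enters the lower half-plane $\{q_2<0\}$, which does not contain $\hat l_1(\mu)$ for $\mu$ close to $\mu_0$; second, the Coriolis term $-2\dot q_2$ in \eqref{equ: motion of H2} together with a uniform lower bound on $-\partial_{q_1}U$ on the relevant compact subset of the Hill region (which can be arranged by choosing $\delta_0$ small enough so that $q_1(0)$ lies in the regime where Proposition \ref{prop_transversality of $Y_e$}-(iv) applies) yields a uniform positive lower bound for $\dot q_1(t)$ on $(-t_0,0)$ away from $t=0,-t_0$. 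This bounds $t_0$ uniformly in $(\mu,E)$ and closes the continuity argument, completing the proof.
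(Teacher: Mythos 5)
Your reduction to the critical case $(\mu,E)=(\mu_0,L_1(\mu_0))$ plus continuous dependence is the same first move as the paper, and your use of Birkhoff's monotonicity of $\dot q_1+2q_2$ (via $\partial_{q_1}U>0$) to get $\dot q_1>0$ along the arc is also the right second ingredient. But the core of the lemma is missing from your argument.

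The gap is in your base case. The initial point sits at distance $\delta_0$ from the saddle-center $l_1(\mu_0)$ with $\delta_0$ \emph{small}, so the trajectory lives entirely in the neck region where the dynamics is governed by the linearization at the saddle-center; it is not ``bounded away'' from $l_1(\mu_0)$ in any sense that lets you bypass a local analysis. More importantly, Birkhoff's dichotomy (cases (e)--(f)) describes trajectories with $q_1(0)\in(0,\hat q_1^*)$, which cross the $q_2$-axis before returning to the $q_1$-axis — the opposite of what the lemma asserts. The whole point of the lemma is to exhibit the \emph{other} behavior (return to the $q_1$-axis with $q_1(-t_0)>0$) at $q_1(0)=\hat l_1(\mu_0)+\mu_0-\delta_0$, precisely in order to conclude $\hat q_1^*\le \hat l_1(\mu_0)+\mu_0-\delta_0$. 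So ``the only obstructions are (e) and (f)'' is false: the real alternative to be excluded is that the backward orbit reaches $q_1=0$ first, and nothing in your argument rules this out. The paper does this by writing the initial condition in the symplectic normal-form coordinates $\bar x$ of the saddle-center, solving the linearized (rescaled) system explicitly, and observing that the elliptic component forces $\tilde q_2$ to vanish again at some $-t_0\in(-\pi/\lambda_2,0)$ with $\tilde q_1(-t_0)=O(\delta_0)$, so the return to the $q_1$-axis happens in uniformly bounded time and at a point still near the Lagrange point (hence with $q_1(-t_0)>0$). This explicit computation is the content you would need to supply.

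Your third paragraph does not repair this: $\dot q_1$ has no uniform positive lower bound on $(-t_0,0)$ (it vanishes at $t=0$, and the inequality $\dot q_1>-2q_2$ degenerates at both endpoints), a lower bound on $\dot q_1$ would in any case control the $q_1$-displacement rather than the return time to $\{q_2=0\}$, and the sign claim on $-\partial_{q_1}U$ is reversed (Birkhoff's lemma gives $\partial_{q_1}U>0$ there). The correct uniform bound $t_0<\pi/\lambda_2$ comes from the Lyapunov frequency $\lambda_2$ of the linearization, not from a Coriolis estimate.
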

\begin{proof}
It is enough to prove the lemma for $(\mu, E) = (\mu_0, L_1(\mu_0))$ and the statement follows from the transversality properties of the solution at the times where $q_2=0$. We look at the solutions of the linearized dynamics near $l_1(\mu_0)$ with initial conditions as in the statement. If $\delta_0$ is taken sufficiently small, we approximate the dynamics in $H^{-1}(L_1(\mu_0))$ by the linearized dynamics at $l_1(\mu_0)$, which is described by the quadratic Hamiltonian $H_2=H_2(\bar x)$ given in \eqref{H2}, where $\bar x:=\left((p+i\mu_0,q-\mu_0)- l_1(\mu_0)\right)(V^T)^{-1}$ and $V$ is the matrix given by \eqref{V1234b} so that $l_1(\mu)$ corresponds to$\bar x=0$. Hence,
$$
q(t)-(\mu_0 + \hat l_1(\mu_0),0)=\left(\frac{2\sqrt{\lambda_2}}{C_2C_0}\bar x_2+\frac{2\sqrt{\lambda_1}}{C_1C_0}\bar x_3,\frac{C_1}{\sqrt{\lambda_1}C_0}\bar x_1+\frac{C_2}{\sqrt{\lambda_2}C_0}\bar x_4\right)(t).
$$
The dynamics of $H$ is given by
$$
\begin{aligned}
\dot q(t)&=p(t)+iq(t)\\
&=\left(\frac{C_2^2+2a-2}{\sqrt{\lambda_1}C_1C_0}\bar x_1+\frac{C_1^2+2a-2}{\sqrt{\lambda_2}C_2C_0}\bar x_4,\frac{\sqrt{\lambda_2}(C_2^2-2)}{C_2C_0}\bar x_2+\frac{\sqrt{\lambda_1}(C_1^2-2)}{C_1C_0}\bar x_3\right)(t)\\
&+\left(-\frac{C_1}{\sqrt{\lambda_1}C_0}\bar x_1-\frac{C_2}{\sqrt{\lambda_2}C_0}\bar x_4,\frac{2\sqrt{\lambda_2}}{C_2C_0}\bar x_2+\frac{2\sqrt{\lambda_1}}{C_1C_0}\bar x_3\right)(t).
\end{aligned}$$

Since $q_1(0)=1-r_1(\mu_0)-\delta_0$, $\dot q_2(0)>0= q_2(0)=\dot q_1(0)$, we conclude that $\bar x_1(0)=\bar x_4(0)=0$ and $-\delta_0=\frac{2\sqrt{\lambda_2}}{C_2C_0}\bar x_2(0)+\frac{2\sqrt{\lambda_1}}{C_1C_0}\bar x_3(0)$. Recall that $C_2>C_1>0$ for every $\mu\in [0,1]$. Since $H(0,\bar x_2,\bar x_3,0)-L_1(\mu_0)=\frac{\lambda_2}{2}\bar x_2^2-\frac{\lambda_1}{2}\bar x_3^2+O(|\bar x_2|^3+|\bar x_3|^3)=0$, we see that
$$
\bar x_2(0)=\frac{ C_0C_1C_2}{2\sqrt{\lambda_2}(C_2-C_1)}\delta_0+O(\delta_0^2)>0,\ \
\bar x_3(0)=-\frac{C_0C_1C_2}{2\sqrt{\lambda_1}(C_2-C_1)}\delta_0+O(\delta_0^2)<0,$$
and
$$
\dot q_2(0)= \frac{\sqrt{\lambda_2}C_2\bar x_2(0) +\sqrt{\lambda_1}C_1\bar x_3(0)}{C_0}=\frac{C_1C_2}{2}\delta_0+O(\delta_0^2)>0,$$
for every $\delta_0>0$ sufficiently small.

Consider the re-scaling $\bar x=\epsilon^{1/2}x$ variable, we see that $H_\epsilon(x):=\epsilon^{-1}(H(\epsilon^{1/2} x)-L_1(\mu_0))$ converges in $C^\infty_{\text{loc}}$ to $H_2(x)$. We thus consider a solution $\tilde  x(t)$ in $H_2^{-1}(0)$ with initial conditions $\tilde x(0)=\frac{\delta_0 C_0C_1C_2}{2(C_2-C_1)}(0,\frac{1}{\sqrt{\lambda_2}},-\frac{1}{\sqrt{\lambda_1}},0)$ approximating the solution $x(t)$ in $H^{-1}_\epsilon(0)$ with initial condition $\bar x(0)$ mentioned above, where $H_2(\tilde x(t))=\frac{\lambda_1}{2}(\tilde x_1^2-\tilde x_3^2)+\frac{\lambda_2}{2}(\tilde x_2^2+\tilde x_4^2)(t)=0$. This is given by
$$
\begin{aligned}
\tilde x(t)=\frac{\delta_0 C_0C_1C_2}{2(C_2-C_1)}\bigg(-\frac{\sinh(\lambda_1t)}{\sqrt{\lambda_1}},\frac{\cos(\lambda_2t)}{\sqrt{\lambda_2}},
-\frac{\cosh(\lambda_1t)}{\sqrt{\lambda_1}},\frac{\sin(\lambda_2t)}{\sqrt{\lambda_2}}\bigg).
\end{aligned}
$$
In $q$-coordinates up to a re-scaling and a shifting, we write $\tilde x(t)$ as $\tilde q(t)$, i.e.,  $V\tilde x^T=(\tilde p,\tilde q)^T$. We thus obtain
$$
\tilde q(t)=\frac{\delta_0 C_1C_2}{2(C_2-C_1)}\left(\frac{2\cos(\lambda_2t)}{C_2}-\frac{2\cosh(\lambda_1t)}{C_1},-\frac{C_1\sinh(\lambda_1t)}{\lambda_1}+\frac{C_2\sin(\lambda_2t)}{\lambda_2}\right),
$$
with $\tilde q(0)=(-\delta_0,0)$ and $\dot{\tilde q}(0)=(0,\frac{C_1C_2}{2}\delta_0)$. Except for $t=0$, $\tilde q_2$ admits another zero at $-t_0\in(-\pi/\lambda_2,0)$. 
Therefore, we conclude that for every $\delta_0>0$ sufficiently small, the orbit $q(t)$ admits an intersection with $q_2$-axis at some time $-t_0\in(-\pi/\lambda_2,0)$ with $q_1(t_0)<-\delta_0$. To see that $\dot q_1|_{t\in(-t_0,0)}>0$, we need the following lemma from \cite{Birkhoff1915}.

\begin{lem}[Birkhoff {\cite[Section 17]{Birkhoff1915}}]\label{lem: partial_q1 U positive}
For every $0<\mu<1$, the inequality $\partial_{q_1}U>0$ holds on $B_\mu\cap \{0 \leq q_1 <l_1(\mu)+\mu\}$, where $B_\mu\subset \mathbb C\setminus\{-\mu\}$ is the projection of $\mathcal M^e_{\mu,L_1(\mu)}$ to the $q$-plane.
\end{lem}

From this lemma, we know that $\partial_{q_1}U(q)>0$ for every $q\in B_{\mu_0}\cap\{0\leq q_1< \hat l_1(\mu_0)+\mu_0\}$. Then $\frac{d}{dt}(\dot q_1+2q_2)=-\partial_{q_1}U<0$ whenever $0\leq q_1(t)<\hat l_1(\mu_0)+\mu_0$, for a solution $(p(t),q(t))\in \mathcal{M}_{\mu_0, L_1(\mu_0)}^e$. This means that $(\dot q_1+2q_2)(t)$ is a decreasing function on $t \in [-t_0,0)$. Since $\dot q_{1}(0)+2q_{2}(0)=0$, we conclude that $(\dot q_1+2q_2)(t)>0$ or equivalently $\dot q_1(t)>-2q_2(t)>0$ for every $t\in[-t_0,0)$.
Hence, this lemma holds.
\end{proof}

We are ready to complete the proof of Theorem \ref{thm_retrograde}. From the arguments above and Lemma \ref{lem: hat q2<1-r1-delta}, we know that both $\Gamma_1$ and $\Gamma_2$ are well-defined real-analytic curves for every $(\mu,E)$ sufficiently close to $(\mu_0, L_1(\mu_0))$. Indeed, in the definition of $\Gamma_2$, we need the existence of $\hat q_1^*$ satisfying the properties above. Lemma \ref{lem: hat q2<1-r1-delta} implies that such $\hat q_1^*$ exists for every $(\mu, E)$ sufficiently close to $(\mu_0, L_1(\mu_0))$, including energies $E$ greater than $L_1(\mu)$. The curves $\Gamma_1$ and $\Gamma_2$ do not self-intersect and analytically depend on $(\mu, E)$. As mentioned before, they admit at least one crossing point, which is isolated. Such a crossing point varies continuously with $(\mu,E)$. For each crossing point, there exists a $q_2$-symmetric retrograde orbit in $H^{-1}(E)$. If $(\mu,E) \to (\mu_0, L_1(\mu_0))$, the retrograde orbit for $(\mu,E)$ converges in $C^\infty$ to the one of $(\mu_0, L_1(\mu_0))$ since their initial conditions are arbitrarily close to each other. The proof of Theorem \ref{thm_retrograde} is now complete.

\section{Proof of Theorem \ref{thm_alphaJ}}
In this section, we prove Theorem \ref{thm_alphaJ}. We restate it for convenience.

\begin{thm}\label{thm_alphaJ_appendix}
Let $0<\mu_0<1$. Then for every $(\mu,E)$ sufficiently close to $(\mu_0, L_1(\mu_0))$, with $E> L_1(\mu)$, the following statements hold:
\begin{itemize}
\item[(i)] There exists a contact form $\alpha=\alpha_{\mu,E}=i_{Y}\omega_0$ on $\mathcal{M}^{e\# m}_{\mu,E}\equiv \R P^3 \# \R P^3$ whose Reeb flow is equivalent to the regularized Hamiltonian flow. Here, $Y=Y_{\mu, E}$ is a Liouville vector field, defined on a neighborhood of $\mathcal{M}_{\mu, E}^{e\#m}$ in $\R^4$ and transverse to $\mathcal{M}^{e\# m}_{\mu, E}$. Also, $\omega_0$ is the canonical symplectic form $\sum_i dp_i \wedge dq_i$.

\item[(ii)] There exists a compatible almost complex structure $J=J_{\mu,E}$ on $\R \times \mathcal{M}^{e\#m}_{\mu,E}$  adapted to $\alpha$ that admits a pair of $J$-holomorphic planes asymptotic to $P_{2,E}$ through opposite directions. The closure of their projections to $\mathcal{M}^{e\#m}_{\mu,E}$ form a regular two-sphere $\S=\S_{\mu,E}$ containing $P_{2,E}$. Furthermore, $\text{dist}(\S,l_1(\mu)) \to 0$  as $E \to L_1(\mu)^+$ uniformly in $\mu$.

\item[(iii)] There exists a contact form $\alpha = \alpha_{\mu,L_1(\mu)}=i_{Y_{\mu,L_1(\mu)}}\omega_0$ on the sphere-like singular subset $\dot \M^e:=\M_{\mu,L_1(\mu)}^e\setminus \{l_1(\mu)\}$ so that the contact forms $\alpha_{\mu,E}$ in (i) converge in $C^\infty_{\text{loc}}(\dot \M^e)$ to $\alpha_{\mu,L_1(\mu)}$ as $E \to L_1(\mu)^+$ uniformly in $\mu$. The same conclusion holds for $\dot{\mathcal{M}}^m:=\mathcal{M}_{\mu,L_1(\mu)}^m\setminus \{l_1(\mu)\}$.
\end{itemize}
\end{thm}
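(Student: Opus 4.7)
The plan is to analyze the regularized Hamiltonian $\hat H_{\mu, E} = \frac{1}{2}|y + F(x)|^2 + V_{\mu, E}(x)$ near the saddle-center singularities $S_\pm(\mu)$ that correspond to $l_1(\mu)$, and to build the contact form, the Liouville field, the almost complex structure, and the separating two-sphere simultaneously from a local normal form that extends globally via patching with the constructions of \cite{AFKP2012}. For $E = L_1(\mu)$, the lifted critical point $S_\pm$ is a non-degenerate critical point of saddle-center type whose linearization has eigenvalues $\pm\alpha(\mu),\pm i\beta(\mu)$ with $\alpha,\beta>0$ depending smoothly on $\mu$. A symplectic Moser-Weinstein change of variables brings $\hat H_{\mu,E}$ into the local form $\hat H_{\mu,E} = \alpha(\mu) I_1 + \beta(\mu) I_2 + O(|I|^2) + (E-L_1(\mu))$ near $S_\pm$, where $I_1=\frac{1}{2}(\xi_1^2-\eta_1^2)$ and $I_2=\frac{1}{2}(\xi_2^2+\eta_2^2)$, depending smoothly on $(\mu,E)$.

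For item (i), in the local normal form the radial field $Y_{\mathrm{loc}}=\frac{1}{2}(\xi\partial_\xi+\eta\partial_\eta)$ is Liouville for $\omega_0$ and is transverse to $\hat H_{\mu,E}^{-1}(0)$ for every $E>L_1(\mu)$ close to $L_1(\mu_0)$, because $d\hat H_{\mu,E}(Y_{\mathrm{loc}})>0$ on the energy surface near $S_\pm$. Outside a fixed small neighborhood $\mathcal{U}$ of $S_\pm$, the component $\mathcal{M}^{e\#m}_{\mu,E}\setminus\mathcal{U}$ coincides with the disjoint union of the punctured primary components, which are of contact type by \cite{AFKP2012} (their construction is perturbative in $\mu$ but the transversality condition is open and thus persists for $(\mu,E)$ near $(\mu_0,L_1(\mu_0))$). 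Using a partition of unity adapted to $(\mu,E)$, I would interpolate $Y_{\mathrm{loc}}$ with the global Liouville field from \cite{AFKP2012} inside the interpolating shell where both are defined, and verify transversality throughout by a uniform compactness and open-condition argument. Defining $\alpha=i_Y\omega_0$ gives a contact form whose Reeb flow is positively proportional to $X_{\hat H}$, hence equivalent to the regularized Hamiltonian flow.

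For item (ii), the Lyapunov orbit $P_{2,E}$ is the circle $\{I_1=0,\ I_2=(E-L_1(\mu))/\beta(\mu)\}$ in the local model, and its local two-dimensional stable and unstable manifolds $W^{s/u}_{\mathrm{loc}}$ are open disks in the energy surface bounded by $P_{2,E}$. I would define $J$ in a uniform tubular neighborhood of $\R\times P_{2,E}$ in the symplectization by requiring it to preserve both the contact distribution and the splitting into the center and saddle eigendirections of the asymptotic operator $A_{P_{2,E}}$, so that the lifts of $W^s_{\mathrm{loc}}$ and $W^u_{\mathrm{loc}}$ (equipped with the natural parametrization by the Hamiltonian action in the $\R$-direction) are $J$-holomorphic. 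Their leading eigenvectors at $P_{2,E}$ are the two leading negative eigenvectors of $A_{P_{2,E}}$, which are antipodal, so the two planes approach $P_{2,E}$ through opposite directions. Extending $J$ to the entire symplectization $C^\infty$-compatibly with $\alpha$ yields the pair of $J$-holomorphic planes whose projections close up to the embedded two-sphere $\mathcal{S}=\mathcal{S}_{\mu,E}$. The distance bound $\mathrm{dist}(\mathcal{S},l_1(\mu))\to 0$ as $E\to L_1(\mu)^+$ follows from the fact that, in the local model, $\mathcal{S}$ lies in the level set $\{I_2=(E-L_1(\mu))/\beta(\mu)\}$, which collapses onto $S_\pm$ at rate $O(\sqrt{E-L_1(\mu)})$ uniformly in $\mu$ close to $\mu_0$.

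For item (iii), the entire construction of $Y=Y_{\mu,E}$ depends smoothly on $(\mu,E)$ away from the singularity: both $Y_{\mathrm{loc}}$ and the global Liouville vector field from \cite{AFKP2012} extend smoothly to $E=L_1(\mu)$, and the partition of unity interpolation does as well. I would therefore simply set $\alpha_{\mu,L_1(\mu)}:=i_{Y_{\mu,L_1(\mu)}}\omega_0$ on $\dot{\mathcal{M}}^e$, and $C^\infty_{\mathrm{loc}}$-convergence $\alpha_{\mu,E}\to\alpha_{\mu,L_1(\mu)}$ on $\dot{\mathcal{M}}^e$, uniform in $\mu$, is immediate from the smooth dependence of $Y$ on the parameters and from the fact that any compact subset of $\dot{\mathcal{M}}^e$ is contained in the region where the patching is smooth. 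The main obstacle I expect is in item (ii): namely the construction of an $\R$-invariant compatible almost complex structure whose restriction to a uniform neighborhood of $P_{2,E}$ makes the hyperbolic local stable and unstable manifolds genuinely $J$-holomorphic while remaining globally $d\alpha$-compatible and smoothly dependent on $(\mu,E)$. This requires carefully identifying the leading eigendirections of $A_{P_{2,E}}$ with the saddle eigendirections of the local normal form, and keeping the construction uniform as $E\to L_1(\mu)^+$, where $P_{2,E}$ shrinks to $S_\pm$ and the local model degenerates.
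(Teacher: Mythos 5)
Your item (ii) contains the fatal error, and it is not repairable as stated. The hemispheres of the separating sphere $\mathcal{S}$ must be \emph{transverse} to the Reeb flow (this is built into the notion of a spherical shield and is what makes $\mathcal{S}\setminus P_{2,E}$ consist of leaves of the foliation), whereas the local stable and unstable manifolds $W^{s/u}_{\mathrm{loc}}(P_{2,E})$ are \emph{flow-invariant}; near the orbit they are half-open cylinders, not disks bounded by $P_{2,E}$. More to the point, they can never be projections of non-trivial $J$-holomorphic curves: if $u$ has image in a flow-invariant surface $W$, then $du(T\dot\Sigma)\subset TW$ and $\pi(TW)$ is at most one-dimensional because $R\in TW$, so the complex-linear map $\pi\circ du$ has rank $\leq 1$ and hence vanishes identically, forcing $\int u^*d\alpha=0$; but a finite-energy plane asymptotic to a periodic orbit always has $\int u^*d\alpha$ equal to the period of its asymptotic limit, which is positive. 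The paper's sphere is an entirely different object: in the linearized coordinates $x$ with $H_2=\frac{\lambda_1}{2}(x_1^2-x_3^2)+\frac{\lambda_2}{2}(x_2^2+x_4^2)$ it is the slice $H_2^{-1}(c_0)\cap\{x_3=0\}$, which is transverse to the flow away from $P_{0,L}$, and the two planes are produced \emph{explicitly} as solutions $u(s,t)=(x_1(s),r(s)\cos 2\pi t,0,r(s)\sin 2\pi t)$ of a first-order ODE for $r(s)$ for a concrete choice of $J$ adapted to the Liouville field $Y_2$; automatic transversality (Proposition \ref{prop_automatic_transversality}-(iii)) then propagates these planes to all nearby $(\mu,E)$ with $E>L_1(\mu)$, and the distance estimate follows from undoing the $\epsilon^{1/2}$-rescaling. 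Your identification of the leading eigenvectors with the saddle eigendirections is symptomatic of the same confusion: the leading eigenvector of each plane points along the transverse slice $\{x_3=0\}$, not along the invariant manifolds.

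In item (i) you underestimate the interpolation. A partition of unity applied to the vector fields themselves does not yield a Liouville field, since $\mathcal{L}_{\chi Y_1+(1-\chi)Y_2}\omega_0=\omega_0+d\chi\wedge\iota_{Y_1-Y_2}\omega_0$; the paper interpolates through the generating Hamiltonian, writing $Y_2-Y_e=X_G$ and $Y_\epsilon:=Y_2-X_{\beta(x_3)G}$. More seriously, transversality of $Y_\epsilon$ is not an open condition surviving a compactness argument: the interpolation takes place in the neck, which collapses to $l_1(\mu)$ as $E\to L_1(\mu)^+$, and $G$ contains a translation term of size $\epsilon^{-1/2}$ (the centers of $Y_e$ and $Y_2$ are at distance $O(1)$, i.e.\ $O(\epsilon^{-1/2})$ in the rescaled coordinates). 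Consequently the cutoff term $G\,d\beta\cdot X_H$ is of the same order $O(\epsilon^{1/2}|\beta'|)$ as the favorable term $\beta\,dH\cdot Y_e=O(\epsilon^{1/2}\beta)$, and one needs a pointwise bound on $|\beta'|$ in terms of $\beta$ --- the differential inequality \eqref{equ: condition on beta} of Proposition \ref{prop_neck_Liouville}, solved by an explicit $\beta$ --- together with the extension of the transversality of $Y_e$ to energies above $L_1(\mu)$ outside a shrinking annular sector (Proposition \ref{prop_transversality2}); the result in \cite{AFKP2012} covers all mass ratios but only $E<L_1(\mu)$, so the issue is the energy range, not perturbation in $\mu$. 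Item (iii) is correct in spirit but inherits the gap in (i).
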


The proof has two main steps. In the first step, we study the limiting linear system near $l_1(\mu_0)$ for every $(\mu,E)$, $E>L_1(\mu),$ sufficiently close to $(\mu_0, L_1(\mu_0))$. Under a suitable re-scaling of variables, the limiting system becomes a standard linear system with a saddle-center equilibrium at the origin. It thus admits a natural Liouville vector field $Y_2$. For a suitable choice of almost complex structure $J$, a pair of $J$-holomorphic planes can be explicitly found, which are asymptotic to the Lyapunoff orbits through opposite directions and form a spherical shield. Finally, the automatic transversality of such $J$-holomorphic planes, see Theorem \ref{prop_automatic_transversality}, guarantees the existence of similar $J$-holomorphic planes for every $(\mu,E)$, $E>L_1(\mu)$, sufficiently close to $(\mu_0, L_1(\mu_0))$. In particular, this planes project arbitrarily close to $l_1(\mu_0)$. The second step is to construct a suitable interpolation between $Y_2$ and the natural Liouville vector fields $Y_e,Y_m$ centered at the earth and the moon, see in \cite{AFKP2012}.

\subsection{Spherical shields near the neck}\label{subsec: holo two sphere near l1}

In this section, we construct a pair of $J$-holomorphic planes $\tilde u_{1,E}=(a_{1,E},u_{1,E}),\tilde u_{2,E}=(a_{2,E},u_{2,E}):\C \to \R \times H^{-1}(E)$
asymptotic to the Lyapunoff orbit $P_{2,E}$ near $l_1(\mu)$, for $(\mu,E)$ sufficiently close to $(\mu_0, L_1(\mu_0))$, with $E>L_1(\mu)$.  Recall that
$H(p,q)=\frac{1}{2}((p_1-q_2)^2+(p_2+q_1)^2)+U(q),$ where
$$ U(q)=-\frac{\mu}{|q-(1-\mu)|}-\frac{1-\mu}{|q+\mu|}-\frac{1}{2}|q|^2.$$
The energy of the first Lagrange point is denoted
$$L_1(\mu)=U(\hat l_1(\mu))=-\frac{\mu}{r_1}-\frac{1-\mu}{1-r_1}-\frac{1}{2}(1-\mu-r_1)^2$$
where
$\hat l_1(\mu)=(1-\mu-r_1,0)$ denotes the projection of $l_1(\mu)$ to the $q$-plane and $r_1\in(0,1)$ solves \eqref{equ: r1, r2 and r3}. Given $\delta,\epsilon_0,c_0>0$, we define $\hat q_1^{\pm}:=1-\mu-r_1\pm \epsilon^{1/2}\delta$ and
\begin{equation}\label{equ: hat L}
\hat{\mathcal{L}}_\epsilon:=\left\{(p,q)\in \mathcal{M}_{\mu,L_1(\mu) + c_0 \epsilon}^{e\#m},\ q_1\in[\hat q_1^-,\hat q_1^+]\right\}.
\end{equation}

Let $(\bar p,\bar q)=(p,q)-l_1(\mu)=(p+i\hat l_1(\mu),q-\hat l_1(\mu))$ be shifted coordinates. The Hamiltonian $H$ near $l_1(\mu)$ can be written as
$$H(\bar p,\bar q)=L_1(\mu)+H_2(\bar p,\bar q)+R(\bar p,\bar q),$$
where
$H_2(\bar p,\bar q)=\frac{1}{2}\big((\bar p_1-\bar q_2)^2+(\bar p_2+\bar q_1)^2+(\nabla^2U(\hat l_1)\bar q,\bar q)\big),$ $R$ vanishes up to order $2$ at $l_1(\mu)$,
$$
\nabla^2U(\hat l_1)=\begin{pmatrix}-1-4a & 0 \\ 0 & -1+2a
\end{pmatrix},\quad a=\frac{1-\mu}{2(1-r_1)^3}+\frac{\mu}{2r_1^3}.
$$
Using \eqref{equ: r1, r2 and r3}, we compute
\begin{equation}\label{mua}
\begin{aligned}
a(r_1)=\frac{2-r_1+r_1^2}{1-2r_1+r_1^2+2r_1^3-r_1^4}.
\end{aligned}
\end{equation}
Moreover, $\max\{a(r_1), r_1\in[0,1]\}=a(1/2)=4$ and $\min\{a(r_1), r_1\in[0,1]\}=a(0)=a(1)=2$ as one readily checks.
We re-scale $(\bar p, \bar q)$ and obtain a new Hamiltonian in coordinates $(\hat p, \hat q)$
$$
H_\epsilon(\hat p,\hat q):=\epsilon^{-1}(H(\epsilon^{1/2}\hat p-i(1-\mu-r_1),\epsilon^{1/2}\hat q+(1-\mu-r_1))-L_1(\mu)).
$$
Notice that $H_\epsilon(\hat p, \hat q)$  converges in $C^\infty_{\text{loc}}$ to $H_2(\hat p,\hat q)$ as $\epsilon \to 0^+$. In coordinates $(\hat p,\hat q)$, the energy surface $H^{-1}(L_1(\mu)+c_0\epsilon)$ and the re-scaled region $\hat{\mathcal{L}_\epsilon}$ smoothly converge to $H_2^{-1}(c_0)$ and $H_2^{-1}(c_0)\cap\{\hat q_1\in[-\delta,\delta]\}$, respectively.

The dynamics of $H_2$ is determined by the linear system
$$
\left(\begin{array}{c} \dot {\hat p} \\ \dot {\hat q} \end{array} \right)=J_4\nabla^2H(l_1(\mu))\left(\begin{array}{c} \hat p \\  \hat q \end{array} \right)
=\left(\begin{array}{cc} J_2 & -\nabla^2U(\hat l_1(\mu))-I_2\\ I_2 & J_2\end{array}\right)\left(\begin{array}{c} \hat p \\ \hat q \end{array} \right),
$$
where $\lambda_1,\lambda_2>0$ satisfy
$$
\nabla^2H(l_1(\mu))=\left(\begin{array}{cc}
I_2 & J_2 \\ J_2^T & \mathrm{diag}(-4a,2a)
\end{array}\right).
$$
Here, $J_{2n}= \left(\begin{array}{cc}0_n & -I_n\\ I_n & 0_n\end{array}\right)$ is the standard complex matrix, and $0_n$ and $I_n$ are the $n\times n$ zero and identity matrices, respectively. The eigenvalues of $J_4\nabla^2H(l_1(\mu))$ are
$\pm \lambda_1$ and $\pm i\lambda_2$,
where
\begin{equation}\label{lambda12}
\begin{aligned}
\lambda_1^2=a-1+\sqrt{a(9a-4)}\geq 1+2\sqrt{7}\quad \mbox{ and } \quad
\lambda_2^2=1-a+\sqrt{a(9a-4)}\geq-1+2\sqrt{7}.
\end{aligned}
\end{equation}
Notice that $\lambda_1\geq \lambda_2$ since $a\geq 2$. The following vectors form a symplectic basis of $\R^4$
\begin{equation}\label{V1234}
\begin{aligned}
V_1 & := k_1\left(\frac{5a+\sqrt{a(9a-4)}}{2(1+4a)\sqrt{a(9a-4)}},0,0,\frac{2+3a-\sqrt{a(9a-4)}}{2(1+4a)\sqrt{a(9a-4)}}\right),\\
V_2 & :=k_2\left(0,\frac{3a}{2\sqrt{a(9a-4)}}+\frac{1}{2},\frac{1}{\sqrt{a(9a-4)}},0\right),\\
V_3 & :=k_1^{-1}\left(0,\frac{(1+4a)(3a-\sqrt{a(9a-4)})}{2+3a-\sqrt{a(9a-4)}},\frac{2(1+4a)}{2+3a-\sqrt{a(9a-4)}},0\right),\\
V_4 & :=k_2^{-1}\left(\frac{5a-\sqrt{a(9a-4)}}{2+3a+\sqrt{a(9a-4)}},0,0,1\right),
\end{aligned}
\end{equation}
where
$$\begin{aligned}
k_1& =\frac{\sqrt{2}(a(9a-4))^{1/4}(1+4a)}{(2+3a-\sqrt{a(9a-4)})^{1/2}(-1+a+\sqrt{a(9a-4)})^{1/4}},\\
k_2& =\frac{\sqrt{2}(a(9a-4))^{1/4}(1-a+\sqrt{a(9a-4)})^{1/4}}{(2+3a+\sqrt{a(9a-4)})^{1/2}}.
\end{aligned}$$
Moreover, $\R V_1\oplus \R V_3$ contains the eigenspaces of $\pm \lambda_1$  and $\R V_2\oplus \R V_4$ contains the generalized eigenspace of $\pm i \lambda_2$. We write $V:=\big(V_1^T,V_2^T,V_3^T,V_4^T\big)\in\mathrm{Sp}(4)$. Let
\begin{equation}\label{C123}
\begin{aligned}
C_0 & :=\sqrt{2}(a(9a-4))^{1/4}, \quad
C_1:=(2+3a-(a(9a-4))^{1/2})^{1/2}, \\ C_2 & :=(2+3a+(a(9a-4))^{1/2})^{1/2}.
\end{aligned}
\end{equation}
 Then $V$ gets simplified as
\begin{equation} \label{V1234b}
V=\begin{pmatrix}
\frac{C_2^2+(2a-2)}{\sqrt{\lambda_1}C_1C_0} & 0 & 0 & \frac{C_1^2+(2a-2)}{\sqrt{\lambda_2}C_2C_0}\\
0 & \frac{\sqrt{\lambda_2}(C_2^2-2)}{C_2C_0} & \frac{\sqrt{\lambda_1}(C_1^2-2)}{C_1C_0} & 0\\
0 & \frac{2\sqrt{\lambda_2}}{C_2C_0} & \frac{2\sqrt{\lambda_1}}{C_1C_0} & 0\\
\frac{C_1}{\sqrt{\lambda_1}C_0}&0&0&\frac{C_2}{\sqrt{\lambda_2}C_0}
\end{pmatrix}.
\end{equation}

Under the linear symplectic change of coordinates
$\Phi(x) := V x^T = (\hat p,\hat q)^T$, $H_2$ becomes
\begin{equation}\label{H2}
H_2=\frac{\lambda_1}{2}(x_1^2-x_3^2)+\frac{\lambda_2}{2}(x_2^2+x_4^2),
\end{equation}
and thus Hamilton's equations decouple
\begin{equation}\label{equ: linearized equation}
\left\{\begin{aligned}
\dot x_1&=\lambda_1x_3,\\
\dot x_3&=\lambda_1x_1,
\end{aligned}\right.\quad\text{and}\quad
\left\{\begin{aligned}
\dot x_2&=-\lambda_2x_4,\\
\dot x_4&=\lambda_2x_2.
\end{aligned}\right.
\end{equation}
The energy surface $H_2^{-1}(c_0)$, $c_0>0$, contains a unique Lyapunov orbit
\begin{equation}\label{equ: lyapunov orbit}
P_{0,L}=\left(0,\sqrt{2c_0/\lambda_2}\cos(\lambda_2 t),0,\sqrt{2c_0/\lambda_2}\sin(\lambda_2t)\right), \quad t\in \R / \frac{2\pi}{\lambda_2} \Z,
\end{equation}
which is hyperbolic and has index $2$.

Let $\Lambda:=\mathrm{diag}(1-b,1/2,b,1/2)$, where $0<b<1$. We consider the Liouville vector field
\begin{equation}\label{Y2}
Y_2:=\Lambda\cdot x\partial_x=(1-b)x_1\partial_{x_1}+\frac{1}{2}x_2\partial_{x_2}+bx_3\partial_{x_3}+\frac{1}{2}x_4 \partial_{x_4}.
\end{equation}
We compute on $H_2^{-1}(c_0)$
$$
\begin{aligned}
dH_2 \cdot Y_2 &=\lambda_1(1-b)x_1^2-\lambda_1bx_3^2+\frac{\lambda_2}{2}(x_2^2+x_4^2).
\end{aligned}
$$
Hence, $Y_2$ is transverse to $H_2^{-1}(c_0), c_0>0,$ if $|x_3|$ is sufficiently small.  Notice that $x_1=x_3=0$ implies $x_2^2+x_4^2>0$.
Under this condition, $Y_2$ induces a contact form $\alpha_2$ on $H_2^{-1}(c_0)$ near $x_3=0$, given by the restriction of
$$
\alpha_2:=\iota_{Y_2} \omega = -bx_3dx_1-\frac{1}{2}x_4dx_2+(1-b)x_1dx_3+\frac{1}{2}x_2dx_4.
$$
The contact structure $\xi = \ker \alpha_2$ is
spanned by $\xi_1:=X_1-A_1Y_2$ and $\xi_2:=X_2-A_2Y_2,$
where
$$
\begin{aligned}
A_1 & =\frac{dH_2\cdot X_1}{dH_2 \cdot Y_2},\quad A_2=\frac{dH_2 \cdot X_2}{dH_2 \cdot Y_2},\\
X_1 & =\frac{1}{2}x_4\partial_{x_1}-bx_3\partial_{x_2}+\frac{1}{2}x_2\partial_{x_3}-(1-b)x_1\partial_{x_4},\\
X_2 & =-\frac{1}{2}x_2\partial_{x_1}+(1-b)x_1\partial_{x_2}+\frac{1}{2}x_4\partial_{x_3}-bx_3\partial_{x_4}.
\end{aligned}
$$
The Reeb vector field of $\alpha_2$ is $R_2=(dH_2(Y_2))^{-1}X_{H_2}$. We choose the almost complex structure $J$ on $\mathbb{R}\times H_2^{-1}(c_0)$ such that $J \cdot \xi_1 = \xi_2$ and $J \cdot \partial_a = R_2$.

Next we construct a $J$-holomorphic cylinder
$\tilde u=(a,u):\mathbb{R}\times \R / \Z\to \mathbb{R}\times H_2^{-1}(c_0),$ asymptotic to $P_{0,L}$ at  $+\infty$, and with a removable singularity at $-\infty$. We have
$\pi \partial_t u = J \cdot \pi \partial_s u,$
$\partial_s a=\alpha_2(\partial_t u),$ and $ \partial_ta=-\alpha_2(\partial_su),$
where $\pi:TH_2^{-1}(c_0) \to \xi$ is the projection along $R_2$.
In coordinates $a\in \R$ and $(x_1,x_2,x_3,x_4)\in \R^4$, we assume that
$$
a(s,t)=a(s) \quad \mbox{ and } \quad u(s,t)=(x_1(s),r(s)\cos(2\pi t),0,r(s)\sin(2\pi t)),$$ where $x_1(s)^2=(2c_0-\lambda_2r(s)^2)/\lambda_1>0$ and $r(s)>0$. An extensive computation gives
$$
r'(s)=\frac{-2\pi r(\lambda_2r^2-2c_0)(\lambda_1r^2+4(b-1)^2(2c_0-\lambda_2r^2))}{(2c_0+(1-2b)(2c_0-\lambda_2 r^2))^2}=:g(r).
$$

Since $0<b<1$ and $0<r<r_0:=\sqrt{2c_0/\lambda_2}$, we have $2c_0+(1-2b)(2c_0-\lambda_2r^2)>0$, and thus $g=g(r)$ is well-defined on the interval $\left[-r_0, r_0\right]$. It vanishes at $r=0,\pm r_0$, is positive on $\left(0,r_0\right)$ and negative on $\left(-r_0,0\right)$. Assume that $r(0)\in \left(0,r_0\right)$ and $a(0)=0$. Then $r(s)\to r_0$ exponentially fast as $s\to +\infty$.
We also have
$a'(s)=\alpha_2(\partial_t u)=\pi r(s)^2\to \pi r_0^2=2\pi c_0/\lambda_2$ as  $s \to +\infty.$ Hence $s=+\infty$ is a positive end asymptotic to $P_{0,L}$. Notice that $x_1(s) \to 0$ as $s\to +\infty$. The end at $s=-\infty$ is removable since $r(s),a'(s) \to 0$ and $x_1(s) \to \sqrt{2c_0/\lambda_1}$ as $s\to -\infty$. In particular, $u(s,t)\to \big(\sqrt{2c_0/\lambda_1},0,0,0\big)$ as $s\to -\infty$. Hence, we obtain a $J$-holomorphic plane $\tilde u_{1}:\C \to \R \times H^{-1}_2(c_0)$ asymptotic to $P_{0,L}$. Its energy is $E(\tilde u_{1})=\pi r_0^2=2\pi c_0/\lambda_2$ and coincides with the action of $P_{0,L}$. Similarly, if $r(0)\in (-r_0,0),$ then we obtain another $J$-holomorphic plane $\tilde u_{2}:\C \to \R \times H^{-1}_2(c_0)$ asymptotic to $P_{0,L}$. Both $J$-holomorphic planes and their projections to $H^{-1}_2(c_0)\cap \{x_3=0\}$ are embedded and do not intersect each other. They approach $P_{0, L}$ through opposite directions. By uniqueness of such $J$-holomorphic planes, these are the only $J$-holomorphic planes asymptotic to $P_{0, L}$ up to reparametrization and $\R$-translation, see Theorem~\ref{thm_uniqueness}.
The discussion above implies the following proposition.
\begin{prop}\label{prop: holo S2 for Y}
Let $0<b<1$ and $c_0>0$. Then the following assertions hold on $H_2^{-1}(c_0)$:
\begin{itemize}
    \item[(i)] There exists a neighborhood $U_2$ of $H_2^{-1}(c_0)\cap \{x_3=0\}$ in $H_2^{-1}(c_0)$, so that $Y_2$ is transverse to $U_2$. Let $\xi:=\ker \alpha_2$, where $\alpha_2 := \iota_{Y_2} \omega|_{U_2}$ is the contact form on $U_2$ induced by $Y_2$, and let $R_2$ be the Reeb vector field of $\alpha_2$.
    \item[(ii)] There exists a compatible $\R$-invariant almost complex structure $J$ on $\R \times H_2^{-1}(c_0)$ satisfying $J\cdot \xi = \xi$ and $J \cdot \partial_a = R_2$, admitting a pair of embedded holomorphic planes $\tilde u_1=(a_1,u_1),\tilde u_2=(a_2,u_2):\mathbb{C}\to \mathbb{R}\times H_2^{-1}(c_0)$, asymptotic to $P_{0,L}$ through opposite directions, and satisfying $u_1(\C) \cup u_2(\C) = H_2^{-1}(c_0)\cap \{x_3=0\}$.
\end{itemize}
\end{prop}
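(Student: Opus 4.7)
The plan is to execute the calculation explicitly, exploiting the decoupled linear form of $H_2$ given in \eqref{H2}. For part (i), I would simply differentiate: applying $Y_2$ to $H_2$ yields
\[
dH_2 \cdot Y_2 \;=\; \lambda_1(1-b)x_1^2 - \lambda_1 b\, x_3^2 + \tfrac{\lambda_2}{2}(x_2^2+x_4^2).
\]
On $H_2^{-1}(c_0) \cap \{x_3 = 0\}$, the conservation $H_2 = c_0 > 0$ forces $\lambda_1 x_1^2/2 + \lambda_2(x_2^2+x_4^2)/2 = c_0$, so the sum of the first and third terms is at least $\min\{(1-b), 1\} \cdot 2c_0 > 0$. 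By continuity, transversality of $Y_2$ persists on a tubular neighborhood $U_2$ of the slice $\{x_3=0\}$ in $H_2^{-1}(c_0)$. Then $\alpha_2 = \iota_{Y_2}\omega|_{U_2}$ is a contact form in the standard way.

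For part (ii), the key observation is that the slice $\{x_3 = 0\}\cap H_2^{-1}(c_0)$ is an $S^2$ foliated by the Lyapunov orbit together with a two-parameter family of circles, and the decoupling of the $(x_1,x_3)$ and $(x_2,x_4)$ planes suggests the ansatz
\[
u(s,t) = \bigl(x_1(s),\, r(s)\cos(2\pi t),\, 0,\, r(s)\sin(2\pi t)\bigr), \qquad a(s,t) = a(s),
\]
together with the constraint $\lambda_1 x_1^2 + \lambda_2 r^2 = 2c_0$. I would first choose $J$ on $\xi$ by declaring $J\xi_1 = \xi_2$ for the frame $\xi_i = X_i - A_i Y_2$ constructed from the explicit vector fields $X_1, X_2$ above, and $J\partial_a = R_2$ on the normal factor. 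Then the Cauchy-Riemann equations $\pi \partial_t u = J \pi \partial_s u$, $\partial_s a = \alpha_2(\partial_t u)$, $\partial_t a = -\alpha_2(\partial_s u)$ reduce, under the ansatz, to a single autonomous ODE for $r(s)$, which I would compute to be
\[
r'(s) \;=\; \frac{-2\pi\, r\,(\lambda_2 r^2 - 2c_0)\,\bigl(\lambda_1 r^2 + 4(b-1)^2(2c_0 - \lambda_2 r^2)\bigr)}{\bigl(2c_0 + (1-2b)(2c_0 - \lambda_2 r^2)\bigr)^2} \;=:\; g(r).
\]

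The analysis of $g$ is then purely elementary: for $0 < b < 1$ the denominator never vanishes on $[-r_0,r_0]$ with $r_0 = \sqrt{2c_0/\lambda_2}$, and $g$ has zeros exactly at $\{-r_0, 0, r_0\}$ with $g > 0$ on $(0,r_0)$ and $g < 0$ on $(-r_0, 0)$. Initial data $r(0) \in (0,r_0)$ thus yields $r(s) \to r_0$ exponentially as $s \to +\infty$ and $r(s) \to 0^+$ as $s \to -\infty$; the conservation law gives $x_1(s) \to 0$ and $x_1(s) \to \sqrt{2c_0/\lambda_1}$ at the two ends, respectively. A direct check that $a'(s) = \pi r(s)^2$ shows that the end at $+\infty$ is a positive puncture asymptotic to $P_{0,L}$ with action $2\pi c_0/\lambda_2$, and the end at $-\infty$ is removable, so after removing the singularity we obtain an embedded $J$-holomorphic plane $\tilde u_1:\C \to \R \times H_2^{-1}(c_0)$.

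The second plane $\tilde u_2$ is obtained by taking initial data $r(0) \in (-r_0, 0)$; by the symmetry $r \mapsto -r$ of $g$, this yields a curve approaching $P_{0,L}$ through the opposite leading eigenvector. The two projections $u_1(\C)$ and $u_2(\C)$ are disjoint hemispheres whose union, together with $P_{0,L}$, exhausts the slice $\{x_3=0\}\cap H_2^{-1}(c_0)$. Embeddedness is immediate from the strict monotonicity of $r(s)$. I do not expect a substantive obstacle: the decoupled linear system and the $S^1$-symmetry in $(x_2,x_4)$ reduce the construction to the ODE analysis of $g(r)$, and the only delicate point is verifying that the denominator of $g$ is bounded away from zero uniformly on the relevant interval, which follows from $0 < b < 1$.
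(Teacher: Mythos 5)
Your proposal is correct and follows essentially the same route as the paper: the same computation of $dH_2\cdot Y_2$ for transversality near $\{x_3=0\}$, the same choice of $J$ via the frame $\xi_1,\xi_2$, the same rotationally symmetric ansatz reducing the Cauchy--Riemann equations to the ODE $r'=g(r)$, and the same phase-line analysis of $g$. The only quibble is the lower bound in part (i): on $H_2^{-1}(c_0)\cap\{x_3=0\}$ one has $\lambda_1 x_1^2+\lambda_2(x_2^2+x_4^2)=2c_0$, so the correct constant is $\min\{1-b,\tfrac12\}\cdot 2c_0$ rather than $\min\{1-b,1\}\cdot 2c_0$, which does not affect positivity.
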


We fix $0<\mu_0<1$, $c_0=1$ and $0<b<1$. The re-scaled energy surface $H_\epsilon^{-1}(1), \epsilon>0,$ near $l_1(\mu)$, converges in $C^\infty_{\text{loc}}$ to $H_2^{-1}(1)$ as $(\mu,\epsilon) \to (\mu_0, 0^+)$.  Since $P_{0,L}$ is hyperbolic, $H_\epsilon^{-1}(1)$ admits an index-$2$ hyperbolic orbit $P_{2,\epsilon}$ converging in $C^\infty$ to $P_{0,L}$ as $(\mu,\epsilon) \to (\mu_0,0^+)$. We may assume that there exists a small compact tubular neighborhood $\mathcal{U}\subset \R^4$ of $H_2^{-1}(1) \cap \{-\delta \leq x_3\leq \delta\}$, for some $\delta>0$ small, so that $Y_2$ is transverse to $H^{-1}_\epsilon(1) \cap \mathcal{U}$ for every
$(\mu,\epsilon), \epsilon>0,$ sufficiently close to $(\mu_0, 0)$. Denote by $\alpha_\epsilon=i_{Y_2}\omega_0|_{H^{-1}_\epsilon(1) \cap \U}$ the contact form on the neck-region $H^{-1}_\epsilon(1)\cap \mathcal{U}$ induced by $Y_2$. Up to a diffeomorphism $C^\infty$-close to the identity, we can see $\alpha_\epsilon$ as a contact form on $H_2^{-1}(1) \cap \mathcal{U}$  so that $\alpha_\epsilon \to \alpha_2$ in $C^\infty_{\text{loc}}$ as $(\mu,\epsilon) \to (\mu_0, 0^+)$. We may choose any almost complex structure $J_\epsilon=J_{\mu,\epsilon}$ on $\R \times (H_2^{-1}(1) \cap \mathcal{U})$ adapted to $\alpha_\epsilon$ so that $J_\epsilon \to J$ in $C^\infty_{\text{loc}}$ as $(\mu,\epsilon) \to (\mu_0, 0^+)$.
Since the curves $\tilde u_1,\tilde u_2$ are Fredholm regular, see Proposition \ref{prop_automatic_transversality}-(iii), we find for each $(\mu,\epsilon), \epsilon>0$ sufficiently close to $(\mu_0, 0)$, a pair of $J_\epsilon$-holomorphic planes $\tilde u_{1,\epsilon}=(a_{1,\epsilon}, u_{1,\epsilon})$, $\tilde u_{2,\epsilon}=(a_{2,\epsilon},u_{2,\epsilon}):\C \to \R \times H_\epsilon^{-1}(1)$ asymptotic to $P_{2,\epsilon}$.
Recall that $H_\epsilon=\epsilon^{-1}(H\circ \phi-L_1(\mu))$, where $\phi:(\hat p,\hat q)\mapsto (p,q)=\epsilon^{1/2}(\hat p,\hat q)+ l_1(\mu)$ induces a diffeomorphism between suitable subsets of $H_\epsilon^{-1}(1)$ and $H^{-1}(L_1(\mu)+\epsilon)$. We obtain $\hat J_\epsilon$-holomorphic curves $\tilde U_{i,\epsilon}=(A_{i,\epsilon},U_{i,\epsilon}):\C \to \R \times H^{-1}(L_1(\mu)+\epsilon)$, $i=1,2,$ given by
$$
\tilde U_{i,\epsilon}(s,t):=(\epsilon a_{i,\epsilon}(s,t), \phi(u_{i,\epsilon}(s,t)))=(\epsilon a_{i,\epsilon}(s,t),\epsilon^{1/2}u_{i,\epsilon}(s,t)+ l_1(\mu)), \quad  \forall (s,t)\in \mathbb{R}\times \R / \Z.
$$
Here, the almost complex structure $\hat J_\epsilon$ on $\R \times H^{-1}(L_1(\mu)+\epsilon)$ is the one induced by the contact form $\hat \alpha_\epsilon:= \epsilon(\phi^{-1})^*\alpha_\epsilon,$ and the push-forward $\phi_*J_\epsilon|_{\xi=\ker \alpha_\epsilon}.$ Indeed, we check that
$$
\begin{aligned}
\partial_s A_{i,\epsilon}& = \epsilon \partial_s a_{i,\epsilon} = \epsilon \alpha_\epsilon(\partial_t u_{i,\epsilon})  = \epsilon \alpha_\epsilon(D\phi^{-1}(U_{i,\epsilon}) \cdot \partial_t U_{i,\epsilon})\\ &  =
\epsilon (\phi^{-1})^*\alpha_\epsilon (\partial_t U_{i,\epsilon})  = \hat \alpha_\epsilon(\partial_t U_{i,\epsilon}),\\
\pi(\partial_s U_{i,\epsilon})&=D\phi(u_{i,\epsilon})\pi(\partial_s u_{i,\epsilon})=D\phi(u_{i,\epsilon})J_\epsilon\cdot \pi(-\partial_t u_{i,\epsilon})\\
&=(D\phi J_\epsilon D\phi^{-1})|_{U_{i,\epsilon}}\cdot \pi(-\partial_t U_{i,\epsilon})=(\phi_*J_\epsilon)|_{U_{i,\epsilon}}\cdot \pi(-\partial_t U_{i,\epsilon}).
\end{aligned}
$$
Denote $\omega = d\hat p \wedge d\hat q$ the standard symplectic form. Since $\phi_* Y_2=Y_2$ and $\phi^*\omega = \epsilon \omega$, we have $\hat \alpha_\epsilon = \iota_{Y_2} \omega|_{H^{-1}(L_1(\mu) + \epsilon)}$. As observed before, $\hat \alpha_\epsilon$ restricts to a contact form on $H^{-1}(L_1(\mu)+\epsilon)$ near $x_3=0$.
We have proved the following proposition.

\begin{prop}\label{prop: holo S2 for Y on H}
Fix $0<\mu_0<1$. Then for every $(\mu,\epsilon), \epsilon >0$, sufficiently close to $(\mu_0, 0)$, the following assertions hold:
\begin{itemize}
    \item[(i)] There exists a contact form $\hat \alpha_\epsilon=\hat \alpha_{\mu,\epsilon}$ on a small neighborhood $U_\epsilon$ of $\{x_3=0\}$ in $H^{-1}(L_1(\mu)+\epsilon)$ so that its Reeb vector field is parallel to the Hamiltonian vector field of $H$.

    \item[(ii)] There exists an almost complex structure $\hat J_\epsilon=\hat J_{\mu,\epsilon}$ on $\R \times H^{-1}(L_1(\mu) + \epsilon)$ near $x_3=0$, adapted to $\hat \alpha_\epsilon$, and  a pair of $\hat J_\epsilon$-holomorphic planes $\tilde U_{i,\epsilon}=(A_{i,\epsilon},U_{i,\epsilon}):\C \to \R \times H^{-1}(L_1(\mu)+\epsilon),i=1,2,$ asymptotic to the Lyapunoff orbit $P_{2,\epsilon} \subset H^{-1}(L_1(\mu)+\epsilon)$ through opposite directions.

    \item[(iii)] Given any neighborhood $\mathcal{U} \subset \R^4$ of $l_1(\mu_0)$, we have $U_{1,\epsilon}(\C) \cup U_{2,\epsilon}(\C) \subset \mathcal{U}$ for every $(\mu,\epsilon), \epsilon>0$ sufficiently close to $(\mu_0, 0)$.
    \end{itemize}
\end{prop}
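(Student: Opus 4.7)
The plan is to reduce the statement to the model linear system via the rescaling $\phi\colon(\hat p,\hat q)\mapsto \epsilon^{1/2}(\hat p,\hat q)+l_1(\mu)$, apply Proposition~\ref{prop: holo S2 for Y} to the limit Hamiltonian $H_2$, and then propagate the resulting contact form, almost complex structure, and pair of $J$-holomorphic planes back to $H^{-1}(L_1(\mu)+\epsilon)$ by appealing to automatic transversality and stability under $C^\infty_{\mathrm{loc}}$-perturbations.

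First I would observe that the rescaled Hamiltonian $H_\epsilon:=\epsilon^{-1}(H\circ\phi-L_1(\mu))$ converges in $C^\infty_{\mathrm{loc}}$ to $H_2$ as $(\mu,\epsilon)\to(\mu_0,0^+)$. Combined with Proposition~\ref{prop: holo S2 for Y}(i), the transversality computation $dH_2\cdot Y_2=\lambda_1(1-b)x_1^2-\lambda_1 b x_3^2+\tfrac{\lambda_2}{2}(x_2^2+x_4^2)$ on $H_2^{-1}(1)$ is strictly positive on a compact neighborhood $U_2$ of $\{x_3=0\}$; by openness this transversality persists on the nearby level sets $H_\epsilon^{-1}(1)$. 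Thus $\alpha_\epsilon:=\iota_{Y_2}\omega_0|_{H_\epsilon^{-1}(1)\cap U_2}$ is a well-defined contact form, and after identifying nearby level sets by a diffeomorphism $C^\infty$-close to the identity we have $\alpha_\epsilon\to\alpha_2$ in $C^\infty_{\mathrm{loc}}$.

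Next, since the Lyapunov orbit $P_{0,L}\subset H_2^{-1}(1)$ is hyperbolic, the implicit function theorem produces a nearby hyperbolic periodic orbit $P_{2,\epsilon}\subset H_\epsilon^{-1}(1)$ with $P_{2,\epsilon}\to P_{0,L}$ in $C^\infty$. Choose $J_\epsilon\in\mathcal{J}(\alpha_\epsilon)$ with $J_\epsilon\to J$ in $C^\infty_{\mathrm{loc}}$, where $J$ and the pair $\tilde u_1,\tilde u_2$ are as in Proposition~\ref{prop: holo S2 for Y}(ii). By Proposition~\ref{prop_automatic_transversality}(iii), each $\tilde u_i$ is Fredholm regular with $\mathrm{ind}^\delta(\tilde u_i)=2$, so the implicit function theorem in the SFT setting yields, for every $(\mu,\epsilon)$ close to $(\mu_0,0)$, a pair of embedded $J_\epsilon$-holomorphic planes $\tilde u_{i,\epsilon}=(a_{i,\epsilon},u_{i,\epsilon})\colon\mathbb{C}\to\mathbb{R}\times H_\epsilon^{-1}(1)$ asymptotic to $P_{2,\epsilon}$ through opposite directions. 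Since the leading eigenvectors of $\tilde u_1,\tilde u_2$ point in opposite directions and this condition is open, the perturbed planes inherit the same property.

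Finally, I pull everything back by $\phi$. Using $\phi^*\omega_0=\epsilon\,\omega_0$ and $\phi_*Y_2=Y_2$, the push-forward contact form on $H^{-1}(L_1(\mu)+\epsilon)$ near $\{x_3=0\}$ is $\hat\alpha_\epsilon=\epsilon(\phi^{-1})^*\alpha_\epsilon=\iota_{Y_2}\omega_0|_{H^{-1}(L_1(\mu)+\epsilon)}$, and its Reeb vector field is parallel to $X_H$, giving~(i). Setting $\hat J_\epsilon=\phi_* J_\epsilon$ on the contact distribution and $\hat J_\epsilon\cdot\partial_a=\hat R_\epsilon$, the verification sketched in the paragraph before the proposition shows that $\tilde U_{i,\epsilon}(s,t):=(\epsilon a_{i,\epsilon}(s,t),\phi(u_{i,\epsilon}(s,t)))$ solves the Cauchy--Riemann equations, proving~(ii). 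Part~(iii) is then immediate: since $\phi$ contracts by the factor $\epsilon^{1/2}$ toward $l_1(\mu)$ and the images $u_{i,\epsilon}(\mathbb{C})$ lie in a fixed bounded region of $\mathbb{R}^4$ (as they $C^\infty_{\mathrm{loc}}$-converge to $u_i(\mathbb{C})\subset H_2^{-1}(1)\cap\{x_3=0\}$), we have $U_{i,\epsilon}(\mathbb{C})\subset B_{C\epsilon^{1/2}}(l_1(\mu))\subset\mathcal{U}$ for every $(\mu,\epsilon)$ close enough to $(\mu_0,0)$.

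The main obstacle is the gluing of the local construction on $U_2$ with the global picture: one must verify that the perturbed planes $\tilde u_{i,\epsilon}$, produced by the implicit function theorem on the rescaled level set, stay uniformly inside the region where $Y_2$ is transverse (i.e.\ where $\hat\alpha_\epsilon$ is actually a contact form) and do not intersect the boundary of that region. This is controlled by $C^\infty_{\mathrm{loc}}$-convergence to the explicit model planes $\tilde u_1,\tilde u_2$, whose images lie strictly in $\{x_3=0\}$, together with the exponential decay at the positive puncture enforced by hyperbolicity of $P_{2,\epsilon}$, so the planes are confined to an arbitrarily small neighborhood of $\{x_3=0\}$ for $\epsilon$ small.
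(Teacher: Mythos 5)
Your proposal follows the paper's proof essentially verbatim: rescale to the model Hamiltonian $H_2$, invoke Proposition \ref{prop: holo S2 for Y}, use Fredholm regularity of the explicit model planes to deform them to the nearby level sets $H_\epsilon^{-1}(1)$, and transport everything back via $\phi$ using $\phi_*Y_2=Y_2$ and $\phi^*\omega_0=\epsilon\,\omega_0$, with (iii) following from the $\epsilon^{1/2}$-contraction. One small slip: a plane asymptotic to the index-$2$ hyperbolic orbit $P_{0,L}$ has Fredholm index $1$ (not $2$), but automatic transversality still holds since $c_N=0<1$, so the argument is unaffected.
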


\subsection{Interpolation of Liouville vector fields} \label{subset: interpolation of Liouville vector field}
Fix $0<\mu_0<1$. For every $(\mu,E)$ sufficiently close to $(\mu_0,L_1(\mu_0))$, we denote by $\mathcal{D}_\mu\subset \C$ the closed disk of radius $0<\hat l_1(\mu) + \mu<1$  centered at the earth $q=-\mu$. Notice that $\hat l_1(\mu)\in \partial \mathcal{D}_\mu =C_\mu:=\{ (q_1+\mu)^2+q_2^2=(\hat l_1(\mu)+\mu)^2\}$.  Recall that $B_\mu\subset \C\setminus\{-\mu\}$ is the projection of $\mathcal{M}^e_{\mu,L_1(\mu)}$ to the $q$-plane. Consider the Liouville vector field
$Y_e:=(q_1+\mu)\partial_{q_1}+q_2\partial_{q_2}$ in coordinates $(p,q)$ centered at the earth. We also consider the Liouville vector field $Y_m:=(q_1 -(1-\mu))\partial_{q_1} + q_2 \partial_{q_2}$ centered at the moon. The following proposition is essentially proved in \cite{AFKP2012} with a slightly different notation. 

\begin{prop}[Albers-Frauenfelder-van Koert-Paternain \cite{AFKP2012}]\label{prop_transversality of $Y_e$}
Consider polar coordinates $q=(-\mu+\rho\cos\theta,\rho\sin\theta), \rho>0, \theta \in \R / 2\pi \Z,$ centered at the earth $q=-\mu$. Then
\begin{itemize}
    \item[(i)] $B_\mu\setminus \{\hat l_1(\mu)\}\subset \mathcal{D}_\mu\setminus \partial \mathcal{D}_\mu$ and $\partial_\rho U>0$ on $\mathcal{D}_\mu\setminus \{-\mu,\hat l_1(\mu)\}$.

    \item[(ii)] For every $0<\rho<1$, the even function $\theta \mapsto U(-\mu+\rho\cos\theta,\rho\sin\theta)$ attains its  minimum at $\theta=0$ ($\theta =\pi$ is a local minimum), and its maximum at $\pm \hat \theta$, where $\hat \theta \in (0, \pi)$ satisfies $2\cos \hat\theta=\rho$, i.e., $\hat q:=(-\mu+\rho \cos \hat \theta, \rho \sin \hat \theta)$ is such that $|\hat q-(1-\mu)|=1$. Moreover, $\theta \mapsto U(-\mu+\rho\cos\theta,\rho\sin\theta)$ is strictly increasing on $[0,\hat \theta]$ and strictly decreasing on $[\hat \theta,\pi]$.

    \item[(iii)] For every $E<L_1(\mu)$, $Y_e$ is positively transverse to the unregularized component $\mathcal{M}^e_{\mu,E}\subset H^{-1}(E)$, i.e. $dH \cdot Y_e>0$ on $\mathcal{M}^e_{\mu,E}$.

    \item[(iv)] Let $E\geq L_1(\mu)$ and $\theta\in\R/2\pi\Z$. If there exists $\rho_\theta\in(0,\hat l_1(\mu)+\mu)$ such that $U(-\mu+\rho_\theta\cos\theta,\rho_\theta\sin\theta)=E$, then $dH \cdot Y_e>0$ on the unregularized component $\mathcal{M}_{\mu,E}^e\cap \{(p,q),q=(-\mu+\rho\cos\theta,\rho\sin \theta), 0<\rho \leq \rho_\theta\}$.
\end{itemize}
\end{prop}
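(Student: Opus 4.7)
The plan is to work throughout in polar coordinates $(\rho,\theta)$ centered at the earth, $q=-\mu+\rho e^{i\theta}$, and reduce everything to one-variable calculus on
\[
U(\rho,\theta)=-\frac{\mu}{D}-\frac{1-\mu}{\rho}-\tfrac12(\mu^2-2\mu\rho\cos\theta+\rho^2),\qquad D=\sqrt{\rho^2-2\rho\cos\theta+1}.
\]
For (ii), a direct differentiation yields $\partial_\theta U=\mu\rho\sin\theta(1-D^3)/D^3$, whose sign is that of $\sin\theta(1-D)$. Since $D<1$ is equivalent to $\rho<2\cos\theta$, the critical angle is exactly $\hat\theta=\arccos(\rho/2)$, which is also where $|\hat q-(1-\mu)|=D=1$; the monotonicity assertions follow at once, and a one-line comparison gives $U(\rho,\pi)-U(\rho,0)=2\mu\rho^3/(1-\rho^2)>0$, identifying $\theta=0$ as the global minimum and $\theta=\pi$ as a local minimum.

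For (i), fix $\theta$ and set $W_\theta(\rho):=U(\rho,\theta)$. A direct computation gives
\[
W_\theta''(\rho)=\frac{\mu\bigl(D^2-3(\rho-\cos\theta)^2\bigr)}{D^5}-\frac{2(1-\mu)}{\rho^3}-1,
\]
and I would establish the strict concavity $W_\theta''<-1$ throughout the relevant range of $(\rho,\theta)$; the dominant earth contribution $-2(1-\mu)/\rho^3$ and the rotational $-1$ are designed to beat the (possibly positive) moon term. Given this, $W_0'(1-r_1)=0$ from \eqref{equ: r1, r2 and r3} forces $W_0'>0$ on $(0,1-r_1)$ by concavity; for $\theta\neq0$, the monotonicity of $U$ in $\theta$ from (ii) combined with concavity in $\rho$ pushes all critical points of $W_\theta$ strictly outside $\mathcal D_\mu$, so that $\partial_\rho U>0$ on $\mathcal D_\mu\setminus\{-\mu,\hat l_1(\mu)\}$. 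The inclusion $B_\mu\setminus\{\hat l_1(\mu)\}\subset\mathrm{int}(\mathcal D_\mu)$ then follows from the characterization of $B_\mu$ as the connected component of $\{U\leq L_1(\mu)\}$ containing the earth.

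For (iii)--(iv), the key is a clean reformulation of transversality. Writing $v_1=p_1-q_2$, $v_2=p_2+q_1$ and parametrizing the fibre of $\mathcal M^e_{\mu,E}$ over $q$ by $v_1=v\cos\phi$, $v_2=v\sin\phi$ with $v=\sqrt{2(E-U)}$, a direct expansion in polar coordinates centered at the earth gives the identity
\[
dH\cdot Y_e=\rho v\sin(\phi-\theta)+\rho\,\partial_\rho U,
\]
so that infimizing over $\phi$ reduces $dH\cdot Y_e>0$ on the whole fibre to the pointwise condition
\[
\Phi(\rho,\theta):=\tfrac12\bigl(W_\theta'(\rho)\bigr)^2+W_\theta(\rho)>E.
\]
Differentiating along the ray, $\partial_\rho\Phi=W_\theta'\bigl(1+W_\theta''\bigr)$, which is strictly negative wherever $W_\theta'>0$ by the concavity of step (i). Since $\Phi\to+\infty$ as $\rho\to 0^+$ (the $(W_\theta')^2$ term dominates the singularity in $W_\theta$) and $\Phi(1-r_1,0)=L_1(\mu)$, strict monotonicity together with (ii) yields $\Phi>L_1(\mu)\geq E$ throughout $B_\mu\setminus\{-\mu\}$ for $E\leq L_1(\mu)$, proving (iii); part (iv) follows identically along the ray of direction $\theta$ up to $\rho_\theta$, since $\Phi(\rho_\theta,\theta)=\tfrac12(W_\theta'(\rho_\theta))^2+E>E$ and $\Phi$ decreases down to this value as $\rho\nearrow\rho_\theta$. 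The main obstacle is the uniform concavity estimate $W_\theta''<-1$ in step (i): the moon's term $\mu\bigl[D^2-3(\rho-\cos\theta)^2\bigr]/D^5$ can be positive and must be quantitatively dominated by the earth's $-2(1-\mu)/\rho^3$ and the rotational $-1$ across all of $\mathcal D_\mu\times\mathbb R/2\pi\mathbb Z$.
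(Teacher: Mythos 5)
Your reduction of (iii)--(iv) is exactly the right one: the identity $dH\cdot Y_e=\rho\,\partial_\rho U+\rho v\sin(\phi-\theta)$ is correct, and minimizing over $\phi$ does reduce transversality over the whole fiber to $\tfrac12(\partial_\rho U)^2+U>E$ together with $\partial_\rho U>0$; this is precisely the quantity the paper itself works with in Proposition \ref{prop_transversality2} (where it is called $F_1$), and part (ii) of your argument, including the formula $\partial_\theta U=\mu\rho\sin\theta\,(1-D^3)/D^3$ and the comparison $U(\rho,\pi)-U(\rho,0)=2\mu\rho^3/(1-\rho^2)$, is complete and correct. Note that the paper does not prove this proposition at all --- it cites Lemma 5.2, Corollary 5.3, Lemma 5.4 and Proposition 5.1 of \cite{AFKP2012} --- so you are attempting a self-contained proof where the authors chose a citation.

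However, there are two genuine gaps. First, the entire analytic content of (i), (iii) and (iv) in your scheme rests on the uniform estimate $W_\theta''<-1$ on $\mathcal{D}_\mu\setminus\{-\mu\}$ for all $\theta$ and all $0<\mu<1$, and you do not prove it; you only flag it as ``the main obstacle.'' This is not a routine verification: the moon term $\mu\bigl(D^2-3(\rho-\cos\theta)^2\bigr)/D^5$ is positive on the set $2(\rho-\cos\theta)^2<\sin^2\theta$ and can be of order $\mu(1-\cos^2\theta)^{-3/2}$ there, so dominating it by $2(1-\mu)/\rho^3+1$ requires quantitative input from the relation \eqref{equ: mu of r1} between $\mu$ and the radius $1-r_1$ of $\mathcal{D}_\mu$. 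For comparison, the paper's own verification of the corresponding monotonicity ($F_1'<0$) in Proposition \ref{prop_transversality2} already takes a page of explicit factorizations ($F_2,F_3,F_4$) even though it is only carried out on and near the axis $\theta=0$. Second, your derivation of $\partial_\rho U>0$ off the axis does not follow as stated: ``the monotonicity of $U$ in $\theta$ from (ii) combined with concavity in $\rho$ pushes all critical points of $W_\theta$ strictly outside $\mathcal{D}_\mu$'' is a non sequitur, since the sign of $\partial_\theta U$ carries no direct information about the location of zeros of $\partial_\rho U$ along a ray. What concavity actually reduces you to is the boundary inequality $\partial_\rho U(1-r_1,\theta)\geq 0$ for all $\theta$ (with equality only at $\theta=0$), and that is an additional two-variable estimate you would still have to establish --- it is essentially the content of Lemma 5.4 in \cite{AFKP2012}. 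Until these two estimates are supplied, the proof of (i), and hence of (iii) and (iv), is incomplete.
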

\begin{rem}An analogous statement holds on the moon side via the symmetry $H_\mu(p,q) = H_{1-\mu}(-p,-q)$. \end{rem}

\begin{proof}
Item (i) follows from Corollary 5.3 and Lemma 5.4 in \cite{AFKP2012}, (ii) follows from Lemma 5.2 in \cite{AFKP2012}, (iii) follows from Proposition 5.1 in \cite{AFKP2012}, and (iv) follows from the proof of Proposition 5.1 in \cite{AFKP2012}.
\end{proof}

Let $\gamma(\theta):= (-\mu + (\hat l_1(\mu)+\mu)\cos \theta, (\hat l_1(\mu) + \mu)\sin \theta)), \theta \in \R / 2\pi \Z$, be a parametrization of $C_\mu$. Then $\gamma(0)=\hat l_1(\mu)$.
  By Proposition \ref{prop_transversality of $Y_e$}, the function $\theta \mapsto U(\gamma(\theta))$ attends  its minimum $L_1(\mu)$ precisely at $\theta=0$. Hence, for every $\epsilon:=E-L_1(\mu)>0$ sufficiently small, there exists $\theta_*=\theta_*(\epsilon)>0$ arbitrarily small so that
$$
\begin{aligned}
& U(\gamma(\pm\theta_*))  =L_1(\mu)+\epsilon,\\
& U(\gamma(\theta))  >L_1(\mu)+\epsilon,\quad \forall \theta_* < \theta < 2\pi-\theta_*,\\
& U(\gamma(\theta))  <L_1(\mu)+\epsilon,\quad \forall -\theta_* < \theta < \theta_*.
\end{aligned}
$$

\begin{prop}\label{prop_transversality2}Let $\mathcal{D}_\epsilon\subset \mathcal{D}_\mu$ be the open set $\mathcal{D}_\mu\setminus (\{-\mu\}\cup \partial \mathcal{D}_\mu)$ with  the annular sector $(\rho,\theta)\in [\mu + \hat l_1(\mu) - \epsilon^{1/2},  \mu + \hat l_1(\mu)] \times [-\theta_*,\theta_*]$ removed.
For every $\epsilon>0$ sufficiently small, $dH \cdot Y_e>0$ on $H^{-1}(L_1(\mu)+\epsilon)\cap \{(p,q),q\in \mathcal{D}_\epsilon\}.$
\end{prop}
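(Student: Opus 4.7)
The plan is to reduce strict transversality to a pointwise inequality on the $q$-plane and verify it on $\mathcal{D}_\epsilon$ by combining Proposition \ref{prop_transversality of $Y_e$}-(iv) with a local Taylor analysis near $\hat l_1(\mu)$. In polar coordinates $q = -\mu+\rho(\cos\theta,\sin\theta)$ centered at the earth, a direct computation using $\dot q_1 = p_1-q_2$ and $\dot q_2 = p_2+q_1$ yields $dH\cdot Y_e = \rho\,\partial_\rho U + \rho^2\dot\theta$. Energy conservation gives $\rho^2\dot\theta^2+\dot\rho^2 = 2(E-U)$, and Cauchy--Schwarz produces $|\rho^2\dot\theta|\leq \rho\sqrt{2(E-U)}$. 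Hence strict transversality at every lift of a given $q$ on $H^{-1}(E)$ is equivalent to the pointwise bound
\[
\Psi_E(q) := (\partial_\rho U(q))^2 - 2(E-U(q)) > 0
\]
at points $q\in\mathcal{D}_\epsilon$ with $U(q)\leq E$. Writing $E = L_1(\mu)+\epsilon$ and $\Psi_E = \Psi_{L_1(\mu)} - 2\epsilon$ reduces the task to showing $\Psi_{L_1(\mu)}(q)>2\epsilon$.

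For angles $|\theta|>\theta_*$, the strict monotonicity $\partial_\rho U>0$ from Proposition \ref{prop_transversality of $Y_e$}-(i) produces a unique $\rho_\theta\in(0,R_0)$, with $R_0:=\mu+\hat l_1(\mu)$, such that $U(\rho_\theta,\theta)=E$, and Proposition \ref{prop_transversality of $Y_e$}-(iv) gives the required bound along the ray up to $\rho_\theta$; points with $\rho>\rho_\theta$ lie off $H^{-1}(E)$. For $|\theta|\leq\theta_*$ the analysis splits into three regimes. Near the earth ($\rho$ small), the explicit asymptotics $U\sim-(1-\mu)/\rho$ and $\partial_\rho U\sim(1-\mu)/\rho^2$ make $\Psi_{L_1(\mu)}\to+\infty$. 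At intermediate $\rho$ bounded away from both $-\mu$ and $\hat l_1(\mu)$, I use the auxiliary function $F_0(\rho):=\tfrac12(\partial_\rho U)^2 + U$ along $\theta=0$: a direct computation gives $\partial_\rho F_0 = \partial_\rho U\bigl(1+\partial_\rho^2 U\bigr)|_{\theta=0}$ with $1+\partial_\rho^2 U|_{\theta=0} = -\tfrac{2\mu}{(1-\rho)^3}-\tfrac{2(1-\mu)}{\rho^3}<0$, so $F_0$ is strictly decreasing, and $F_0(R_0)=L_1(\mu)$ forces $\Psi_{L_1(\mu)}(\rho,0)>0$ for $\rho<R_0$; continuity and compactness then extend the strict positivity uniformly to a fixed angular slab $|\theta|\leq\theta_0$, which contains $[-\theta_*,\theta_*]$ once $\epsilon$ is small since $\theta_*=O(\epsilon^{1/2})$. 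Close to $\hat l_1(\mu)$, I pass to shifted Cartesian coordinates $(u,v)=(q_1-\hat l_1(\mu),q_2)$ and Taylor expand $U = L_1(\mu)-\tfrac{1+4a}{2}u^2+\tfrac{2a-1}{2}v^2+O(|(u,v)|^3)$ to obtain
\[
\Psi_{L_1(\mu)}(q) = 4a(1+4a)\,u^2 + (2a-1)\,v^2 + O(|(u,v)|^3).
\]
Removing the annular sector forces $|u|\geq\epsilon^{1/2}/2$ for $\epsilon$ small, and the uniform lower bound $a\geq 2$ from \eqref{mua} yields $\Psi_{L_1(\mu)}(q)\geq a(1+4a)\epsilon\geq 18\epsilon$, comfortably exceeding $2\epsilon$.

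The main obstacle is this last local step: the removed annular sector has radial width only $\epsilon^{1/2}$, so one must confirm that this narrow removal already beats the quadratic degeneracy of $\Psi_{L_1(\mu)}$ at $\hat l_1(\mu)$. The decisive quantitative fact is $4a(1+4a)>2$, guaranteed by $a\geq 2$ as recalled after \eqref{mua}; since all coefficients in the estimates depend continuously on $\mu$, the argument also goes through uniformly for $\mu$ near $\mu_0$, matching the regime in which the proposition is applied.
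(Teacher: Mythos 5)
Your proposal is correct, and it follows the same overall strategy as the paper: reduce transversality to the pointwise radial inequality $(\partial_\rho U)^2>2(E-U)$ (the paper's function $F_1$ is exactly your $\Psi_E$ restricted to $\theta=0$), dispose of the angles $|\theta|>\theta_*$ via Proposition \ref{prop_transversality of $Y_e$}-(iv), and then show that removing the $\epsilon^{1/2}$-collar beats the quadratic degeneracy at $\hat l_1(\mu)$. The execution differs in two places. First, to pass from the ray $\theta=0$ to the full slab $|\theta|\leq\theta_*$, the paper proves the exact monotonicity $\partial_sL<0$ in $s=\cos\theta$, whereas you use compactness away from the endpoints plus a two-dimensional Hessian expansion of $\Psi_{L_1(\mu)}$ at $\hat l_1(\mu)$; your route is less explicit but more conceptual, and the price is that you must treat the neighborhood of $\hat l_1(\mu)$ separately, which you do correctly (the coefficient $4a(1+4a)$ of $u^2$ and the bound $|u|\geq\epsilon^{1/2}$ on $\mathcal{D}_\epsilon$ are both right, and your leading term agrees with the paper's expansion $F_1(1-r_1-\epsilon^{1/2})=\tfrac{2f(r_1)\epsilon}{(\cdots)^2}+O(\epsilon^{3/2})$). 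Second, for the radial monotonicity at $\theta=0$ your factorization $\frac{d}{d\rho}\bigl(\tfrac12(\partial_\rho U)^2+U\bigr)=\partial_\rho U\,(1+\partial_\rho^2U)$ with $1+\partial_\rho^2U|_{\theta=0}=-\tfrac{2\mu}{(1-\rho)^3}-\tfrac{2(1-\mu)}{\rho^3}<0$ is a genuinely cleaner derivation of the paper's statement that $F_1$ is strictly decreasing on $(0,1-r_1)$, replacing the explicit polynomial bookkeeping with $F_2,F_3,F_4$. Two small points to make explicit in a write-up: the equivalence between $dH\cdot Y_e>0$ on all lifts of $q$ and $\Psi_E(q)>0$ uses $\partial_\rho U>0$ from Proposition \ref{prop_transversality of $Y_e$}-(i) (otherwise the Cauchy--Schwarz bound only gives one direction), and the near-earth asymptotics should be stated uniformly in $\theta$ so that the three regimes actually cover $\mathcal{D}_\epsilon\cap\{|\theta|\leq\theta_*\}$; both are routine.
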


\begin{proof}
Let $\epsilon>0$ be small. Since $U(\gamma(\theta))>L_1(\mu)+\epsilon,$ for every $\theta_* < \theta < 2\pi -\theta_*$,  Proposition \ref{prop_transversality of $Y_e$}-(i) and (iv) implies that $dH \cdot Y_e> 0$ on $H^{-1}(L_1(\mu)+\epsilon)\cap \{(p,q),q=(-\mu+\rho\cos\theta,\rho\sin \theta), 0<\rho <\hat l_1(\mu)+\mu,\theta_*< \theta< 2\pi -\theta_*\}.$ Hence, it remains to check that $dH \cdot Y_e>0$ on $H^{-1}(L_1(\mu)+\epsilon) \cap \{0<\rho< \mu + \hat l_1(\mu) -\epsilon^{1/2}, -\theta_*<\theta< \theta_*\}.$

Assume that $\theta\in[-\theta_*,\theta_*]$, and denote $p=\dot q-iq=(r_{\dot q}\cos\theta_{\dot q}+\rho\sin\theta,r_{\dot q}\sin\theta_{\dot q}+\mu-\rho\cos\theta)$. Then
$$
\begin{aligned}
dH\cdot Y_e & = (p_2+q_1+\partial_{q_1}U)(q_1+\mu)+(q_2-p_1+\partial_{q_2} U)q_2\\
 & = \rho(\partial_\rho U-r_{\dot q}\sin(\theta-\theta_{\dot q})) \geq \rho(\partial_\rho U - r_{\dot q})\\
& = \frac{1-\mu}{\rho}+\frac{\mu (\rho - \cos\theta)\rho}{(1+\rho^2-2\rho\cos\theta)^{3/2}} - \rho^2 + \mu \rho \cos\theta - \rho r_{\dot q},
\end{aligned}
$$
where $r_{\dot q}^2=2(L_1(\mu)+\epsilon-U)$.  For simplicity, we denote $s=\cos\theta$. 
Let
$$
\begin{aligned}
L(\rho,s)
& := \frac{1-\mu}{\rho}+\frac{\mu (\rho - s)\rho}{(1+\rho^2-2\rho s)^{3/2}}-\rho^2 + \mu \rho s - \rho \sqrt{2(\epsilon+L_1(\mu)- U)}.
\end{aligned}
$$
We compute
$$
\frac{1}{\mu \rho}\partial_{s}L=\frac{3 \rho (\rho - s)}{(1 + \rho^2 - 2 \rho s)^{5/2}} + 1 - \frac{1}{(1 + \rho^2 - 2 \rho s)^{3/2}}-\frac{\rho(1 + \rho^2 - 2 \rho s)^{-3/2}-\rho}{\sqrt{2(\epsilon+L_1(\mu)- U)}}.
$$

Recall that $\theta \in [-\theta_*,\theta_*]$. For $\epsilon>0$ sufficiently small, $\theta_*>0$  is arbitrarily close to $0$ and thus $s=\cos \theta$ is arbitrarily close to $1$. This implies that $1+\rho^2 - 2\rho s = 1-\rho(2s-\rho) \in (0,1)$ for every $\rho \in (0,1)$. Moreover, if $\epsilon>0$ is sufficiently small, we have $\rho < s=\cos \theta$ for every $\rho < 1-r_1=\mu + \hat l_1(\mu)<1$. We conclude that if $\epsilon>0$ is sufficiently small and $\theta\in [-\theta_*,\theta_*]$, then $\partial_sL<0$. This implies that $L$ is strictly increasing in $\theta\in[0,\theta_*]$, if $\epsilon>0$ is sufficiently small and $0<\rho < 1-r_1$. By symmetry, it suffices to check that $L>0$ when $s=1$ (or $\theta =0$) and $\rho \in (0, 1-r_1 - \epsilon^{1/2})$.

We fix $s=1$. Notice that $L/\rho = \partial_\rho U - \sqrt{2(L_1(\mu)+\epsilon - U)}$.
We know that $\partial_{\rho}U>0$ if $\rho\in(0,1-r_1)$. Hence $L>0$ if and only if
$F_1(\rho):=(\partial_\rho U)^2-2(L_1(\mu)+\epsilon-U)>0$.
After a straightforward computation using the expression \eqref{mua} for $\mu=\mu(r_1)$, we obtain
$$
F_1'(\rho)=-\frac{4(1-r_1-\rho)F_2(\rho)F_3(\rho)}{(1-\rho)^5\rho^5((1-r_1)^2 +r_1^3(2-r_1))^2},
$$
where
$$
\begin{aligned}
F_2(\rho)&=(1-\rho)^2((1-r_1)(1-r_1^3)+\rho(1-r_1^3)+\rho^2(1+r_1+r_1^2))\\
&+\rho^3(1-\rho+(2-\rho)r_1)r_1(3-3r_1+r_1^2),\\
F_3(\rho)&=(1-r_1)^2((1-3\rho+3\rho^2)(1-r_1^3)-\rho^3)+\rho^3r_1^3(4-5r_1+2r_1^2).
\end{aligned}
$$
It is immediate that $F_2(\rho)>0$ for every $0<\rho < 1-r_1$. We also have $F_3(\rho)>0$ for every $\rho\in(0,1-r_1)$, since  $F_4(\rho):=(1-3\rho+3\rho^2)(1-r_1^3)-\rho^3$ satisfies $F_4'(\rho)=-3 ((\rho-1+r_1^3)^2+r_1^3(1-r_1^3))<0$ and $F_4(1-r_1)=3(1-r_1)r_1^4>0$. Therefore,  $F_1$ is strictly decreasing on the interval $(0,1-r_1)$, $F_1(0^+)=+\infty$ and $F_1(1-r_1)=-2\epsilon<0$.

We still need to check that $F_1(1-r_1-\epsilon^{1/2})>0$. Expanding it in $\epsilon^{1/2}$ near $\epsilon=0$, we obtain
$$
F_1(1-r_1-\epsilon^{1/2})=\frac{2f(r_1)\epsilon}{((1-r_1)^2+r_1^3(2-r_1))^2}+O(\epsilon^{3/2}),
$$
where
$f(r_1)=5(7-8r_1+6r_1^2)+r_1(2+5r_1^3+3r_1^6)+r_1^2(14+6r_1^2+4r_1^3+r_1^5)(1-r_1)>0$ for $r_1\in(0,1)$. Hence $F_1(1-r_1-\epsilon^{1/2})>0$ for every $\epsilon>0$ sufficiently small. We conclude that if $\epsilon>0$ is sufficiently small, then $dH \cdot Y_e >0$ on $H^{-1}(L_1(\mu)+\epsilon)\cap \{(q,p),q\in \mathcal{D}_\epsilon\}$, and the proof is finished.
\end{proof}

Take $(\mu,\epsilon)$, $\epsilon>0$, close to $(\mu_0,0^+)$, and let $Y_2$ be the Liouville vector field defined in \eqref{Y2}, where  $b\leq 1/2$. We aim at interpolating $Y_2$ and $Y_e$ to obtain a Liouville vector field $Y_\epsilon$ which is positively transverse to $H^{-1}(L_1(\mu)+\epsilon)$ for every $\epsilon>0$ sufficiently small.

Consider $x$ coordinates as before, $Vx^T = (\hat p,\hat q)^T,$ where the symplectic matrix $V$ is given in \eqref{V1234b} and $(\hat p,\hat q)$ denote the re-scaled coordinates with origin at $l_1(\mu)$. The canonical symplectic form $\omega_0$ is given by $dx_1 \wedge dx_3 + dx_2 \wedge dx_4$. Let $c_0>0$. Then $dH_2 \cdot Y_2 = c_0 + \lambda_1(1/2-b)(x_1^2+x_3^2)>0$ on $H_2^{-1}(c_0)$, which implies that $Y_2$ is everywhere transverse to $H_2^{-1}(c_0)$.

In coordinates $x$, $Y_e$ becomes
$$
\begin{aligned}
Y_e & =(\bar q_1+(1-r_1))\partial_{\bar q_1}+\bar q_2\partial_{\bar q_2} = \hat q\partial_{\hat q} + \epsilon^{-1/2}(1-r_1) \partial_{\hat q_1}\\
&= (xQ_0 +\epsilon^{-1/2}(0,d_5,d_6,0))\partial_x^T,
\end{aligned}
$$
where
$$
Q_0  :=\left(\begin{array}{cccc}
1-d_1 & 0 & 0 & d_3 \\
0 & 1-d_2 & d_3 & 0 \\
0 & d_4 & d_1 & 0\\
d_4 & 0 & 0 & d_2
\end{array}\right),
$$
and
$$
\begin{aligned}
d_1 & :=1+\frac{C_1^2-2}{C_0^2}>0, \quad \quad  \quad  \quad \quad \quad \quad  d_2   :=\frac{C_2^2-2}{C_0^2}>0, \\
d_3 & :=\frac{(C_2^2-2)C_1\sqrt{\lambda_2}}{C_0^2C_2\sqrt{\lambda_1}}>0,
\quad \quad \quad \quad \quad   d_4  :=\frac{(2-C_1^2)C_2\sqrt{\lambda_1}}{C_0^2C_1\sqrt{\lambda_2}}<0, \\
d_5 & :=-\frac{(2(a-1)+C_1^2)(1-r_1)}{C_0C_2\sqrt{\lambda_2}}<0,
\quad  d_6 :=\frac{(2(a-1)+C_2^2)(1-r_1)}{C_0C_1\sqrt{\lambda_1}}>0.
\end{aligned}
$$
Here, $C_1,C_2,C_3$ are as in \eqref{C123}, and $a=a(r_1)$, see \eqref{mua}.

Since both $Y_e$ and $Y_2$ are Liouville vector fields, there exists a function $G$ defined near $x=0$ such that $\iota_{Y_e-Y_2}\omega=dG.$
In particular, $Y_2-Y_e=X_G$ and
$$
\nabla G=(Y_2-Y_e)J_4=(x(\Lambda-Q_0)-\epsilon^{-1/2}(0,d_5,d_6,0))J_4\partial_x^T,
$$
where $\Lambda:=\mathrm{diag}(1-b,1/2,b,1/2)$. Hence, we may choose
$
G(x)=\frac{1}{2}xQ_Gx^T+\epsilon^{-1/2}(-d_6x_1+d_5x_4),
$
where
\begin{equation}\label{QG}
Q_G=\begin{pmatrix}
0 & -d_3 & b-d_1 & 0\\ -d_3 & 0 & 0 & \frac{1}{2}-d_2\\
b-d_1 & 0 & 0 & d_4\\ 0 & \frac{1}{2}-d_2 & d_4 & 0
\end{pmatrix}.
\end{equation}

Let $\epsilon>0$ be small. By Proposition \ref{prop_transversality2}, we know that $dH\cdot Y_e>0$ for any $\epsilon^{1/2}x=(\bar p,\bar q)(V^T)^{-1}=((p,q)-l_1(\mu)) \cdot (V^T)^{-1}$, where $(p,q)\in H^{-1}(L_1(\mu)+\epsilon)$ with $|q+\mu|<\hat l_1(\mu)+\mu-\epsilon^{1/2}$. We want to interpolate  $Y_e$ and $Y_2$ in order to obtain a Liouville vector field $Y_\epsilon$ which is positively transverse to  $H^{-1}(L_1(\mu)+\epsilon)$.
We shall construct this interpolation in coordinates $x=(x_1,x_2,x_3,x_4)$.

Denote $\mathcal M^{e\# m}_{\mu,E}$ shortly as $\mathcal M^{e\# m}_{\epsilon}$, where $\epsilon=E-L_1(\mu)>0$ is small. For every $\epsilon>0$, denote by $\mathcal{N}_\epsilon$ the neck region consisting of points $(p,q)\in \mathcal{M}^{e\# m}_{\epsilon}$ satisfying $|q+\mu| \geq \mu + \hat l_1(\mu) - \epsilon^{1/2}$ and $|q-1+\mu| \geq 1-\mu - \hat l_1(\mu) - \epsilon^{1/2}$.

\begin{prop}\label{prop_neck_Liouville} There exist $0<\delta<N$ so that for every $\epsilon>0$ sufficiently small, the following statements hold:
\begin{itemize}
    \item[(i)] $\mathcal{N}_\epsilon \subset \{(p,q):|(p,q)-l_1(\mu)|<10\epsilon^{1/2}\}$. Moreover, if $(p,q)\in \mathcal{N}_\epsilon$, then  $|x_3|<N$.

    \item[(ii)]  There exists a Liouville vector field $Y_\epsilon$  on a neighborhood of $\mathcal{M}^{e\# m}_\epsilon$  which is transverse to $\mathcal{M}^{e\# m}_\epsilon$, coincides with $Y_e$ on $-2N < x_3 < -N $, with $Y_m$ on $N < x_3 < 2N$ and with $Y_2$ on $-\delta  < x_3 < \delta $.
\end{itemize}
\end{prop}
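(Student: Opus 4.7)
The plan is to handle (i) and (ii) separately: (i) is a direct estimate on the neck region using the Hill-region geometry, while (ii) is a Hamiltonian-interpolation construction whose main difficulty is verifying transversality on the transition annuli.

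For (i), the two exclusion conditions defining $\mathcal N_\epsilon$ remove balls of radii $\mu+\hat l_1(\mu)-\epsilon^{1/2}$ and $1-\mu-\hat l_1(\mu)-\epsilon^{1/2}$ around the primaries; these balls are almost tangent at $\hat l_1(\mu)$, and a direct geometric calculation along the $q_1$-axis forces $|\bar q_1|\leq\epsilon^{1/2}$, where $\bar q=q-\hat l_1(\mu)$. The Hill-region inclusion $U(q)\leq L_1(\mu)+\epsilon$, combined with the Taylor expansion $U=L_1-\tfrac{1+4a}{2}\bar q_1^2+\tfrac{2a-1}{2}\bar q_2^2+O(|\bar q|^3)$ and $2a-1>0$, yields $|\bar q_2|=O(\epsilon^{1/2})$. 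The energy equation then gives $(p_1-q_2)^2+(p_2+q_1)^2=2(L_1(\mu)+\epsilon-U(q))=O(\epsilon)$, from which $|\bar p_1|$ and $|\bar p_2+\hat l_1(\mu)|$ are $O(\epsilon^{1/2})$. Keeping the constants explicit gives $|(p,q)-l_1(\mu)|<10\epsilon^{1/2}$ for $\epsilon$ sufficiently small, uniformly in $\mu$ near $\mu_0$. Since $x=\epsilon^{-1/2}V^{-1}(\bar p,\bar q)^T$ with $V$ a fixed symplectic matrix, this translates to $|x|=O(1)$ and hence $|x_3|<N$ for a fixed $N$.

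For (ii), I exploit the fact that the difference of two Liouville vector fields for the same $\omega_0$ is Hamiltonian: $\iota_{Y_2-Y_e}\omega_0$ is closed, so near $l_1(\mu)$ one may write $Y_2-Y_e=X_{G_e}$ for a smooth function $G_e$, and analogously $Y_2-Y_m=X_{G_m}$. Fix a smooth cutoff $\chi:\R\to[0,1]$ with $\chi\equiv 1$ on $[-\delta,\delta]$ and $\chi\equiv 0$ outside $(-N,N)$, and set
\[
Y_\epsilon:=\begin{cases} Y_e+X_{\chi(x_3)\,G_e}, & x_3\leq 0,\\ Y_m+X_{\chi(x_3)\,G_m}, & x_3\geq 0.\end{cases}
\]
On $|x_3|\leq\delta$ both formulas reduce to $Y_2$, so the pieces glue smoothly across $x_3=0$; on $|x_3|\geq N$ the cutoff vanishes and $Y_\epsilon$ equals $Y_e$ or $Y_m$. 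Because adding a Hamiltonian vector field preserves the Liouville relation $L_Y\omega_0=\omega_0$, $Y_\epsilon$ is itself Liouville. Transversality is then checked region by region. On $\{|x_3|\geq N\}$, $Y_\epsilon=Y_e$ or $Y_m$, and part (i) puts the corresponding points in $\{q\in\mathcal D_\epsilon\}$ (or its moon analogue), so Proposition \ref{prop_transversality2} applies. On $\{|x_3|\leq\delta\}$, $Y_\epsilon=Y_2$, and the identity $dH_2\cdot Y_2=1+\lambda_1(\tfrac{1}{2}-b)(x_1^2+x_3^2)\geq 1$ on $H_2^{-1}(1)$ (valid for $b\leq 1/2$), combined with the $C^\infty_{\text{loc}}$-convergence of the rescaled Hamiltonian to $H_2$, transfers transversality to $H_\epsilon^{-1}(1)$ for $\epsilon$ small. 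On the interpolation annuli $\{\delta<|x_3|<N\}$, using $X_{\chi(x_3)G_e}=\chi X_{G_e}+G_e X_{\chi(x_3)}$ together with $X_{\chi(x_3)}=\chi'(x_3)\partial_{x_1}$ (read off from $\omega_0=dx_1\wedge dx_3+dx_2\wedge dx_4$), we obtain the decomposition
\[
Y_\epsilon=(1-\chi)Y_e+\chi Y_2+G_e\,\chi'(x_3)\,\partial_{x_1},
\]
whose three contributions to $dH\cdot Y_\epsilon$ are of orders $\epsilon^{1/2}$ (from $Y_e$, with indefinite sign), $\epsilon$ (from $Y_2$, strictly positive), and $\epsilon^{3/2}$ (from the correction, subleading).

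The genuine obstacle lies in this interpolation annulus: the $(1-\chi)(dH\cdot Y_e)$ contribution decays only like $\epsilon^{1/2}$ and so dominates in magnitude the strictly positive $\chi(dH\cdot Y_2)=O(\epsilon)$ term whenever $1-\chi$ is bounded away from zero. Transversality therefore requires arranging the profile of $\chi$ so that on the portion of phase space where $dH\cdot Y_e\leq 0$ -- namely, over the thin annular sector excluded from $\mathcal D_\epsilon$, whose image in the $x_3$-variable is determined through the symplectic matrix $V$ -- the cutoff is already close enough to $1$ for the $Y_2$ contribution to dominate; the subleading correction $G_e\chi'\partial_{x_1}H$ may then be absorbed. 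Identifying the image of the excluded annular sector in the $x_3$-coordinate and matching it to a suitable $\chi$ with $\|\chi'\|_\infty$ chosen independently of $\epsilon$ is the delicate calculation at the heart of (ii).
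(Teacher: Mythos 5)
Part (i) of your argument is essentially the paper's: rescale at $l_1(\mu)$, use the two ball-exclusion conditions to bound $\hat q_1$, feed that into the quadratic approximation of $U$ to bound $\hat q_2$, recover the momentum bound from the energy relation, and transport everything to $x$ through the fixed matrix $V$. That part is fine. For (ii), your skeleton is also the paper's: write $Y_2-Y_e=X_{G}$, cut off the Hamiltonian correction by a function of $x_3$, and decompose $dH\cdot Y_\epsilon$ into the three contributions $(1-\chi)\,dH\cdot Y_e$, $\chi\, dH\cdot Y_2$, and a derivative term. But the proof stops exactly where the work begins, and the sketch of how to finish contains two substantive errors.

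First, the correction term is \emph{not} $O(\epsilon^{3/2})$. In the rescaled coordinates the generating function is $G(x)=\tfrac12 xQ_Gx^T+\epsilon^{-1/2}(-d_6x_1+d_5x_4)$: the affine part carries a factor $\epsilon^{-1/2}$ because $Y_e$ is centered at the earth, which sits at distance $O(\epsilon^{-1/2})$ from $l_1(\mu)$ in the $x$-frame. Since $dH\cdot\partial_{x_1}=\epsilon\lambda_1x_1+O(\epsilon^{3/2})$, the derivative term is $O(\epsilon^{1/2}|\chi'|)$ --- the \emph{same} order as the indefinite term $(1-\chi)\,dH\cdot Y_e$ and larger than the positive term $\chi\,dH\cdot Y_2=O(\epsilon)$. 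It cannot be absorbed; controlling it forces a differential inequality on the cutoff of the form $|\chi'|\lesssim |x_3|(1-\chi)+\epsilon^{1/2}\chi$, which the paper satisfies by solving an explicit ODE for the profile (and which rules out an arbitrary cutoff with merely bounded derivative).

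Second, your strategy for the indefinite term --- ``make $\chi$ close to $1$ wherever $dH\cdot Y_e\le 0$'' --- presupposes that the bad set has small image in the $x_3$-variable, and that is precisely the hard claim. A priori the set where Proposition \ref{prop_transversality2} gives no sign information is the whole neck region, whose $x_3$-image is the full interval $(-N,N)$, not a small neighborhood of $0$. The paper's resolution is not to localize the bad set by inspection but to \emph{prove} that $dH\cdot Y_e>0$ on the entire transition annulus: the leading contribution is $\epsilon^{1/2}(\lambda_2d_5x_2-\lambda_1d_6x_3)$, and using $|d_5|\le d_6$ together with the energy bound $\lambda_1x_1^2+\lambda_2(x_2^2+x_4^2)<3+\lambda_1x_3^2$ one gets the quantitative lower bound $dH\cdot Y_e\ge \epsilon^{1/2}\hat v_1|x_3|>0$ for $\hat c\le |x_3|\le N$, where $\hat c=(3/\lambda_1)^{1/2}C_1/(C_2-C_1)$ is exactly where the estimate degenerates; this is what determines $\delta=\hat c$ and confines any possible negativity of $dH\cdot Y_e$ to the pure-$Y_2$ zone. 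Without this computation (or a substitute for it), the transversality of $Y_\epsilon$ on $\{\delta<|x_3|<N\}$ is not established, so the proposal as written has a genuine gap at the heart of (ii).
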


\begin{proof}Recall that $l_1(\mu) = (-i\hat l_1(\mu), \hat l_1(\mu))\in \C^2$. Near $l_1(\mu)$, consider the re-scaled coordinates $(p,q)=\epsilon^{1/2}(\hat p,\hat q) +  l_1(\mu)$. The re-scaled Hamiltonian
$
H_\epsilon:= \epsilon^{-1} (H(\epsilon^{1/2}(\hat p,\hat q) + l_1(\mu))-L_1(\mu))
$
admits the potential $U_\epsilon := \epsilon^{-1}(U(\epsilon^{1/2}\hat q + \hat l_1(\mu))-L_1(\mu))$. As $\epsilon \to 0^+$, $U_\epsilon$ converges  uniformly in $C^\infty$ to $\frac{1}{2}\nabla^2 U(\hat l_1(\mu))(\hat q,\hat q)=\frac{1}{2}((2a-1)\hat q_2^2-(4a+1)\hat q_1^2)$ on $|\hat q|<C$ for any fixed $C>0$. Let $(p,q)\in \mathcal N_\epsilon\subset \mathcal{M}^{e\# m}_\epsilon$. Then $U_\epsilon(\hat p,\hat q)\leq 1$, $(\epsilon^{1/2}\hat q_1+\hat l_1(\mu)+\mu)^2+(\epsilon^{1/2}\hat q_2)^2\geq (\mu+\hat l_1(\mu)-\epsilon^{1/2})^2$ and $(\epsilon^{1/2}\hat q_1+\hat l_1(\mu)+\mu-1)^2+(\epsilon^{1/2}\hat q_2)^2\geq (1-\mu-\hat l_1(\mu)-\epsilon^{1/2})^2$. Assuming that $|\hat q|<C$ for a large constant $C$, and $(p,q)\in \mathcal{N}_\epsilon$, we use these estimates to obtain
$$
(2a-1)\hat q_2^2-(4a+1)\hat q_1^2+O(\epsilon^{1/2})\leq 2
$$
and
$$
-1-\frac{\hat q_1^2+\hat q_2^2-1}{2(\mu+\hat l_1(\mu))}\epsilon^{1/2}\leq \hat q_1\leq 1+\frac{\hat q_1^2+\hat q_2^2-1}{2(1-\mu-\hat l_1(\mu))}\epsilon^{1/2},
$$
where $2<a=a(\mu)\leq 4$ is given in \eqref{mua}. Combining the inequalities above, we obtain
$$
\begin{aligned}
&\quad (2a-1)(\hat q_1^2+\hat q_2^2)\leq 2+6a\hat q_1^2+O(\epsilon^{1/2})\\
&\leq 2+6a\max\left\{\left(1+\frac{\epsilon^{1/2}(\hat q_1^2+\hat q_2^2-1)}{2(\mu+\hat l_1(\mu))}\right)^2,\left(1+\frac{\epsilon^{1/2}(\hat q_1^2+\hat q_2^2-1)}{2(1-\mu-\hat l_1(\mu))}\right)^2\right\}+O(\epsilon^{1/2}).
\end{aligned}
$$
Taking $\epsilon\to 0$, we see that $\hat q_1^2+\hat q_2^2\leq (2+6a)/(2a-1)<5$. Therefore, for any $\epsilon>0$ sufficiently small, $|q-\hat l_1(\mu)|<\sqrt{5}\epsilon^{1/2}$ for every $(p,q)$ in the neck region and we take $C= \sqrt{5}$. Since $\mathcal N_\epsilon$ contains only one connected component, we conclude that $\mathcal{N}_\epsilon \subset \{(p,q),|q-\hat l_1(\mu)|<\sqrt{5}\epsilon^{1/2}\}$ for every $\epsilon>0$ sufficiently small.

Since $\dot q=p+iq$, we estimate
$$
\begin{aligned}
|p+i\hat l_1(\mu)|^2&=|\dot q- i(q-\hat l_1(\mu))|^2\leq 2(L_1(\mu)+\epsilon-U(q)))+|q-\hat l_1(\mu)|^2\\
& \leq 2\epsilon-(2a-1)q_2^2+(4a+1)(q_1-\hat l_1(\mu))^2+O(\epsilon^{3/2})+5\epsilon\\
& \leq (20a+12)\epsilon< 92\epsilon.
\end{aligned}
$$

The estimates above imply that if $(p,q)\in\mathcal{N}_\epsilon $, then $|(p,q)-l_1(\mu)|<10\epsilon^{1/2}$. Since $\epsilon^{1/2} x^T=V^{-1}((p,q)-l_1(\mu))^T$ is an affine map and $V$ only depends on $\mu$, we may take $N:=10 \alpha_1$, where $\alpha_1$ is the largest absolute value of an eigenvalue of $V^{-1}$. Therefore, if $(p,q)\in \mathcal{N}_\epsilon$, then $|x_3|< N$. This proves (i). In the proof of (ii) below, we may take $N$ even larger if necessary.

Now we construct the interpolation between $Y_e, Y_2$, and $Y_m$. We assume that $b=1/2$, that is $Y_2$ is a radial vector field both in coordinates $(\hat p,\hat q)$ and $x$. Our interpolation is constructed in coordinates $x$ and supported in $|x_3|<N$. Choose $N>0$ as above. It follows from (i) and Proposition \ref{prop_transversality2} that for every $\epsilon>0$ sufficiently small, $Y_e$ is transverse to $\mathcal{M}^{e\# m}_{\epsilon} \setminus \mathcal{N}_\epsilon$. Moreover, recall that $Y_2$ is globally transverse to $H_2^{-1}(1)$ where $H_2 = \frac{\lambda_1}{2}(x_1^2-x_3^2) + \frac{\lambda_2}{2}(x_2^2+x_4^2)$. Also, $Y_2$ is invariant under the re-scaling of $x$. Hence, for every $\epsilon>0$ sufficiently small, $Y_2$ is transverse to $\mathcal{M}^{e\# m}_{\epsilon}\cap \{|x|< N\}$.

Let $\beta:\R\to [0,1]$ be a smooth even function,  non-increasing on $(-\infty,0],$ $\beta(x_3)=1$  near $x_3=-N$ and $\beta(x_3)=0$  near $x_3=0$.  Later, we will impose additional conditions on $\beta$. Recall that $G(x)=\frac{1}{2}xQ_Gx^T+\epsilon^{-1/2}(-d_6x_1+d_5x_4)$, where $Q_G$ is given in \eqref{QG}. Let
$$
Y_\epsilon(x):=Y_2(x)-X_{\beta(x_3)G(x)}.
$$
Since $H(p,q)=L_1(\mu)+\epsilon H_2(x)+\epsilon^{3/2}R(x)$ near $l_1(\mu)$, where $R(x)$ contains higher order terms, we have
\begin{equation}\label{equ: transversality of Yepsilon}
\begin{aligned}
dH \cdot Y_\epsilon&=dH \cdot Y_2+d(\beta G)\cdot X_{H}=dH \cdot (Y_2-\beta X_G)+Gd\beta \cdot X_{H}\\
&=\beta dH \cdot Y_e +(1-\beta)dH \cdot Y_2+Gd\beta \cdot X_{H}.
\end{aligned}
\end{equation}
In the last identity we used that $Y_2 - X_G = Y_e$. We compute the main terms above
\begin{equation}\label{equ: computation of dH_2.}
\begin{aligned}
 dH \cdot Y_e
&=\epsilon\big(\lambda_1(1-d_1)x_1^2+\lambda_2(1-d_2)x_2^2-\lambda_1d_1x_3^2+\lambda_2d_2x_4^2\\
&\quad+(\lambda_1d_4+\lambda_2d_3)x_1x_4+(\lambda_2d_4-\lambda_1d_3)x_2x_3\big)+\epsilon^{3/2}dR\cdot Y_e\\
&\quad +\epsilon^{1/2}(\lambda_2d_5x_2-\lambda_1d_6x_3),\\
dH \cdot Y_2 &=\epsilon dH_2\cdot Y_2 + \epsilon^{3/2}dR \cdot Y_2=\epsilon + O(\epsilon^{3/2}) + \epsilon^{3/2}dR \cdot Y_2,\\
Gd\beta \cdot X_{H} & =\beta'(x_3)\left(xQ_Gx^T/2+\epsilon^{-1/2}(-d_6x_1+d_5x_4)\right) \left(\epsilon\lambda_1x_1 + \epsilon^{3/2} dx_3 \cdot X_R\right),
\end{aligned}
\end{equation}

Denote by $\mathcal{L}_\epsilon\subset \mathcal{N}_\epsilon$ the neck-region  of $\mathcal{M}^{e \# m}_\epsilon$ contained in $-N\leq x_3 \leq N$. Then there exists $R_0>0$ so that $|R(x)|< R_0$ for every $x\in \mathcal{L}_\epsilon$ and every $\epsilon>0$ small. Let $x\in \mathcal{L}_\epsilon\cap \{x_3=-c\},$ where $c\in [0,N]$. Then
\begin{equation}\label{equ: estimate in (0,N)}
\lambda_1 x_1^2+\lambda_2(x_2^2+x_4^2)=(2+\lambda_1 c^2)-2\epsilon^{1/2}R(x)<(3+\lambda_1 c^2),
\end{equation}
for every $\epsilon>0$ sufficiently small.
Moreover, recall that \eqref{lambda12} and \eqref{C123} give
\begin{gather*}
\lambda_1=(a-1+\sqrt{a(9a-4)})^{1/2}\geq \lambda_2=(1-a+\sqrt{a(9a-4)})^{1/2},\\
C_1=(2+3a-\sqrt{a(9a-4)})^{1/2}< C_2=(2+3a+\sqrt{a(9a-4)})^{1/2},\\
C_0^2=2(a(9a-4))^{1/2}.
\end{gather*}
Since $C_2^4\lambda_2^2-C_1^4\lambda_1^2=8-24a+44a^2+72a^3>0$,  we have
$$
0<-d_5=\frac{(2(a-1)+C_1^2)(1-r_1)}{C_0C_2\sqrt{\lambda_2}}\leq d_6=\frac{(2(a-1)+C_2^2)(1-r_1)}{C_0C_1\sqrt{\lambda_1}}.
$$
Therefore, we estimate the following term of $dH \cdot Y_e$
$$
\begin{aligned}
&\qquad \lambda_2 d_5x_2-\lambda_1d_6 x_3\\
&=(1-r_1)\left(\frac{\sqrt{\lambda_1}(2(a-1)+C_2^2)|x_3|}{C_0C_1}-\frac{\sqrt{\lambda_2}(2(a-1)+C_1^2)x_2}{C_0C_2}\right)\\
&\geq \frac{(1-r_1)}{C_0C_1C_2}\left(c\sqrt{\lambda_1 }C_2(5a+\sqrt{a(9a-4)})-\sqrt{3+\lambda_1c^2 }C_1(5a-\sqrt{a(9a-4)})\right)\\
&\geq \frac{(1-r_1)}{C_0C_1C_2}\left(c\sqrt{\lambda_1} (C_1+C_2)\frac{C_0^2}{2}+5a(c\sqrt{\lambda_1} (C_2-C_1)-\sqrt{3}C_1)\right).
\end{aligned}
$$

Let $\hat c:=\frac{(3/\lambda_1)^{1/2}C_1}{C_2-C_1}$. Notice that $\hat c$ depends only on $\mu$ and if $c=\hat c$, then $c\sqrt{\lambda_1}(C_2-C_1) - \sqrt{3}C_1$ in the last term above vanishes. Choose $N>\hat c$ so that \eqref{equ: estimate in (0,N)} still holds for every $\epsilon>0$ sufficiently small. Therefore, for any $c\in[\hat c,N]$, we have
$$
\lambda_2 d_5x_2-\lambda_1d_6 x_3\geq \frac{(1-r_1)c\sqrt{\lambda_1}(C_1+C_2)C_0}{2C_1C_2}=:-\hat v_1x_3> 0,$$
where $\hat v_1:=\frac{(1-r_1)\sqrt{\lambda_1}(C_1+C_2)C_0}{2C_1C_2}>0$ only depends on $\mu$. Moreover, it follows from \eqref{equ: computation of dH_2.} that
$dH \cdot Y_e-\epsilon^{1/2}(\lambda_2 d_5x_2-\lambda_1d_6x_3)=O(\epsilon)$ for every $\epsilon>0$ small.
We conclude that
$$
\beta dH \cdot Y_e \geq -\epsilon^{1/2}\beta\hat v_1x_3>0 \quad \text{on}\quad \mathcal{L}_\epsilon \cap \{-N \leq x_3 \leq -\hat c\}.
$$
Our interpolation will be supported in this subset, possibly after taking $N$ even larger.

Since $dH_2 \cdot Y_2= H_2 = 1 + O(\epsilon^{1/2})$ on $\mathcal L_\epsilon$, we obtain from \eqref{equ: computation of dH_2.} the following estimate
$$
\begin{aligned}
(1-\beta)dH \cdot Y_2& =(1-\beta)(\epsilon+ O(\epsilon^{3/2})+\epsilon^{3/2}dR\cdot Y_2)\\
& \geq(1-\beta)(\epsilon+O(\epsilon^{3/2}))\\ & \geq (1-\beta)\epsilon/2\end{aligned}
$$
on $\mathcal{L}_\epsilon$, for every $\epsilon>0$ sufficiently small.

Using \eqref{equ: computation of dH_2.} and \eqref{equ: estimate in (0,N)}, we estimate the dominating term of $Gd\beta \cdot X_{H}$
$$
\begin{aligned}
 |\beta'(x_3)\lambda_1(-d_6x_1^2+d_5x_1x_4)| & \leq \lambda_1|\beta'(x_3)|\cdot \left|(d_6+|d_5|)x_1^2+|d_5|x_4^2\right|\\
& \leq |\beta'(x_3)|\cdot (d_6+|d_5|(1+\lambda_1/\lambda_2))(3+\lambda_1 c^2)\\
& =|\beta'(x_3)| \hat v_2(3+\lambda_1x_3^2),
\end{aligned}
$$
where $\hat v_2:=d_6+|d_5|(1+\lambda_1/\lambda_2)>0$ only depends on $\mu$. We conclude from \eqref{equ: computation of dH_2.} that
$$
\begin{aligned}
Gd\beta\cdot X_H&\geq -\epsilon^{1/2}|\beta'(x_3)|(|\lambda_1(d_6x_1^2-d_5x_1x_4)|+O(\epsilon^{1/2}))\\
&\geq -\epsilon^{1/2}|\beta'(x_3)| \hat v_2(4+\lambda_1x_3^2),
\end{aligned}
$$
on $\mathcal{L}_\epsilon$ for every $\epsilon>0$ sufficiently small.

Notice that $\hat v_1,\hat v_2>0$ only depend on $\mu$. Combining the estimates above, we see from \eqref{equ: transversality of Yepsilon} that the inequality $dH \cdot Y_\epsilon>0$ holds true if
\begin{equation}\label{equ: condition on beta}
|\beta'(x_3)|<-\frac{\hat v_1x_3\beta(x_3)}{\hat v_2(4+\lambda_1x_3^2)}+\frac{(1-\beta(x_3))\epsilon^{1/2}}{2\hat v_2(4+\lambda_1x_3^2)}, \quad \forall x_3 \in [-N, -\hat c].
\end{equation}

Hence, in order to construct $\beta$ satisfying \eqref{equ: condition on beta}, we first choose
$$
\beta(x_3):=\left(\hat v_2(4+\lambda_1 x_3^2)\right)^{\frac{\hat v_1}{2\hat v_2\lambda_1}}-\left(\hat v_2(4+\lambda_1\hat c^2)\right)^{\frac{\hat v_1}{2\hat v_2\lambda_1}},\quad \forall x_3 \in [-\check c,-\hat c],
$$
where $\check c>0$ is the unique point satisfying $\beta(-\check c)=1$. Indeed, it follows from this definition that $\beta$ satisfies the following properties
$$
\beta'(x_3)=\frac{\hat v_1x_3\beta(x_3)}{\hat v_2(4+\lambda_1x_3^2)}< 0,\quad \forall x_3\in [-\check{c}, \hat c], \quad \beta(-\check c)=1 \quad \mbox{ and } \quad \beta(-\hat c)=0.
$$
In particular, \eqref{equ: condition on beta} holds.
Notice that $\check c$ is explicitly given by
$$
\check c^2=\frac{1}{\hat v_2\lambda_1}\left(\left(\hat c_2(4+\hat c^2\lambda_1)\right)^{\frac{\hat v_1}{2\hat c_2\lambda_1}}+1\right)^{\frac{2\hat v_2\lambda_1}{\hat v_1}}-\frac{4}{\lambda_1},
$$
and does not depend on $\epsilon$.
We take $N$ even larger so that  $N>\check c$ and \eqref{equ: estimate in (0,N)} still holds for every $\epsilon>0$ sufficiently small. Since $\beta$ decreases from $1$ to $0$ on $[-\check c,-\hat c]$, we can change $\beta$ on small neighborhoods of $x_3 = -\check c$ and $x_3=-\hat c$ to obtain a new smooth even function $\beta:\R \to [0,1]$, still denoted $\beta$, so that $\beta$ has the desired properties, i.e., it is a non-increasing smooth function on $(-\infty,0]$, coincides with $1$ on a neighborhood of $x_3=-N$ and with $0$ on a neighborhood $x_3 = - \hat c$. It is important to notice that the change of $\beta$ near the points $x_3 = -\check c$ and $x_3=-\hat c$ can be done in such a way that $|\beta'|$ of the new function is less or equal than $|\beta'|$ of the initial function. Hence, condition \eqref{equ: condition on beta} still holds, and the interpolated Liouville vector field $Y_\epsilon$ as constructed above is transverse to $\mathcal{M}^{e \# m}_\epsilon$ for every $\epsilon>0$ sufficiently small.  Finally, we can take $\delta:= \hat c.$
\end{proof}

Proposition \ref{prop_neck_Liouville} implies the following proposition.

\begin{prop}\label{prop: interpolation of alpha}
Fix $0<\mu_0<1$. For every $(\mu,\epsilon)$, $\epsilon>0,$ sufficiently close to $(\mu_0, 0)$, the following assertions hold:
\begin{itemize}

\item[(i)] There exists a Liouville vector field $Y_{\epsilon}$ defined on a neighborhood of $\mathcal{M}^{e\# m}_{\mu, L_1(\mu)+\epsilon}$, which is everywhere transverse to $\mathcal{M}^{e\# m}_{\mu, L_1(\mu)+\epsilon}$ and induces a contact form $\hat \alpha_\epsilon$ so that its Reeb flow is a re-paramerization of the Hamiltonian flow.

\item[(ii)] There exist two neighborhoods $U_\epsilon\subset \mathcal{U}_\epsilon$ of $l_1(\mu_0)$, such that $Y_\epsilon$ coincides with $Y_e=(q_1+\mu)\partial_{q_1}+q_2\partial_{q_2}$ and $Y_m=(q_1-1+\mu)\partial_{q_1}+q_2\partial_{q_2}$ in $\mathcal{M}^e_{\mu,L_1(\mu)+\epsilon}\setminus \mathcal{U}_\epsilon$ and $\mathcal{M}^m_{\mu,L_1(\mu)+\epsilon}\setminus \mathcal{U}_\epsilon$, respectively, and $Y_\epsilon$ coincides with $Y_2=\frac{1}{2}(x_1\partial_{x_1}+x_2\partial_{x_2}+x_3\partial_{x_3}+x_4\partial_{x_4})$ in $\mathcal{M}_{\mu,L_1(\mu)+\epsilon}\cap U_\epsilon$.

\item[(iii)] Given any neighborhood $\mathcal{U}\subset \R^4$ of $l_1(\mu)$, $\alpha_\epsilon$ is uniformly converging to $\alpha_0$ on the subset $\mathcal{M}_{\mu,L_1(\mu)+\epsilon}^{e \# m}\setminus \mathcal{U}$ as $\epsilon\to 0$, where $\alpha_0$ is induced by $Y_e$ and $Y_m$.
\end{itemize}
\end{prop}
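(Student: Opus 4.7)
The plan is to assemble this proposition almost entirely from Proposition~\ref{prop_neck_Liouville}, which already contains the hard analytical content: constructing and verifying transversality of the interpolation between $Y_2$ and $Y_e$ in the neck region. The remaining task is to package the neck-region interpolation with the already-established transversality of $Y_e, Y_m$ on the "bulk" of each chamber, and then to observe that everything localizes to a shrinking neighborhood of $l_1(\mu)$.

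For (i) and (ii), I would take the vector field $Y_\epsilon$ produced by Proposition~\ref{prop_neck_Liouville}-(ii) on the neck region, which by construction equals $Y_2$ on $\{|x_3|<\delta\}$, $Y_e$ on $\{-2N<x_3<-N\}$ and $Y_m$ on $\{N<x_3<2N\}$, and extend it by setting $Y_\epsilon\equiv Y_e$ on the earth-side complement of the neck and $Y_\epsilon\equiv Y_m$ on the moon-side complement. Transversality of this extension to $\mathcal{M}^{e\#m}_{\mu,L_1(\mu)+\epsilon}$ follows from Proposition~\ref{prop_transversality2} on the earth side, its mirror on the moon side (obtained from the symmetry $H_\mu(p,q)=H_{1-\mu}(-p,-q)$), and Proposition~\ref{prop_neck_Liouville}-(ii) on the neck. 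Since each of $Y_2,Y_e,Y_m$ is Liouville for $\omega_0$ and the interpolation in Proposition~\ref{prop_neck_Liouville} preserves this property (the interpolated vector field differs from $Y_2$ by a Hamiltonian vector field $X_{\beta G}$), $Y_\epsilon$ is Liouville, and $\hat\alpha_\epsilon:=\iota_{Y_\epsilon}\omega_0|_{\mathcal{M}^{e\#m}_{\mu,L_1(\mu)+\epsilon}}$ is a contact form whose Reeb field is parallel to $X_H$. Then I would take $U_\epsilon:=\mathcal{M}^{e\#m}_{\mu,L_1(\mu)+\epsilon}\cap\{|x_3|<\delta\}$ and $\mathcal{U}_\epsilon:=\mathcal{M}^{e\#m}_{\mu,L_1(\mu)+\epsilon}\cap\{|x_3|<2N\}$, which are open neighborhoods of $l_1(\mu_0)$ verifying (ii).

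For (iii), I would use Proposition~\ref{prop_neck_Liouville}-(i), which says $\mathcal{N}_\epsilon\subset\{(p,q):|(p,q)-l_1(\mu)|<10\epsilon^{1/2}\}$. Fixing a neighborhood $\mathcal{U}$ of $l_1(\mu)$, for all $\epsilon>0$ sufficiently small the support of the interpolation is contained in $\mathcal{U}$, so outside $\mathcal{U}$ the vector field $Y_\epsilon$ equals $Y_e$ (on the earth side) or $Y_m$ (on the moon side) independently of $\epsilon$. Hence $\hat\alpha_\epsilon$ is the pullback to $\mathcal{M}^{e\#m}_{\mu,L_1(\mu)+\epsilon}\setminus\mathcal{U}$ of the $\epsilon$-independent one-forms $\iota_{Y_e}\omega_0$ and $\iota_{Y_m}\omega_0$. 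Since the hypersurfaces $\mathcal{M}^{e\#m}_{\mu,L_1(\mu)+\epsilon}\setminus\mathcal{U}$ converge smoothly to $(\mathcal{M}^{e}_{\mu,L_1(\mu)}\cup\mathcal{M}^{m}_{\mu,L_1(\mu)})\setminus\mathcal{U}$ as $\epsilon\to 0^+$, uniform convergence $\hat\alpha_\epsilon\to\alpha_0$ follows.

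The main obstacle is not an analytical one at this stage — the analysis was done in Proposition~\ref{prop_neck_Liouville} — but the bookkeeping needed to verify uniformity in $\mu$ near $\mu_0$. This requires checking that the quantities $\hat c,\check c, N, \hat v_1, \hat v_2, R_0$ appearing in the construction depend continuously on $\mu$ through the eigenvalues $\lambda_1(\mu),\lambda_2(\mu)$ and the symplectic frame $V(\mu)$, so that the pointwise ODE inequality \eqref{equ: condition on beta} determining the cutoff $\beta$ can be satisfied by a single choice of $\beta$ valid on a small neighborhood of $\mu_0$. Once that is observed, assembling (i)--(iii) is straightforward.
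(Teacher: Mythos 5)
Your proposal is correct and follows essentially the same route as the paper, which derives this proposition directly from Proposition~\ref{prop_neck_Liouville} (together with Proposition~\ref{prop_transversality2} and its moon-side mirror); your assembly of the pieces, the choice of $U_\epsilon,\mathcal{U}_\epsilon$ as rescaled $x_3$-slabs, and the localization argument for (iii) match the paper's intent. The uniformity in $\mu$ you flag is handled exactly as you suggest, since $\hat c,\check c,N,\hat v_1,\hat v_2$ depend continuously on $\mu$ through $\lambda_1,\lambda_2$ and $V$.
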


Finally, Theorem \ref{thm_alphaJ} directly follows from Propositions \ref{prop: interpolation of alpha} and \ref{prop: holo S2 for Y on H}.

\section{Proof of Theorem \ref{thm_muRSl1}}

In this section, we aim to prove Theorem \ref{thm_muRSl1} by showing that periodic orbits on the regularized energy surface $\hat H_{\mu,E}^{-1}(0)$ passing sufficiently close to $S_\pm(\mu_0)$ have high index. Similar index estimates were found in \cite{ dPS1, dPHKS, GRS} using a different approach. Due to the antipodal symmetry, it is enough to consider $S_+(\mu_0)$. Before proving Theorem \ref{thm_muRSl1}, we need some relevant index estimates.

It will be convenient to consider the Robbin-Salamon index of Lagrangian and symplectic paths. Let $\mathrm{Lag}(\mathbb{R}^{2n},\omega)$ be the collection of all the Lagrangian subspaces of $(\mathbb{R}^{2n},\omega)$. Fix $V,W\in \mathrm{Lag}(\mathbb{R}^{2n},\omega)$ satisfying $\R^{2n} = V \oplus W$. Let $\Lambda:[a,b] \to \mathrm{Lag}(\R^{2n}, \omega)$ be a piecewise $C^1$ path of Lagrangian subspaces. Denote $\Lambda_t:=\Lambda(t), \forall t\in [a,b].$ We call $t_0\in [a,b]$ a crossing of $\Lambda$ if $\Lambda_{t_0}\cap V\neq \{0\}$ and $\Lambda_{t_0} \pitchfork W$. For each crossing $t_0$, we define its crossing form as
$$
\Gamma(\Lambda,V,t_0)(v):=\frac{d}{dt}\omega(v,v+w_t)\Big|_{t=t_0},\quad \quad  \forall v\in \Lambda_{t_0}\cap V,.
$$
where $w_t\in W$ is uniquely determined by the condition $v+w_t\in \Lambda_t$ for $t$ close to $t_0$. Notice that $\Gamma$ does not depend on the choice of $W$.
We call $t_0$ a regular crossing if $\Gamma(\Lambda,V,t_0)$ is non-degenerate as a quadratic form. In that case, we denote by $\text{Sign}\Gamma(\Lambda,V,t_0)$ the signature of the crossing form at $t_0$, i.e., the difference between the number of positive and negative eigenvalues. Generically, $\Lambda$ admits only finitely many regular crossings, and the Robbin-Salamon index of $\Lambda$ is defined as the half-integer
\begin{equation}\label{equ: Robbin-Salamon index L}
\mu(\Lambda,V):=\frac{1}{2}\mathrm{Sign}\Gamma(\Lambda,V,a)+\sum_{t_0\in(a,b)  \text{ is crossing}}\mathrm{Sign}\Gamma(\Lambda,V,t_0)+\frac{1}{2}\mathrm{Sign}\Gamma(\Lambda,V,b),
\end{equation}
where the first and last terms on the right-hand side only appear if the points $a$ and $b$ are crossings, respectively. Every Lagrangian path is homotopic with fixed ends to a regular path.

Since the diagonal $\Delta=\{(v,v)|v\in\mathbb{R}^{2n}\}$ and, more generally, the graph $\mathrm{Gr}(M)=
\{(v,Mv)|v\in\mathbb{R}^{2n}\}$, $M\in \mathrm{Sp}(2n),$ are Lagrangians of $\mathrm{Lag}(\mathbb{R}^{2n}\oplus\mathbb{R}^{2n},-\omega_0\oplus\omega_0)$, we define the Robbin-Salamon index of a piecewise $C^1$ path $\psi:[a,b] \to \mathrm{Sp}(\R^{2n})$ of symplectic matrices by
$$
\mu_{\text{RS}}(\psi):=\mu(\mathrm{Gr}(\psi),\Delta)\in \frac{1}{2}\Z.
$$
Notice that $\psi$ does not necessarily start from the identity. In particular, $\det(I - \psi(t_0))=0$ for each crossing $t_0\in [a,b]$. The path $\psi$ determines a path of symmetric matrices $S(t):= -J\dot \psi(t)\psi^{-1}(t),t\in [a,b].$ Conversely, given a path of symmetric matrices $S(t),t\in [a,b],$ and $\psi_a\in \text{Sp}(2n),$ one can integrate $\dot \psi = - J S(t) \psi$ to obtain a path $\psi(t) \in \text{Sp}(2n), t\in [a,b]$ with $\psi(a) = \psi_a$. In that case,
$$
\Gamma(t_0):=\Gamma(\mathrm{Gr}(\psi), \Delta, t_0)= S(t_0)|_{\ker(I - \psi(t_0))}
$$
is nondegenerate for every regular crossing $t_0\in[a,b]$. If all crossings are regular, the Robbin-Salamon index of the path $\psi$ is given by
\begin{equation}\label{equ: Robbin-Salamon index}
\mu_{\text{RS}}(\psi) = \frac{1}{2}\text{Sign}(\Gamma(a)) + \sum_{t_0\in (a,b) \text{ is crossing}}\text{Sign}(\Gamma(t_0)) + \frac{1}{2}\text{Sign}(\Gamma(b)).
\end{equation}

Among the axiomatic properties of the Robbin-Salamon index, we know that $\mu_{\text{RS}}$ is invariant under homotopies with fixed end points, is additive under catenation $\mu_{\text{RS}}(\psi)=\mu_{\text{RS}}(\psi|_{[a,c]}) + \mu_{\text{RS}}(\psi|_{[c,b]}),\forall c\in (a,b)$, and satisfies the product property $\mu_{\text{RS}}(\psi\oplus \psi') =\mu_{\text{RS}}(\psi)+ \mu_{\text{RS}}(\psi')$. Moreover, if $\psi(a) = I$ and $\psi(b)$ does not have $1$ as an eigenvalue, then $\mu_{\text{RS}}(\psi)$ coincides with the Conley-Zehnder index $\mu_{\text{CZ}}(\psi)$ of a nondegenerate path of symplectic matrices. For a general path $\psi$, with $\psi(a)=I$, the difference between $\mu_{\text{RS}}(\psi)$ and $\mu_{\text{CZ}}(\psi)$ is bounded by a uniform constant depending only on $n$.

For $2n=2$, let
\begin{equation}\label{ll1}
\psi_1(t) := \left(\begin{array}{rr} \cosh \lambda_1 t & \sinh \lambda_1 t \\  \sinh \lambda_1 t &  \cosh \lambda_1 t \end{array} \right)\psi_1(0), \quad t\in [a,b],
\end{equation}
where $\lambda_1>0$ and $\psi_1(0)=(a_{ij})_{2\times 2}\in \text{Sp}(2)$. Then $\psi_1$ has at most two crossings. This follows from the fact that $\text{tr}(\psi_1(t))=c_1e^{\lambda_1t}+c_2e^{-\lambda_1t}=2$ has at most two solutions $t_1,t_2\in \R$, where $c_1:=\frac{1}{2}(a_{11}+a_{22}+a_{12}+a_{21})$ and $c_2:=\frac{1}{2}(a_{11}+a_{22}-a_{12}-a_{21})$. The existence of two crossings $t_1\leq t_2$, with multiplicities counted, only occurs if both coefficients $c_1$ and $c_2$ are positive. Otherwise, there exists at most $1$ crossing, say $t_1$, which must be simple, i.e. $\dim \ker (I-\psi_1(t_1)) =1$. Assume that $c_1,c_2>0$. Notice that $S_1 :=-J\dot \psi_1 \psi_1^{-1}= \text{diag}(\lambda_1,-\lambda_1)$. Then $t_1=t_2$ is equivalent to $c_1c_2=1$ and $t_1\neq t_2$ is equivalent $c_1c_2<1$. Assume that $t_1=t_2$. Since $a_{11}a_{22}-a_{12}a_{21}=1$, we see that $c_1c_2=1$ is equivalent to $|a_{11}-a_{22}|=|a_{12}-a_{21}|=:v$. If $v=0$, then  $a_{11}=a_{22}>0,a_{12}=a_{21}$. In this case, $\psi_1(t_1)=I$ and the crossing form is $S_1$, which has signature $0$. If $v>0$, then $\psi_1(t_1)$ has eigenvalue $1$ with $\ker(\psi_1(t_1)-I)=\R(1,1)$ or $\R(1,-1)$. Then the crossing form $S_1|_{\ker(\psi_1(t_1)-I)}=0$. If $t_1\neq t_2$, we can reduce this case to a small neighborhood of $\{c_1c_2=1\}$ by continuity of $c_1c_2$ and the homotopy invariance of $\mu_{\text{RS}}$. Then, as a small perturbation of the previous case, the signature of the crossing forms at $t_1$ and $t_2$ is $0$. Hence, in all cases, we have
\begin{equation}\label{muRS1}
|\mu_{\text{RS}}(\psi_1)|\leq 1.
\end{equation}

If
\begin{equation}\label{ll2}
\psi_2(t) = \left(\begin{array}{rr} \cos \lambda_2 t & -\sin \lambda_2 t \\ \sin \lambda_2 t &  \cos \lambda_2 t \end{array} \right)\psi_2(0), \quad t\in [a,b],
\end{equation}
for some $\lambda_2>0$, then there exist precisely two crossing points $t_1,t_2\in [a,a+2\pi/\lambda_2)$ for every $a\in \R$,  where multiplicities are counted, i.e., if $t_1\neq t_2$ then $\dim \ker (I - \psi_2(t_i))=1,i=1,2$, and if $t_1=t_2$, then $\dim \ker (I-\ker \psi_2(t_i))=2$. Since $S_2 :=-J\dot \psi_2\psi_2^{-1}= \text{diag}(\lambda_2,\lambda_2)>0,$ we have $\sum_{t}\mathrm{Sign}(\Gamma(t))= 2$, where the sum is over the crossing points $t\in [a,a+2\pi/\lambda_2)$. Hence
\begin{equation}\label{muRS2}
\mu_{\text{RS}}(\psi_2) \geq 2\lfloor \lambda_2(b-a)/2\pi  \rfloor , \quad \forall a<b.
\end{equation}

Now, for convenience, we denote $\hat H=\hat H(y,x)$ the regularized Hamiltonian omitting the subscripts $\mu,E$. Consider a smooth path $\psi(t)\in \text{Sp}(2n),t\in [a,b]$, satisfying $\dot \psi = J B(t) \psi,$ where $B(t)= \nabla^2\hat H(\gamma(t))$ is the Hessian of $\hat H$ along a solution $\gamma\subset \hat H^{-1}(0)$ of $\dot \gamma(t) = J \nabla \hat H(\gamma(t))$.
Since $S_+(\mu)$ is a saddle-center singularity, see section \ref{subsec: holo two sphere near l1}, there exist symplectic coordinates $\tilde x\in \R^4$  near $S_+(\mu)$ so that the Hamiltonian has the form $\hat H = \hat H_2 + \hat R,$ where $\hat H_2 = \frac{\hat \lambda_1}{2}(\hat x_1^2-\hat x_3^2) + \frac{\hat \lambda_2}{2}(\hat x_2^2+\hat x_4^2)$, and $\hat R$ vanishes up to second order. If $\gamma(t)\equiv S_+(\mu), \forall t\in \R,$ then $B_{S_+(\mu)}(t) \equiv \text{diag}(\hat \lambda_1,\hat \lambda_2,-\hat \lambda_1,\hat \lambda_2)$. In particular, the linearized flow over $S_+(\mu)$ decouples $\psi_{S_+(\mu)}=\psi_1 \oplus \psi_2$ on the  $(\hat x_1,\hat x_3)$ and $(\hat x_2,\hat x_4)$ directions,  as in \eqref{ll1} and \eqref{ll2}, respectively. From \eqref{muRS1}, \eqref{muRS2}, and the product property of the Robbin-Salamon index, we obtain
\begin{equation}\label{muRS}
\begin{aligned}
\mu_{\text{RS}}(\psi_{S_+(\mu)}|_{[a,b]})=\mu_{\text{RS}}(\psi_1|_{[a,b]})+\mu_{\text{RS}}(\psi_2|_{[a,b]})
 \geq 2\lfloor \lambda_2(b-a)/2\pi\rfloor -1, \quad \forall a<b.
\end{aligned}
\end{equation}

We observe that the restriction of $\nabla^2 \hat H$ to $\mathbb R\{\partial_{p_1},\partial_{p_2}\}$ is always positive-definite. Using the above Robbin-Salamon index, a uniform estimate of the Conley-Zehnder index can be obtained.
\begin{prop}\label{prop_lower_bound_index}
Let $x(t)=\varphi_t(x(0))$ be a Hamiltonian trajectory of $\hat H=\hat H(y,x)$. Let $\psi_t:=D\varphi_t(x(t))\psi_0$, $\forall t\in[0,T]$ be the linearized path of symplectic matrices starting from $\psi_0\in \mathrm{Sp}(4).$ Then $\mu_{\mathrm{RS}}(\psi_t|_{[0,T]})\geq -9$. 
\end{prop}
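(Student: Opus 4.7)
The plan is to exploit the Tonelli-type structure of the regularized Hamiltonian $\hat H(y,x) = \tfrac12|y+F(x)|^2 + V_{\mu,E}(x)$, whose only really useful feature for this statement is that $\nabla^2_y \hat H \equiv I_2 > 0$, that is, the Hessian $B(t) = \nabla^2 \hat H(x(t))$ restricted to the vertical directions $V := \mathbb R\{\partial_{y_1},\partial_{y_2}\}$ is the identity along every trajectory. The proof reduces $\mu_{\text{RS}}(\psi_t)$ to the Maslov index of the Lagrangian path $\Lambda(t) := \psi_t \cdot V$, which is non-negative by a direct crossing-form computation, and then absorbs the difference into a universally bounded correction.

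First, I would verify the positivity of every Maslov crossing of $\Lambda$ with $V$. At a crossing time $t_0$, pick $w \in \Lambda(t_0) \cap V$; then $w \in V$ and the crossing form evaluated on $w$ is
\[
\Gamma(\Lambda,V;t_0)(w) = \omega_0\bigl(w, \dot\psi_{t_0}\psi_{t_0}^{-1} w\bigr) = \langle w, B(t_0) w\rangle = \langle w, B_{yy}(t_0)\, w\rangle = |w|^2 > 0.
\]
Hence every crossing of $\Lambda$ with $V$ is positive, and the only possibly negative contribution to $\mu(\Lambda,V)|_{[0,T]}$ can come from the half-signature at the starting endpoint $t=0$, giving $\mu(\Lambda, V)|_{[0,T]} \geq -1$.

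Second, I would invoke the standard comparison between $\mu_{\text{RS}}(\psi)$ and the Lagrangian Maslov index $\mu(\psi V, V)$. Writing $\mu_{\text{RS}}(\psi) = \mu(\text{Gr}(\psi),\Delta)$ in $\mathbb R^4 \oplus \mathbb R^4$ and applying the Hörmander identity relating the Maslov indices of $\mathrm{Gr}(\psi)$ against $\Delta$ and of $\psi V$ against $V$, one obtains
\[
\mu_{\text{RS}}(\psi|_{[0,T]}) \;=\; \mu(\psi V, V)|_{[0,T]} \;+\; s\bigl(V,\Delta;\psi_T\bigr) \;-\; s\bigl(V,\Delta;\psi_0\bigr),
\]
where each triple term $s$ is a Hörmander index in $\mathrm{Lag}(\mathbb R^4 \oplus \mathbb R^4)$ depending only on the corresponding endpoint matrix. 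A standard bound in dimension $2n=4$ gives $|s| \leq 4$, so the total correction has absolute value at most $8$. Combining with the first step yields $\mu_{\text{RS}}(\psi_t|_{[0,T]}) \geq -1 - 8 = -9$.

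The main obstacle is not conceptual but numerical: extracting the sharp constant $-9$ requires careful bookkeeping of the half-signature at $t=0$, the two Hörmander triple indices at $t=0$ and $t=T$, and the fact that $\psi_0$ is allowed to be an arbitrary element of $\mathrm{Sp}(4)$ rather than the identity. Once the signs and magnitudes of these finitely many boundary contributions are tabulated against the (non-negative) interior crossings of $\Lambda$ with $V$, the claim follows. No compactness or dynamical hypothesis beyond the Tonelli form of $\hat H$ is used; in particular the bound is uniform in the trajectory $x(\cdot)$, in $T>0$, and in $\psi_0\in\mathrm{Sp}(4)$.
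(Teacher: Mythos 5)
Your proof follows essentially the same route as the paper's: the only dynamical input is $\nabla^2_y\hat H\equiv I_2$, which makes every crossing of the flowed vertical Lagrangian $\psi_tV$ with $V$ positive, and the passage from that Lagrangian Maslov index to $\mu_{\text{RS}}$ is effected by a Hörmander index whose size is bounded a priori. The one place you genuinely diverge is the treatment of the arbitrary initial frame $\psi_0$: the paper first replaces $\psi_t$ by $\psi_t\psi_0^{-1}$, so the graph path starts at $\Delta$ and the single Hörmander index $s(V\oplus V,\Delta;\Delta,\Lambda_T)$ is bounded below by $-6$ via Robbin--Salamon Theorem 3.5(3), and then restores the basepoint by catenating explicit non-positive paths $\beta_1,\beta_2$ built in its appendix on non-negative paths, which contribute $\mu_{\text{RS}}(\beta_1)\geq-4$ and $-\mu_{\text{RS}}(\beta_2)\geq0$, giving $-4-5-0=-9$; you instead keep both endpoints general and bound $s(V\oplus V,\Delta;\mathrm{Gr}(\psi_0),\mathrm{Gr}(\psi_T))$ by $8$ in one stroke. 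Your variant buys a cleaner statement --- no auxiliary paths are needed, and since the crossing form is positive definite even at $t=0$ you actually have $\mu(\psi V,V)\geq 0$, hence $\mu_{\text{RS}}\geq-8$ --- but the step you leave vague is precisely the one requiring care: writing the Hörmander index as a difference of two endpoint terms each bounded by $4$ needs either the simultaneous-graph normal form of RS Theorem 3.5(3) (hence a common Lagrangian transverse to all four of $V\oplus V$, $\Delta$, $\mathrm{Gr}(\psi_0)$, $\mathrm{Gr}(\psi_T)$, arranged by a small perturbation and continuity of the index) or an explicit triple-index formula. Once that bookkeeping is pinned down, your argument closes and is marginally sharper than the paper's.
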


\begin{proof}
Recall from \eqref{equ: Robbin-Salamon index} that $\mu_{\text{RS}}(\psi_t|_{[0,T]})$ is the sum of $\mathrm{Sign}(\Gamma(t))$ at the crossings $t\in[0,T]$. Let $t_0$ be a crossing. We have $\ker(I-\psi_{t_0})\cong \mathrm{Gr}(\psi_{t_0})\cap \Delta$, where $\mathrm{Gr}(\psi_t):=\{(v,\psi_t(v))| v\in\mathbb{R}^{4}\}$ and $\Delta:=\{(v,v)|v\in\mathbb{R}^{4}\}$. Let $\mathcal{L}(4)$ denote the collection of Lagrangian subspaces of $(\mathbb{R}^{4}\oplus\mathbb R^{4},-\omega_0\oplus \omega_0)$. Then $\mathrm{Gr}(\psi_t),\Delta \in \mathcal{L}(4)$. By definition, we have $\mu_{\text{RS}}(\psi_t|_{[0,T]})=\mu(\mathrm{Gr}(\psi_t|_{[0,T]}),\Delta)$.

Let $\mathrm{L}(2)$ be the collection of  Lagrangian subspaces in $(\mathbb{R}^{4},\omega_0)$. Consider the Lagrangian subspace $V=\mathbb{R}\{\partial_{y_1},\partial_{y_2}\}\in \mathrm{L}(2)$. By flowing $V$, we obtain a Lagrangian path $\psi_t\psi_0^{-1}V|_{[0,T]}$ in $\mathrm{L}(2)$. Since $\dot \psi_t=J\nabla^2\hat H(x(t))\psi_t$ for every $t$ and $\nabla^2\hat H|_V=I$, we compute for every crossing $t_0\in[0,T]$
$$
\begin{aligned}
\Gamma(\psi_t\psi_0^{-1}V|_{[0,T]},V,t_0)(v)&=\frac{d}{dt}\omega_0(v,\psi_{t}\psi_{t_0}^{-1}v)\big|_{t=t_0}=(J_4v,\dot \psi_{t_0}\psi_{t_0}^{-1}v)\\
&=(\nabla^2\hat H(x(t_0))v,v)=\|v\|^2>0,
\end{aligned}
$$
for every nonzero $v\in (\psi_{t_0}\psi_0^{-1}V)\cap V$. Since $t=0$ is a boundary crossing with multiplicity $2$, we conclude that
$$
\mu(\psi_t\psi_0^{-1}V|_{[0,T]},V)=\mu(\Lambda_t|_{[0,T]},V\oplus V)\geq 1,\quad \Lambda_t:=\mathrm{Gr}(\psi_t\psi_0^{-1})\in \mathcal L(4).
$$
Let $s(V\oplus V,\Delta; \Lambda_0,\Lambda_T):=\mu(\Lambda_t|_{[0,T]},\Delta)-\mu(\Lambda_t|_{[0,T]},V\oplus V)$ be the Hormander index, where $\Lambda_0=\Delta$ and $\Lambda_T=\mathrm{Gr}(\psi_T\psi_0^{-1})$, see Theorem 3.5 in \cite{RS1}. We thus obtain
$$\begin{aligned}
\mu_{\text{RS}}(\psi_t\psi_0^{-1}|_{[0,T]})&=
\mu(\Lambda_t|_{[0,T]},\Delta)=\mu(\Lambda_t|_{[0,T]},V\oplus V)+s(V\oplus V,\Delta; \Lambda_0,\Lambda_T)\\
&\geq 1 + s(V\oplus V,\Delta; \Delta,\Lambda_T)\geq -5.
\end{aligned}$$
The last inequality uses Theorem 3.5-(3) in \cite{RS1}, since $\{V\oplus V,\Delta,\Lambda_T\}$ can be written as the graph of symmetric metrics at the same time, under a suitable change of coordinates.

Choose a path $\beta_1$ from $\psi_0$ to $I_4$ and let $\beta_2:=\psi_T\psi_0^{-1}\beta_1$. From Appendix \ref{sec: positive paths}, one can choose $\beta_1$ so that $-J\dot \beta_1 \beta_1^{-1}$ is non-positive-definite and satisfies
$$
-4\leq \mu_{\text{RS}}(\beta_1) \leq 0.
$$
In particular, $-J\dot \beta_2 \beta_2^{-1}$ is also non-positive-definite, then we have $\mu_{\text{RS}}(\beta_2) \leq 0$. Notice that $\beta_2$ is a path from $\psi_T$ to $\psi_T\psi_0^{-1}$, and then
$\psi_t\psi_0^{-1}$ is a path from $I$ to $\psi_T\psi_0^{-1}$. Notice that the path $\psi_t$ is homotopic to the catenation $\bar \beta_2 * \psi_t\psi_0^{-1} * \beta_1$, starting from $\beta_1$, where the homotopy is given by  $s\mapsto \psi_t\psi_0^{-1}\beta_1(s)$. We conclude from the homotopy invariance and the catenation property that
$
\mu_{\text{RS}}(\psi_t|_{[0,T]})=\mu_{\text{RS}}(\beta_1)+\mu_{\text{RS}}(\psi_t\psi_0^{-1}|_{[0,T]})-\mu_{\text{RS}}(\beta_2)\geq -9.$
\end{proof}

Now we are ready to prove Theorem \ref{thm_muRSl1}. Let $P'=(\gamma,T)\subset \hat H^{-1}(0)$ be a $T$-periodic orbit which is not a cover of the Lyapunov orbit near $S_\pm(\mu)$, where $|\mu-\mu_0|$ and $E - L_1(\mu_0)$ are small. Due to the antipodal symmetry of $\hat H$, we only need to consider $S_+(\mu_0)$. Let $\mathcal{U}_1\subset \R^2 \times (\R \times \R / 2\pi \Z)$ be a small open neighborhood of $S_+(\mu_0)$ so that if $\gamma$ intersects $\mathcal{U}_1$, then for $(\mu,E)$ sufficiently close to $(\mu_0, L_1(\mu_0))$, $\gamma$ lies in $\widetilde{\mathcal{M}}_{\mu,E}:=\widetilde{\mathcal{M}}_{\mu,E}^e \cup \widetilde{\mathcal{M}}_{\mu,E}^m$. Since $\gamma$ is not a cover of the Lyapunov orbit near $S_+(\mu)$, we can take $\U_1$ sufficiently small so that $\gamma$ necessarily intersects   $\widetilde{\mathcal{M}}_{\mu,E} \setminus \U_1$. Denote by $\tilde \alpha_{\mu,E}$ the lift of the contact form to $\widetilde{\mathcal{M}}_{\mu,E}$, and recall that $\mathcal{A}(\gamma) = \int_\gamma \tilde \alpha_{\mu,E}$. Here, the contact form is also well-defined if $E \leq L_1(\mu)$. Let $\psi(t)\in \text{Sp}(4)$ be the path of symplectic matrices obtained by integrating the linearized flow along $\gamma$ so that  $\psi(0)=I$.

Since $S_+(\mu)$ is an equilibrium point that varies smoothly with $\mu$, we can take $\mathcal{U}_1$ even smaller, if necessary, and find an open neighborhood $\mathcal{U}_2\subset \mathcal{U}_1$ of $S_+(\mu_0)$ so that for $(\mu,E)$ sufficiently close to $(\mu_0, L_1(\mu_0))$, the following holds: if $\gamma$ intersects $\mathcal{U}_2$, then for an arbitrarily large time interval $[a,b]$, $\gamma$ lies inside $\mathcal{U}_1$ and thus its period is $T> b-a\gg 0$. Taking $\mathcal{U}_1, \U_2$ even small and $(\mu,E)$ even closer to $(\mu_0, L_1(\mu_0))$, if necessary, we can assume furthermore that $B(t):=\nabla^2H(\gamma(t))$ along $\gamma|_{[a,b]}$ is arbitrarily close to the constant path of symmetric matrices $B_{S_+(\mu)}\equiv \nabla^2H(S_+(\mu))$.
Moreover, it follows from \eqref{muRS} that given $N \in \N$, there exists a neighborhood $\V_{N} \subset \U_2$ of $S_+(\mu)$ so that if $\gamma([a,b])\subset \mathcal{U}_1$ and $\gamma([a,b]) \cap \mathcal{V}_{N} \neq \emptyset$, then $\mu_{\text{RS}}(\psi|_{[a,b]})> N$.  By Proposition \ref{prop_lower_bound_index}, the contribution to the Robbin-Salamon index of $\gamma$ along the complement of $[a,b]$ in the domain of $\gamma$ is $\geq -9$. This follows from the catenation axiom. Then $\mu_{\text{RS}}(\gamma)=\mu_{\text{RS}}(\gamma|_{[a,b]})+\mu_{\text{RS}}(\gamma|_{[b,T+a]})>N-9$. Since $|\mu_{\text{CZ}}(\gamma) - \mu_{\text{RS}}(\gamma)|$ is uniformly bounded by $2$, Theorem \ref{thm_muRSl1} follows by taking the symmetric neighborhood $\U_N := \V_{N+11}$ of $S_\pm$.

\section{Proof of Theorem \ref{thm_main3}}

Let $\alpha=\alpha_{\mu,E},J=J_{\mu,E}$ and $\mathcal S=\partial \mathcal M^e_{\mu,E}=\partial \mathcal M^m_{\mu,E}$ be as in Theorem \ref{thm_alphaJ} for every $(\mu,E)$ sufficiently close to $(\mu_0,L_1(\mu_0))$, with $E>L_1(\mu)$. Let $P_3^e=P_{3,\mu,E}^e\subset \mathcal{M}_{\mu,E}^e$ be the continuous family of retrograde orbits and $\mathcal{D}= \mathcal{D}_{\mu,E}$ be the $2$-disks for $P_3^e$ as in Theorem \ref{thm_retrograde}, for $(\mu,E)$ sufficiently close to $(\mu_0, L_1(\mu_0))$. Assume that $\mathcal{M}_{\mu_0, L_1(\mu_0)}$ satisfies the conditions given in Theorem \ref{thm_main3}, i.e., the double cover of $P_{3,\mu_0, L_1(\mu_0)}^e$ has index $\geq 3$ and $\P'_0 = \emptyset$.

First, we show that for $(\mu,E)$ sufficiently close to $(\mu_0, L_1(\mu_0))$, with $E>L_1(\mu),$ the regularized component $\mathcal{M}_{\mu,E}^{e\# m}$ does not contain a contractable periodic orbit $P' \neq P_2$ that is unlinked with $P_3^e$, has rotation number $1$ and action $\leq \S(\mathcal{D}, \alpha)$. Fix $N=3$ and let $\mathcal{U}_3\subset \R^2 \times (\R \times \R / 2\pi \Z)$ be the small neighborhood of the singularities $S_\pm(\mu_0)$ corresponding to $l_1(\mu_0)$, as given in Theorem \ref{thm_muRSl1}, so that for $(\mu,E)$ sufficiently close to $(\mu_0,L_1(\mu_0))$ every trajectory in $\mathcal{M}_{\mu,E}^e$ which intersects $\U_3$ has index $>3$. By contradiction, assume there exists $(\mu_i,E_i)\rightarrow (\mu_0,L_1(\mu_0))$, $E_i > L_1(\mu_i)$, and a sequence of contractable periodic orbits $P'_i\subset \mathcal M^e_{\mu_i,E_i} \setminus \U_3$ that are unlinked with $P_3^e$, have rotation number $1$ and action $\leq \S(\mathcal{D}, \alpha)$. Passing to a subsequence, $P'_i$ uniformly converge to a contractible periodic orbit $P'_0\subset \mathcal{M}_{\mu_0,L_1(\mu_0)}^e\setminus \{l_1(\mu)\}$ with rotation number $1$ and action $\leq \S(\mathcal{D}_{\mu_0,L_1(\mu_0)}, \alpha_{\mu_0, L_1(\mu_0)})$. This orbit is geometrically distinct from $P_3^e$ since the index of $(P_3^e)^2$ is $\geq 3$, or equivalently, its rotation number is $>1$. Hence, $P'_0$ is unlinked with $P_3^e$. This contradicts the hypothesis $\P'_0 = \emptyset$ and proves (i).

To prove (ii), we recall that the $2$-disks $\mathcal{D}\subset \mathcal{M}_{\mu,E}^e \setminus \S$ for $P_{3}^e$ have $|d\alpha|$-area uniformly bounded by $\S(\mathcal{D}_{\mu_0, L_1(\mu_0)},\alpha_{\mu_0, L_1(\mu_0)})+1$ for every $(\mu,E)$ sufficiently close to $(\mu_0, L_1(\mu_0))$. From (i), we can assume that $\mathcal{M}_{\mu,E}^e \setminus \S$ has no periodic orbit with rotation number $1$ and action $\leq \S(\mathcal{D},\alpha)$ for $(\mu,E)$ sufficiently close to $(\mu_0, L_1(\mu_0))$. A direct application of Theorem \ref{main1} gives the desired $2-3$ foliation of $\mathcal{M}_{\mu,E}^e$ for $(\mu,E)$ sufficiently close to $(\mu_0, L_1(\mu_0))$, with $E> L_1(\mu)$, which is adapted to the contact form $\alpha=\alpha_{\mu,E}$ and $J'=J'_{\mu,E}$, where $J'$ coincides with $J$ on a neighborhood of $\S$ and is arbitrarily $C^\infty$-close to $J$, see Theorem \ref{thm_alphaJ}. This finishes the proof of (ii).

Now we prove (iii). Corollary \ref{cor_homoclinic} gives at least one homoclinic orbit $\gamma\subset \mathcal{M}_{\mu,E}^e$ to $P_2$ for every $(\mu,E)$ sufficiently close to $(\mu_0,L_1(\mu_0)),$ with $E>L_1(\mu)$. Let us show that if the branch $W^u$ of the unstable manifold of $P_2$ in $\mathcal{M}_{\mu, E}^e$ does not coincide with the branch $W^s$ of the stable manifold of $P_2$ in $\mathcal{M}_{\mu, E}^e$, then there exist infinitely many transverse homoclinic orbits to $P_2$, which implies infinitely many periodic orbits and positive topological entropy. The proof of this fact goes back to Conley \cite{conley} and was further exploited in \cite{dPS2, dPHKS}. Hence, we only briefly address the argument of those references.

Let $(\theta, r) \in \R / 2\pi \Z \times (-\epsilon, \epsilon), \epsilon>0$ small, be suitable real-analytic coordinates on small annuli $A^s, A^u \subset \mathcal{M}_{\mu,E}^e$ transverse to the local branches $W^s_{\text{loc}}, W^u_{\text{loc}}$ of the stable/unstable manifolds $W^s, W^u$ of $P_2$, respectively at $r=0$. The points in $A^s$ whose forward trajectory immediately transit to $\mathcal{M}_{\mu,E}^m$ correspond to $r<0$, and there exists a well-defined local map from $A^s\cap \{r>0\}$ to $A^u\cap \{r>0\}$ given by
$$
l(\theta,r) = (\theta + \Delta(r), r), \quad \Delta(r) = g(r) - h(r) \ln r,  \quad r>0,
$$
where $g(r)$ and $h(r)$ are real-analytic functions defined near $r=0$ and $h(0)>0,$ see \cite[section 4]{dPHKS}. This map describes the infinite twist from $A^s$ to $A^u$ as $r\to 0^+$. We may assume that the homoclinic orbit $\gamma$ locally intersect $A^s$ and $A^u$ precisely at $(\theta, r) = (0,0)$. The flow determines a real-analytic map $G(\theta,r),$ defined near $(\theta,r)=(0,0)$ and satisfying $G(0)=0$, corresponding to the first hit map from $A^u$ to $A^s$ along $\gamma$. The image under $G$ of a small arc in $r=0$ centered $(0,0)$ is a real-analytic curve $\hat \eta_u$ intersecting $r=0$ only at $(0,0)$. This is so since $W^s$ does not coincide with $W^u$ and the flow is real-analytic. Also, we may assume that $\hat \eta_u$ contains an open sub-arc $\eta_u$ contained in $r>0$ and with endpoint in $(0,0)$, and there exists a small arc $\eta_s$ in $r=0$ with an endpoint in $(0,0)$ which does not lie in the image of $G|_{\{r<0\}}$, see scenarios (b) and (c) in \cite[section 4]{dPHKS}, or the special homoclinic orbits in \cite[section 6]{dPS2}.
Now the intersections between $l(\eta_u)$ and $G^{-1}(\eta_s)$ correspond to new homoclinic orbits to $P_2$. In fact, since $G$ is a diffeomorphism onto the image
 and $l(\eta_u)$ twists infinitely many times around the circle $r=0$, there exist infinitely many new homoclinic orbits. From the properties of $l$, it is then a crucial remark from Conley \cite{conley} that such intersections are transverse for $r>0$ sufficiently small. This implies infinitely many transverse homoclinic orbits to $P_2$, also implying infinitely many periodic orbits and positive topological entropy.

 If the branches $W^s, W^u \subset \mathcal{M}_{\mu,E}^e$ coincide, then their intersections with a plane $\mathcal{D}\in \mathcal{F}$ asymptotic to $(P_3^e)^2$ are formed by finitely many embedded circles $C_i, i=1,\ldots, m,$ which bound disjoint closed disks $B_i \subset \mathcal{D}$. Moreover, the $2-3$ foliation in $\mathcal{M}_{\mu,E}^e$ determines a first return area-preserving map $h$ defined in $\mathcal{D} \setminus \cup_i B_i$. The map $h^m$ behaves like the local map $l$ near each $C_i$ in suitable coordinates. Hence, $h^m$ twists infinitely many times around $C_i$. Frank's generalization of the Poincar¨¦-Birkhoff Theorem, see \cite{franks1, franks_erratum}, implies the existence of infinitely many periodic points of $h^m$, which correspond to infinitely many periodic orbits in $\mathcal{M}_{\mu,E}^e$, see more details in \cite[section 7]{dPS2}.  This finishes the proof of (iii).

 The same argument holds for $\mathcal{M}_{\mu, E}^m$, and if both regularized subset $\mathcal{M}_{\mu_0, L_1(\mu_0)}^e, \mathcal{M}_{\mu_0, L_1(\mu_0)}^m$ satisfy the conditions  given in Theorem \ref{thm_main3}, then the union of both $2-3$ foliations for $\mathcal{M}_{\mu, E}^e$ and $ \mathcal{M}_{\mu, E}^m$ gives a $3-2-3$ foliation for $\mathcal{M}_{\mu,E}^{e \# m}$ for every $(\mu,E)$ sufficiently close to $(\mu_0, L_1(\mu_0)),$ with $E> L_1(\mu)$. The proof of Theorem \ref{thm_main3} is complete.

\section{Proof of Theorem \ref{thm_3bp2}}

In this section, we consider the circular planar restricted three-body problem with mass ratio $\mu=1/2$. We prove that the regularized subsets $\mathcal M^e_{\mu,E}$ and $\mathcal M^m_{\mu,E}$ are strictly convex for every $E\leq L_1(\mu)=-2$.

To prove the strict convexity of those subsets of the energy surface, we start with a generalization of the results in \cite{Sa1} by presenting conditions for the Hessian of a non-mechanical Hamiltonian to be definite on the tangent space of the energy surface, see Theorem \ref{thm: convexity formula}.  This condition is equivalent to the positivity of the sectional curvatures at the regular points. We assume that the magnetic field and the potential are decoupled, which is the case of $\mu=1/2$. The criterion formula is further simplified, see Corollary \ref{cor: a special case}. We then use a certain monotonicity argument to convert the problem of lower energies into that of checking the positivity of a smooth function on the regularized Hill region at the critical level, see Lemma \ref{lem: I0>0}. The positivity of the function is then checked at the critical level and a simple local-to-global argument shows that positive curvatures at the regular points imply global convexity.

\subsection{The generalized criterion}\label{subsec: criterion formula}
Assume that the Hamiltonian $H$ has the following form
\begin{equation}\label{equ: magnetic hamiltonian}
H(y,x)=\frac{(y_1+f_1(x))^2}{2}+\frac{(y_2+f_2(x))^2}{2}+V(x),\quad \forall y,x\in \mathbb{R}^2,
\end{equation}
where $F=(f_1,f_2)$ is a smooth magnetic field and $V$ is a smooth potential function. Let $y_F:=y+F$ and $J=\begin{pmatrix} 0&-1\\1&0\end{pmatrix}$. Then
$$
\begin{aligned}
\nabla H&=(y_1+f_1,y_2+f_2,(y+F)\cdot \partial_{x_1}F+\partial_{x_1}V,(y+F)\cdot \partial_{x_2}F+\partial_{x_2}V)\\
&=(y_F,y_F\nabla F+\nabla V),
\end{aligned}
$$
and
$$
\nabla^2H=\begin{pmatrix} I_2 & \nabla F \\
\nabla F ^{T}& M\end{pmatrix},
$$
where $\nabla F=(\partial_{x_1}F^T,\partial_{x_2}F^T)$ and
$$\begin{aligned}
M&:=\frac{1}{2}\nabla^2(f_1^2+f_2^2)+y_1\nabla^2f_1+y_2\nabla^2f_2+\nabla^2V\\
&=\nabla f_1\otimes \nabla f_1+\nabla f_2\otimes \nabla f_2+(y_1+f_1)\nabla^2f_1+(y_2+f_2)\nabla^2f_2+\nabla^2 V.
\end{aligned}$$

Fix the energy surface $\mathcal{M}:=H^{-1}(h)$ and assume that $\mathcal{M}$ has at most a finite number of singularities. Denote by $S$ the singular set of $\mathcal{M}$ and notice that $\mathcal{M} \setminus S$ is connected. We assume that the projection of $\mathcal{M}$ to the $x$-plane is a topological disk $\overline{\mathcal{H}} \subset \R^2$ whose interior $\mathcal{H}$ is formed by points satisfying $y_F\neq0$ and the points on the boundary $\partial \mathcal{H}$ satisfy $y_F=0$. For a regular point $(y,x)\in \mathcal{M}$, with $y_F\neq0\Rightarrow V(x) < h$, one can choose a global tangent frame
$$
\begin{aligned}
X_1&=(-\partial_{y_2}H,\partial_{y_1}H,0,0)=(y_FJ^T,0),\\
X_2&=(-\partial_{x_1}H,-\partial_{x_2}H,\partial_{y_1}H,\partial_{y_2}H)=(-y_F\nabla F-\nabla V,y_F),\\
X_3&=(\partial_{x_2}H,-\partial_{x_1}H,\partial_{y_2}H,-\partial_{y_1}H)=((y_F\nabla F+\nabla V)J,y_FJ).
\end{aligned}
$$
We see that $X_1(y,x) \neq 0$ is tangent to the fiber direction whenever $y_F\neq 0$. If $(y,x)\in \mathcal M$ is a regular point of $H$, satisfying $y_F=0$, then $x\in \partial \mathcal H$, and we can choose the global tangent frame
$$X_{1b}=(1,0,0,0),\quad X_{2b}=(0,1,0,0),\quad X_{3b}=(0,0,V_{2},-V_{1}).$$

Let $W:=\nabla^2H|_{T\mathcal M}$ be the tangent Hessian of $H$ restricted to $T\mathcal M$. In the frame $\{X_1,X_2,X_3\}$, $W$ is represented by the matrix  $(X_i\nabla^2 H X_j^T)_{3\times 3}$ along $\mathcal{M}\cap \{y_F\neq 0\}$. In the frame $\{X_{1b}, X_{2b}, X_{3b}\}$, $W$ is represented by the matrix $(X_{ib}\nabla^2HX_{jb}^T)_{3\times 3}$ along $\mathcal{M}\cap \{y_F=0\}$.

\begin{thm}\label{thm: convexity formula}
The tangent Hessian $W= \nabla^2H|_{T\mathcal M}$  is positive-definite if and only if $$\det(U_W(\theta,x_1,x_2))>0,$$
where $U_W$ is the following $2\times 2$-matrix
\begin{equation}\label{equ: U}
U_W(\theta,x_1,x_2):=r^2(\cos\theta\nabla^2f_1+\sin \theta\nabla^2 f_2)+r\nabla^2V+r^{-1}\nabla V\otimes \nabla V.
\end{equation}
 Here, $r=|y_F|=\sqrt{2(h-V)}$ and $\theta$ is the argument of $y_F$, if $y_F\neq 0$. Notice that $\det U_W=V_{22}V_1^2+V_{11}V_2^2-2V_{12}V_1V_2$ is well-defined for $y_F=0$.  Moreover, if $F$ and $V$ satisfy the following symmetries
\begin{equation}\label{equ: symmetries}
\begin{aligned}
&F(x_1,x_2)=NF(x_1,-x_2)=-NF(-x_1,x_2),\\
&V(x_1,x_2)=V(x_1,-x_2)=V(-x_1,x_2),
\end{aligned}
\end{equation}
with $N=\mathrm{diag}(-1,1)$, then
$$
U_W(\theta,x_1,x_2)=NU_W(\pi-\theta,x_1,-x_2)N
=NU_W(-\theta,-x_1,x_2)N.
$$
\end{thm}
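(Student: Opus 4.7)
The plan is to exploit the product structure of the Hamiltonian: since $H$ is quadratic in $y$ with $\partial_y^2 H = I_2$, the fiber direction $X_1 = (y_F J^T, 0)$ of $T\mathcal{M}$ is tautologically a positive-definite direction for $\nabla^2 H$, with $X_1^T \nabla^2 H X_1 = |y_F|^2 = r^2 > 0$. Hence the positive-definiteness of the $3\times 3$ matrix $W$ reduces, via a Schur complement (equivalently, via $\nabla^2 H$-orthogonalization against $X_1$), to a $2\times 2$ condition on any transverse complement of $\mathbb{R} X_1$ in $T\mathcal{M}$. The most convenient such complement comes from the coordinates $(\theta, x)$ on $\mathcal{M}$ given by $y = r(x) e_\theta - F(x)$ with $r(x) = \sqrt{2(h-V(x))}$: fixing $\theta$ and varying $x$ yields the horizontal lifts
$$E_\xi := \left(-\tfrac{\nabla V\cdot \xi}{r}\,e_\theta - \nabla F\,\xi,\ \xi\right), \qquad \xi \in \mathbb{R}^2.$$

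Two short computations are then needed. First, $X_1$ and $E_\xi$ are $\nabla^2 H$-orthogonal: the cross term $\dot y_\xi \cdot (y_F J^T) + \xi^T (\nabla F)^T (y_F J^T)$ reduces, after the $\nabla F\xi$ contribution of $\dot y_\xi$ cancels against the off-diagonal block of $\nabla^2 H$, to $-r^{-1}(\nabla V\cdot\xi)\,(y_F J^T)\cdot e_\theta$, which vanishes because $y_F J^T \perp y_F = r e_\theta$. Second, substituting $E_\xi$ into the identity $\nabla^2 H((u,v),(u,v)) = |u|^2 + 2 u^T\nabla F v + v^T M v$ and simplifying—the two $r^{-1}(\nabla V\cdot\xi)\,e_\theta^T\nabla F\xi$ terms cancel, and the rank-one part $(\nabla f_1\cdot\xi)^2 + (\nabla f_2\cdot\xi)^2 = |\nabla F\xi|^2$ of $v^T M v$ absorbs the residual $-|\nabla F\xi|^2$ coming from $|u|^2 + 2u^T\nabla F v$—yields exactly
$$E_\xi^T\,\nabla^2 H\,E_\xi = \tfrac{1}{r}\,\xi^T U_W\,\xi.$$
Combining these gives the block-diagonal form $\nabla^2 H|_{T\mathcal{M}} \cong (r^2) \oplus (r^{-1} U_W)$ in the basis $(X_1, E_{\xi_1}, E_{\xi_2})$; since $r > 0$, positive-definiteness of $W$ reduces to positive-definiteness of $U_W$. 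In the regime of the paper, the trace of $U_W$ is controlled at a base point and propagated by continuity along the connected regular locus, so the nondegeneracy condition is captured by $\det U_W > 0$.

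For the symmetries, the hypotheses \eqref{equ: symmetries} translate into parities: $f_1$ is odd in $x_2$ and even in $x_1$; $f_2$ is even in $x_2$ and odd in $x_1$; and $V$ is even in both variables. A term-by-term check then yields $\nabla^2 f_1(x_1,-x_2) = -N\nabla^2 f_1(x_1,x_2)N$, $\nabla^2 f_2(x_1,-x_2) = N\nabla^2 f_2(x_1,x_2)N$, $\nabla^2 V(x_1,-x_2) = N\nabla^2 V(x_1,x_2)N$, and $(\nabla V\otimes \nabla V)(x_1,-x_2) = N(\nabla V\otimes \nabla V)(x_1,x_2)N$. Using $\cos(\pi-\theta) = -\cos\theta$ and $\sin(\pi-\theta) = \sin\theta$, the sign flip in $\nabla^2 f_1$ is exactly cancelled by that of $\cos\theta$, so the leading $r^2$-coefficient is $N$-conjugated correctly; the $r$- and $r^{-1}$-terms are conjugated as recorded; and $r$ itself is unchanged since $V$ is even. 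This establishes the first symmetry, and the second ($x_1 \mapsto -x_1$) follows from a parallel parity check in $x_1$. The main anticipated obstacle is purely bookkeeping—row/column-vector conventions, the meaning of $\nabla V\otimes\nabla V$, and the various factors of $r$—all of which a careful notation declaration at the outset should eliminate entirely.
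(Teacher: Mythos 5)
Your proof is correct, and it takes a genuinely different route from the paper. The paper works entirely in the frame $X_1,X_2,X_3$, computes all six entries $W_{ij}$ by brute force, expands $\det W=|y_F|^4(C_1+|y_F|^2C_2)$, and then verifies by a long algebraic manipulation that $C_1+|y_F|^2C_2$ coincides with $\det U_W$. You instead change the frame: keeping the fiber direction $X_1$ and replacing $X_2,X_3$ by the horizontal lifts $E_\xi$ of the $(\theta,x)$-parametrization, you show $X_1\perp_{\nabla^2H}E_\xi$ (the cross term collapses to a multiple of $(y_FJ^T)\cdot y_F=0$) and that $E_\xi^T\nabla^2H\,E_\xi=r^{-1}\xi^TU_W\xi$ — I checked both identities and they hold, the key cancellations being $Je_\theta\cdot e_\theta=0$ and $(\nabla f_1\cdot\xi)^2+(\nabla f_2\cdot\xi)^2=|\nabla F\xi|^2$. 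This buys two things the paper's computation does not make visible: a pointwise equivalence $W>0\iff U_W>0$ (strictly stronger than the determinant identity), and a geometric explanation of where $U_W$ comes from. What it loses is the explicit formula $\det W=|y_F|^4\det U_W$ and the boundary frame $W_b$ at $r=0$, which the paper extracts from the same computation and reuses later; your frame degenerates as $r\to0$, so if you need the boundary statement you would have to treat it separately as the paper does (the theorem as stated, however, only concerns points where $y_F\neq0$, so this is not a gap in the proof of the statement itself).

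One point to make explicit rather than gesture at: for a symmetric $2\times2$ matrix, $\det U_W>0$ alone only gives definiteness of \emph{some} sign, so the "if" direction of the theorem (in either your formulation or the paper's) needs either the continuity-from-a-base-point argument you sketch, or the observation that one diagonal entry of $U_W$ is positive — which is exactly what the paper verifies in its application (the function $c_t>0$ in Lemma 10.4). Stating that closing step in one line would make the argument airtight; the paper's own proof leaves the same step implicit. Your parity bookkeeping for the symmetry claim ($\nabla^2f_1\mapsto -N\nabla^2f_1N$, etc., with the sign absorbed by $\cos(\pi-\theta)=-\cos\theta$) is correct and matches what the paper asserts without proof.
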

\begin{proof} Assume first that $y_F\neq0$. We identify $W$ with the matrix $(X_i\nabla^2 H X_j^T)_{3\times 3}=:(W_{ij})_{3\times 3}$. A direct computation gives
$$\begin{aligned}
W_{11}&=|y_F|^2,\\
W_{12}&=y_FJ(y_F\nabla F + \nabla V)^T - y_FJ\nabla F y_F^T,\\
W_{13}&=-y_F(y_F\nabla F + \nabla V)^T + y_FJ\nabla F Jy_F^T,\\
W_{22}&=|y_F\nabla F + \nabla V|^2-2(y_F\nabla F + \nabla V)\nabla F y_F^T +
  y_F M y_F^T,\\
W_{23}&=(y_F\nabla F + \nabla V)(J\nabla F +\nabla F J)y_F^T - y_F M Jy_F^T,\\
W_{33}&=|y_F\nabla F + \nabla V|^2 -
  2(y_F\nabla F + \nabla V)J\nabla F Jy_F^T - y_F J M Jy_F^T.
\end{aligned}$$
Then the determinant of $W$ becomes
$$
\det W=|y_F|^4(C_1+|y_F|^2C_2)=:|y_F|^4C_0,
$$
where $y_F=\sqrt{2(h-V)}(\cos \theta,\sin\theta)$. Denoting $(s,t):=(\cos \theta,\sin\theta)$, we obtain
$$
\begin{aligned}
C_1&=(\nabla VJ)M(\nabla VJ)^T-(\nabla f_1( \nabla V J)^T)^2-(\nabla f_2 (\nabla V J)^T)^2\\
&=\sqrt{2(h-V)}\big(s\cdot (\nabla VJ)\nabla^2f_1(\nabla VJ)^T+t\cdot(\nabla VJ)\nabla^2f_2(\nabla VJ)^T\big)\\
&\quad\ +(\nabla VJ)(\nabla f_1\otimes \nabla f_1+\nabla f_2\otimes \nabla f_2+\nabla^2 V)(\nabla VJ)^T\\
&\quad\ -(\nabla f_1( \nabla V J)^T)^2-(\nabla f_2 (\nabla V J)^T)^2,\\
C_2&=\det M-(\nabla f_1J)M(\nabla f_1J)^T-(\nabla f_2J)M(\nabla f_2J)^T+(\nabla f_1 (\nabla f_2J)^T)^2.
\end{aligned}
$$
Since for any smooth function $g:\R^2 \to  \R$, we have
\begin{equation}\label{equ: formula g}
(\nabla gJ)(\nabla f_1\otimes \nabla f_1+\nabla f_2\otimes \nabla f_2)(\nabla gJ)^T=(\nabla f_1( \nabla g J)^T)^2+(\nabla f_2 (\nabla g J)^T)^2,
\end{equation}
we compute
\begin{equation}\label{equ: C1}
\begin{aligned}
C_1&=\sqrt{2(h-V)}\big(s\cdot(\nabla VJ)\nabla^2f_1(\nabla VJ)^T+t\cdot(\nabla VJ)\nabla^2f_2(\nabla VJ)^T\big)\\
&\quad\ +(\nabla VJ)\nabla^2 V(\nabla VJ)^T.
\end{aligned}
\end{equation}
Now we shortly denote $f_{k,ij}=\partial_{x_i,x_j}f_k$, $V_i=\partial_{x_i}V$ and $V_{ij}=\partial_{x_i,x_j}V$. Then we obtain
$$\begin{aligned}
\det M&=2(h-V)\det(s\cdot\nabla^2f_1+t\cdot\nabla^2f_2)\\
&+\sqrt{2(h-V)}\cdot\big((s\cdot f_{1,11}+t\cdot f_{2,11})(V_{22}+|\partial_{x_2}F|^2)\\
&\qquad\qquad\quad \ \ -2(s\cdot f_{1,12}+t\cdot f_{2,12})(V_{12}+\partial_{x_1}F\cdot \partial_{x_2}F)\\
&\qquad\qquad\qquad\ +(s\cdot f_{1,22}+t\cdot f_{2,22})(V_{11}+|\partial_{x_1}F|^2)\big)\\
&+\det(\nabla^2 V+\nabla f_1\otimes \nabla f_1+\nabla f_2\otimes \nabla f_2),
\end{aligned}$$
where
$$\begin{aligned}
&\ \ \ \det(\nabla^2 V+\nabla f_1\otimes \nabla f_1+\nabla f_2\otimes \nabla f_2)\\
&=\det \nabla^2 V+(\nabla f_1J)\nabla^2V(\nabla f_1J)^T+(\nabla f_2J)\nabla^2V(\nabla f_2J)^T+(\nabla f_1(\nabla f_2J)^T)^2.
\end{aligned}$$
Using \eqref{equ: formula g}, we can also compute
$$\begin{aligned}
&\ \ \ -(\nabla f_1J)M(\nabla f_1J)^T-(\nabla f_2J)M(\nabla f_2J)^T+(\nabla f_1 (\nabla f_2J)^T)^2\\
&=-\sqrt{2(h-V)}\big((\nabla f_1J)(s\cdot \nabla^2 f_1+t\cdot \nabla^2 f_2)(\nabla f_1J)^T\\
&\qquad\qquad\qquad\ +(\nabla f_2J)(s\cdot \nabla^2 f_1+t\cdot \nabla^2 f_2)(\nabla f_2J)^T\big)\\
&\ \ \ -(\nabla f_1J)\nabla^2V(\nabla f_1J)^T-(\nabla f_2J)\nabla^2V(\nabla f_2J)^T-(\nabla f_1(\nabla f_2J)^T)^2.
\end{aligned}$$
The computation above gives
$$\begin{aligned}
C_2&=2(h-V)\det(s\cdot\nabla^2f_1+t\cdot\nabla^2f_2)\\
&+\sqrt{2(h-V)}\big(s\cdot(f_{1,11}V_{22}+ f_{1,22}V_{11}-2f_{1,12}V_{12})\\
&\qquad\qquad\quad\ \ +t\cdot(f_{2,11}V_{22}+f_{2,22}V_{11}-2f_{2,12}V_{12})\big)\\
&+\det \nabla^2V.
\end{aligned}$$
Therefore, we obtain
\begin{equation}\label{equ: C0}
\begin{aligned}
C_0&=4(h-V)^2\det(s\cdot\nabla^2f_1+t\cdot\nabla^2f_2)\\
&+\sqrt{2(h-V)}\big(2(h-V)\big(s\cdot(f_{1,11}V_{22}+f_{1,22}V_{11}-2f_{1,12}V_{12})\\
&\qquad\qquad\qquad\qquad\qquad +t\cdot(f_{2,11}V_{22}+f_{2,22}V_{11}-2f_{2,12}V_{12})\big)\\
&\qquad\qquad\qquad+s\cdot(\nabla VJ)\nabla^2f_1(\nabla VJ)^T+t\cdot(\nabla VJ)\nabla^2f_2(\nabla VJ)^T\big)\\
&+2(h-V)\det \nabla^2V+(\nabla VJ)\nabla^2 V(\nabla VJ)^T\\
&:=4(h-V)^2A_2+\sqrt{2(h-V)}(2(h-V)A_{13}+A_{11})+A_0,
\end{aligned}
\end{equation}
where
$$\begin{aligned}
A_2&:=s^2\cdot(f_{1,11}f_{1,22}-f_{1,12}^2)+t^2\cdot(f_{2,11}f_{2,22}-f_{2,12}^2)\\
&\quad\ +st\cdot(f_{1,11}f_{2,22}+f_{2,11}f_{1,22}-2f_{1,12}f_{2,12}),\\
A_{13}&:=s\cdot(f_{1,11}V_{22}+f_{1,22}V_{11}-2f_{1,12}V_{12})\\
&\quad\ +t\cdot(f_{2,11}V_{22}+f_{2,22}V_{11}-2f_{2,12}V_{12}),\\
A_{11}&:=s\cdot(f_{1,22}V_1^2+f_{1,11}V_2^2-2f_{1,12}V_1V_2)\\
&\quad\ +t\cdot(f_{2,22}V_1^2+f_{2,11}V_2^2-2f_{2,12}V_1V_2),\\
A_0&:=2(h-V)(V_{11}V_{22}-V_{12}^2)+V_{22}V_1^2+V_{11}V_2^2-2V_{12}V_1V_2.
\end{aligned}$$
Note that $C_0$ admits the same sign as $\det W$, which only depends on $V$ and $\nabla^2 f_i, i=1,2$. Denoting $r=\sqrt{2(h-V)}$, one can check that $C_0=D_1+D_2$, where
$$\begin{aligned}
D_1&=-4(h-V)^2(s\cdot f_{1,12}+t\cdot f_{2,12})^2-2((h-V)V_{12}^2+V_{12}V_1V_2)\\
&\quad\ -2\sqrt{h-V}(2(h-V)V_{12}+V_1V_2)(s\cdot f_{1,12}+t\cdot f_{2,12})\\
&=-\left(2(h-V)(s\cdot f_{1,12}+t\cdot f_{2,12})+\frac{2(h-V)V_{12}+V_1V_2}{\sqrt{2(h-V)}}\right)^2+\frac{V_1^2V_2^2}{2(h-V)}.
\end{aligned}$$
and
$$\begin{aligned}
D_2&=4(h-V)^2(s^2\cdot f_{1,11}f_{1,22}+t^2\cdot f_{2,11}f_{2,22}+st\cdot (f_{1,11}f_{2,22}+f_{2,11}f_{1,22}))\\
&+\sqrt{2(h-V)}\big\{2(h-V)\big(s\cdot (f_{1,11} V_{22}+f_{1,22} V_{11})+t\cdot (f_{2,11} V_{22}+f_{2,22} V_{11})\big)\\
&\qquad\qquad\qquad\ +s\cdot(f_{1,22}V_{1}^2+f_{1,11}V_{2}^2)+t\cdot(f_{2,22}V_{1}^2+f_{2,11}V_{2}^2)\big\}\\
&+2(h-V)V_{11}V_{22}+V_{22}V_{1}^2+V_{11}V_{2}^2\\
&=\left(2(h-V)(s\cdot f_{1,11}+t\cdot f_{2,11})+\frac{2(h-V)V_{11}+V_{1}^2}{\sqrt{2(h-V)}}\right)\cdot\\
&\qquad \left(2(h-V)(s\cdot f_{1,22}+t\cdot f_{2,22})+\frac{2(h-V)V_{22}+V_{2}^2}{\sqrt{2(h-V)}}\right)-\frac{V_{1}^2V_{2}^2}{2(h-V)}.
\end{aligned}$$

We thus obtain
$$C_0=D_1+D_2= u_{11}(s,t)u_{22}(s,t)-u_{12}(s,t)^2=\det U_W.$$
where
$U_W=(u_{ij})_{2\times 2}$ and $u_{ij}(s,t)=r^2(s\cdot f_{1,ij}+t\cdot f_{2,ij})+rV_{ij}+r^{-1}V_i V_j$.

If $(y,x)$ is such that $x\in\partial \mathcal{H}$, i.e. $y_F=0$, then $X_1$ vanishes. Hence, as explained before, we consider the simpler frame
$$
X_{1b}=(1,0,0,0),\quad X_{2b}=(0,1,0,0),\quad X_{3b}=(0,0,V_2,-V_1).
$$
The tangent Hessian in this frame gives the following $3\times 3$ matrix
$$
W_b=(X_{ib}\nabla^2H X_{jb}^T)_{3\times 3}=\begin{pmatrix}
1 & 0 & \nabla f_1(\nabla VJ)^T\\
0 & 1 & \nabla f_2(\nabla VJ)^T\\
\nabla f_1(\nabla VJ)^T & \nabla f_1(\nabla VJ)^T & (\nabla VJ)M(\nabla VJ)^T
\end{pmatrix}
$$
Notice that $W_b$ is positive-definite if and only if $\det W_b>0$. A direct computation shows that
$$
\det W_b=(\nabla VJ)M(\nabla VJ)^T-(\nabla f_1( \nabla V J)^T)^2-(\nabla f_2 (\nabla V J)^T)^2=C_1,
$$
where $C_1$ was obtained above. Since $2(h-V)=0$ along $\partial \mathcal{H}$, we further obtain from \eqref{equ: C1}
$$\det W_b=(\nabla VJ)\nabla^2 V(\nabla VJ)^T=V_{22}V_{1}^2+V_{11}V_{2}^2-2V_{12}V_{1}V_{2},$$
which is independent of the magnetic field $F$.

We conclude in both cases $y_F\neq 0$ and $y_F=0$ that $W$ is positive-definite if and only if
\begin{equation}\label{equ: criterion formula}
C_0|_{\mathcal{M}}=C_1+|y_F|^2C_2=\det U_W|_{\mathcal M}>0,
\end{equation}
which also implies that
$C_0|_{\partial \mathcal{H}}=C_1|_{\partial \mathcal H}=\det W_b|_{\partial \mathcal{H}}>0$. Hence, the theorem follows.
\end{proof}

\begin{rem}\label{rem: decouple}
If $F$ and $V$ are decoupled, then $D_1=0$ and $C_0=u_{11}(s,t)u_{22}(s,t)-r^{-2}V_1^2V_2^2$.
If $F$ vanishes, then $C_0=A_0$ and the criterion boils down to the mechanical case considered in \cite{Sa1}.
\end{rem}

From \eqref{equ: U}, we observe that $\nabla V\otimes \nabla V$ admits two eigenvalues $\{\lambda_1=|\nabla V|^2,\lambda_2=0\}$, which correspond to the eigenvectors $\xi_1=\nabla V$ and $\xi_2=\nabla VJ=(V_2,-V_1)$, respectively. If
$$U_1:=r(s \nabla^2f_1+t\nabla^2f_2)+\nabla^2 V>0, $$
then $U_W$ is positive definite for $(y,x)\in \mathcal{M}$ with $x\in \mathcal{H}\setminus \partial \mathcal H$. Moreover, $\det U_W>0$ on $\mathcal M$. If $\nabla^2 F=0$, then the strict convexity of $V$ implies $\det U_W>0$.

If $F$ and $V$ are decoupled, then $U_1=\mathrm{diag}(c(x_1,x_2),d(x_1,x_2))$, where
\begin{equation}\label{equ: c and d}
\begin{aligned}
c(x_1,x_2)&:=r(x_1,x_2)(sf_{1,11}(x_1)+tf_{2,11}(x_1))+V_{11}(x_1),\\
d(x_1,x_2)&:=r(x_1,x_2)(sf_{1,22}(x_2)+tf_{2,22}(x_2))+V_{22}(x_2).
\end{aligned}
\end{equation}
If $c,d>0$, then $U_1$ is positive definite and $\det U_W>0$ naturally holds. In general, we have
$$\det U_W=r^{-2}\big((V_1^2+r^2c)(V_2^2+r^2d)-(V_1V_2)^2\big)=dV_1^2+cV_2^2+r^2cd.$$
Then $\mathrm{det}U_W>0$ is equivalent to $dV_1^2+cV_2^2+r^2cd>0$. In summary, we have the following corollary.

\begin{cor}\label{cor: a special case}
The following properties hold:
\begin{itemize}
\item[(i)] If $U_1:=r(s \nabla^2f_1+t\nabla^2f_2)+\nabla^2 V>0,$ then $\det U_W>0$ on $\mathcal M$. In particular, if $\nabla^2 F=0$, then $\nabla^2V|_{\mathcal H}>0$ implies $\det U_W>0$ on such points.
\item[(ii)] If $F$ and $V$ are decoupled, then
\begin{equation*}\label{equ: det U}
\det U_W=dV_1^2+cV_2^2+r^2cd.
\end{equation*}
where $c=r(sf_{1,11}+tf_{2,11})+V_{11}$ and $d=r(sf_{1,22}+tf_{2,22})+V_{22}$.
\end{itemize}
\end{cor}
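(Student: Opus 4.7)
The corollary breaks into two essentially independent computational statements, both of which follow directly from Theorem \ref{thm: convexity formula} and the explicit expression \eqref{equ: U} for $U_W$. The plan is to rewrite \eqref{equ: U} in the suggestive form
\[
U_W \;=\; r\bigl(r(s\nabla^2 f_1+t\nabla^2 f_2)+\nabla^2 V\bigr)+r^{-1}\nabla V\otimes\nabla V \;=\; rU_1+r^{-1}\nabla V\otimes\nabla V,
\]
which is valid at every point of $\mathcal{M}$ projecting to $\mathcal{H}$ (where $r=\sqrt{2(h-V)}>0$).

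For part (i), the symmetric tensor $\nabla V\otimes \nabla V$ is manifestly rank-one positive semidefinite: it has eigenvalues $|\nabla V|^2\geq 0$ and $0$, with eigenvectors $\nabla V$ and $\nabla V\, J$. Consequently $r^{-1}\nabla V\otimes \nabla V$ is positive semidefinite on $\mathcal{H}$, and if $U_1>0$ then $rU_1$ is strictly positive definite, so the sum $U_W$ is positive definite. The special case $\nabla^2 F=0$ is immediate: both Hessians $\nabla^2 f_1$ and $\nabla^2 f_2$ vanish, so $U_1=\nabla^2 V$, and the hypothesis $\nabla^2 V|_{\mathcal{H}}>0$ gives $U_W>0$ and hence $\det U_W>0$ on such points, as required.

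For part (ii), I would read off the entries $u_{ij}=(U_W)_{ij}=r^2(sf_{1,ij}+tf_{2,ij})+rV_{ij}+r^{-1}V_iV_j$ and use that, in the decoupled regime, the mixed partials $f_{1,12},f_{2,12}$ and $V_{12}$ all vanish (this is exactly the condition ensuring $D_1=0$ in Remark \ref{rem: decouple}, consistent with the splitting $F(x)=(f_1(x_2),f_2(x_1))$ and $V$ split into $x_1$- and $x_2$-dependent pieces appearing in the three-body setup). Under these vanishings one gets
\[
u_{11}=rc+r^{-1}V_1^2,\qquad u_{22}=rd+r^{-1}V_2^2,\qquad u_{12}=r^{-1}V_1V_2,
\]
with $c,d$ as in \eqref{equ: c and d}. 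A one-line expansion
\[
\det U_W=u_{11}u_{22}-u_{12}^2=(rc+r^{-1}V_1^2)(rd+r^{-1}V_2^2)-r^{-2}V_1^2V_2^2 = r^2cd+cV_2^2+dV_1^2
\]
yields the announced identity, since the $r^{-2}V_1^2V_2^2$ terms cancel.

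There is no real obstacle here: once the decomposition $U_W=rU_1+r^{-1}\nabla V\otimes\nabla V$ is isolated, (i) is an elementary observation about rank-one perturbations of positive definite matrices, and (ii) is a direct algebraic expansion exploiting the vanishing of the mixed partial derivatives. The only point deserving care is to verify explicitly that the hypotheses on $F$ and $V$ are precisely what makes all three quantities $f_{1,12}$, $f_{2,12}$, and $V_{12}$ vanish, so that the $u_{12}$-term reduces cleanly to $r^{-1}V_1V_2$ and the $V_1^2V_2^2$ contributions cancel.
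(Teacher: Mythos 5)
Your proposal is correct and follows essentially the same route as the paper: the paper likewise isolates the decomposition $U_W=rU_1+r^{-1}\nabla V\otimes\nabla V$, uses that $\nabla V\otimes\nabla V$ is rank-one positive semidefinite for part (i), and for part (ii) notes that decoupling makes $U_1=\mathrm{diag}(c,d)$ so that $\det U_W=r^{-2}\bigl((V_1^2+r^2c)(V_2^2+r^2d)-(V_1V_2)^2\bigr)=dV_1^2+cV_2^2+r^2cd$. No gaps.
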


\subsection{Elliptic-hyperbolic coordinates}\label{sec: elliptic coordinates and strict convexity}
We study the convexity of the critical level for $\mu=1/2$. From the generalized criterion proved in the previous subsection, we have reduced the local convexity problem to that of checking the positivity of a certain smooth function on the Hill region. In this section we prove that this function is positive for the critical energy $L_1(1/2)=-2$ and also for every lower values of energy.

Recall that the Hamiltonian of the circular restricted three-body problem as
$$
H_\mu(p,q)=\frac{1}{2}((p_1-q_2)^2+(p_2+q_1)^2)-\frac{\mu}{|q-(1-\mu)|}-\frac{1-\mu}{|q+\mu|}-\frac{1}{2}|q|^2.
$$
Shifting the $q_1$-variable in a way that the primaries stay at $\pm \frac{1}{2}$, and replacing $p_2$ by $p_2-1/2+\mu$, we obtain the equivalent Hamiltonian
$$\begin{aligned}
\bar H_\mu(p,q)=\frac{1}{2}((p_1-q_2)^2+(p_2+q_1)^2)-\frac{\mu}{|q-\frac{1}{2}|}-\frac{1-\mu}{|q+\frac{1}{2}|}-\frac{1}{2}\big|\frac{1}{2}-\mu+q\big|^2.
\end{aligned}$$
Consider the symplectic transformation given by elliptic coordinates
$$p_1=a_1y_1+b_1y_2,\quad p_2=a_2y_1+b_2y_2,\quad q_1=\frac{1}{2}\cosh x_1\cos x_2,\quad q_2=\frac{1}{2}\sinh x_1\sin x_2,$$
where $a_i,b_i, i=1,2,$ satisfy
$$\begin{aligned}
\begin{pmatrix}
a_1 & b_1 \\ a_2 & b_2
\end{pmatrix}&=
2\begin{pmatrix}
\sinh x_1 \cos x_2 & \cosh x_1 \sin x_2 \\
-\cosh x_1 \sin x_2 & \sinh x_1 \cos x_2
\end{pmatrix}^{-1}\\
&=\frac{2}{\cosh^2 x_1-\cos^2 x_2}
\begin{pmatrix}
\sinh x_1 \cos x_2 & -\cosh x_1 \sin x_2 \\
\cosh x_1 \sin x_2 & \sinh x_1 \cos x_2
\end{pmatrix}.
\end{aligned}$$
The regularized Hamiltonian $\hat H = \hat H_{\mu, h}$ in coordinates $(y,x)\in \R^2 \times (\R \times \R/2\pi \Z)$ becomes
$$
\begin{aligned}
\hat H(y,x)&=\frac{1}{4}(\cosh^2 x_1-\cos^2 x_2)(\bar H_\mu(p(y,x),q(x))-h)\\
&=\frac{1}{2}\left(\left(y_1+\frac{\sin 2x_2}{8}\right)^2+\left(y_2+\frac{\sinh 2x_1}{8}\right)^2\right)+V(x)+(1-2\mu)\hat V(x),
\end{aligned}
$$
where
$$
\begin{aligned}
V(x)&=-\frac{h\cosh^2x_1}{4}-\frac{\cosh x_1}{2}-\frac{\sinh^2(2x_1)}{128}+\frac{h\cos^2x_2}{4}-\frac{\sin^2(2x_2)}{128},\\
\hat V(x)&=\frac{\cos x_2}{2}-\frac{1}{16}(\frac{1}{2}-\mu+\cosh x_1 \cos x_2)(\cosh^2 x_1-\cos^2 x_2).
\end{aligned}$$
Notice that $\hat H$ smoothly depends on $\mu$ and $h$, and the regularized dynamics of $\bar H_\mu^{-1}(h)$ corresponds to the dynamics of $\hat H^{-1}(0)$ under a double covering map that identifies $(y,x)\sim -(y,x)$.

\subsection{Strict convexity} From now on we fix $\mu=1/2$. Then
$$
\begin{aligned}
\hat H(y,x)=\frac{1}{2}\left((y_1+f_1(x))^2+(y_2+f_2(x))^2\right)+V(x).
\end{aligned}
$$
The magnetic field is denoted $F:=(f_1,f_2)=\frac{1}{8}(\sin(2x_2),\sinh(2x_1))$ and the potential function  is decoupled
$$
V(x) = W_1(x_1) + W_2(x_2),
$$
where
$$
W_{1}(x_1)=-\frac{h\cosh^2x_1}{4}-\frac{\cosh x_1}{2}-\frac{\sinh^2(2x_1)}{128},\quad W_{2}(x_2)=\frac{h\cos^2x_2}{4}-\frac{\sin^2(2x_2)}{128}.
$$

For every fixed $h\leq -2$, a simple analysis shows that the critical points of $V$ are given by two nondegenerate minima at $(0,0)$ and $(0,\pi)$ with value $-\frac{1}{2}$, two saddle-points at $(0,\pm \pi/2)$ with value $\hat v_0=-h/4-1/2 \geq 0$, two saddles at $(\hat x_1,0)$ and $(\hat x_1, \pi)$ with value $\hat v_1> \hat v_0>0$ and two maxima $(\hat x_1, \pm \pi/2)$ with value $\hat v_2 > \hat v_1>0$. Notice that $V_1 = \partial_{x_1}W_1>0$ on $(0,\hat x_1)$. Hence, for $h=-2$, the Hill region $\mathcal{H} = \mathcal{H}^{-2}$ of the bounded subset of the regularized critical level restricted to $\R \times [-\pi/2,\pi/2]$ is bounded by the graphs of $x_1 = \pm f(x_2)$, where $f:[-\pi/2,\pi/2]\to [0,\bar x_1]$ is a smooth function on $[-\pi/2,\pi/2]$, and $\bar x_1 =f(0) < \hat x_1$ is the maximum of $f$, see Figure \ref{fig:critical level}. The boundary of $\mathcal{H}$ has singularities at $(0,\pm \pi/2)$. This subset corresponds to the component around the primary at $q=1/2$. Since for $\mu=\frac{1}{2}$ the components of the energy surface around the primaries are symmetric and thus admit the same dynamics, we only consider this component. Now observe that $\partial_h V = -\frac{1}{4}(\cosh^2 x_1-\cos^2 x_2)<0$ for every $(x_1,x_2)\in [-\bar x_1, \bar x_1] \times [-\pi/2, \pi/2] \setminus \{(0,0)\}$, we see that for $h<-2$, the Hill region $\mathcal{H}^h$ is an embedded disk contained in the interior of $\mathcal{H}^{-2}$ and, more generally,  $\mathcal{H}^{h_2} \subset \text{int}(\mathcal{H}^{h_1})$ for every $h_2 < h_1\leq -2$.

Due to the symmetry, we restrict the Hill region $\mathcal{H}= \mathcal{H}^h$ to the first quadrant $x_1, x_2\geq 0$. We denote this subset of $\mathcal{H}$ by $\mathcal{H}_1=\mathcal{H}_1^h\subset [0,\bar x_1] \times [0,\pi/2]$, where $\bar x_1(h):=\max \{x_1:(x_1,x_2)\in \mathcal{H}\}$ satisfies $V(\bar x_1,0)=0$. Hence $0<\bar x_1(h_2) \leq \bar x_1(h_1), \forall h_2<h_1\leq -2$. Notice that $W_{1}(x_1)\geq0$, $W_{2}(x_2)\leq 0$ for every $(x_1, x_2) \in[0,\bar x_1]\times [0,\frac{\pi}{2}]$. Also, $V_{1}=\partial_{x_1} W_1, V_{2}=\partial_{x_2}W_2> 0$ in the interior $[0,\bar x_1]\times [0,\frac{\pi}{2}]$. Let $r=r(x):=\sqrt{-2V(x)}$ be defined for $x\in \mathcal{H}$. Then $r$ decreases with both $x_1$ and $x_2$.

Let $(s_1,s_2):=(\cosh x_1,\cos x_2)$. Then we write $W_{1},W_{2}$ as functions of $(s_1,s_2)$
$$
W_{1}(s_1)=-\frac{s_1}{32} \left(16 + (-1 + 8 h) s_1 + s_1^3\right),\quad W_{2}(s_2)=\frac{s_2^2}{32}((-1 + 8 h) + s_2^2).
$$
In coordinates $(s_1,s_2)$, $\mathcal H_1$ lies in $[1,\bar s_1]\times [0,1]$, where $\bar s_1:=\cosh \bar x_1$.

If $h=-2$, then $\mathcal{H}_1$ contains the saddle-point $S_+:=(0, \pi/2)$ corresponding to $l_1(1/2)$. Moreover, $\bar x_1\approx 1.19954$ and $\bar s_1\approx 1.80996<1.9$ since $V(s_1=1.9, s_2=1)=19379/320000>0$.
\begin{figure}[hbpt]
    \centering
    \includegraphics[width=0.4\linewidth]{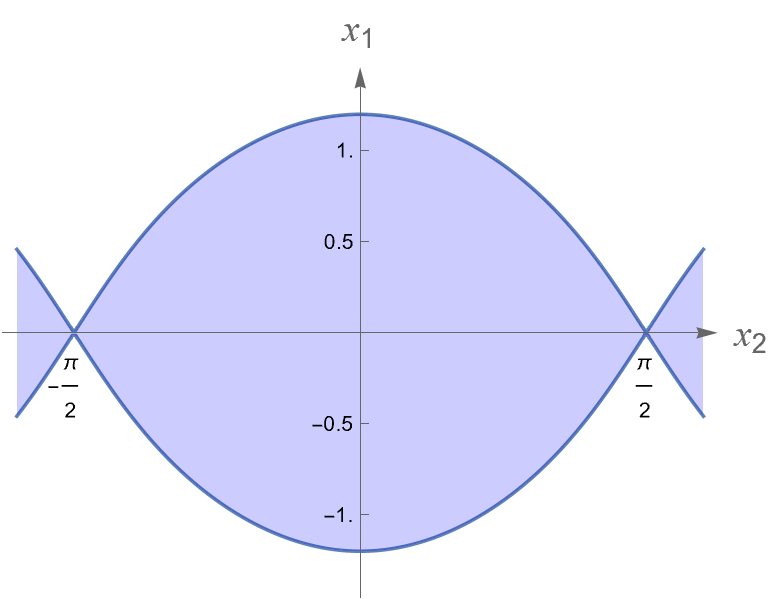}
    \caption{The Hill region $\mathcal{H} = \mathcal{H}^{-2}$ of the subset $\mathcal{M}= \mathcal{M}^{-2}$ of the regularized critical energy surface. }
    \label{fig:critical level}
\end{figure}

Let $\mathcal M=\mathcal{M}^h$ be the subset of $\hat H^{-1}(0)$ projecting to $\mathcal{H}^h$ for every $h\leq -2$.  We compute
\begin{equation}\label{equ: data of Copenhagen problem}
\begin{aligned}
r^2&=\frac{h}{2}(\cosh^2x_1-\cos^2x_2)+\cosh x_1+\frac{1}{64}(\sinh^2(2x_1)+\sin^2(2x_2)),\\
\nabla^2f_1&=\mathrm{diag}\big(0, -\sin x_2\cos x_2\big),\quad \nabla^2f_2=\mathrm{diag}\big(\sinh x_1 \cosh x_1, 0\big),\\
\nabla V &=-\frac{1}{32}\begin{pmatrix}
(16+\cosh x_1+16h\cosh x_1+\cosh(3x_1))\sinh x_1\\
(8h+\cos(2x_2))\sin(2x_2)
\end{pmatrix},\\
\nabla^2V&=-\frac{1}{16}\mathrm{diag} \big(
8\cosh x_1+8h\cosh(2x_1)+\cosh(4x_1), 8h\cos(2x_2)+\cos(4x_2)\big).
\end{aligned}
\end{equation}
Let $y_F:=y+F=re^{i\theta}$ and $(s,t):=(\cos \theta,\sin \theta)$. By Theorem \ref{thm: convexity formula}, the  local convexity of $\mathcal M$ is equivalent to $\mathrm{det} U_W|_{\mathcal M}>0$. The symmetry condition \eqref{equ: symmetries} implies that the matrix
$$U_W(\theta,x_1,x_2)=r^2(s\nabla^2f_1+t\nabla^2 f_2)+r\nabla^2V+r^{-1}\nabla V\otimes \nabla V,
$$
admits the following symmetries
\begin{equation}\label{equ: symmetry}
U_W(\theta,x_1,x_2)=U_W(\pi-\theta,x_1,-x_2)\quad \text{and}\quad U_W(\theta,x_1,x_2)=U_W(-\theta,-x_1,x_2).
\end{equation}
Therefore, it suffices to check the local convexity in $\mathcal M_1:=\{(\theta,x_1,x_2)\in \mathcal M: x\in \mathcal H_1\}$.

Our main goal is to prove the following theorem on the local convexity of $\mathcal{M}$.

\begin{thm}\label{thm: Copenhagen problem}
The following properties hold:
\begin{itemize}
\item[(i)] If $h=-2$, then $\det U_W|_{\mathcal M\setminus S}>0$, where $S\subset \mathcal{M}$ is formed by the singularities $S_\pm$ corresponding to $l_1(1/2)$, and projecting to $\bar S_\pm=(0,\pm\pi/2)$ in the $x$-plane, respectively.

\item[(ii)] If $h<-2$, then $\det U_W|_{\mathcal M}>0$.

\end{itemize}
\end{thm}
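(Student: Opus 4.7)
The strategy, anticipated in the introduction to this section, is to reduce the inequality $\det U_W > 0$ to a semi-algebraic assertion on the Hill region at the critical level, verify it there, and propagate it to lower energies by monotonicity. Since the magnetic field $F = (f_1, f_2) = \tfrac18(\sin 2x_2, \sinh 2x_1)$ is decoupled from the potential $V = W_1(x_1) + W_2(x_2)$, Corollary \ref{cor: a special case}(ii) gives
$$\det U_W = dV_1^2 + cV_2^2 + r^2 cd, \qquad c = t\,r\sinh x_1\cosh x_1 + V_{11},\quad d = -s\,r\sin x_2\cos x_2 + V_{22},$$
with $(s,t) = (\cos\theta, \sin\theta)$ parametrizing the fiber $r\,e^{i\theta} = y + F$, and $r^2 = -2V$ on $\mathcal M$. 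The symmetries \eqref{equ: symmetry} reduce the problem to $\mathcal M_1$, the subset of $\mathcal M$ projecting to the first-quadrant slice $\mathcal H_1$, where the data \eqref{equ: data of Copenhagen problem} yield $V_1, V_2, \sinh x_1\cosh x_1, \sin x_2 \cos x_2, r \geq 0$ for every $h \leq -2$.

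At the critical level $h = -2$, the plan is to show that the fiberwise minimum
$$I_0(x) := \min_{\theta \in \R/2\pi\Z} \det U_W(\theta, x)$$
is strictly positive on $\mathcal H_1 \setminus \{\bar S_+\}$, where $\bar S_+ = (0, \pi/2)$ projects from the singularity $l_1(1/2)$. Since $\det U_W$ is affine in each of $s$ and $t$ apart from a single $st$-coupling, solving $\partial_\theta \det U_W = 0$ gives an algebraic equation whose solutions yield an explicit semi-algebraic formula for $I_0$. Passing to the polynomial coordinates $(s_1, s_2) = (\cosh x_1, \cos x_2) \in [1, \bar s_1] \times [0, 1]$ turns $V$, $V_i$, $V_{ii}$, and $r^2$ into polynomials of low degree in $(s_1, s_2)$, so the positivity of $I_0$ becomes a polynomial inequality on a compact rectangle, to be verified by factoring, sign analysis, or a sum-of-squares decomposition. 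Near the saddle $\bar S_+$, where $V_1, V_2$, and $r$ all vanish simultaneously, the quadratic model from Section \ref{subsec: holo two sphere near l1} controls the leading-order behaviour of $\det U_W$ and yields positivity in a punctured neighbourhood of $\bar S_+$.

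Statement (ii) is obtained by propagating positivity at $h = -2$ to $h < -2$ via a monotonicity argument. Differentiating \eqref{equ: data of Copenhagen problem} in $h$ at fixed $x \in \mathcal H_1^h \subset \mathcal H_1^{-2}$ yields $\partial_h r^2 = \tfrac12(\cos^2 x_2 - \cosh^2 x_1) \leq 0$, $\partial_h V_{11} = -\tfrac12 \cosh(2 x_1) < 0$, $\partial_h V_{22} = -\tfrac12 \cos(2 x_2)$, and sign-definite expressions for $\partial_h V_i$. Combined with the fact that the Hill region shrinks strictly with $h$ and thus excludes the saddle $\bar S_+$ for every $h < -2$, these identities should produce a dominating estimate $\det U_W(\theta, x; h) \geq \det U_W(\theta, x; -2)$ for every $(x, \theta) \in \mathcal M^h$; the conclusion then follows by taking the fiberwise minimum. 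The main obstacle will be the verification at $h = -2$: after extracting the extremal $\theta$'s and substituting back, the resulting polynomial inequality on $[1, \bar s_1] \times [0, 1]$ is unlikely to factor cleanly, so a case analysis by subregions—governed by which stationary $\theta$ realizes the minimum—together with a careful matching of the local saddle-center asymptotics at $\bar S_+$ to global estimates on the rest of $\mathcal H_1$, will be the technically delicate ingredient.
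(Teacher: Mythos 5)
Your overall architecture matches the paper's: use Corollary \ref{cor: a special case}(ii) to write $\det U_W = dV_1^2 + cV_2^2 + r^2cd$, reduce by symmetry to $\mathcal H_1$, verify positivity at $h=-2$ in the polynomial variables $(s_1,s_2)=(\cosh x_1,\cos x_2)$, and propagate to $h<-2$ by monotonicity in $h$. But the monotonicity step, which you treat as routine, is where the proposal breaks down. First, a sign error: $r^2=\tfrac h2(\cosh^2x_1-\cos^2x_2)+\cdots$, so $\partial_h r^2=\tfrac12(\cosh^2x_1-\cos^2x_2)\ge 0$, not $\le 0$ as you wrote. More importantly, $\partial_h V_{22}=-\tfrac12\cos 2x_2$ changes sign at $x_2=\pi/4$, so the quantity $d$ (equivalently $\hat D$ in the paper) is \emph{not} monotone in $h$ with a single sign on the region where it is negative; consequently the pointwise estimate $\det U_W(\theta,x;h)\ge\det U_W(\theta,x;-2)$ does not follow from the derivatives you list. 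The paper's proof must split $\{\hat D\le 0\}$ into the subregions $\Omega_\pm^h$ according to the sign of $\partial_h\hat D$, and on $\Omega_+^h$ — where term-by-term monotonicity fails — it needs the separate, nontrivial inequality $\hat W_0(s_1,s_2,-2)<0$ to conclude $\partial_h I_0^h\le 0$; it then propagates positivity not from $h=-2$ directly but from the intermediate energy $h_*$ at which $\hat D$ first becomes nonnegative. None of this is visible in your sketch, and without it the propagation step is unproven.

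Two lesser divergences. For the fiber direction, the paper does not extremize over $\theta$ exactly (solving $\partial_\theta\det U_W=0$ would produce an unwieldy semi-algebraic expression); it instead bounds $c_t\ge c_{-1}>0$ (Lemma \ref{lem: c>0}) and $d_s\ge\hat D(x_2)=d_1(0,x_2)$, replacing the fiberwise minimum by a $\theta$- and $x_1$-decoupled lower bound $I_0=V_2^2+\hat D\,(V_1^2/c_{-1}+r^2)$; this relaxation is precisely what makes the polynomial verification (Lemmas \ref{lem: I0>0}, \ref{lem: estimates of fi}, \ref{lem: E2>0}) feasible. At the corner $\bar S_+$, the paper does not invoke the linearized saddle-center model: it blows up the degeneracy via the substitution $s_1=1+c^2s_2^2$, extracts the factor $s_2^4$ from $64c_{-1}I_0$, and checks that the cofactor $E_1$ stays positive up to the corner — a global algebraic argument rather than a matching of local asymptotics to global estimates.
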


From \eqref{equ: data of Copenhagen problem}, we see that $f_{1,11}=f_{2,22}=0$ and $-f_{1,22},f_{2,11}\geq 0$ on $\mathcal M_1$. The quantities $c,d$ in \eqref{equ: c and d} are reduced to
$$
c_t(x):=r\cdot tf_{2,11}(x_1)+V_{11}(x_1),\quad
d_s(x):=r\cdot sf_{1,22}(x_2)+V_{22}(x_2).
$$
Hence, from Corollary \ref{cor: a special case}-(ii), we have
\begin{equation}\label{equ: det U1}
\det U_W=d_s(V_{1}^2+r^2c_t)+c_tV_{2}^2=c_t\big(V_{2}^2+d_s(V_{1}^2/c_t+r^2)\big).
\end{equation}

\begin{rem} \label{rem_non-convexity}
For every $0< \mu <1$, $\mu \neq \frac{1}{2}$, it is possible to show that at least one of the critical subsets $\mathcal M^e_{\mu,L_1(\mu)}$ and $\mathcal M^m_{\mu,L_1(\mu)}$ is not strictly convex in elliptic coordinates. Recall that the regularized Hamiltonian is $\hat H=\frac{1}{2}((y_1+f_1(x))^2+(y_2+f_2(x))^2)+V_\mu(x)$, where $V_\mu=V+(1-2\mu)\hat V$ and $f_1,f_2,V$ do not depend on $\mu$. We compute the determinant of $U_W=r^2(s\nabla^2f_1+t\nabla^2f_2)+r\nabla^2V_\mu+r^{-1}\nabla V_\mu\otimes \nabla V_\mu$ as
$$\det U_W=r^2(c_{\mu,t}d_{\mu,s}-V_{\mu,12}^2)+c_{\mu,t}V_{\mu,2}^2+d_{\mu,s}V_{\mu,1}^2-2V_{\mu,12}V_{\mu,1}V_{\mu,2},$$
where $c_{\mu,t}=r\cdot tf_{2,11}+V_{\mu,11}$ and $d_{\mu,s}=r\cdot sf_{1,22}+V_{\mu,22}$. In elliptic coordinates, the first Lagrange point $\hat l_1=(1/2-r_1,0)\in \C$ is given by $(0,\arccos(1-2r_1))$. Recall also that $\mu=\mu(r_1)$ and $L_1=L_1(\mu(r_1))$ for every $r_1\in(0,1)$. Restricting $\det U_W$ to $\{x_1=0\}$, we compute near $x_*:=\arccos(1-2r_1)$ that
$$
\det U_W(\theta,0,\hat x_2+x_*)=G(r_1,\cos \theta)\hat x_2^3+O(\hat x_2^4),
$$
where $G(r_1,s)=G_1(r_1)G_2(r_1,s)$ is such that
$$
\begin{aligned}
G_1(r_1)&=\frac{((1 - r_1) r_1)^{\frac{11}{2}}(1 - 2 r_1) (3 - 3r_1 + r_1^2) (1 + r_1 + r_1^2) ((3 - r_1)^2 + 4 r_1^2 + (2 - r_1) r_1^3)}{ 2 ((1 - r_1)^2 + (2 - r_1) r_1^3)^3} \\
G_2(r_1,s)&= 4((1 - r_1)^2 + (2 - r_1)r_1^3)^{\frac{1}{2}} ((3-r_1)^2 + 4r_1^2 + (2-r_1)r_1^3)^{\frac{1}{2}}s\\
&\quad -(15 - 14r_1 + 11r_1^2 + 6 r_1^3 - 3 r_1^4),\quad \forall s\in [-1,1].
\end{aligned}
$$
We easily check that $G_1$ admits only one zero $r_1=1/2$ in $(0,1)$. Moreover, we compute
$$\begin{aligned}
&(15 - 14r_1 + 11r_1^2 + 6 r_1^3 - 3 r_1^4)^2- 16((1 - r_1)^2 + (2 - r_1)r_1^3)((3-r_1)^2 + 4r_1^2 + (2-r_1)r_1^3)\\
&=\frac{19433}{256} + \frac{123 \nu^2}{16} + \frac{307 \nu^4}{8} + 51 \nu^6 - 7 \nu^8>0, \quad \forall r_1=\nu+\frac{1}{2}\in (0,1).
\end{aligned}$$
Hence, $G_2<0$ for every $(r_1,s)\in (0,1)\times [-1,1]$. We conclude that for every $0<\mu<1$, $\mu\neq \frac{1}{2}$, $G$ never vanishes. In particular, $\det U_W(\theta,0,x)$ has opposite signs on each side of $(0,x_*)$. Hence, for every $\mu\neq 1/2$, $\mathcal M^e_{\mu,L_1(\mu)}$ and $\mathcal M^m_{\mu,L_1(\mu)}$ cannot be both strictly convex. 
\end{rem}

To prove Theorem \ref{thm: Copenhagen problem}, we start proving the following lemma.

\begin{lem}\label{lem: c>0}
For every $h\leq -2$ and $t\in [-1,1]$, we have
$c_t\geq c_{-1}=-rf_{2,11}+V_{11}>0$ on $\mathcal H_1$. As a consequence, $\det U_W|_{\mathcal M\setminus S}>0$ if and only if $I:=V_{2}^2+d_s(V_{1}^2/c_t+r^2)>0$ on $\mathcal M_1\setminus S$.
\end{lem}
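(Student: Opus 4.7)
The first inequality $c_t \geq c_{-1}$ is a triviality: $c_t = V_{11}(x_1) + t \cdot r f_{2,11}(x_1)$ is affine in $t$ with slope $r f_{2,11}(x_1) = \tfrac{r}{2}\sinh(2x_1)$, which is non-negative on $\mathcal{H}_1 \subset \{x_1 \geq 0\}$. The content of the lemma is therefore the strict positivity $c_{-1} > 0$ on $\mathcal{H}_1$, plus the purely algebraic "consequence" clause, which follows at once from the identity $\det U_W = c_t \cdot I$ of \eqref{equ: det U1}: once $c_t > 0$ is established, $\det U_W|_{\mathcal{M} \setminus S}$ and $I|_{\mathcal{M}_1 \setminus S}$ have the same sign, and by the symmetry \eqref{equ: symmetry} the restriction to $\mathcal{M}_1$ is harmless.

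My first step for $c_{-1} > 0$ is to reduce the problem from all $h \leq -2$ to the critical level $h = -2$ by a monotonicity argument. Since $\partial_h V = -\tfrac{1}{4}(\cosh^2 x_1 - \cos^2 x_2) \leq 0$ everywhere, we have $\mathcal{H}_1^h \subset \mathcal{H}_1^{-2}$ for $h \leq -2$, and from $r^2 = -2V$ the relation $\partial_h r \geq 0$ holds at every fixed point of $\mathcal{H}_1^h$. Differentiating $c_{-1}(h) = V_{11}(x_1;h) - r(x;h) f_{2,11}(x_1)$ yields
\begin{equation*}
\partial_h c_{-1} \;=\; -\tfrac{1}{2}\cosh(2x_1) \;-\; (\partial_h r)\, f_{2,11}(x_1) \;<\; 0,
\end{equation*}
so for each fixed $(x_1,x_2)$ the function $h \mapsto c_{-1}$ is strictly decreasing on $(-\infty, -2]$. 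Hence $c_{-1}(h; x) \geq c_{-1}(-2; x)$ on $\mathcal{H}_1^h$, and it suffices to check $c_{-1} > 0$ on $\mathcal{H}_1^{-2}$ at $h = -2$.

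At $h = -2$, I would first verify that $V_{11}(x_1) > 0$ on $[0, \bar x_1]$: substituting $u = \cosh x_1$ the formula for $V_{11}$ in \eqref{equ: data of Copenhagen problem} becomes $-\tfrac{1}{2}u^4 + \tfrac{5}{2}u^2 - \tfrac{1}{2}u - \tfrac{17}{16}$, whose second derivative in $u$ is $-6u^2+5<0$ on $[1, \bar s_1]$. Concavity together with positivity at $u=1$ (value $7/16$) and at $u = \bar s_1$ (direct check, using $\bar s_1 \approx 1.81$) forces positivity on the whole interval. Since $V_{11}, f_{2,11} \geq 0$, the inequality $c_{-1} > 0$ becomes $V_{11}^2 > r^2 f_{2,11}^2$, equivalently
\begin{equation*}
\Phi(x_1,x_2) \;:=\; V_{11}(x_1)^2 \;+\; 2V(x_1,x_2)\, f_{2,11}(x_1)^2 \;>\; 0 \qquad \text{on } \mathcal{H}_1^{-2}.
\end{equation*}
Because $V$ decouples, $\partial_{x_2}\Phi = 2 f_{2,11}^2\, W_2'(x_2)$, and at $h=-2$ one checks $W_2'(x_2) = \sin(2x_2)(\tfrac{1}{2} - \tfrac{1}{32}\cos(2x_2)) > 0$ on $(0,\pi/2)$. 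So $\Phi$ is minimized in $x_2$ at $x_2 = 0$, reducing everything to the one-variable inequality $\Phi(x_1,0) > 0$ for $x_1 \in [0, \bar x_1]$.

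The last step, the verification of $\Phi(x_1,0)>0$, is the main obstacle. Under $u = \cosh x_1$ this is a polynomial inequality in $u \in [1, \bar s_1]$, obtained by squaring the degree-$4$ polynomial $V_{11}(u)$ and subtracting $(-2W_1(u)+1)\,u^2(u^2-1)$; both endpoints give strictly positive values ($(7/16)^2$ at $u=1$, and $V_{11}(\bar s_1)^2$ at $u = \bar s_1$ since $V(\bar x_1,0)=0$), but the balance between the large coefficient of $u^8$ coming from $V_{11}^2$ and the term $\tfrac{1}{16}u^4(u^2-1)^2$ coming from $-2W_1 f_{2,11}^2$ must be controlled uniformly in between. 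I expect this to be handled by an explicit polynomial factorization on $[1, \bar s_1]$, exploiting the fact that $\bar s_1 < 1.81$ is bounded away from the root of $V_{11}$ on $u>\bar s_1$. Once that is done, combining with the $t$-monotonicity and \eqref{equ: det U1} completes the proof of both assertions.
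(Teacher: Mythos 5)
Your proposal follows essentially the same route as the paper: the trivial $t$-monotonicity from $f_{2,11}\geq 0$ on $x_1\geq 0$; reduction to $h=-2$ by monotonicity in $h$ (the paper phrases this as $r_h$ increasing and $V_{11,h}$ decreasing in $h$ rather than differentiating $c_{-1}$, but it is the same observation); reduction to $x_2=0$ by monotonicity in $s_2=\cos x_2$; and finally a one-variable polynomial inequality on $[1,\bar s_1]$, which after clearing denominators is exactly the expression
$$(-17 - 8 s_1 + 40 s_1^2 - 8 s_1^4)^2-16s_1^2(s_1^2 - 1)(16 + 16 s_1 - 17 s_1^2 + s_1^4)>0$$
that the paper verifies. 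The only real shortfall is that you stop at this last inequality and merely "expect" it to admit a positive factorization; that computation is the entire content of the positivity claim and must actually be carried out. It does work: the paper substitutes $v=s_1-1\in[0,1]$ and exhibits the left-hand side as $49+48v$ plus a sum of manifestly nonnegative terms, so your reduction is sound, but as written the proof is incomplete at precisely its decisive step.
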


\begin{proof}
We first consider $h=-2$. Since $f_{2,11}(s_1)=s_1\sqrt{s_1^2-1}$ is positive on $(1,+\infty)$ and $r=r(s_1,s_2)$ is increasing with $s_2$, we see for a fixed $s_1\geq 1$ that $c_t(s_1,s_2)\geq c_{-1}(s_1,s_2)\geq c_{-1}(s_1,1)$. Hence, it is enough to prove that
$c_{-1}(s_1,1)=\frac{1}{16}(-17 - 8 s_1 + 40 s_1^2 - 8 s_1^4 - 4 s_1 (s_1^2 - 1)^{1/2} (16 + 16 s_1 - 17 s_1^2 + s_1^4)^{1/2})>0$ for every $s_1\in [1, \bar s_1]$. Indeed, let $v=s_1-1\in [0,\bar s_1-1]\subset [0,1]$. We compute
$$\begin{aligned}
&\quad\ (-17 - 8 s_1 + 40 s_1^2 - 8 s_1^4)^2-16s_1^2(s_1^2 - 1)(16 + 16 s_1 - 17 s_1^2 + s_1^4)\\
&=49 + 48 v + 16 v^2 \big( 41 (1 - v)^4 + 7 v (1 - v)^5 + 25 (1 - v)^4 v^2 + 29 v^6 \\
&\quad\  + 13 v (3 - 4 v)^2 + (1 - v) v^3 (14 - 45 v + 44 v^2)\big)>0,\quad \forall v\in[0,1].
\end{aligned}$$
In particular, we see that $c_{-1}(1,s_2)=7/16>0$ for every $s_2\in[0,1]$. Therefore, $c_{-1}(s_1,s_2)\geq c_{-1}(s_1,1)>0$ for every $(s_1,s_2)\in \mathcal H_1\setminus S$. Notice that $c_{-1}$ is well-defined for $(s_1,s_2) = (1,0)$ corresponding to $\bar S_+$, and is equal to $7/16>0$ at this point. Hence, from the symmetries \eqref{equ: symmetry} and the expression \eqref{equ: det U1}, the proof is complete for $h=-2$.

For $h<-2$, we change the previous notation to $\mathcal H^h,\mathcal H^h_1,r_h,V_{i,h},V_{ii,h},c^h_t,d^h_s$ for $i=1,2$. Notice that $\mathcal H^h\subset \text{int}(\mathcal H^{-2})$ for every $h< -2$. From \eqref{equ: data of Copenhagen problem}, we observe that $r_h^2(x)$ increases with $h$ for every $x\in \mathcal H^h$, $\nabla^2f_1,\nabla^2f_2$ are independent of $h$, and $V_{1,h},V_{2,h},V_{11,h}$ decrease with $h$ on $\mathcal H_1^h$. Hence, $c_{t}^h(x)\geq c_{-1}^h(x)=-r_h(x)f_{2,11}(x)+V_{11,h}(x)\geq c_{-1}^{-2}(x)>0$ for every $t\in[-1,1]$ and $x\in \mathcal H^h_1$. Finally, using \eqref{equ: symmetry} and \eqref{equ: det U1}, the lemma follows.
\end{proof}

Next we prove Theorem \ref{thm: Copenhagen problem} using Theorem \ref{thm: convexity formula}, Lemmas \ref{lem: c>0}, \ref{lem: I0>0}, and a monotonicity argument.

\begin{figure}[hbpt]
    \centering
    \includegraphics[width=0.4\linewidth]{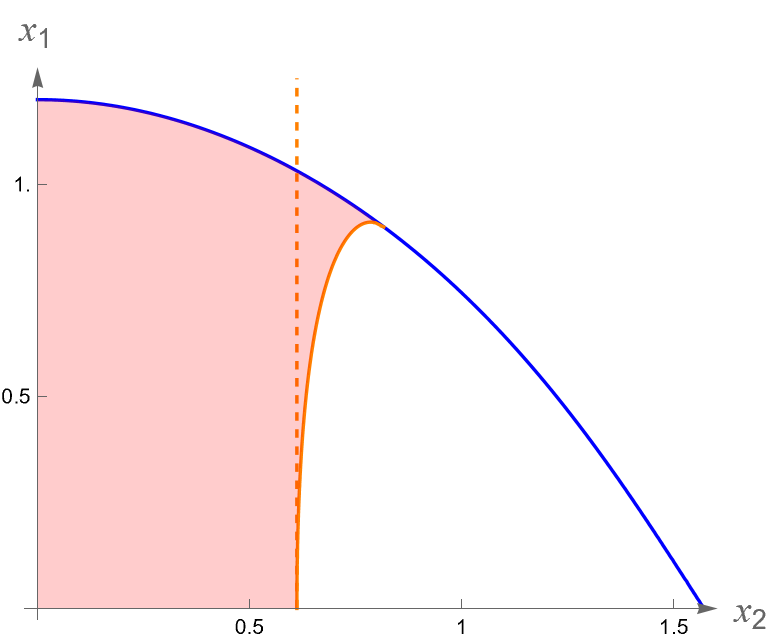}
    \caption{The boundary curve $\partial \mathcal H\cap \mathcal H_1$ (blue), the region $\{d_1>0\}\cap \mathcal H_1$ (red) for $h=-2$, the set $\{d_1=0\}$ (orange) and the set $\{D=0\}$ (orange dashed). }
    \label{fig:dzero}
\end{figure}

\begin{proof}[Proof of Theorem \ref{thm: Copenhagen problem}] We keep the previous notations $\mathcal M^h,\mathcal M^h_1,\mathcal H^h$ and $\mathcal H^h_1$. Recall that  $(s,t)=(\cos\theta,\sin \theta)$ and $I(\theta,x)=V_2^2(x_2)+d_s(x)(V_1^2(x_1)/c_t(x)+r^2(x))$. Here, $(\theta,x)\in \mathcal{M}_1 \setminus S$. By Lemma~\ref{lem: c>0}, we know that if $d_s(x)>0$, then $I(\theta,x)>0$ and thus $\det U_W(\theta,x)>0$. See the region $\mathcal H_1^{-2}\cap \{d_1> 0\}$ in Figure \ref{fig:dzero}. We thus restrict to $\{(\theta,x)\in \mathcal M_1^h|d_s(x)\leq 0\}$.
In this region, $I\geq V_2^2+d_s(V_1^2/c_{-1} +r^2)$ since $c_t \geq c_{-1}>0$ by Lemma \ref{lem: c>0}. Moreover, since $f_{1,22}\leq 0$ and $r(x)$ decreases with $x_1$ for every fixed $x_2$, we have $\hat D(x_2,h):=d_1(0,x_2)\leq d_1(x)\leq d_s(x)$ on $\mathcal H_1^h$ for every $s\in[-1,1]$. Notice that $\hat D$ depends only on $\cos x_2$ and $h$. Since $W_0(x):=V_1^2/c_{-1}+r^2>0$ on $\mathcal H_1^h\setminus S$, it is sufficient to show that $I_0^h(x):=V_2^2+\hat D\cdot W_0>0$ on $(\mathcal H_1^h\setminus S)\cap \{\hat D(x_2,h) <0\}$ for every $h\leq -2$. Let $(s_1,s_2):=(\cosh x_1,\cos x_2)$ as before. We compute
$$
16\hat D=-1 + 8 s_2^2 - 8 s_2^4 + h (8 - 16 s_2^2) - 4 s_2 (1 - s_2^2)^{\frac{1}{2}} (16 + s_2^2 - s_2^4 + 8 h (1-s_2^2))^{\frac{1}{2}}.
$$
Clearly, $I_0^h|_{\mathcal H_1^h\setminus S}>0$ whenever $\hat D\geq 0$, since $\hat D|_{\{x_2=0\}}=(-1-8h)/16>0$ for every $h\leq -2$.

Now we compute
$$
\begin{aligned}
\partial_h W_0(s_1,s_2)&=\frac{1}{512c_{-1}^2}\bigg(\left(\frac{s_1(s_1^2 - s_2^2)}{2r(s_1,s_2)}+(s_1^2-1)^\frac{1}{2}\right)(s_1^2-1)^{\frac{3}{2}}(8 + (-1 + 8 h) s_1 + 2 s_1^3)^2\\
&\qquad\qquad\ \ + (s_1^2-1) \big( s_1 (8 + (-1 + 8 h) s_1 + 2 s_1^3) + 16c_{-1}\big)^2\bigg) + \frac{1}{2}(1 - s_2^2).
\end{aligned}
$$

We see that $\partial_h W_0\geq 0$ for every $(s_1,s_2)\in \mathcal H_1^h$. Moreover, from \eqref{equ: data of Copenhagen problem}, we see that $r^2(x)$ increases with $h$ for every $x\in \mathcal H^h$, $\nabla^2f_1,\nabla^2f_2$ are independent of $h$, and $V_1,V_2,V_{11}$ all decrease with $h$ on $\mathcal H_1^h$. Since $V_1,V_2\geq 0$ on $\mathcal H_1^h$ for every $h\leq -2$, we see that $V_1^2,V_2^2$ also decrease with $h$ on $\mathcal H_1^h$. Hence, for every $(s_1,s_2)\in \Omega_-^h:=\{(s_1,s_2)\in \mathcal H_1^h: \hat D(s_2,h)\leq 0, \partial_h \hat D(s_2,h)\leq 0\}$, we have
$$
\partial_h I_0^h=\partial_h (V_{2}^2) + \partial_h \hat D\cdot W_0 + \hat D \cdot \partial_hW_0\leq 0.
$$

Now we aim to clarify the region $\partial_h \hat D \leq 0$ in terms $s_2$ and $h$. Write $V=V(s_1,s_2)$. We solve $V(1,s_2)=0$ for $h$ to obtain
$$\underline{h}(s_2):=-\frac{16 + s_2^2 - s_2^4}{8(1 - s_2^2)},$$
which is decreasing on $[0,1]$ and satisfies $\underline h(0)=-2$. Therefore, $\mathcal H^h_1\subset \{(s_1,s_2)\in[0,\bar s_1]\times[\bar s_2,1]\}$, where $\bar s_1(h)$ solves $V(\bar s_1,1)=0$ and $\bar s_2(h)$ is the inverse of $\underline{h}(s_2)$. Let $f(s_2,h):=-32V(1,s_2)=16 + s_2^2 - s_2^4 + 8 h (1-s_2^2)$. We see that $f>0$ for every $s_2 \in (\bar s_2, 1]$ and $h\leq -2$.  Then we compute $\partial_h \hat D(s_2,h)=\frac{1}{2} - s_2^2 - s_2 (1 - s_2^2)^\frac{3}{2}f^{-\frac{1}{2}}$. We solve $\partial_h \hat D(s_2,h)=0$ for $h$ to obtain
$$
\bar h(s_2)=\frac{16 - 67 s_2^2 + 71 s_2^4 - 4 s_2^6}{8 (s_2^2-1) (-1 + 2 s_2^2)^2},\quad \forall s_2\in[0,1)\setminus\{2^{-\frac{1}{2}}\}.
$$
In particular, $\bar h(0)=\bar h(s_{20})=-2$, where $s_{20}:= \frac{1}{2} (\frac{1}{30} (57 - \sqrt{129}))^{\frac{1}{2}} \approx0.616726<5/8$. Since $\partial_{hh}^2\hat D(s_2,h)=4s_2 (1 - s_2^2)^\frac{5}{2}f^{-\frac{3}{2}}>0$, we know that $\partial_h \hat D$ is increasing with $h\in [\underline h(s_2),-2]$. Moreover, since $f(s_2,\underline h(s_2))=0$ and $\partial_h \hat D(s_{20},-2)=0$, we obtain $\partial_h \hat D(s_2,\underline{h}(s_2))=-\infty$ and $\partial_h \hat D(s_2,-2)=\frac{1}{2} - s_2^2 - (1 - s_2^2)^\frac{3}{2}(17 - s_2^2)^{-\frac{1}{2}}$, which is positive on $(0,s_{20}]$ and negative on $[s_{20},1]$. This implies that for every fixed $s_2\in (0, s_{20}]$,  $\partial_h \hat D(s_2,h) < 0$ if and only if
$h \in (\underline{h}(s_2), \bar h(s_2))$. Moreover,  $\partial_h \hat D(s_2,h) > 0$ if and only if $h\in (\bar h(s_2),-2]$. For every fixed $s_2\in [s_{20}, 1]$, $\partial_h \hat D(s_2,h) < 0$ if and only if $h\in (\underline{h}(s_2), -2].$  See Figure \eqref{fig: hatD} for the regions $\{\partial_h \hat D< 0\}$ and $\{\partial_h \hat D > 0\}$.
\begin{figure}[hbpt]
    \centering
    \includegraphics[width=0.4\linewidth]{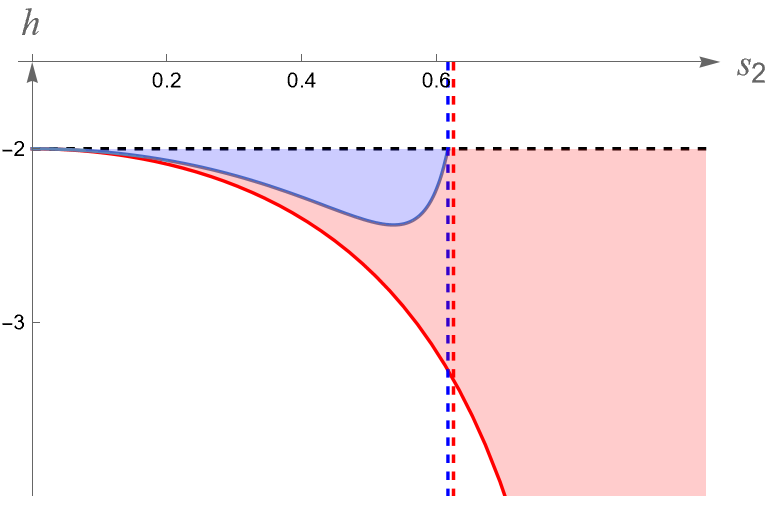}
    \caption{The regions $\{\partial_h \hat D<0\}$ (red), $\{\partial_h \hat D>0\}$ (blue), in which the blue and red curve denote the graph of $\bar h$ on $[0,s_{20}]$ and of $\underline{h}$ on $[0,1]$. The blue and red dashed lines are $s_2=s_{20}$ and $s_2=5/8$, respectively.}
    \label{fig: hatD}
\end{figure}

Now we consider the region $\Omega_+^h:=\{(s_1,s_2)\in\mathcal H^h_1| \hat D\leq 0,\partial_h \hat D\geq 0\}$, where $h\in [\bar h(s_2),-2]$ for every $s_2\in [0,s_{20}]$. We compute
$$
\begin{aligned}
\partial_h I_0^h&=\partial_h (V_{2}^2) + \partial_h \hat D\cdot W_0 + \hat D \cdot \partial_hW_0\\
&=\partial_h (V_{2}^2)+\frac{1}{2}((1 - s_2^2) \hat W_0 - s_2^2 W_0)-s_2 (1 - s_2^2)^{\frac{3}{2}}f^{-\frac{1}{2}} W_0\\
&\quad\ + \frac{\hat D}{512c_{-1}^2}\bigg((\frac{s_1(s_1^2 - s_2^2)}{2r(s_1,s_2)} + (s_1^2-1)^{\frac{1}{2}})(s_1^2-1)^{\frac{3}{2}} (8 + (-1 + 8 h) s_1 + 2 s_1^3)^2 \\
&\qquad\qquad\qquad\ +(s_1^2-1) (s_1 (8 + (-1 + 8 h) s_1 + 2 s_1^3) + 16 c_{-1})^2\bigg).
\end{aligned}$$
where $\hat W_0(s_1,s_2,h):=W_0(s_1,s_2,h) + \hat D(s_2,h)$. We see that all the terms are non-positive except $\hat W_0$ on $\Omega_+^h$. Since $\partial_hW_0\geq 0$ and $\partial_h \hat D\geq 0$ on $\{\partial_h \hat D\geq 0\}$, we have $\hat W_0(s_1,s_2,h)\leq \hat W_0(s_1,s_2,-2)$ for every $\bar h(s_2)\leq h\leq -2$. We shall prove that  $\hat W_0(s_1,s_2,-2)<0$ on $\Omega_+^h$.

We first compute that
$$\begin{aligned}
\partial_{s_2}\hat W_0(s_1,s_2,-2)&=
\frac{s_2}{8} \bigg(37 - 18 s_2^2 + (20 - 4 (1 - s_2^2)^{\frac{1}{2}} (17 - s_2^2)^{\frac{1}{2}}) + \frac{4 s_2^2 (9 - s_2^2)}{(1 - s_2^2)^{\frac{1}{2}} (17 - s_2^2)^{\frac{1}{2}}}\\
&\quad\ + \frac{s_1 (-1 + s_1^2)^{\frac{3}{2}}}{512r(s_1,s_2)c_{-1}^2}(8 - 17 s_1 + 2 s_1^3)^2 (17 - 2 s_2^2)\bigg)\geq 0,
\end{aligned}$$
for every $(s_1,s_2)\in\mathcal H_1^{-2}\subset [1,\bar s_1] \times [0,1]$. Then $\hat W_0(s_1,s_2,-2)\leq \hat W_0(s_1,s_{20},-2)\leq \hat W_0(s_1,\frac{5}{8},-2)$ on $\Omega_+^h$. Now we aim to show that $\hat W_0(s_1,5/8,-2)<0$ for every $s_1\in [1,\hat s_{1}]$, where $\hat s_1\approx 1.37116$ solves $V(\hat s_1,5/8,-2)=0$. We first compute
\begin{equation}\label{equ: E 5/8}
\begin{aligned}
&\quad\ 2^{20} c_{-1}(s_1,5/8)\hat W_0(s_1,5/8,-2)=E_0(s_1) +4 s_1 (100 \sqrt{41457} - 20361) J(s_1)\\
&+4 s_1 (4418 - 65536 s_1 + 69632 s_1^2 - 4096 s_1^4) (J(s_1) - (5/4 - 8 (s_1 - 5/4)^2))\\
&+ (100 \sqrt{41457} - 20360) (17 + 8 s_1 - 40 s_1^2 + 8 s_1^4).
\end{aligned}
\end{equation}
where
$J(s_1):=4(s_1^2-1)^{\frac{1}{2}}r(s_1,5/8)=(s_1^2-1)^{\frac{1}{2}}\left(\frac{26575}{4096} + 16 s_1 - 17 s_1^2 + s_1^4 \right)^{\frac{1}{2}}\geq 0$ for every $s_1\in[1,\hat s_1]$, and
$$
\begin{aligned}
E_0(s_1)&=-187055 - 163474 s_1 + 2863736 s_1^2 - 6584384 s_1^3 + 6310408 s_1^4 - 2469888 s_1^5 \\
&\quad\ + 98304 s_1^6 + 131072 s_1^7 - 16384 s_1^8.
\end{aligned}
$$

We see that $20360<100 \sqrt{41457}\approx 20360.99<20361$, $17 + 8 s_1 - 40 s_1^2 + 8 s_1^4 = -7 (3 - 2 s_1) -
41 (s_1 - 1) - (s_1 - 1) (3 - 2 s_1) (-1 + 10 s_1 + 4 s_1^2)<0$ for every $s_1\in [1,3/2]$ and $4418 - 65536 s_1 + 69632 s_1^2 - 4096 s_1^4 = 4418 +
4096 (s_1 - 1) s_1 (16 - s_1 - s_1^2)>0$ for every $s_1\in [1,2]$. Moreover,
$$\begin{aligned}
&\quad\ (5/4 - 8 (s_1 - 5/4)^2)^2-J(s_1)^2
=(s_1-1)^4\left(\frac{1549675}{4096} - \frac{1048977 s_1}{2048} + \frac{713883 s_1^2}{4096}\right) \\
&+ (2 - s_1)^2\left( \frac{28749}{4096} (6 - 5s_1)^2 (s_1 - 1)^2  + \frac{2993}{2048} (7 - 6s_1)^2 (s_1 - 1) + \frac{9}{64} (15 - 13s_1)^2\right) \\
&+ \frac{44897}{2048} (2 - s_1) (s_1-1)^3 (5 - 4s_1)^2 >0,\quad \forall s_1\in[1,2].
\end{aligned}$$
We rewrite $E_0$ as
$$
\begin{aligned}
E_0(1+v)&=-1024 v^3 (1 - 2 v)^3 (5 - v) (13 + 2 v) - 189054 v^2 (1 - 2 v)^3\\
&\quad - 185596 v^3 (1 - 2 v)^2- 5817 v (1 - 2 v)^2 - 29434 v^2 (1 - 2 v) \\
&\quad - 17665 (1 - 2 v) (1 - 6 v)^2 - 162199 v (4 v - 1)^2<0,\quad v\in[0,1/2].
\end{aligned}$$

From the estimates above, we see that all the terms in \eqref{equ: E 5/8} are negative in $[1,\hat s_1]\subset [1,3/2]$. We conclude that $\hat W_0(s_1,s_2,h)<\hat W_0(s_1,\frac{5}{8},-2)<0$ on $\Omega_+^h$. Combining both cases $\Omega_+^h \cup \Omega_-^h$, we obtain $\partial_h I^h_0\leq 0$ in $\{\hat D\leq 0\}$.

Let $h_*(s_2,h):=\min\{h_0\in [h,-2]:\hat D(s_2,h_0)\geq 0\}$, with the convention that $h_*(s_2,h)=-2$ if $\hat D(s_2,h_0)<0$ for every $h_0\in [h,-2]$. We know that $I_0^h>0$ on $\{\hat D\geq 0\}\setminus \{\bar S_+\}$ and $\partial_h I_0 \leq 0$ on $\{\hat D \leq 0\}$. Moreover, by Lemma \ref{lem: I0>0} below, $I^{-2}_0>0$ on $\mathcal H_1^{-2}\setminus S$. Hence, if $\hat D(s_2,h) <0$, then $I_0^h(s_1,s_2) \geq  I_0^{h_*}(s_1,s_2)>0$. Notice that in this case, we have $\mathcal{H}_1^h \subset \mathcal{H}_1^{h_*}$. We conclude that $I^h_0(x)>0$ for every $x\in \mathcal H_1^h\setminus S$. The proof of Theorem \ref{thm: Copenhagen problem} is complete.
\end{proof}

\begin{lem}\label{lem: I0>0}
Fix $h=-2$, and let
$I_0(x):=V_2(x_2)^2+D(x_2)(V_1(x_1)^2/c_{-1}(x)+r^2(x))$ on $\mathcal H_1$, where
$D(x_2):=d_{1}(0,x_2)=r(0,x_2)f_{1,22}(x_2) + V_{22}(x_2)$. Then $I_0(x)>0$ for every $x\in \mathcal H_1\setminus S$. Moreover, $I_0(x)=O(\cos^4 x_2)$ uniformly in $x_1$ near $S$.
\end{lem}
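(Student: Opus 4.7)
The plan is to work in the coordinates $(s_1,s_2)=(\cosh x_1,\cos x_2)\in[1,\bar s_1]\times[0,1]$, in which all quantities entering $I_0$ become explicit polynomial or square-root expressions in $(s_1,s_2)$, and to split $\mathcal H_1$ into the subregions $\{D(x_2)\geq 0\}$ and $\{D(x_2)<0\}$. By Lemma~\ref{lem: c>0} we already have $c_{-1}>0$ on $\mathcal H_1$, so $V_1^2/c_{-1}\geq 0$ and the sign of $I_0$ is controlled by the sign of $D$ and by the competition between $V_2^2$ and $|D|\,r^2$.

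In the region $\{D\geq 0\}$ every summand in $I_0 = V_2^2 + D(V_1^2/c_{-1}+r^2)$ is nonnegative. Since $V_2$ vanishes on $\mathcal H_1$ only along $\{x_2=0\}$, and a direct evaluation gives $D(0)=V_{22}(0)=15/16>0$ while $r^2>0$ in the interior of $\mathcal H_1\cap\{x_2=0\}$ and $V_1(\bar x_1)>0$ at the boundary endpoint, one concludes $I_0>0$ throughout $\{D\geq 0\}\cap(\mathcal H_1\setminus S)$.

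In the region $\{D<0\}$, which contains a full neighborhood of $S\equiv(1,0)$, the task is to prove $V_2^2 > |D|(V_1^2/c_{-1}+r^2)$. Using $r^2=-2V$ and the explicit polynomial forms of $W_i,W_i',W_i''$, I would multiply by $c_{-1}$ and rewrite
\[
c_{-1}(x)\,I_0(x) \;=\; c_{-1}(x)\,V_2^2 \;+\; D(x_2)\,V_1^2 \;+\; D(x_2)\,c_{-1}(x)\,r^2
\]
as a polynomial in $(s_1,s_2)$ plus square-root terms involving only $\sqrt{s_1^2-1}$, $\sqrt{1-s_2^2}$ and $\sqrt{(1-s_2^2)(17-s_2^2)}$. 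Isolating these radicals and squaring reduces the desired inequality to the positivity of an explicit polynomial on the compact region $\{D<0\}\cap\mathcal H_1$ away from $S$, which can be certified by sum-of-squares identities of the same flavor used in the proof of Lemma~\ref{lem: c>0}.

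The main obstacle is the sharp cancellation at $S$. At $s_1=1$ one has $V_1\equiv 0$, and using
\[
V_2^2=\tfrac{289}{256}s_2^2-\tfrac{357}{256}s_2^4+O(s_2^6),\qquad r^2(0,x_2)=\tfrac{17}{16}s_2^2-\tfrac{1}{16}s_2^4+O(s_2^6),\qquad D=-\tfrac{17}{16}+\tfrac{10-\sqrt{17}}{4}s_2^2+O(s_2^4),
\]
the $O(s_2^2)$ terms of $V_2^2+D\,r^2$ cancel exactly and a direct expansion yields
\[
I_0(0,x_2)=\frac{17(5-\sqrt{17})}{64}\,s_2^4+O(s_2^6),
\]
with strictly positive leading coefficient since $\sqrt{17}<5$. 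To upgrade this to a uniform estimate near $S$, I would Taylor-expand $I_0$ jointly in $s_2$ and in $s_1-1$. Since $V_1^2/c_{-1}=O(s_1-1)$ and every other quantity depends smoothly on $s_1$ near $s_1=1$, the quartic leading coefficient in $s_2$ remains close to $17(5-\sqrt{17})/64$ on a small neighborhood $U$ of $S$, which gives simultaneously $I_0\geq c\,\cos^4 x_2$ on $U$ for some $c>0$ and the asymptotic statement $I_0=O(\cos^4 x_2)$ uniformly in $x_1$ near $S$. Combining this local analysis with the polynomial positivity estimate on the compact complement $\{D<0\}\setminus U$ then completes the argument.
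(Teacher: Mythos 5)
Your reduction to the two regions $\{D\geq 0\}$ and $\{D<0\}$, and your expansion at $s_1=1$ giving $I_0(0,x_2)=\tfrac{17(5-\sqrt{17})}{64}\cos^4x_2+O(\cos^6x_2)$, are both correct and agree with the paper. But the step that upgrades this to a neighborhood of $S$ contains a genuine gap. You argue that since $V_1^2/c_{-1}=O(s_1-1)$ and everything depends smoothly on $s_1$, the quartic coefficient ``remains close to $17(5-\sqrt{17})/64$'' near $S$. This fails because in $\mathcal H_1$ the variable $s_1-1$ is \emph{comparable} to $s_2^2$ (the Hill region near $S$ is pinched between $s_1=1$ and $s_1=1+t_0^2s_2^2$ with $t_0^2$ up to $17/14$), so the parameter $c^2:=(s_1-1)/s_2^2$ is of order one, not small, arbitrarily close to $S$. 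Along the parabola $s_1=1+c^2s_2^2$ the terms $D\cdot V_1^2/c_{-1}$ and $D\cdot r^2$ each acquire $O(s_2^2)$ corrections of size $\pm\tfrac{119}{128}c^2s_2^2$; these happen to cancel (so the $O(\cos^4)$ statement survives), but the resulting quartic coefficient is $\tfrac{1}{28}E_1(0,c)$ with $E_1(0,c)=\tfrac{7}{16}\bigl(85-17\sqrt{17}+170c^4-34c^3\sqrt{2(17-14c^2)}\bigr)$, which drops from $\approx 6.5$ at $c=0$ to $\approx 0.4$ near $c\approx 0.62$ --- a loss of more than an order of magnitude, and nearly zero. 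No continuity argument from the single value at $c=0$ can certify positivity over the whole range $c\in[0,\sqrt{17/14}]$; one must prove the sharp inequality $E_1(0,c)>0$ (and its perturbation in $s_2$) explicitly.

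This is precisely why the paper does not use a local--global split: it substitutes $s_1=1+c^2s_2^2$ \emph{globally} on $\{D\leq 0\}$, obtains the exact factorization $64\,c_{-1}I_0=s_2^4E_1(s_2,c)$, and then proves $E_1>0$ on the compact rectangle $[0,1]\times[0,\sqrt{17/14}]$ by the chain of polynomial estimates in Lemma \ref{lem: estimates of fi} and Lemma \ref{lem: E2>0}. Your ``compact complement'' part (isolate radicals, square, certify a polynomial inequality) is the right kind of plan for the region away from $S$, but as written it defers essentially all of the quantitative work, and your treatment of the neighborhood of $S$ would need to be replaced by an argument of the paper's type to close the gap.
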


\begin{proof}
If $x=(x_1,x_2)\in \mathcal{H}_1 \setminus S$ and $D(x_2)\geq 0$, then $I_0(x)>0$ naturally holds, see Lemma \ref{lem: c>0}. In fact, $V_2(x_2)$ only vanishes at $x_2=0$, where $D=15/16$. Therefore, it is sufficient to consider $x\in \mathcal H_1\cap \{D(x)<0\}$. See Figure \ref{fig:dzero} for the curve $\{D(x)=0\}$. Let $(s_1,s_2)=(\cosh x_1,\cos x_2)$ as before and write $D=D(s_2)$, where $s_2\in [0,1]$. We compute
\begin{equation}\label{equ: D}
\begin{aligned}
D(s_2)=\frac{1}{16}(-17 - 8 s_2^4 + 40s_2^2 - 4s_2^2(1-s_2^2)^{1/2}(17-s_2^2)^{1/2}).
\end{aligned}
\end{equation}
Hence,
$$
D'(s_2)=\frac{s_2\big(8+(1-s_2^2)^{1/2}((10 - 4 s_2^2)(17-s_2^2)^{1/2} - (25 - 2s_2^2) (1-s_2^2)^{1/2})\big)}{2(1-s_2^2)^{1/2}(17-s_2^2)^{1/2}}.
$$
Since $ (10 - 4 s_2^2)^2(17 - s_2^2) - (25 - 2 s_2^2)^2(1 - s_2^2) = 1075 - 735 s_2^2 + 248 s_2^4 - 12 s_2^6>0$ for every $s_2\in [0,1]$, we have $D'(s_2)>0$, and thus $D(s_2)$ is an increasing functions with a unique zero $\hat s_2\in (0,1)$, since $D(0)=-17/16$ and $D(1)=15/16$. We also know that $D(s_2)<0, \forall s_2 \in[0,\hat s_2)$. It will be important later to know that $\hat s_2 < 0.82$ since $D(0.82)>0$ as one easily checks. Hence we only need to prove that $I_0(s_1,s_2)>0$ in $\Omega:=\{(s_1,s_2)\in[1,\bar s_1]\times[0,\hat s_2],V(s_1,s_2)\leq 0\}\subset \mathcal H_1$. Notice that $\Omega$ corresponds to the region of $\mathcal{H}_1$, where $D\leq 0.$

From $V(s_1,s_2)=\frac{1}{32}(-s_1(16 - 17 s_1 + s_1^3) + s_2^4 -17 s_2^2)=0$, we see that
$$
s_1=1+t_0(s_1,s_2)^2s_2^2,\quad t_0=\sqrt{\frac{17 - s_2^2}{s_1 (16 - s_1 - s_1^2)}},
$$
where $t_0(s_1,s_2)\leq\sqrt{17/14}<6/5$ on $[1,\bar s_1]\times[0,1]$. Notice that $s_1(16-s_1-s_1^2)$ is increasing in $s_1\in [1,\bar s_1]$. Consider the change of coordinates $(s_2,c)\mapsto (s_1,s_2)$, where $s_1=1+c^2s_2^2$, for every $(s_2,c)\in (0,1]\times [0,\sqrt{17/14}]$. In new coordinates $(s_2,c)$, we have $\mathcal H_1\setminus S \subset (0,1]\times[0,\sqrt{17/14}]$. From the expression of $t_0$ above, we see that the curve $V=0$ in these coordinates, are given by $(s_2,t_0),$ where $t_0 < \sqrt{17/14}$. The constant curve $c(s_2)=\sqrt{17/14}, \forall s_2\in [0,1],$  in $x$-coordinates, is depicted in Figure \ref{fig:enlarged hill region}. We compute in new coordinates $(s_2,c)$
\begin{equation}\label{equ: remove deg}
64c_{-1}(s_2,c)I_0(s_2,c)=s_2^4E_1(s_2,c),
\end{equation}
where
\begin{equation}\label{E1}\begin{aligned}
E_1(s_2,c)&:= j_1(s_2) c_{-1}(s_2,c) - c^3 D(s_2) W_3(s_2,c),\\
16c_{-1}(s_2,c)&:=j_0(s_2, c) - 4 c s_2^2 (1 + c^2 s_2^2) j_2(s_2, c), \\
W_3(s_2,c)&:= j_3(s_2,c)j_2(s_2,c) + c j_4(s_2,c),
\end{aligned}
\end{equation}
and
$$
\begin{aligned}
j_0(s_2,c)&:=7 + 40 c^2 s_2^2 - 8 c^4 s_2^4 - 32 c^6 s_2^6 - 8 c^8 s_2^8,\\
j_1(s_2)&:=(85 - 26 s_2^2 +
    s_2^4 - (17 - s_2^2) (1-s_2^2)^{1/2}(17-s_2^2)^{1/2}),\\
j_2(s_2,c)&:=\big((2+c^2s_2^2)(-32s_2^{-2} V(1+c^2s_2^2,s_2)\big)^{1/2}\\
&:=\big((2 + c^2 s_2^2) (17 - 14 c^2 - s_2^2 - 11 c^4 s_2^2 + 4 c^6 s_2^4 + c^8 s_2^6)\big)^{1/2},\\
j_3(s_2,c)&:=(-14 + 3 c^2 s_2^2 + c^4 s_2^4) (1 + c^2s_2^2)^2,\\
j_4(s_2,c)&:=70 + 18 c^2 s_2^2 - 105 c^4 s_2^4 - 74 c^6 s_2^6 + 2 c^8 s_2^8 + 8 c^{10} s_2^{10} + c^{12} s_2^{12}.
\end{aligned}
$$
We have to prove that $E_1(s_2,c)>0$ for every $(s_2,c) \in \mathcal{H}_1\setminus S$ so that $D(s_2) < 0$. We start computing $E_1(0,0)=\frac{119}{16}(5-\sqrt{17})>0$. Since $E_1$ is continuous on $\mathcal H_1$, we see that $I_0=O(s_2^4)$ uniformly in $c$ near $(s_2,c)=(0,0)$.
\begin{figure}[hbpt]
\centering
\begin{minipage}{0.4\linewidth}
\includegraphics[width=\linewidth]{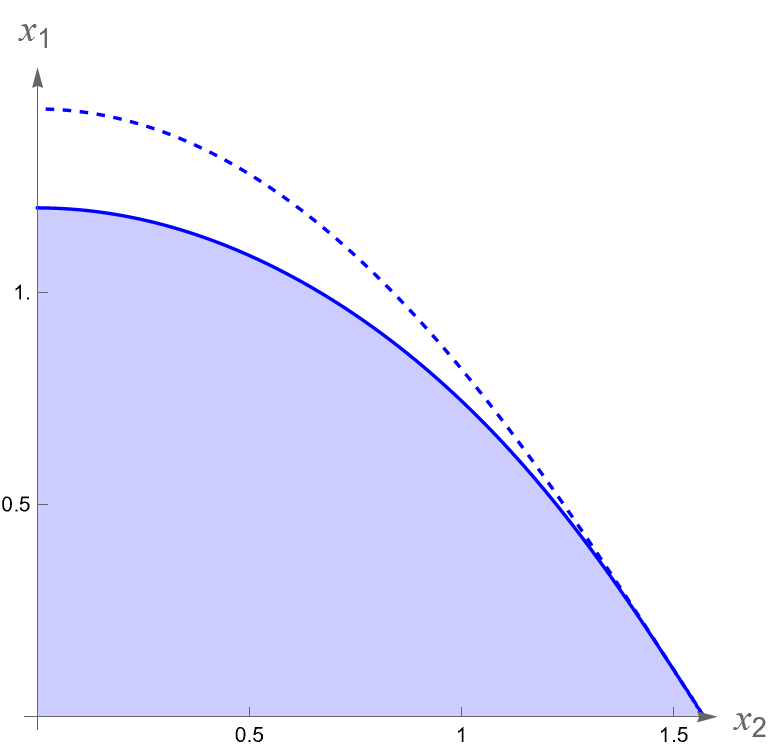}
\end{minipage}
\hspace{5mm}
\begin{minipage}{0.4\linewidth}
\includegraphics[width=\linewidth]{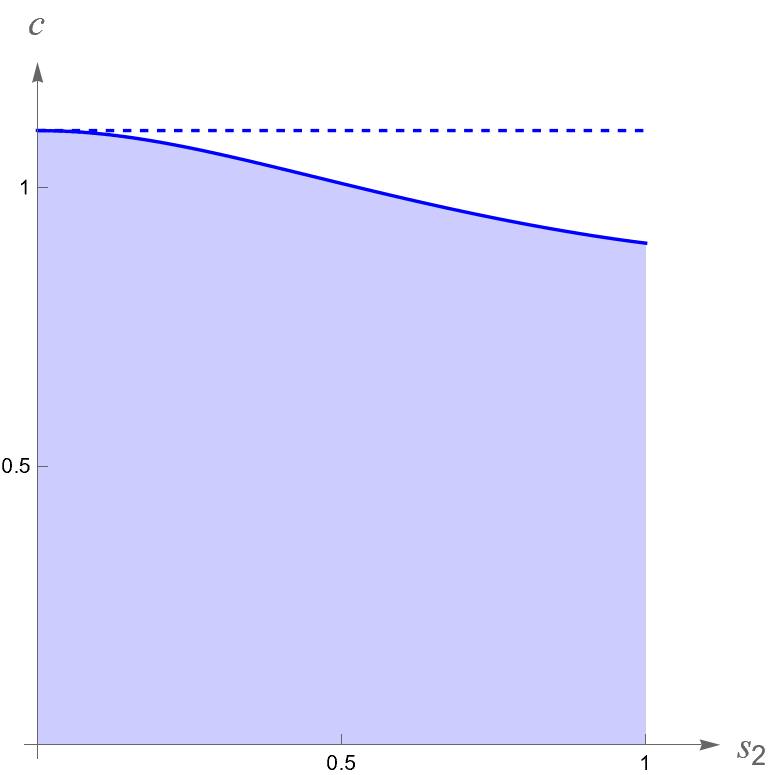}
\end{minipage}
\centering
\caption{The blue dashed curve is $s_1=1+\frac{17}{14}s_2^2$ (or $c=\sqrt{17/14}$) and the blue region is $\mathcal H_1$ in different coordinates.}
\label{fig:enlarged hill region}
\end{figure}

Before we continue with the proof of Lemma \ref{lem: I0>0}, we prove the following lemma.

\begin{lem}\label{lem: estimates of fi}
The following estimates hold:
\begin{itemize}

\item[(i)] $j_0(s_2,c)>0$ and $j_3(s_2,c)<0$ for every $(s_2,c)\in \mathcal{H}_1$.

\item[(ii)] $j_1(s_2)\geq 85 - 17^{3/2}>59/4$ and $j_4(s_2,c)>0$ for every $(s_2,c)\in \mathcal H_1\cap\{0\leq s_2\leq\hat s_2\}$.

\item[(iii)] $0\leq |j_2(s_2,c)|< j_{2,0}(s_2,c)$ for every $(s_2,c)\in [0,1]\times[0,\sqrt{17/14}]$, where
$$
j_{2,0}(s_2,c)=\frac{169}{32} - \left(c^2-\frac{1}{4}\right)\left(\frac{5}{2}+\frac{s_2^2}{4}+c^2\right) - \frac{c^4 s_2^2}{2}.
$$

\item[(iv)] $D_{-}(s_2)\leq 16D(s_2)\leq D_+(s_2)$ for every $s_2\in[0,1]$, where
$$\begin{aligned}
D_-(s_2)&= -17 - 8 s_2^4 + 40 s_2^2 - 4s_2^2 (33/8 - (2 s_2^2 + s_2^4)),\\
D_+(s_2)&= -17 - 8 s_2^4 + 40 s_2^2 - 4s_2^2 (1 - s_2^2) (4 + 2 s_2^2 + s_2^4).
\end{aligned}$$
\end{itemize}
\end{lem}
\begin{proof}
First, we rewrite $j_0$ as
$j_0(s_2,c)=g_0(1+c^2s_2^2)$, where
$g_0(s_1)=-17 - 8 s_1 + 40 s_1^2 - 8 s_1^4$. Since $g_0''(s_1)=80-96s_1^2<0$ for every $s_1\geq1$, we see that $g_0$ is a concave function on $[1,+\infty)$. Therefore, $g_0(1)=7$ and $g_0(1.9)=9929/1250>0$ which implies that $g_0(\bar s_1)>0$ since $1<\bar s_1 < 1.9$. We conclude that  $g_0(s_1)>0$ for every $s_1\in [1,\bar s_1]$ since $g_0$ is concave on the interval. Hence, $j_0>0$ on $\mathcal H_1$. Moreover, $j_3<0$ on $\mathcal H_1$, since in that region $s_2 \in [0,1]$ and $c\in [0, \sqrt{17/14}].$ This proves (i).

To prove (ii), we rewrite and estimate $j_1$ as follows
$$
\begin{aligned}
j_1(s_2)&=85 - 17^{3/2} +
 s_2^2 \left(-26 + s_2^2 + \frac{5780 - 918 s_2^2 + 52 s_2^4 - s_2^6}{17^{3/2} + (1 - s_2^2)^{1/2} (17 - s_2^2)^{3/2}}\right)\\
 &\geq 85 - 17^{3/2} +
 s_2^2 \left(s_2^2 + \frac{5780-26\cdot 2\cdot 17^{3/2} - 918 s_2^2 + 52 s_2^4 - s_2^6}{2\cdot 17^{3/2}}\right)\\
 &\geq 85 - 17^{3/2} +
 s_2^2 \left(s_2^2 + \frac{2000 - 918 s_2^2 + 52 s_2^4 - s_2^6}{2\cdot 17^{3/2}}\right)\\
 &\geq 85 - 17^{3/2}>\frac{59}{4},\quad \forall s_2\in[0,1].
\end{aligned}
$$

As for $j_4$, we rewrite it as $j_4(s_2,c)=g_4(1+c^2s_2^2)$, where
$$
g_4(s_1):=16 + 32 s_1 + 64 s_1^2 - 22 s_1^3 - 23 s_1^4 + 2 s_1^5 + s_1^6.
$$
We shall prove first that the points $(s_1,s_2)$ in $\mathcal{H}_1$ for which $D(s_2)<0$ are such that $0\leq s_1 < 1.6$. Then we show that $g_4$ is positive in this interval. Since $V(s_1,s_2)$ increases with $s_1\in [1,\bar s_1]$, decreases with $s_2\in[0,1]$ and $V(1.6,\hat s_2)>V(1.6,0.82)=0.012116305>0$, we see that $\max\{s_1:V(s_1,s_2)\leq 0, \forall 0\leq s_2\leq \hat s_2\}=\max\{s_1:V(s_1,\hat s_2)\leq 0\}<1.6$. Hence, it is sufficient to prove that $g_4(s_1)>0$ for every $s_1\in[1,1.6]$. We first see that $g_4''(v+1)=-210 - 444 v + 24 v^2 + 160 v^3 + 30 v^4<0$ for every $v\in[0,0.6]$, which means that $g_4(s_1)$ is a concave function on $[1,1.6]$. Using that $g_4(1)=70$ and $g_4(1.6)= 436624/15625>0$, we obtain $g_4(s_1)>0$ for every $s_1\in[1,1.6]$ as desired. Hence, (ii) follows.

To prove (iii), we first show that $j_{2,0}^2 - j_2^2>0$ on $\mathcal H_1$. Denote by $(l_2,c_1):=(s_2^2,c^2)$. Recall that $(s_2,c)\in [0,1] \times [0,\sqrt{17/14}]$. Hence $(l_2,c_1)\in [0,1] \times [0,17/14]$. We compute
$$
\begin{aligned}
256(j_{2,0}^2-j_2^2)&=\frac{905}{4} + 364 c_1 - 1728 c_1^2 + 1152 c_1^3 + 256 c_1^4 + (701 - 5180 c_1 + 7960 c_1^2  \\
&\quad\ + 704 c_1^3 + 256 c_1^4) l_2 + (1 + 248 c_1 + 832 c_1^3 + 64 c_1^4) l_2^2 - 1536 c_1^4 l_2^3 - 256 c_1^5 l_2^4\\
&=\frac{l_2(1 - l_2)}{2} \big(l_2(1 - l_2) j_{21} + 4 (1 - l_2)^2 j_{22} + 4 l_2^2 j_{23}\big) + (1 - l_2)^4 j_{24} + l_2^4 j_{25},
\end{aligned}
$$
where
$$\begin{aligned}
j_{21}&=6923 - 26216 c_1 + 27024 c_1^2 + 19712 c_1^3 + 4736 c_1^4,\\
j_{22}&=803 - 1862 c_1 + 524 c_1^2 + 2656 c_1^3 + 640 c_1^4,\\
j_{23}&=1505 - 6794 c_1 + 8484 c_1^2 + 4192 c1^3 + 192 c_1^4,\\
j_{24}&=905/4 + 364 c_1 - 1728 c_1^2 + 1152 c_1^3 + 256 c_1^4,\\
j_{25}&=3713/4 - 4568 c_1 + 6232 c_1^2 + 2688 c_1^3 - 960 c_1^4 - 256 c_1^5.
\end{aligned}$$
Let $v=14c_1/17\in[0,1]$. After manipulating these functions, we obtain
$$\begin{aligned}
j_{21}(v)&= \frac{1 - v}{49} \left(2 v (96528 - 580911 v + 1346231 v^2) + 12000 (3 - 10 v)^2  + 231227 (1 - v)^3\right)\\ &  + \frac{145322731}{2401} v^4,\\
j_{22}(v)&=\frac{v(1 - v)}{343} (326193 - 1061368 v + 1671464 v^2) + 803 (1 - v)^4 + \frac{13113085}{2401} v^4,\\
j_{23}(v)&=\frac{v(1 - v)}{343}(58359 - 325948 v + 3078524 v^2) + 100 (3 - 10 v)^2 + 605 (1 - v)^4 + \frac{21099315}{2401} v^4,\\
j_{24}(v)&=\frac{v^2}{9604} (4183326 - 8388128 v + 7465427 v^2) + 100v (3 - 5 v)^2 + 447v (1 - v)^3 + \frac{905}{4} (1 - v)^4,\\
j_{25}(v)&=\frac{v^2(1 - v)^2}{98} (125841 + 401512 v) + \frac{16643}{28} v(1 - v)^4 + \frac{67796525}{9604} v^4(1 - v)  + 100 (3 - 10 v)^2\\
&\quad\ + \frac{113}{4} (1 - v)^5 + \frac{115642367}{67228} v^5,\quad \forall v\in[0,1].
\end{aligned}$$
One can check that $j_{21},j_{22}, j_{23}, j_{24}$ and $j_{25}$ are positive functions on $[0,1]$. Therefore, $j_{2,0}^2-j_2^2>0$ for every $(s_2,c)\in[0,1]\times [0,\sqrt{17/14}]$. This proves (iii).

Finally, we prove (iv). From \eqref{equ: D} and the definition of $D_\pm$, it is sufficient to prove that
$$
d_+(s_2):=(1-s_2^2)(4+2s_2^2+s_2^4)\leq (1 - s_2^2)^{1/2}(17 - s_2^2)^{1/2}\leq d_-(s_2):=\frac{33}{8}-(2s_2^2+s_2^4),
$$
for every $s_2\in[0,1]$.
Taking the square of the non-negative expressions above, we see that
$$
\begin{aligned}
(1 - s_2^2)(17 - s_2^2)-d_+(s_2)^2&=(1 - s_2^4) (1 - 2 s_2^2 + 6 s_2^4 + 2 s_2^6 + s_2^8)>0,\\
d_-(s_2)^2-(1 - s_2^2)(17 - s_2^2)&=\frac{s_2^4}{4} (71 - 287 s_2^2 + 293 s_2^4) + \frac{3s_2^2}{4} (11 - 20 s_2^2)^2\\
&\quad\  +\frac{37s_2^2}{4} (1 - s_2^2)^3 + (1 - 2 s_2^2)^2>0,\quad \forall s_2\in[0,1].
\end{aligned}
$$
Item (iv) follows and the proof of Lemma \ref{lem: estimates of fi} is complete.
\end{proof}

Assume that $D(s_2)<0$. By Lemma \ref{lem: estimates of fi}, we obtain from \eqref{E1} the following estimates
\begin{equation}\label{equ: E2}
\begin{aligned}
16E_1 & = 16j_1c_{-1}-16c^3D\cdot W_3 \\ &\geq \frac{59}{4}\cdot  16c_{-1}-16c^3D\cdot W_3\\
&=\frac{59}{4}\big(j_0 - 4 c s_2^2 (1 + c^2 s_2^2)j_{2}\big) - 16 c^3 D\cdot j_3 j_{2} - 16 c^4 D\cdot j_4\\
&\geq \frac{59}{4}(j_0 - 4 c s_2^2 (1 + c^2 s_2^2)j_{2,0})  -  c^3 D_- \cdot j_3 j_{2,0} -  c^4D_+ \cdot j_4\\
&=:E_2(s_2,c),\quad (s_2,c)\in \Omega,
\end{aligned}
\end{equation}
where $j_{2,0}$ and $D_\pm$ are as in Lemma \ref{lem: estimates of fi}-(iii) and (iv).
The first inequality follows from Lemmas \ref{lem: c>0} and \ref{lem: estimates of fi}-(ii). The last inequality follows from Lemma \ref{lem: estimates of fi}-(iv) and the relations $16 c^3 D j_3 j_{2}\leq 16 c^3 D j_3 j_{2,0}\leq 16 c^3 D_- j_3 j_{2,0}$ on $\Omega$ due to Lemma \ref{lem: estimates of fi}-(i), (ii) and (iii). Finally, by Lemma \ref{lem: E2>0}, we have $16E_1(s_2,c)\geq E_2(s_2,c)>0,\forall (s_2,c)\in \Omega$. Notice that in coordinates $(s_2,c)$, $\Omega$ corresponds to the points in $\mathcal{H}_1$ lying in $[0,\hat s_2] \times [0,\sqrt{17/14}]$, or equivalently, $D\leq 0$. We conclude that $I_0>0$ on $\Omega$. As mentioned before, it follows from \eqref{equ: remove deg} that $I_0(s_2,c)=O(s_2^4)$ uniformly in $c$ near $s_2=0$. The proof of Lemma \ref{lem: I0>0} is completed.
\end{proof}

We still need to show that the positivity of sectional curvatures of $\mathcal M^e_{\mu,E}$ and $\mathcal M^e_{\mu,E}$ implies that both subsets are strictly convex. This local-to-global property is proved in the next section.

\subsection{Sectional curvatures and strict convexity}
In this section, we discuss the local-to-global relation between positive sectional curvatures and strict convexity of a topologically embedded three-sphere $M$ in $\R^4$, possibly with a finite set $S\subset M$ of singularities. In particular, $\R^4\setminus M$ bounds a compact set $B_M\subset \R^4$ and an unbounded open set $C_M=\R^4 \setminus B_M$. Assume that all sectional curvatures of $M\setminus S$ are positive. In particular, the principal curvatures of $ M\setminus S$ have the same sign, and we may assume they are all positive. If $M$ admits no singularity, i.e., $M$ is embedded, then \cite[Theorem 13.5]{Thorpe} implies that $M$ is strictly convex. This means that $B_M$ is convex and every hyperplane tangent to $M$ at a regular point has contact of order $1$, i.e., if a curve $\alpha(t)\in M \setminus S$ satisfies $\alpha'(0)\neq 0$, then $\alpha''(0)$ has a non-trivial projection to the normal direction to $M$ at $\alpha(0)$. The case $\# S =1$ was considered in \cite{Sa1}. Let us consider the case $\# S=2$.

\begin{prop}\label{prop_section curvature implies convexity}
Assume that $\#S =2$, i.e., $S=\{S_+, S_-\}$. If all sectional curvatures of $M\setminus S$ are positive, then $M$ is strictly convex.
\end{prop}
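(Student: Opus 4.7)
The strategy is to adapt the $\#S=1$ argument of \cite{Sa1} to two singular points, taking advantage of the fact that $M\setminus S$ remains connected (removing two points from a smooth $3$-manifold does not disconnect it) and that the Gauss map is a local diffeomorphism on all of $M\setminus S$.

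First I would record the local convexity picture. For a hypersurface in $\R^4$, the Gauss equation in dimension three shows that positivity of the sectional curvatures at a regular point is equivalent to the second fundamental form being definite there. Fix a continuous unit normal $\nu: M\setminus S\to S^3$ pointing into the bounded region $B_M$ (globally consistent by connectedness of $M\setminus S$); then the second fundamental form is positive definite with respect to $\nu$. Consequently, each $p\in M\setminus S$ admits a neighborhood $U\subset M$ with $U\setminus\{p\}$ contained in the open half-space $H_p^+$ bounded by $T_pM$ on the side of $\nu(p)$, and the Gauss map is a local diffeomorphism.

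Next I would globalize: for every regular $p$ the whole of $M$ lies in the closed half-space $\overline{H_p^+}$. Fix $p\in M\setminus S$ and set $\phi_p(q):=\langle q-p,\nu(p)\rangle$. By Step 1, $\phi_p\ge 0$ in a neighborhood of $p$. Assume for contradiction that $\phi_p$ attains a strictly negative minimum at some $q_*\in M$. If $q_*\in M\setminus S$, then $T_{q_*}M\parallel T_pM$ and local strict convexity at $q_*$ together with $q_*$ being a minimum forces $\nu(q_*)=-\nu(p)$. Joining $p$ to $q_*$ by a continuous path $\gamma$ in the connected set $M\setminus S$ and applying the intermediate value theorem to $t\mapsto\langle\nu(\gamma(t)),\nu(p)\rangle$ yields an intermediate regular point $p'$ with $\nu(p)\in T_{p'}M$; a localization at $p'$ using positive definiteness of the second fundamental form there provides points of $M$ on both sides of $T_{p'}M$ arbitrarily close to $p'$, contradicting local convexity at $p'$. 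Once $M\subset\overline{H_p^+}$ for every regular $p$, the bounded region $B_M$ equals the intersection of these half-spaces and is therefore convex, and strict convexity at each regular point is inherited directly from the local picture of Step 1.

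The main obstacle is ruling out the case $q_*\in S$ in the argument above, since at a singular point $M$ has neither tangent hyperplane nor second fundamental form. I would handle this by a density/perturbation argument: the set of directions $v\in S^3$ for which the support function $q\mapsto\langle q,v\rangle$ on $M$ attains its maximum only at singular points is contained in the union over $\alpha=\pm$ of the polar (tangent) cones of $M$ at $S_\alpha$; since there are only two such singularities and each tangent cone has empty interior in $S^3$ (this is where the specific cone structure of $S_\pm$ as saddle--center singularities enters, together with the positive-curvature hypothesis on $M\setminus S$), these ``bad'' directions form a closed nowhere-dense subset of $S^3$. Perturbing $\nu(p)$ slightly to avoid this set reduces the analysis to the regular case and closes the argument. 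An alternative route, if the cone analysis is cumbersome, is to approximate $M$ by a smooth $C^2$ sphere $M_\epsilon$ that agrees with $M$ outside small neighborhoods of $S_\pm$ and retains positive sectional curvatures (possible because the singularities are of the saddle--center type described in Theorem~\ref{thm_alphaJ}), apply Thorpe's theorem to each $M_\epsilon$, and pass to the limit, convexity being preserved under Hausdorff limits.
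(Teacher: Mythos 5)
Your first step (positive sectional curvature at a regular point is equivalent to a definite second fundamental form, and a consistent co-orientation exists by connectedness of $M\setminus S$) agrees with the paper's. The globalization step, however, contains a genuine gap. You want to show $\phi_p(q)=\langle q-p,\nu(p)\rangle\ge 0$ on $M$ for every regular $p$, and you argue that a strictly negative minimum at a regular $q_*$ leads, via the intermediate value theorem along a path, to a point $p'$ with $\nu(p)\in T_{p'}M$ at which ``positive definiteness of the second fundamental form provides points of $M$ on both sides of $T_{p'}M$.'' That assertion is the negation of what positive definiteness gives: the second fundamental form being definite at $p'$ means precisely that $M$ lies locally on \emph{one} side of $T_{p'}M$, so no contradiction arises. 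Indeed, on any ovaloid there is an entire ``equator'' of points whose normal is orthogonal to a fixed $\nu(p)$, and nothing is wrong there. (As a side remark, at a negative minimum of $\phi_p$ local convexity forces $\nu(q_*)=+\nu(p)$, not $-\nu(p)$.) What you are trying to prove in this step --- that every tangent hyperplane of a compact, connected, locally strictly convex hypersurface is a supporting hyperplane --- is essentially the Hadamard--van Heijenoort theorem and requires a genuinely global argument; it does not follow from the IVT sketch. Your treatment of the singular points is also incomplete: both the claim that the tangent cones at $S_\pm$ have empty interior and the existence of a positively curved smoothing near $S_\pm$ presuppose local information at the singularities (in effect, local convexity there) that has not been established.

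For comparison, the paper sidesteps the supporting-hyperplane formulation entirely. After the local step (anchored by a large tangent sphere at a regular point, which fixes the correct side once and for all), it proves convexity of $B_M$ chord by chord: for $x\in M\setminus S$, the two lines through $x$ and $S_\pm$ meet $M$ in an at most countable set $G$, so $M\setminus G$ is connected; joining $x$ to $y$ by a path $\gamma$ in $M\setminus G$, every segment $x\gamma(t)$ avoids $S$, and the set of $t$ with $x\gamma(t)\subset B_M$ is open and closed in $[0,1]$ (openness uses that such a segment must be transverse to $M$ at its endpoints, by strict local convexity). If you wish to keep your route, you would need to invoke or reprove a van Heijenoort-type theorem on the regular part and then supply a genuine argument at $S_\pm$; otherwise the chord argument is the more economical repair.
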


\begin{proof}
The convex hull of $S$ is a line segment. Hence, we find a sufficiently large sphere $S_R$ which contains $M$ in its interior and is tangent to $M$ at a regular point $p_0\in M\setminus S$. We can choose a continuous normal vector $N$ along $M\setminus S$ pointing towards $C_M$. In a rectangular system of coordinates near $p_0$, $M$ is locally the graph of a strictly convex function $f_{p_0}$ defined near the origin $0\in T_{p_0}M$, with positive values in the direction of $-N$, and so that $B_M$ corresponds to points above the graph of $f_{p_0}$. This is true since the same holds for $S_R$. This property is locally preserved and thus it must hold for every point in $M\setminus S$ since all sectional curvatures of $M\setminus S$ are positive and $M\setminus S$ is connected.

To prove that $B_M$ is convex, it is enough to show that given $x,y\in M$, the line segment $xy$ connecting $x$ to $y$ is contained in $B_M$. Fix $x\in M \setminus S$. Then $xy \subset B_M$ for every $y$ sufficiently close to $x$ by the local property of $M$ near $x$ proved before. Now notice that the two lines connecting $x$ to $S_+$ and $S_-$ intersect $M$ in at most a countable subset $G\subset M$. This follows from the fact that $M\setminus S$ has positive curvature. It follows that $M\setminus G$ is connected, and we can take a continuous curve $\gamma(t)\in M\setminus G, t\in [0,1],$ connecting $x$ to $y$. Then $x\gamma(t)$ does not intersect $S$. We claim that for each $t$, the line segment $x\gamma(t)$ is contained in $B_M$. In fact, we show that the set $I\subset [0,1]$ of $t$ for which this property holds is open and closed in $[0,1]$. Notice that $I$ is clearly closed in $[0,1]$ since $B_M$ is compact. It must also be open since $x\gamma(t), t\in I,$ must be transverse to $M$ at both $x$ and $\gamma(t)$ since otherwise, $x\gamma(t)$ contains points in $C_M$, a contradiction. Hence $xy\subset B_M$. The other cases for $x$ and $y$ follow from continuation.  Hence, $B_M$ is convex. Since the curvatures are positive, the intersection of each hyperplane tangent to $M\setminus S$ with $S$ is a single point, and the contact has order $1$.
\end{proof}

Theorems \ref{thm: convexity formula} and \ref{thm: Copenhagen problem} imply that all sectional curvatures of $\mathcal{M}_{1/2,E}^e$ and $\mathcal{M}_{1/2,E}^m$ are positive for every $E\leq -2$. This local condition combined with Proposition \ref{prop_section curvature implies convexity} implies that both $\mathcal{M}_{1/2,E}^e$ and $\mathcal{M}_{1/2,E}^m$ are strictly convex for every $E\leq -2$. This completes the proof of Theorem \ref{thm_3bp2}.

\section{Proof of Theorem \ref{thm_Birkhoff_conj}}

 First we claim that for $(\mu,E)$ sufficiently close to $(-1/2,-2)=(-1/2,L_1(1/2))$, with $E< L_1(\mu)$, the $\R P^3$-components $\mathcal{M}_{\mu,E}^e$ and $\mathcal{M}^m_{\mu,E}$ are dynamically convex. Indeed, we know from Theorem \ref{thm_muRSl1} that there exists a neighborhood $\U_3\subset \R^2 \times (\R \times \R / 2\pi \Z)$ of the singularities $S_\pm(1/2)$ corresponding to $l_1(1/2)$ so that, for $(\mu,E)$ sufficiently close to $(1/2,-2)$ and any contractible periodic orbit $P'\subset \mathcal{M}_{\mu,E}^e \cup \mathcal{M}_{\mu,E}^m$, that is not a cover of the Lyapunov orbit near $l_1(\mu)$ and intersects $\U_3$, the Conley-Zehnder index of $P'$ is $>3$. Because $\mathcal{M}_{1/2,-2}^e$ and $\mathcal{M}_{1/2,-2}^m$ are strictly convex, we also know that for $(\mu,E)$ sufficiently close to $(1/2,-2)$, with $E<L_1(\mu)$, any periodic orbit in $(\mathcal{M}_{\mu,E}^e \cup \mathcal{M}_{\mu,E}^m) \setminus \U_3$ has index $\geq 3$.

 Now since $\mathcal{M}_{1/2,E}^e$ and $\mathcal{M}_{1/2,E}^m$ are strictly convex for every $E< -2=L_1(1/2)$, we find an open neighborhood $\V \subset \R^2$ of $\{1/2\} \times (-\infty,-2)$ so that both $\mathcal{M}_{\mu,E}^e$ and $\mathcal{M}_{\mu,E}^m$ are strictly convex for $(\mu,E) \in \V$. For $E \ll 0$, the neighborhood can be taken uniform since both $\mathcal{M}_{\mu,E}^e$ and $\mathcal{M}_{\mu,E}^m$ are uniformly strictly convex. We conclude that there exists $\epsilon_0>0$ so that for every $|\mu-1/2| < \epsilon_0$ and $E<L_1(\mu)$, both $\mathcal{M}_{\mu,E}^e$ and $\mathcal{M}_{\mu,E}^m$ are dynamically convex. This proves (i). Items (ii) and (iii) directly follow from the main results in \cite{hryn2, HLS, HS2, HSW}.

\appendix

\section{\texorpdfstring{Non-negative paths in $\mathrm{Sp}(2n)$}{Non-negative paths in Sp(2n)}}\label{sec: positive paths}
In this section, we prove the following statement, which is used to obtain a lower bound on the index of periodic orbits of Hamiltonians with a magnetic term in the kinetic energy.

\begin{prop}\label{prop_non_negative_path}Given $M \in \mathrm{Sp}(2n)$, there exists a smooth path $\beta:[0,1] \to \mathrm{Sp}(2n)$ with $\beta(0) = I_{2n}, \beta(1) = M$ and $-J\dot \beta(t) \beta(t)^{-1} \geq 0,\ \forall t\in [0,1]$. Moreover, $n \leq \mu_{\text{RS}}(\beta) \leq 2n.$
\end{prop}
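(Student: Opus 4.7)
Plan. I would build $\beta$ as a smooth concatenation of two or three explicit non-negative arcs, exploit polar decomposition in $\mathrm{Sp}(2n)$ to control the endpoint, and then read off the index bounds from formula \eqref{equ: Robbin-Salamon index}. The cone of non-negative generators
$$ \mathcal{C}_+ := \{ JS \in \mathfrak{sp}(2n) \;:\; S=S^T \geq 0 \} $$
is convex with non-empty interior and spans the vector space $\mathfrak{sp}(2n)$ as a whole (any symmetric $S$ decomposes as $(S+cI)-cI$ with both summands positive for $c$ large). Since $\mathrm{Sp}(2n)$ is connected, a standard accessibility argument for the Lie wedge $\mathcal{C}_+$ shows that every $M\in\mathrm{Sp}(2n)$ is reachable from $I$ by a path whose derivative lies in $\mathcal{C}_+$.

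To make this explicit and to control the index, I would use the polar decomposition $M=O\cdot P$ with $O\in U(n)=\mathrm{Sp}(2n)\cap O(2n)$ and $P=\sqrt{M^TM}$ symmetric positive-definite symplectic. Identifying $\R^{2n}\cong\C^n$ so that $J$ becomes multiplication by $i$, I can write $O=e^{JH_O}$ with $H_O=H_O^T\geq 0$, $[J,H_O]=0$ (choose all complex arguments in $[0,2\pi)$); the arc $\gamma_O(t)=e^{tJH_O}$ then satisfies $-J\dot\gamma_O\gamma_O^{-1}=H_O\geq 0$. The positive-definite factor $P$ cannot be a single exponential with non-negative generator (if $S\geq 0$ then $JS$ has pure imaginary spectrum, so $e^{JS}$ stays elliptic, while $P$ has eigenvalue pairs $(\lambda,\lambda^{-1})$ with $\lambda>0$). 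I would overcome this by using the symplectic KAK (Cartan) decomposition of $P$ and factoring it as $P=e^{JS_1}\cdot e^{JS_2}$ with $S_1,S_2>0$; blockwise this reduces to the elementary $\mathrm{SL}(2,\R)$ fact that any diagonal $\mathrm{diag}(\lambda,\lambda^{-1})$ is a product of two elliptic symplectic exponentials with positive-definite $S$ (and a possible pre-multiplication by $-I=e^{J\pi I}$, also a non-negative exponential). Smoothly concatenating $\gamma_O$ with $e^{tJS_1}$ and $e^{tJS_2}$ via bump-function transitions keeps $S(t)\geq 0$ throughout, because convex combinations of elements of $\mathcal{C}_+$ remain in $\mathcal{C}_+$.

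For the index bounds I would reason from the crossing-form formula \eqref{equ: Robbin-Salamon index}. Arrange that $S(0)>0$ strictly (post-composing with a short initial arc $e^{tJI}$ if $H_O$ has a kernel). The boundary crossing at $t=0$ has kernel $\ker(I-\beta(0))=\R^{2n}$ and crossing form $\Gamma(0)=S(0)$, contributing $\frac{1}{2}\mathrm{Sign}\,S(0)=n$; because $S(t)\geq 0$ forces every other crossing form to be positive semi-definite, all other contributions are $\geq 0$, giving $\mu_{\mathrm{RS}}(\beta)\geq n$. For the upper bound, the point is to arrange that $\beta(t)$ has no eigenvalue $1$ for $t\in(0,1)$; with $\|H_O\|,\|S_1\|,\|S_2\|$ chosen small enough, the image $\beta(t)$ stays away from the crossing hypersurface $\{\det(I-\cdot)=0\}$ in the interior (the elementary comparison: the earliest interior crossing of $e^{tJS}$ occurs at $t=2\pi/\mu_{\max}$ where $\pm i\mu_{\max}$ is the largest purely imaginary eigenvalue of $JS$). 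Then only the endpoint at $t=1$ remains, and its contribution is $\frac{1}{2}\mathrm{Sign}(\Gamma(1))\leq \frac{1}{2}\dim\ker(I-M)\leq n$, so $\mu_{\mathrm{RS}}(\beta)\leq 2n$.

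The main obstacle is the second step: producing the factorization $P=e^{JS_1}e^{JS_2}$ with $S_1,S_2>0$ while simultaneously keeping each exponential's rotation angle small enough to avoid interior crossings of the concatenated path. Since $P$ may be arbitrarily far from $I$ while each exponential must remain elliptic of controlled size, the two elliptic factors must combine their non-commuting effects to yield a large hyperbolic displacement; this is delicate in higher dimension, but by applying the symplectic block-diagonalization $P=U\Lambda U^{-1}$ and treating each $(\lambda_i,\lambda_i^{-1})$-block separately before reassembling via conjugation by the unitary symplectic $U$, the construction reduces to the $\mathrm{SL}(2,\R)$ case, which can be done by hand with explicit bounds on the rotation angles ensuring crossing-freeness on $(0,1)$.
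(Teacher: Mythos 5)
Your route (polar decomposition $M=OP$, a unitary arc $e^{tJH_O}$, and a two‑elliptic‑factor splitting of the positive‑definite part) is genuinely different from the paper's, which instead conjugates $M$ into Long's normal forms $R(\theta)$, $D(\lambda)$, $D(\rho,\theta)$ and concatenates explicit arcs of the form $t\mapsto A\,R(at)$, for which $-J\dot\beta\beta^{-1}=a(AA^T)^{-1}>0$ automatically. Your plan could in principle work, but as written it has two gaps. The first you acknowledge yourself: the factorization $P=e^{JS_1}e^{JS_2}$ with $S_1,S_2>0$ is only asserted. It is true (it is essentially the same $\mathrm{SL}(2,\R)$ computation the paper performs when it writes $D(\lambda)$ as a product of conjugated rotations), but until it is carried out with explicit angles your construction of $\beta$ is incomplete. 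Note also that your opening ``accessibility'' remark is not a valid shortcut: a cone that merely spans the Lie algebra need not have reachable semigroup equal to the whole group (a proper wedge in an abelian Lie algebra already fails this), so reachability here really does require the explicit construction.

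The second gap is a genuine error in the upper bound. You propose to arrange that $\beta(t)$ has no interior crossing by taking $\|H_O\|,\|S_1\|,\|S_2\|$ small. This is impossible whenever $M$ has an eigenvalue on the positive real axis away from $1$ (e.g.\ $M=D(\lambda)$, $\lambda>1$): along any non‑negative path starting at $I$ the eigenvalues leave $1$ onto the unit circle, and since $\det\beta(t)=1$ they can only reach the positive real axis by colliding at $+1$ and exiting through it, which is precisely an interior crossing of $\{\det(I-\cdot)=0\}$; no choice of norms in your factorization can prevent this. (Indeed the paper's own path for $D(\lambda)$ has exactly one such interior crossing, contributing $+1$, which is why its block index is $2$ rather than $1$.) So your argument only yields $\mu_{\mathrm{RS}}(\beta)\le n+\tfrac12\dim\ker(I-M)$ plus the unaccounted interior contributions; to recover $\mu_{\mathrm{RS}}(\beta)\le 2n$ you must bound the number and signature of these unavoidable interior crossings, which amounts to controlling the total rotation of each factor — exactly the bookkeeping the paper's normal‑form construction is designed to make transparent. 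The lower bound $\mu_{\mathrm{RS}}(\beta)\ge n$ via the initial crossing form $S(0)>0$ and non‑negativity of all other crossing forms is fine, modulo the routine remark that degenerate crossings must be regularized without losing positivity.
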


\begin{proof} Define $M_1\diamond M_2$ as the $(n_1+n_2)\times (n_1+n_2)$ matrix
$$M_1\diamond M_2=\begin{pmatrix}
A_1 & 0 & B_1 & 0 \\
0 & A_2 & 0 & B_2 \\
C_1 & 0 & D_1 & 0 \\
0 & C_2 & 0 & D_2
\end{pmatrix},\quad \mbox{ where} \quad  M_i=\begin{pmatrix}A_i & B_i \\ C_i & D_i \end{pmatrix}_{n_i\times n_i},\ i=1,2.$$
Since $M$ is generically semi-simple (diagonalizable), Theorem 1.7.3 from \cite{Long02} implies the existence of $P\in\mathrm{Sp}(2n)$ such that
$M=P(N_1\diamond \cdots\diamond N_k)P^{-1},$
where $N_i\in\mathrm{Sp}(2n_i), \sum_{i=1}^kn_i=n$, is one of the following matrices in normal form
$$
R(\theta)=\begin{pmatrix} \cos\theta & -\sin\theta \\ \sin\theta & \cos\theta \end{pmatrix},\quad
D(\lambda)=\begin{pmatrix} \lambda & 0 \\ 0 & \lambda^{-1} \end{pmatrix},\quad
D(\rho,\theta)=\begin{pmatrix} \rho R(\theta) & 0 \\ 0 & \rho^{-1}R(\theta) \end{pmatrix},
$$
with $\theta\in \R/2\pi\Z$, $\lambda\in\mathbb{R}\setminus\{0\}$ and $\rho>0$.
If $N_i=R(\theta),\ 0<\theta\leq 2\pi$, we define
$N_{i,t}=R(t\theta),t\in[0,1].$
If $N_i=D(\lambda)$, we first define
$$N_{i,t}=\left\{\begin{aligned}
&D(\lambda)R(\pi+\pi t)\quad &\text{if}\ \lambda>0,\\
&D(-\lambda)R(t\pi)\quad\quad &\text{if}\ \lambda<0,
\end{aligned}\right.\quad \forall t\in[1/2,1].$$
In particular, the eigenvalues of $N_{i,1/2}$ are $\{i,-i\}$. Then there exists $Q_\pm\in\mathrm{Sp}(2)$ such that
$N_{i,1/2}=Q_\pm(\mp J)Q_\pm^{-1}$ with $\lambda=\pm|\lambda|$. We further define
$$N_{i,t}=\left\{\begin{aligned}
&Q_+ R(3\pi t)Q_+^{-1}\quad &\text{if}\ \lambda>0,\\
&Q_- R(\pi t)Q_-^{-1}\quad  &\text{if}\ \lambda<0,
\end{aligned}\right.\quad \forall t\in[0,1/2].$$
If $N_i=D(\rho,\theta),\theta\in[0,2\pi)$, we first define
$$N_{i,t}=\begin{pmatrix}
\rho I_2 & 0 \\ 0 & \rho^{-1}I_2
\end{pmatrix}\begin{pmatrix}
R((2t-1)\theta) & 0 \\ 0 & R((2t-1)\theta)
\end{pmatrix},
\quad \forall t\in[1/2,1],$$
where $N_{i,1/2}=D(\rho)\diamond D(\rho)=:D(\rho)^{\diamond 2}$. Then we define
$$
N_{i,t}=\left\{\begin{aligned}
& (D(\rho)R((2t+1)\pi))^{\diamond 2},\quad \text{if}\ t\in[1/4,1/2],\\
& (QR(6\pi t)Q^{-1})^{\diamond 2}, \quad\quad\quad  \text{if}\ t\in[0,1/4],
\end{aligned}\right.
$$
where $Q\in \mathrm{Sp}(2)$ satisfies $D(\rho)R(3\pi/2)=Q(-J)Q^{-1}$.

Since $R(at)$ satisfies $-J\frac{d}{dt} R(at) R(at)^{-1}=-aJ\cdot JR(at)R(-at)=aI_2>0$ for any $a>0$, we conclude that $N_{i,t}$ is a piecewise positive regular path. The Robbin-Salamon index satisfies $1\leq\mu_{\text{RS}}(N_{i,t}|_{t\in[0,1]})\leq 2$ for $N_{i,1}=R(\theta),D(\lambda)$ and $2\leq \mu_{\text{RS}}(N_{i,t}|_{t\in[0,1]})\leq 4$ for $N_{i,1}=D(\rho,\theta)$. After reparametrizing each path  $N_{i,t}, t\in [0,1],$ at $t=1/4,1/2$ by slowing down the parametrization at these points, and perturbing the end at $t=1$, see \cite{Long02}, we obtain a smooth non-negative regular path $\beta(t):=P(N_{1,t} \diamond \cdots \diamond N_{k,t}) P^{-1}, t\in [0,1],$ from $I_{2n}$ to $M$ satisfying $-J \dot \beta(t) \beta(t)^{-1} \geq 0$ and whose Robbin-Salamon index satisfies $n\leq \mu_{\text{RS}}(\beta)\leq 2n$.
\end{proof}

\section{Supplementary proofs}\label{sec: E2>0}
This section is devoted to the proof of the following lemma.

\begin{lem}\label{lem: E2>0}
The function $E_2=E_2(s_2,c)$, given in \eqref{equ: E2}, is positive on $[0,1]\times [0,\sqrt{17/14}]$.
\end{lem}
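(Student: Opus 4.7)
The plan is to establish positivity of $E_2$ on the compact rectangle $[0,1]\times[0,\sqrt{17/14}]$ by an explicit algebraic decomposition, in the spirit of the proof of Lemma \ref{lem: estimates of fi}(iii). After substituting the closed forms of $j_0, j_{2,0}, j_3, j_4, D_-, D_+$ from Lemma \ref{lem: estimates of fi} and the display preceding it, $E_2(s_2,c)$ becomes an explicit polynomial in $s_2$ and $c$ of total degree roughly $36$. To keep the expressions manageable I would work in the normalized variables $l := s_2^2 \in [0,1]$ and $v := 14c^2/17 \in [0,1]$, keeping a single odd factor of $c \in [0,\sqrt{17/14}]$ separately so that the coefficients become bona-fide polynomials in $(l,v)$. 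This mirrors exactly the change of coordinates used to prove $j_{2,0}^2 - j_2^2 > 0$ in Lemma \ref{lem: estimates of fi}(iii).

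I would then handle the easy boundary slices first: at $l=0$ one has $E_2(0,c) = \tfrac{413}{4} - 238\,c^3 + 1190\,c^4$, and completing the square in $c^{3/2}$ (or minimising the cubic-quartic directly) shows this is positive on $[0,\sqrt{17/14}]$; at $c=0$ one gets $E_2(s_2,0) = \tfrac{59}{4}j_0(s_2,0) = \tfrac{413}{4} > 0$. For the interior, I would organise the three contributions to $E_2$ according to sign. By Lemma \ref{lem: estimates of fi}(i), $j_3 < 0$ on the rectangle, so $-c^3 D_- j_3 j_{2,0}$ is already $\ge 0$ wherever $D_- \ge 0$; by Lemma \ref{lem: estimates of fi}(ii), $j_4 > 0$, so $-c^4 D_+ j_4$ is $\ge 0$ wherever $D_+ \le 0$. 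Thus the only subregion where two of the three terms could simultaneously contribute negatively is the thin strip where $D_-(l) = -17 + \tfrac{47}{2}l + 4l^3$ becomes positive (roughly $l \gtrsim 0.72$); there the dominant $\tfrac{59}{4}(j_0 - 4cs_2^2(1+c^2s_2^2)j_{2,0})$ must absorb the negativity.

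The core of the proof is then to write $E_2$, after expansion and clearing of denominators, as a sum of products of explicitly non-negative building blocks of the form
\begin{equation*}
l^{a}(1-l)^{b}\,v^{c}(1-v)^{d}\,\bigl(\text{polynomial in }(l,v)\text{ visibly }\ge 0\bigr),
\end{equation*}
plus a small number of squared linear combinations $(\alpha + \beta l + \gamma v + \delta c)^2$ designed to cancel the mixed-degree cross terms arising from the expansion of $c^3 D_- j_3 j_{2,0}$. A useful preliminary inequality is the quantitative lower bound $j_{2,0} \le \tfrac{169}{32}$ together with $j_0 \ge 7 - C(l,v)$ for an explicit $C$, which quantifies the margin in the $\tfrac{59}{4}j_0$ term. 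Once such a decomposition is written down, verification reduces to multiplying out both sides and matching coefficients, exactly as with the auxiliary polynomials $j_{21},\dots,j_{25}$ in Lemma \ref{lem: estimates of fi}(iii).

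The main obstacle is purely algorithmic: finding the right SOS/Positivstellensatz certificate for a polynomial of this degree is tedious, and the decomposition displayed in the final write-up will occupy substantial space. The existence of such a certificate is guaranteed by Schm\"udgen's Positivstellensatz applied to the compact semialgebraic set $\{0\le l\le 1,\ 0\le v\le 1\}$, but in practice one locates the coefficients either with a Gr\"obner-basis-guided search or a numerical SDP solver and then verifies the identity by direct polynomial expansion. No conceptual new input is required beyond what already appears in the proofs of Lemmas \ref{lem: estimates of fi} and \ref{lem: I0>0}.
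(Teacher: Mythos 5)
Your proposal correctly identifies the nature of the task (certifying positivity of an explicit polynomial-type expression on a compact box by decomposing it into visibly non-negative pieces, in the style of Lemma \ref{lem: estimates of fi}), but it does not actually carry out the decomposition: the entire mathematical content of the lemma \emph{is} the explicit certificate, and you defer exactly that to ``a Gr\"obner-basis-guided search or a numerical SDP solver.'' Invoking Schm\"udgen's Positivstellensatz only tells you that a certificate exists \emph{if} $E_2>0$, so it cannot substitute for producing one. The paper's proof supplies the certificate in full: it expands $64E_2$ as $E_3\,l_2^3(1-l_2)$ plus terms $h_1,\dots,h_6$ times explicit non-negative monomial weights plus manifestly non-negative remainders, verifies $h_1,\dots,h_6>0$ by hand via substitutions such as $\beta=6/5-c$, and then reduces to the quadratic-in-$l_2$ polynomial $E_3=k_1l_2^2+k_2l_2+k_3$. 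There it uses two structured (non-SOS) devices you do not anticipate: the interpolation identity $E_3-k_3(1-l_2)^2-E_3(1,c)l_2^2=l_2(1-l_2)k_5(c)$ combined with monotonicity of $k_5$ to handle $c\in[0,4/5]$, and convexity in $l_2$ together with positivity of the discriminant combination $D_0=4k_1k_3-k_2^2$ (bounded below by an integer-coefficient polynomial $D_1(v)$) to handle $c\in[4/5,6/5]$.

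There are also concrete errors in the parts you did compute. At $s_2=0$ one has $D_-(0)=D_+(0)=-17$, $j_3(0,c)=-14$, $j_4(0,c)=70$, so $E_2(0,c)=\tfrac{413}{4}-238\,c^3\,j_{2,0}(0,c)+1190\,c^4$; you dropped the factor $j_{2,0}(0,c)=\tfrac{189}{32}-c^2(\tfrac{11}{4}+c^2)$, which changes the cubic coefficient from $-238$ to roughly $-1405$ near $c=0$ (positivity still holds, but not by the computation you wrote). Your sign bookkeeping for the ``dangerous region'' is also reversed: since $D_-\leq D_+$, the term $-c^3D_-j_3j_{2,0}$ is negative precisely where $D_-<0$ and $-c^4D_+j_4$ is negative precisely where $D_+>0$, so both correction terms are simultaneously negative on the strip where $D_-<0<D_+$, not ``where $D_-$ becomes positive.'' These slips, together with the absence of the actual decomposition, mean the proposal does not yet constitute a proof.
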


\begin{proof}
Let $l_2:=s_2^2\in [0,1]$, and rewrite $E_2$ as
\begin{equation}\label{equ: E2 1}
\begin{aligned}
64E_2(l_2,c) & = E_3 l_2^3 (1 - l_2) + h_1 l_2^7 (1 - l_2) + h_2 l_2^7 + h_3 l_2^6 + h_4 l_2^2 (1 - l_2)^4 + h_5 l_2 (1 - l_2)^5 \\
& + h_6 (1 - l_2)^6 + 10000 c^4 \big(3 - 5 l_2)^2 (31(4 - 5 c)^2 l_2^4/50 + (3 - 4c)^2 (1 - l_2) l_2^2 \\
& + (1 - 2 c)^4 (1 -l_2) l_2  + c^4 l_2^4\big) + 80 c^9 l_2^6 (1 - l_2) +  256 c^{16} l_2^8 (1 - l_2^2),
\end{aligned}
\end{equation}
for every $(l_2,c)\in [0,1] \times [0,\sqrt{17/14}]$, where
\begin{align*}
E_3(l_2,c)&:=k_1(c)l_2^2+k_2(c)l_2+k_3(c),\\
h_1(l_2,c)&:=16 c^{11} (1 + 32 c - 4 c^2 + 256 c^3 - 8 c^4 + 32 c^5 + 16 c^3 (8 + c^2) l_2)>0,\\
h_2(c)&:=8 c^{10} (2368 - 151 c - 128 c^2 + 160 c^3 - 512 c^4 + 48 c^5 - 256 c^6)>0,\\
h_3(c)&:=6608 - 22538 c + 37760 c^2 + 43056 c^3 - 471552 c^4 + 1083859 c^5 - 667488 c^6\\
&\quad\ - 22371 c^7 + 53248 c^8 - 57015 c^9 + 52096 c^{10} - 1459 c^{11} - 896 c^{12} + 5440 c^{13} \\
&\quad\ - 3584 c^{14} + 624 c^{15} + 1088 c^{16},\\
h_4(c)&:=99120 - 111746 c + 188800 c^2 - 709972 c^3 - 272752 c^4 + 2377365 c^5 \\
&\quad\ - 2809728 c^6 + 2257746 c^7 - 1074240 c^8 + 129136 c^9 + 7616 c^{11},\\
h_5(c)&:=39648 - 22302 c + 37760 c^2 - 407878 c^3 + 259440 c^4 + 725190 c^5 \\
&\quad\ - 2140416 c^6 + 3019152 c^7 - 1440000 c^8 + 27200 c^9,\\
h_6(c)&:=6608 - 89964 c^3 + 76160 c^4 + 34272 c^5 + 15232 c^7.
\end{align*}
and
$$
\begin{aligned}
k_1(c)&:=72688 - 111982 c + 188800 c^2 - 349002 c^3 + 1552416 c^4 - 3429694 c^5 + 2538200 c^6\\
&\quad\  - 768657 c^7 + 642128 c^8 + 53009 c^9 + 33152 c^{10} + 4267 c^{11} - 896 c^{12} + 3784 c^{13}\\
&\quad\  + 8704 c^{14} - 128 c^{15},\\
k_2(c)&:=-165200 + 223492 c - 377600 c^2 + 991280 c^3 - 1742336 c^4 + 3920764 c^5, \\
&\quad\ - 5570360 c^6  + 5195743 c^7 - 2587792 c^8 - 319833 c^9 + 33152 c^{10} - 17470 c^{11}\\
&\quad\ + 2176 c^{12} - 3088 c^{13} - 1088 c^{15},\\
k_3(c)&:=132160 - 223964 c + 377600 c^2 - 539456 c^3 + 779872 c^4 - 2021770 c^5 + 3175040 c^6, \\
&\quad - 3205287 c^7 + 1464320 c^8 + 236090 c^9 - 80512 c^{10} + 11504 c^{11} - 5440 c^{13}.
\end{aligned}
$$

The functions $h_1(l_2,c)$ and $h_2(c)$ are clearly positive for $0\leq l_2\leq 1$ and $0\leq c\leq \sqrt{17/14}$. Let $\beta:=6/5-c.$. Then $\beta>0$ for every $c\in[0,\sqrt{17/14}]\subset [0,6/5]$. We rewrite the expressions for $h_3, h_4, h_5$ and $h_6$ to obtain
\begin{align*}
h_3&= \bigg(\frac{17638696}{625} +\frac{5496536}{125} c + \frac{4054798}{25} c^2  + \left(\frac{487824}{5} + 634791 \beta \right) c^3\bigg) (\frac{2}{5} - c)^2\beta\\
&\quad\ \frac{1799104}{3125} c\beta + \frac{2 (46460648 - 2460625 c)}{78125} + (332348 + 145849 c + 50716 c^2) c^4 (1 - c)^4 \\
&\quad\ + (1380 + 1138 c) c^{10} (1 - c)^2 + 163 c^{11} + (3584 \beta + \frac{6}{5}) c^{13} + 624 c^{15} + 1088 c^{16},\\
h_4&=c(\frac{7}{10} - c)^2 \bigg(\frac{32498403269}{125000} + \beta\bigg(\frac{55874294}{3125} + \frac{183617569}{250} c + \frac{114118354}{125} c^2 + \frac{1436406}{25} c^2 \beta \\
&\quad\ + \left(\frac{2917616}{5} + 129136 \beta \right) c^4 \bigg)\bigg) + \left(99120 - \frac{3120663099669}{12500000}c + \frac{99820440049}{625000}c^2\right) + 7616 c^{11},\\
h_5&=c (\frac{13}{20} - c)^2 \bigg(\frac{3703539883}{40000} + \beta\bigg(\frac{92744181}{1000} + \frac{52681943}{100} c + \frac{3077836}{5} c^2  + 503940 c^3 \\
&\quad\ + 1404640 c^4\bigg) + 27200 c^6\bigg)  + \frac{22592124459}{80000000} + \frac{489006553 c}{3200000}  +\frac{3744646701}{50000} (\frac{29}{40}-c)^2,\\
h_6&= c^2 (\frac{3}{5} - c)^2 \left(\frac{16305856}{125} + \frac{1268064}{25} c + \frac{91392}{5} c^2 + 15232 c^3\right) + \frac{6039012}{125} c (\frac{13}{20} - c)^2\\
&\quad\   + \left(6608 - \frac{255148257}{12500}c + \frac{49515186}{3125}c^2\right),\quad c\in [0,6/5].
\end{align*}
One can easily check that $h_3, h_3, h_5$ and $h_6$ are positive on $[0,6/5]$. Therefore, it is sufficient to prove that $E_3(l_2,c)$ is positive on $[0,1]\times [0,6/5]$.

Now, we rewrite $k_1$ and $k_3$ and check that both are positive on $[0,6/5]$
\begin{align*}
k_1(c)&= \left(\frac{581229138}{15625} + 500000 c^3 + \frac{63625694}{25} c^4\right) (\frac{7}{10} - c)^2  + \frac{473182277c}{125} (\frac{3}{10} - c)^2 (\frac{9}{10} - c)^2\\
&\quad\ + \left(\frac{948554347}{6250} + \frac{136241809}{1250} \beta \right) (\frac{2}{5} - c)^2 \beta + \left(642128 \beta^4  + \frac{19273323c}{5} (\frac{3}{5} - c)^2\right) c^2 \beta^2\\
&\quad\  + c^9 (53009 + 33152 c^{1} + 4267 c^{2} - 896 c^{3} + 3784 c^{4}  + 8704 c^{5} - 128 c^{6}) + \frac{32652259}{156250} \\
&\quad\  + \frac{15311007}{10000}c >0,\\
k_3(c)&=c^2 (\frac{17}{20} - c)^2 \bigg(\frac{2355466141019}{4000000} +
\frac{170920469037}{400000} c + \left(\frac{3936935827}{5000}  + \frac{401995653}{1000} \beta\right) c^2 \\
&\quad\  + \frac{42279081}{25} c^4 + \left(\frac{13026}{5} + 80512 \beta\right) c^5\bigg) + \frac{24462722037599}{160000000} c (\frac{4}{5} - c)^2 + 11504 c^{11} \\
&\quad\ - 5440 c^{13} + \left(132160  - \frac{80453722037599 c}{250000000} + \frac{314833837847093 c^2}{1600000000}\right) >0.
\end{align*}
Computing $E_3$ and the line segment $s_2=1$ and $c\in [0,6/5]$, we obtain
\begin{align*}
E_3(1,c)&=k_1(c)+k_2(c)+k_3(c)\\
&=\bigg(39648 - \frac{28903204136204 c}{244140625} + \frac{5470057737716 c^2}{48828125}\bigg) +\frac{1773112492}{78125} c^3 \beta^2 \\
&\quad\  + c^3 \beta^3 \bigg(\frac{16976668754}{78125} +\frac{11890235417}{15625} c +  \frac{1721158044}{3125} c^2\bigg) \\
& \quad\  + c^8 \beta \bigg(\frac{36159674}{625} + \frac{2825154}{125} c + \frac{174859}{25} c^2 +  \frac{22064}{5} c^3 + 4744 c^4\bigg) \\
&\quad\ + c (\frac{3}{5} - c)^2 \bigg(\frac{160957143606}{9765625} + \frac{104765224976c}{390625} \bigg) \\
&\quad\ + 8704 c^{14} - 1216 c^{15}>0,\quad \forall  c\in [0,6/5].
\end{align*}
Then we compare $E_3(l_2,c)-k_3(c)(1-l_2)^2-E_3(1,c)l_2^2=l_2(1-l_2)k_5(c)$, where
$$\begin{aligned}
k_5(c)&:=99120 - 224436 c + 377600 c^2 - 87632 c^3 - 182592 c^4 - 122776 c^5 + 779720 c^6 \\
&\quad\ - 1214831 c^7 + 340848 c^8 + 152347 c^9 - 127872 c^{10} +  5538 c^{11} + 2176 c^{12}\\
&\quad\ - 13968 c^{13} - 1088 c^{15}.
\end{aligned}$$
We compute
$$\begin{aligned}
-k_5'(c)&=\left(224436 - \frac{39166298761 c}{40000} + \frac{8584984187 c^2}{8000}\right) + c^2 (\frac{1}{2} - c)^2 \bigg(\frac{296223209}{400} \\
&\quad\ + \frac{29197057}{20} c + \frac{2934884627}{2500} c^2 + \beta \left(\frac{433202652}{125} c^2 + \frac{5077647}{25} c^3\right) + \frac{29200428}{25} c^5\bigg) \\
&\quad\ + \frac{110596281c}{100} (\frac{9}{20} - c)^2 + c^9 \beta \left(\frac{461262}{5} + 26112 c\right) + 181584 c^{12} + 16320 c^{14}>0,
\end{aligned}
$$
for every $c\in [0,6/5].$
Therefore, $k_5(c)$ is a decreasing function on that interval. Since $k_5(4/5)=14698.5>0$, we conclude that $k_5(c)>0$ for every $c\in [0,4/5]$. This implies that $E_3(l_2,c)>0$ for every $(l_2,c)\in [0,1]\times [0,4/5]$. This implies that $E_3(l_2,c)$ is positive for every $(l_2,c)\in [0,1] \times [0,4/5].$ It remains to show that $E_3$ is positive on $[0,1] \times [4/5,6/5]$.

Since $k_1(c)>0$ for every $c\in  [0,6/5]$, $E_3(l_2,c)$ is a convex function in $l_2$. For every fixed $c\in[4/5,6/5]$, the minimum value of $E_3(\cdot, c)$ is given by
$$
E_3\left(-\frac{k_2}{2k_1},c\right)=\frac{1}{4k_1(c)}(4k_1(c)k_3(c)-k_2(c)^2)=:\frac{1}{4k_1(c)}D_0(c).
$$
We shall prove that $D_0(c)>0$ for every $c\in[4/5,6/5]$. We consider $c=2v/5+4/5$, where $v\in [0,1]$. Notice that $D_0(2v/5+4/5)$ is a polynomial in $v$ of degree $30$ whose coefficients are all rational numbers. Replacing all such coefficients $a$ by $\lfloor a\rfloor$, we obtain a new polynomial $D_1(v)$ satisfying
$$
\begin{aligned}
&\quad\ D_0(2v/5+4/5)\geq D_1(v)\\
:&=1171163506 - 7471707255 v + 16174395360 v^2 + 8922925732 v^3 - 44809560461 v^4 \\
& - 33799370597 v^5  + 45096619475 v^6 + 78595290662 v^7 + 37015742898 v^8 - 14346554736 v^9 \\
&  - 29900538237 v^{10} - 19491958537 v^{11} - 6963317252 v^{12} - 1208863632 v^{13} + 45920428 v^{14} \\
&  + 44662945 v^{15} - 18645624 v^{16} - 19608465 v^{17} - 8327827 v^{18} - 2388872 v^{19} - 519800 v^{20} \\
&  - 89861 v^{21} - 12669 v^{22} - 1482 v^{23} - 145 v^{24} - 12 v^{25} - v^{26} - v^{27} - v^{28} - v^{29} - v^{30},
\end{aligned}
$$
for every $v\in[0,1]$. We can rewrite $D_1$ as
$$
\begin{aligned}
D_1(v)&=\frac{24063486356113}{320000} + \frac{5194158621777 v}{80000} \\
&+ (\frac{1}{2} - v)^2 \big(\frac{14476579533567}{16000} +  \frac{2258156665941}{32} (\frac{1}{5} - v)^2\big) \\
& + (\frac{2}{5} - v)^2 \big(\frac{820472989093}{800} (1 - v) + \frac{587799303261}{8} v (\frac{3}{5} - v)^2\big) \\
& + v^4 (\frac{1}{2} - v)^2 \big(\frac{419040213759}{8} + (1 - v) \big(\frac{147822902909}{2}  + \frac{299960460267}{2} v\\
& + 129475910052 v^2 + 57564677658 v^3 \big)\big) + v^{10} (1 - v) (27664139421 \\
&  + 8172180884 v + 1208863632 v^2 + 4931817 v^4 + 49594762 v^5  + 30949138 v^6\\
& + 11340673 v^7 + 3012846 v^8 + 623974 v^9 + 104174 v^{10} + 14313 v^{11})+ 40988611 v^{14} \\
& + (1 - v) v^{22} (1644 + 162 v + 17 v^2 + 5 v^3 + 4 v^4 + 3 v^5 + 2 v^6 + v^7)>0,\quad \forall v\in [0,1].
\end{aligned}
$$
Therefore, $D_0(c)>0$ for every $c\in [4/5,6/5]$ and thus $E_3(l_2,c)>0$ on $[0,1]\times [4/5,6/5]$. Together with the estimate for every $c\in[0,4/5]$, we conclude that $E_3(l_2,c)>0$ for every $(l_2,c)\in [0,1]\times [0,6/5]$. Finally,  \eqref{equ: E2 1} and the previous estimates for $h_1,\ldots, h_6$ show that $E_2(l_2,c)>0$ for every $(l_2,c)\in [0,1]\times [0,\sqrt{17/14}]$. This finishes the proof of the lemma.
\end{proof}

\end{document}